\newtheorem{theorem}{Theorem}[section]
\newtheorem{lemma}[theorem]{Lemma}
\newtheorem{prop}[theorem]{Proposition}
\newtheorem{cor}[theorem]{Corollary}
\theoremstyle{definition}
\newtheorem{defi}[theorem]{Definition}
\newtheorem{example}[theorem]{Example}
\theoremstyle{remark}
\newtheorem{remark}[theorem]{Remark}
\numberwithin{equation}{section}
\newcommand{\onto}{\,\,\twoheadrightarrow\,\,}
\newcommand{\la}{\label}
\newcommand{\GL}{\mathrm{GL}}
\newcommand{\Ker}{\mathrm{Ker}}
\newcommand{\Res}{\mathrm{Res}}
\newcommand{\Spec}{\mathrm{Spec}}
\newcommand{\Mod}{\mathrm{Mod}}
\newcommand{\End}{\mathrm{End}}
\newcommand{\id}{\mathrm{Id}}
\newcommand{\into}{\hookrightarrow}
\def\ms#1{\mathcal{#1}}
\def\c{\mathbb{C}}
\def\e{\boldsymbol{e}}
\def\A{\mathcal{A}}
\def\a{\mathbb{A}}
\def\k{\mathbb{K}}
\def\Z{\mathbb{Z}}
\def\Q{\mathbb{Q}}
\def\W{\mathrm{Irr}(W)}
\def\QQ{\mathbf{Q}}
\def\N{\mathbb{N}}
\def\P{\mathcal{P}}
\def\T{\mathcal{T}}
\def\D{\mathcal{D}}
\def\O{\mathcal{O}}
\def\Oln{\mathcal{O}^{\rm ln}}
\def\h{\mathfrak{h}}
\def\g{\mathfrak{g}}
\def\n{\mathfrak{n}}
\def\r{\mathfrak{r}}
\def\grd{\mathrm{gr}}
\def\sl2{{\mathfrak{s}\mathfrak{l}}_2}
\def\vreg{V_{\rm{reg}}}
\def\Hreg{H_{\rm{reg}}}
\def\Mreg{M_{\rm{reg}}}
\def\Otor{\ms O_{\rm{tor}}}
\def\KZ{\mathtt{KZ}}
\def\QA{\mathcal{Q}}
\def\kz{\mathrm{kz}}
\def\H{\mathcal{H}}
\def\Ireg{I_{\rm{reg}}}
\def\Reg{\mathrm{Reg}}
\def\mod{\mathrm{mod}}
\begin{document}
%
%
%
\title{Quasi-invariants of Complex Reflection Groups}
\author{Yuri Berest}
\address{Department of Mathematics,
Cornell University, Ithaca, NY 14853-4201, USA}
\email{berest@math.cornell.edu}
\author{Oleg Chalykh}
\address{School of Mathematics, University of Leeds, Leeds LS2 9JT, UK}
\email{oleg@maths.leeds.ac.uk}
\subjclass{16S38 (primary); 14A22, 17B45 (secondary)}
\keywords{Complex reflection group, Coxeter group, rational Cherednik algebra, Dunkl operator,
Hecke algebra, ring of differential operators, Weyl algebra}
\thanks{The first author was partially supported by the NSF grants DMS 04-07502 and DMS 09-01570.}
\maketitle
\begin{abstract}
We introduce quasi-invariant polynomials for an arbitrary finite
complex reflection group $W$. Unlike in the Coxeter case, the space
of quasi-invariants of a given multiplicity is not, in general, an
algebra but a module $Q_k$ over the coordinate ring of a
(singular) affine variety $ X_k $. We extend the main results of
\cite{BEG} to this setting: in particular, we show that the variety
$ X_k $ and the module $ Q_k $ are Cohen-Macaulay, and the rings of
differential operators on $ X_k $ and $ Q_k $ are simple rings,
Morita equivalent to the Weyl algebra $ A_n(\c)\,$, where $ n = \dim\,X_k\,$.
Our approach relies on representation theory of complex Cherednik
algebras introduced in \cite{DO} and is parallel to that of
\cite{BEG}. As an application, we prove the existence of shift
operators for an arbitrary complex reflection group, confirming a
conjecture of Dunkl and Opdam \cite{DO}. Another result is a proof
of a conjecture of Opdam \cite{O}, concerning certain operations
(KZ twists) on the set of irreducible representations of $W$.
\end{abstract}

\section{Introduction}
\la{intro}
The notion of a quasi-invariant polynomial for a finite Coxeter group was introduced by A.~Veselov
and one of the authors in \cite{CV90}. Although quasi-invariants were natural
generalization of invariants, they first appeared in a slightly disguised form (as symbols of
commuting differential operators). More recently, the rings of quasi-invariants and
associated varieties have been studied by means of representation theory
\cite{FV, EG1, BEG} and found applications in other areas, including noncommutative algebra
\cite{BEG}, mathematical physics \cite{B, CFV, FV1} and combinatorics \cite{GW, GW1, BM}.

The aim of the present paper is to define quasi-invariants for an arbitrary complex reflection group and
give new applications. We begin with a brief overview of our definition, referring the reader to
Section~\ref{sec1} for details.
Let $ W $ be a finite complex reflection group acting in its reflection representation $ V $. Denote by
$\, \A = \{H\} \,$ the set of reflection hyperplanes of $ W $ and write $ W_H $ for the (pointwise) stabilizer of
$ H \in \A $ in $W$. Each $ W_H $ is a cyclic subgroup of $W$ of order $ n_H \ge 2 $, whose group algebra
$ \c W_H \subseteq \c W $  is spanned by the idempotents
$$
\e_{H,\, i} = \frac{1}{n_H}\,\sum_{w \in W_H} (\det\,w)^{-i} \, w \ , \quad
i = 0,\,1,\,\ldots\,,\,n_H-1\ ,
$$
where $\,\det:\,W \to \c^{\times} \,$ is the determinant character of $W$ on $V$.
The group $W$ acts naturally on the polynomial algebra $ \c[V]$, and the
invariant polynomials $ f \in \c[V]^W $ satisfy the equations
\begin{equation}
\label{INV} \e_{H, -i} (f) = 0 \ ,\quad i = 1, \ldots,\, n_H-1 \ .
\end{equation}
More precisely, we have that $\,f \in \c[V]^W \,$ if and only if \eqref{INV} hold for all $\,H \in \A \,$.

Now, to define quasi-invariants we relax the equations \eqref{INV} in the following way.
For each $ H \in \A $, we
fix a linear form $\,\alpha_H \in V^* $, such that $\,H = \Ker\,\alpha_H \,$, and choose $\, n_H - 1 \,$
non-negative multiplicities $\, k_{H,i} \in \Z \,$, assuming $\, k_{H,i} = k_{H',i}\,$
whenever $ H $ and $ H' $ are in the same orbit of $ W $ in $ \A $. Then, we replace \eqref{INV} by
\begin{equation}
\label{IN1} \e_{H, -i} (f) \equiv 0\ \mbox{mod}\,\langle\alpha_H
\rangle^{n_H k_{H,i}} \ ,\quad i = 1, \ldots,\, n_H-1 \ ,
\end{equation}
where $\,\langle\alpha_H \rangle\,$ is the ideal in $ \c[V] $
generated by $ \alpha_H $. Letting $\, k := \{k_{H,i}\} $,
we call $\,f \in \c[V]\,$ a {\it $k$-quasi-invariant} of $W$
if it satisfies \eqref{IN1} for all $\, H \in \A \,$. It is easy to see that
this agrees with the earlier definition of quasi-invariants in the Coxeter
case (cf. Example~\ref{Cox}); however, unlike in that case, the subspace
$ Q_k(W) \subseteq \c[V] $ of $k$-quasi-invariants is not necessarily a ring.
Still, $ Q_k(W) $ contains $ \c[V]^W $, and the following remarkable property holds.
\begin{theorem}
\la{chev} $\, Q_k(W) $ is a free module over $\c[V]^W$ of rank $|W|$.
\end{theorem}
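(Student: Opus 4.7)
My plan is to deduce the theorem from two properties of $Q_k(W)$: (a) it is a finitely generated graded $\c[V]^W$-submodule of $\c[V]$ of generic rank $|W|$, and (b) it is maximal Cohen-Macaulay over $\c[V]^W$. Since $\c[V]^W$ is a polynomial algebra by Chevalley-Shephard-Todd, the Auslander-Buchsbaum formula forces any finitely generated graded maximal Cohen-Macaulay module over it to be free, and the generic rank then pins the rank down to $|W|$.

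Part (a) is a formal verification. The conditions \eqref{IN1} are homogeneous and stable under multiplication by $\c[V]^W$ (since each $\e_{H,-i}$ is $\c[V]^W$-linear), so $Q_k(W)$ is a graded $\c[V]^W$-submodule of $\c[V]$ containing $\c[V]^W$, hence finitely generated. To compute the generic rank, set $\delta=\prod_{H\in\A}\alpha_H^{n_H}$; a suitable power $\delta^N$ lies in $\c[V]^W$. Inverting $\delta^N$ makes each $\alpha_H$ a unit in $\c[V][\delta^{-N}]$, so the divisibility conditions in \eqref{IN1} become vacuous, giving $Q_k(W)[\delta^{-N}]=\c[V][\delta^{-N}]$, which is free of rank $|W|$ over $\c[V]^W[\delta^{-N}]$ by Chevalley-Shephard-Todd.

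Part (b) is the real content, and I would approach it in parallel with \cite{BEG}, via the representation theory of the rational Cherednik algebra $H_c(W)$ of \cite{DO} with parameter $c$ chosen to match $\{k_{H,i}\}$. The target is to realise $Q_k(W)$, up to a rank-one twist involving the line $\prod_H \alpha_H^{k_{H,0}}\c[V]$ or similar, as the image of a Dunkl-type intertwiner between standard modules of $H_c(W)$, and then to read off the Cohen-Macaulay property from the PBW filtration on $H_c(W)$ together with the natural action of the spherical subalgebra $eH_c(W)e\cong \c[V]^W\otimes\ldots$ on the relevant module.

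The principal obstacle is that the Coxeter-case argument of \cite{BEG} is controlled by a single sign character, whereas here the hyperplane-by-hyperplane family $\{\e_{H,i}\}$ must be handled uniformly, and the intertwining/shift operators between $H_c$ at different parameter values that would make the \cite{BEG} strategy run are not available off the shelf for general $W$; indeed, establishing their existence is one of the advertised results of this paper. Consequently the Cohen-Macaulay step must be carried out in tandem with the complex-reflection version of the Dunkl-operator and Cherednik-algebra machinery, and this is where the bulk of the work lies; once those ingredients are in place, the freeness of $Q_k(W)$ over $\c[V]^W$ follows formally from (a) and the commutative-algebra step above.
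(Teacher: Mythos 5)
Your commutative-algebra wrapper is sound and part (a) is correct: since $\c[V]^W$ is a polynomial ring, a finitely generated graded maximal Cohen--Macaulay module over it has projective dimension zero by Auslander--Buchsbaum, hence is free, and inverting a suitable power of $\prod_H\alpha_H^{n_H}\in\c[V]^W$ makes the conditions \eqref{IN1} vacuous and pins the rank at $|W|$. But part (b) --- the Cohen--Macaulayness of $Q_k(W)$ --- is the entire content of the theorem, and your proposal never proves it; it only gestures at a plan. Worse, the plan has two inaccuracies. The ``rank-one twist by $\prod_H\alpha_H^{k_{H,0}}\c[V]$'' is vacuous because the normalization $k_{H,0}=0$ is built into Definition~\ref{def1}. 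And the concern that the proof must wait on Heckman--Opdam shift \emph{operators} is misplaced: the paper's proof of Theorem~\ref{chev} uses shift \emph{functors} $\T_{k\to k'}$ (Section~\ref{sfff}), which are defined purely by localization and restriction of scalars and exist unconditionally; the shift operators of Section~\ref{ShOper} are a separate application, not an input to this theorem.

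What the paper actually does is sharper than Cohen--Macaulayness and bypasses Auslander--Buchsbaum entirely. The key device your sketch omits is the module $\QQ_k$ of $\c W$-valued quasi-invariants: one shows $\QQ_k\subset\c[\vreg]\otimes\c W$ is an $H_k$-submodule with $\e\QQ_k=\e(Q_k\otimes 1)$ (Theorem~\ref{Qfat}), decomposes it as $\QQ_k=\bigoplus_{\tau\in\W}\QQ_k(\tau)\otimes\tau^*$, and identifies each $\QQ_k(\tau)$ with a KZ-twisted standard module $M_k(\tau')$ with $\tau'=\kz_{-k}(\tau)$ (Proposition~\ref{sq}, which in turn rests on the shift-functor analysis of Theorem~\ref{zero}). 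Applying $\e$ yields $Q_k\cong\bigoplus_{\tau\in\W}\e M_k(\tau')\otimes\tau^*$ (Theorem~\ref{stq}); each $\e M_k(\tau')\cong(\c[V]\otimes\tau')^W$ is free over $\c[V]^W$, and the total rank is $\sum_\tau(\dim\tau)^2=|W|$. This explicit decomposition --- not an abstract Cohen--Macaulay argument --- is the proof, and it is precisely the step your proposal defers.
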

Since $\, Q_0(W) = \c[V] \,$, Theorem~\ref{chev} can be viewed as a generalization of a
classic result of Chevalley and Serre (see \cite{Ch}); equivalently, it can be stated by
saying that $ Q_k(W) $ is a Cohen-Macaulay module. For the Coxeter groups, this was
conjectured by Feigin and Veselov in \cite{FV} and
proved, by different methods, in \cite{EG1} and \cite{BEG}. It is worth mentioning that the
elementary argument of \cite{Ch} and its refinement in \cite{Bo} (see {\it loc.~cit},
Ch.~V, \S\,5, Theorem~1) do not work for nonzero $ k $.

We will prove Theorem~\ref{chev} (in fact, the more precise Theorem~\ref{stq}) by extending the
approach of \cite{BEG}, which is based on representation theory of Cherednik algebras.
We will also generalize another important result of
\cite{BEG} concerning the ring $ \D(Q_k) $ of differential operators on quasi-invariants.
\begin{theorem}
\label{ma} $\, \D(Q_k) \,$ is a simple ring, Morita equivalent to $ \D(V) $.
\end{theorem}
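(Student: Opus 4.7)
The plan is to adapt the Cherednik-algebra strategy of \cite{BEG} to the complex-reflection-group setting. The central object is the rational Cherednik algebra $H_k = H_k(W, V)$ of Dunkl-Opdam \cite{DO}, together with its spherical subalgebra $U_k := eH_k e$, where $e = \frac{1}{|W|}\sum_{w \in W} w$ is the symmetrizing idempotent. By the Dunkl representation, $H_k$ embeds faithfully into $\D(\vreg) \rtimes W$, and $U_k$ correspondingly into $\D(\vreg)^W$.

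The first key step is to identify the spherical subalgebra with a familiar geometric object: $U_k \cong \D(Q_k)^W$, realized concretely as the $W$-invariant differential operators on $\vreg$ that preserve $Q_k$. One direction would be proved by checking that every Dunkl operator maps $Q_k$ into itself (so that the image of $U_k$ lies in $\D(Q_k)^W$); the opposite inclusion would follow from a $\c[V]^W$-module count using Theorem~\ref{chev}. More generally, $H_k$ itself should be identifiable with an equivariant differential-operator ring $\D(Q_k, W) \subset \D(\vreg) \rtimes W$, from which $\D(Q_k)$ is recovered as a suitable corner.

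With this identification in place, the Morita equivalence is established in two layers. At the spherical level, the $(U_k, H_k)$-bimodule $H_k e$ implements a Morita equivalence provided $H_k e H_k = H_k$; for integer $k$ this ``sphericity'' should follow from an analysis of the KZ functor on category $\ms O$ for $H_k$ together with properties of the cyclotomic Hecke algebra at $q = 1$. To descend from the invariant picture to the non-equivariant statement, one uses $Q_k$ itself, paired with its ``dual'' bimodule $\mathrm{Hom}_{\c[V]}(Q_k,\c[V])$, as the Morita bimodule between $\D(Q_k)$ and $\D(V) = A_n(\c)$; the projectivity and progenerator properties needed for this are supplied precisely by Theorem~\ref{chev}, which guarantees that $Q_k$ is a free $\c[V]^W$-module of rank $|W|$.

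The main obstacle is the sphericity equality $H_k e H_k = H_k$ at integer parameters $k$. In the Coxeter setting of \cite{BEG} this was deduced from relatively soft arguments, but for arbitrary complex reflection groups it genuinely requires the full KZ-functor machinery developed by Ginzburg-Guay-Opdam-Rouquier and a careful analysis of the associated Hecke algebra. Once this is in hand, simplicity of $\D(Q_k)$ is automatic: Morita equivalence preserves simplicity, and the Weyl algebra $A_n(\c)$ is well known to be simple.
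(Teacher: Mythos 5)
Your proposal correctly identifies the two structural pillars — the isomorphism $U_k \cong \D(Q_k)^W$ and the Morita equivalence $H_k \sim U_k$ for integral $k$ via the KZ/category-$\O$ machinery — and these are indeed the inputs the paper uses. But the final assembly has two genuine problems.

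First, the proposed Morita bimodule is wrong. You suggest using ``$Q_k$ itself, paired with its dual bimodule $\mathrm{Hom}_{\c[V]}(Q_k,\c[V])$.'' But $Q_k$ is not a $\c[V]$-module: multiplication by $\c[V]$ does not preserve $Q_k$ (only multiplication by $A_k \subsetneq \c[V]$ does), so $\mathrm{Hom}_{\c[V]}(Q_k,\c[V])$ is not even defined, and $\D(V)$ does not act on $Q_k$. The correct bimodule is the space of differential operators $\P := \D(\c[V],Q_k) = \{D\in\D(\k)\,:\,D(\c[V])\subseteq Q_k\}$, a right ideal of $\D(V)$ and a $(\D(Q_k),\D(V))$-bimodule in the Smith--Stafford style. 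This is a genuinely different object from $Q_k$, and the identification $\End_{\D(V)}\P\cong\D(Q_k)$ is the key computation.

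Second, the logical order is reversed, and this matters. You propose to establish Morita equivalence first and then deduce simplicity of $\D(Q_k)$ as a corollary. But proving that $\P$ is a progenerator — in particular proving $\P(\c[V])=Q_k$ and $\End_{\D(V)}\P\cong\D(Q_k)$ — requires knowing in advance that $\D(Q_k)$ is a \emph{simple} ring: one annihilates $Q_k/\P(\c[V])$ by a nonzero two-sided ideal and invokes simplicity to force it to vanish. Simplicity of $\D(Q_k)$ must therefore be proved independently, and the paper does this via $\D(Q_k)^W\cong U_k$: any nonzero two-sided ideal $I\subseteq\D(Q_k)$ meets $\c[V]^W\subset\D(Q_k)^W$, hence meets the simple ring $\D(Q_k)^W$ nontrivially, hence contains $1$. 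Your route is circular as stated. Relatedly, you invoke Theorem~\ref{chev} (freeness of $Q_k$ over $\c[V]^W$) to supply the ``projectivity and progenerator properties,'' but the paper's proof of Theorem~\ref{ma} does not use Theorem~\ref{chev} at all — indeed the paper explicitly notes that the two theorems are proved by independent arguments, and Theorem~\ref{chev} is only established later, so relying on it here would create another circularity. The progenerator property comes instead from the simplicity (and hereditariness) of $\D(V)$ and $\D(Q_k)$, not from the $\c[V]^W$-module structure of $Q_k$.
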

By a general result of Van den Bergh \cite{vdB} (see also \cite{BN}), Theorem~\ref{ma} is actually a
strengthening of Theorem~\ref{chev}; in this paper, however, we will prove these two
theorems by independent arguments, without using \cite{vdB} and \cite{BN}.

Although most of the elementary properties of quasi-invariants generalize
easily to the complex case, the proofs of Theorem~\ref{chev} and Theorem~\ref{ma} do not.
A key observation of \cite{BEG} linking quasi-invariants $ Q_k $ to
the rational Cherednik algebra $ H_k $  is the fact
that $Q_k $ is a module over the {\it spherical} subalgebra $\, U_k = \e H_k \e \,$ of
$ H_k $, and $ U_k $ is isomorphic to the ring $ \D(Q_k)^W $ of invariant differential
operators on $Q_k $. We will see that a
similar result holds for an arbitrary complex reflection
group; however, unlike in the Coxeter case (cf. \cite{BEG}, Lemma~6.4), this can hardly
be proved by direct calculation, working with generators of $ U_k $. The problem is that the ring of invariants
$ \c[V]^W $ of a complex refection group contains no quadratic polynomial, which makes explicit calculations
with generators virtually impossible\footnote{In fact, skimming the
classification table in \cite{ST} shows that there is an exceptional complex group with minimal fundamental degree 
as large as $60$.}.
To remedy this problem, we will work with the Cherednik algebra itself, lifting quasi-invariants at the level
of $\c W$-valued polynomials. More precisely, in Section~\ref{sec3}, we will define quasi-invariants $ \QQ_k(\tau) $
with values in an {\it arbitrary} representation $ \tau $ of $W$ as a module over the Cherednik algebra $H_k$.
(Checking that $\QQ_k(\tau) $ is indeed an $H_k$-module is easy, since $ H_k $ is generated by linear forms
and first order (Dunkl) operators.) The main observation (Theorem~\ref{Qfat}) is that the usual quasi-invariants
$ Q_k $ are obtained by symmetrizing the $ \tau$-valued ones, $ \QQ_k(\tau) $, with $ \tau $ being the regular
representation $ \c W$.
The existence of a natural $ U_k $-module structure on $ Q_k $ is a simple consequence of this construction and
the fact that $ H_k $ and $ U_k $ are Morita equivalent algebras for integral $k$. As we will see in
Section~\ref{doq} (Proposition~\ref{is}),
the key isomorphism $\, U_k \cong \D(Q_k)^W $ also follows easily from this, and Theorem~\ref{ma} (see Section~\ref{Simple})
can then be proven similarly to \cite{BEG}.

In Section~\ref{ShOper}, we will use quasi-invariants to show the existence of Heckman-Opdam shift
operators for an arbitrary complex reflection group. In the Coxeter case, this result was
established by an elegant argument by G.~Heckman \cite{H}, using Dunkl operators.
Heckman's proof involves explicit calculations with second order invariant operators,
which do not generalize to the complex case (exactly for the reason mentioned above). Still,
Dunkl and Opdam \cite{DO} have managed to extend Heckman's construction to the infinite family
of complex groups of type $ G(m,p,N) $ and conjectured the existence of shift operators in general.
Theorem~\ref{shiftoper} proves this conjecture of \cite{DO}. The idea behind
the proof is to study symmetries of the family of quasi-invariants $ \{ \QQ_k(\tau) \} $ under certain
transformations of multiplicities $k$, which induce the identity at the level of spherical algebra.

Section~\ref{o} reviews the definition and basic properties of the category $ \O $ for rational Cherednik algebras.
This category was introduced and studied in \cite{DO}, \cite{BEG} and \cite{GGOR} as an analogue of the
eponymous category of representations of a semisimple complex Lie algebra. In Section~\ref{o}, we gather
together results on the
category $ \O $ needed for the rest of the paper. Most of these results are either directly borrowed or
can be deduced from the above references (in the last case, for reader's convenience, we provide proofs).

In Section~\ref{SF}, we develop some aspects of representation theory of Cherednik algebras, which may
be of independent interest. First, in Section~\ref{sfff}, we introduce a shift functor
$\,\T_{k \to k'}:\, \O_k \to \O_{k'}\,$, relating representation categories of Cherednik algebras with
different values of multiplicities. This functor is analogous to the Enright completion in Lie theory
(see \cite{J}) and closely related to other types of shift functors which have appeared in the literature.
Some of these relations will be discussed in Section~\ref{othershifts}.

Next, in Section~\ref{kzt}, we will study a certain family of permutations $\,\{\kz_k\}_{k \in \Z} \,$ on the set $ \W $
of (isomorphism classes of) irreducible representations of $W$. These permuations (called KZ twists) were orginally defined
by E.~Opdam in
terms of Knizhnik-Zamolodchikov equations and studied using the finite Hecke algebra $ \H_k(W) $ (see \cite{O1, O, O3}).
In \cite{O1}, Opdam explicitly described KZ twists for all Coxeter groups; he also discovered the remarkable additivity property:
\begin{equation*}
\la{addit0}
\kz_k\circ\kz_{k'}=\kz_{k+k'}
\end{equation*}
which holds for all integral $ k $ and $ k' $.
However, the key arguments in \cite{O1} involve continuous deformations in parameter $k$ and work only under
the assumption that $\dim\,\H_k=|W|$, which still remains a conjecture
for some exceptional groups in the complex case (see \cite{BMR}).
We will derive basic properties of $ \kz_k\, $, including the above additivity, from the properties of the category $\O_k $;
thus, we will give a complete case-free proof of Opdam's results (see Theorem~\ref{zero} and Corollary~\ref{Opcon}).

The link to quasi-invariants is explained by Proposition~\ref{sq},
which says that, for any $\,\tau \in \W \,$, the $H_k$-module
$\,\QQ_k(\tau)\,$ is isomorphic to the so-called standard module
$\, M_k(\tau')$ taken, however, with a twist\footnote{This result corrects an error in \cite{BEG} (cf. Remark~\ref{mistake} in Section~\ref{MQ}).}: $\,\tau' =\kz_{-k}(\tau)$.
We would also like to draw reader's attention to formula \eqref{shf00},
which gives an intrinsic description of the module $ \QQ_k(\tau) $ and should be taken,
perhaps, as a conceptual definition of quasi-invariants (see Remark~\ref{Rdefq}).

In Section~\ref{MQ}, we will use the above description of quasi-invariants
to prove Theorem~\ref{chev} and find a decomposition of $ Q_k $ as a module over the spherical algebra
$\,U_k= \e\,H_k\e$. In addition, we compute the Poincar\'e series of $Q_k$,
generalizing the earlier results of \cite{FV1}, \cite{EG1} and \cite{BEG} to the complex case.
As an application, we give a simple proof of a theorem of Opdam on symmetries of
fake degrees of complex reflection groups.

The paper ends with an Appendix, which links our results to the original setting of \cite{CV90}. For a general complex reflection group $W$ and $W$-invariant integral multiplicities $\, k = \{k_{H, \,i}\}\,$, we define the {\it Baker-Akhiezer function} $\,\psi(\lambda,\,x)\,$ and establish its basic properties. Although this function is not used
in the main body of the paper, it is certainly worth studying.

\subsection*{Acknowledgement}
We are especially indebted to Eric Opdam who has carefully read a preliminary version of this paper
and made many useful suggestions. Thanks to his effort, many proofs are now considerably shorter,
and the whole exposition has greatly improved.
We are also very grateful to Toby Stafford and Michel Van den Bergh for sharing with us their private notes \cite{SvdB}. The main results of this paper -- Theorems~\ref{chev} and~\ref{ma} -- have been independently established in \cite{SvdB} for the complex reflection groups of type $ G(m,p,N) $. In addition, we would like to thank C.~Dunkl, P.~Etingof, V.~Ginzburg, I.~Gordon, R.~Rouquier and A.~Veselov for interesting discussions and comments.

The first author is grateful to the London Mathematical Society for a travel grant and the Mathematics Department of Leeds University
for its hospitality during his visit in March 2007.

\section{Definition of Quasi-invariants}
\la{sec1}
\subsection{Complex reflection groups}
\la{1.1}
Let $ V $ be a finite-dimensional vector space over $ \c$, and let
$ W $ be a finite subgroup of $ \GL(V) $ generated by complex
reflections. We recall that an element $ s \in \GL(V)  $ is
a {\it complex reflection} if it
acts as
identity on some hyperplane $ H_s $ in $ V $.
Since $W$ is finite, there is a positive definite
Hermitian form $\,(\,\cdot\,,\,\cdot\,)\,$ on $ V $, which is invariant under
the action of $W$. We fix such a form,
once and for all, and regard $W$ as a subgroup of the corresponding unitary
group $ U(V) $. We assume that $\,(\,\cdot\,,\,\cdot\,) \,$ is antilinear in its
first argument and linear in the second: if $ x \in V $, we write
$ x^* \in V^* $ for the linear form: $\, V \to \c \,$,
$\,v \mapsto (x,\,v)\,$. The assignment $\, x \mapsto x^* \,$ defines then an
antilinear isomorphism $\,V \stackrel{\sim}{\to} V^*$, which extends to an
antilinear isomorphism of the symmetric algebras
$\c[V^*] $ and $ \c[V] $.

Let $ \A $ denote the set $\,\{H_s\}\,$ of reflection hyperplanes of $ W $,
corresponding to the reflections $ s \in W $. The group $ W $ acts on $ \A $
by permutations, and we write $\A/W$ for the set of orbits of $W$ in $ \A $.
If $ H \in \A $, the (pointwise) stabilizer of $H$ in $W$
is a cyclic subgroup $ W_H \subseteq W $ of order $ n_H $, which
depends only on the orbit $ C_H \in \A/W $ of $H$ in $\A $.
We fix a vector $ v_H \in V $, normal to $ H $
with respect to $(\,\cdot\,,\,\cdot\,)$, and a covector
$ \alpha_H \in V^*$, annihilating $ H $  in $ V^*$.
With above identification, we may (and often will) assume that
$ \alpha_H = v_H^* $.

Now, we write $ \det: W \to \c^\times $ for the character of $W$
obtained by restricting the determinant character of $\GL(V)$.
Then, under the natural action of $W$, the elements
\begin{equation}
\la{delta} \delta := \prod_{H\in \A}\alpha_H \in \c[V]\ ,
\qquad \delta^*:=\prod_{H\in \A}v_H \in \c[V^*]\ ,
\end{equation}
transform as relative invariants with characters $\det^{-1}$ and
$\det$, respectively.
For each $ H \in \A $, the characters of $ W_H $ form a cyclic
group of order $ n_H $ generated by $ \det|_{W_H} $. We write
\begin{equation}
\la{idem}
\e_{H, i} := \frac{1}{n_H}\,\sum_{w \in W_H} (\det\,w)^{-i} w
\end{equation}
for the corresponding idempotents in the group algebra
$\, \c W_H \subseteq \c W \,$.

More generally, for any orbit $\,C \in \A/W\,$, we define
\begin{equation}
\la{delc}
\delta_C := \prod_{H\in C}\alpha_H \in \c[V]\ ,\qquad
\delta_C^* := \prod_{H\in
C}v_H \in \c[V^*]\  .
\end{equation}
These are also relative invariants of $W$, whose characters will be denoted
by $\det_C^{-1}$ and $\det_C$. Note that
$\det_C(s)=\det(s) $ for any reflection $ s \in W $ with $\,H_s\in C$, while
$\det_C(s)=1 $ for all other reflections.
The whole group of characters of $ W $ is generated
by $\det_C$ for various $C \in \A/W$.

Throughout the paper, we will use the following conventions.
%

 \vspace{.8ex}

{\bf 1.} A $W$-invariant function on $ \A $ and the
corresponding function on $ \A/W $ will be denoted by the same
symbol: for example, if $ C $ is the orbit of $ H $ in $ \A $,
we will often write $\, n_C \,$, $\, k_{C}\,$, $\,\ldots\,$
instead of $\, n_H \,$, $\,k_{H}\,$, etc.

{\bf 2.} The index set $\, \{0,\,1,\,2,\,\ldots,\, n_H -1 \}\,$
will be identified with $ \Z/n_H\Z $: thus we will often assume
$\,\{\e_{H,i}\}\,$,$\,\{k_{C,i}\}\,$, $\,\ldots\,$ to be indexed
by all integers with understanding that $\, \e_{H,i} =
\e_{H,\,i+n_H} \,$, $\,k_{C,i} = k_{C,\,i+n_C}\,$, etc.
%

\subsection{Quasi-invariants}
For each $ C \in \A/W $, we fix a sequence of non-negative integers
$\,k_C =\{k_{C,i}\}_{i= 0}^{n_C-1} $, with
$k_{C,0}=0$, and let $ k := \{k_C\}_{C \in \A/W} $ .
Following our convention, we will think of $\, k \,$ as a
collection of {\it multiplicities} $\, \{k_{H,i}\} \,$
assigned to the reflection hyperplanes of $ W $.
\begin{defi}
\la{def1} A {\it $k$-quasi-invariant} of $W$ is a polynomial $\, f
\in \c[V] \, $ satisfying
\begin{equation}
\label{qc} \e_{H,-i} (f) \equiv 0\ \mbox{mod}\,\langle\alpha_H
\rangle^{n_H k_{H,i}}
\end{equation}
for all $\, H\in\A\,$ and $\, i = 0, 1, \ldots,\, n_H-1 \,$.
Here $ \langle\alpha_H\rangle $ stands for the principal ideal of
$ \c[V] $
generated by $ \alpha_H $. (Note that  \eqref{qc} holds
automatically for $ i = 0 $, as we assumed $ k_{H,0} = 0 $ for all
$ H \in \A $.)
\end{defi}
We write $ Q_k(W) $ for the set of all $k$-quasi-invariants
of $ W $: clearly, this is a linear subspace of $ \c[V]$.

\begin{example}[``The Coxeter case'']
\la{Cox} Let $W$ be a finite Coxeter group. Then
each $\, W_H \,$ is generated by a real reflection $ s_H $ of
order $\, n_H = 2\,$, and the corresponding idempotents
\eqref{idem} are given by $\,\e_{H,0} = (1 + s_H)/2\,$ and
$\,\e_{H,1} = (1 - s_H)/2\,$. As $\, k_{H, 0} = 0 \,$,
we have only one (nontrivial) condition \eqref{qc} for each $H
\in \A $, defining quasi-invariants: namely, $\, s_H(f) \equiv f
\,\mbox{mod}\,\langle\alpha_H\rangle^{2 k_H}$, with $ k_H =
k_{H,1} $. This agrees with the original definition of quasi-invariants
for the Coxeter groups (cf. \cite{FV}).
\end{example}
\begin{example}[``The one-dimensional case'']
\la{1dim}
Fix an integer $\, n \ge 2 $, and let $\,W \,$ be $\, \Z/n\Z \,$
acting on $V=\c$ by multiplication by the $n$-th roots of unity.
In this case, we have
only one reflection ``hyperplane'' -- the origin -- with
multiplicities $ k = \{k_0=0,\,
k_1,\,\ldots\,,\, k_{n-1}\} $. Identifying $ \c[V] \cong \c[x] $,
it is easy to see that
\begin{equation}
\la{exq}
Q_k(W) = \bigoplus_{i=0}^{n-1}\,x^{n k_i+i}\, \c[x^n]\ .
\end{equation}
Observe that the first summand in \eqref{exq} (with $i=0$) is
$\,\c[x^n]=\c[V]^W$, the ring of invariants of $ W $ in $ \c[V] $.
Observe also that $ Q_k $ contains all sufficiently large powers of
$\, x \,$ and hence the ideal $\, \langle x \rangle^N \subset
\c[V] $ for some $ N \gg 0 $.
In general, $Q_k$ is not a ring: it is not closed under
multiplication in $ \c[V]$. However, we can define $\,A_k := \{p
\in \c[x]\,:\, p\, Q_k \subseteq Q_k\}\,$, which is obviously a
graded subring of $ \c[V] $, $\, Q_k $ being a graded $A_k$-module.
It is easy to see that $ A_k $ also consists of quasi-invariants of $ W $,
corresponding to different multiplicities (cf. Lemma~\ref{alg} below).
Letting $ X_k := \Spec(A_k)\,$, we note that $ X_k $ is a
rational cuspidal curve, with a unique singular point ``at the
origin.'' The space $\, Q_k \,$ can be thought of geometrically, as
the space of sections of a rank one torsion-free coherent sheaf on
$ X_k $. As a $\c[V]^W$-module, $ Q_k $ is freely generated by the
monomials $\{x^{nk_i+i}\}$,\, $i=0,\dots, n-1$.
\end{example}
\subsection{Elementary properties of quasi-invariants}
\la{prop} We now describe some properties of quasi-invariants,
which follow easily from Definition~\ref{def1}. First, as in
Example~\ref{1dim} above, we fix  $ k = \{ k_{H, i}\} $ and set
\begin{equation}
\la{acc}
A_k:= \{p \in\mathbb C[V]\ :\ p \,Q_k \subseteq Q_k\}\ .
\end{equation}
The following lemma is a generalization of \cite{BEG}, Lemma~6.3.
\begin{lemma}\hfill

\la{alg}

$\mathsf{(i)}\ $ $ A_k = Q_{k'}(W) \,$ for some $ k' =
\{k_{H,i}'\}$. In particular, both $ Q_k $ and $ A_k $ contain $
\c[V]^W $ and are stable under the action of $ W $.

$\mathsf{(ii)}\ $ $ A_k $ is a finitely generated graded subalgebra
of $ \c[V] $, and $Q_k$ is a finitely generated graded module
over $A_k$ of rank $1$.

$\mathsf{(iii)}\ $ The field of fractions of  $ A_k $ is $\, \c(V)
\,$, and the integral closure of $\,A_k \,$ in $\, \c(V) \,$ is
$\,\c[V]\,$.
\end{lemma}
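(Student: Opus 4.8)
\textbf{Proof proposal for Lemma~\ref{alg}.}

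The plan is to reduce everything to the defining congruences \eqref{qc}, which are conditions imposed hyperplane by hyperplane and, for each hyperplane, only involve the behaviour of $f$ along the line through $v_H$. First I would prove $\mathsf{(i)}$. The key point is that the condition ``$p\,Q_k\subseteq Q_k$'' can be tested separately at each $H\in\A$. Fix $H$ and pick a coordinate $x=\alpha_H$ transverse to $H$; restricting to the normal line (i.e. working in $\c[V]/\langle\text{complement}\rangle$, or more precisely analysing the $\langle\alpha_H\rangle$-adic expansion), the congruence \eqref{qc} becomes, as in Example~\ref{1dim}, the statement that the $\e_{H,-i}$-isotypic component of $f$ is divisible by $x^{n_H k_{H,i}}$. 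Thus, locally at $H$, membership in $Q_k$ means the expansion of $f$ in powers of $\alpha_H$ omits prescribed low-degree terms in each isotypic sector. A polynomial $p$ with $p\,Q_k\subseteq Q_k$ must, again locally at $H$, shift these sectors compatibly, and a short argument with the one-dimensional module \eqref{exq} — essentially computing the idealiser of the $\c[x^{n_H}]$-module $\bigoplus_i x^{n_H k_{H,i}+i}\c[x^{n_H}]$ inside $\c[x]$ — shows this forces $p$ itself to satisfy congruences \eqref{qc} for a new multiplicity $k'_{H,i}$ (explicitly $k'_{H,i}=\min_j(k_{H,i+j}-k_{H,j})$ or a similar formula, with $k'_{H,0}=0$ automatic). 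Since $\c[V]^W\subseteq Q_k$ always and $\c[V]^W$ is a ring, we get $\c[V]^W\subseteq A_k$; and $W$-stability of both $Q_k$ and $A_k$ follows because $W$ permutes the hyperplanes and the multiplicities are constant on $W$-orbits, so the whole system \eqref{qc} is $W$-invariant.

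Next, $\mathsf{(ii)}$. Finite generation of $A_k$ as an algebra and of $Q_k$ as an $A_k$-module will follow from Noetherianity once I show both are finitely generated as $\c[V]^W$-modules: indeed $\c[V]$ is a finite $\c[V]^W$-module (Chevalley), hence Noetherian over $\c[V]^W$, and $Q_k,A_k$ are $\c[V]^W$-submodules of $\c[V]$, so they are finite over $\c[V]^W$; a finite $A_k$-algebra generating set can then be extracted, and $A_k$ is a finitely generated $\c[V]^W$-algebra, hence a finitely generated $\c$-algebra. The grading is clear since the congruences \eqref{qc} are homogeneous. For the rank statement: $Q_k$ contains $\c[V]^W$, and I claim it also contains a nonzero ideal of $\c[V]$ — namely, a high power $\delta^N$ of the product \eqref{delta} lies in $A_k$ (raising $\delta$ to the $n_H\max k_{H,i}$ power makes every local expansion vanish to sufficiently high order in every sector), so $\delta^N\c[V]\subseteq A_k$ and likewise $\delta^N\c[V]\subseteq Q_k$. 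Hence $Q_k$ and $A_k$ both have the same field of fractions as $\c[V]$, and in particular $Q_k$ has rank $1$ over $A_k$.

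Finally, $\mathsf{(iii)}$. We have just noted $A_k\supseteq\delta^N\c[V]$, so $\mathrm{Frac}(A_k)=\mathrm{Frac}(\c[V])=\c(V)$. Since $\c[V]$ is a finitely generated normal (polynomial) ring containing $A_k$ with the same fraction field, $\c[V]$ is contained in the integral closure $\widetilde{A_k}$ of $A_k$ in $\c(V)$; conversely $\widetilde{A_k}\subseteq\widetilde{\c[V]}=\c[V]$ because $\c[V]$ is integrally closed and integral over $A_k$ (it is a finite $\c[V]^W$-module, hence a finite $A_k$-module as $\c[V]^W\subseteq A_k$). Therefore $\widetilde{A_k}=\c[V]$. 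The main obstacle, to my mind, is $\mathsf{(i)}$: the localisation-at-a-hyperplane argument that identifies $A_k$ with a quasi-invariant ring $Q_{k'}$ needs the congruences \eqref{qc} to be genuinely ``decoupled'' across hyperplanes and needs the explicit one-variable idealiser computation from Example~\ref{1dim} to be carried out carefully (including checking $k'_{H,0}=0$ and that the minimum/infimum defining $k'$ is attained); everything in $\mathsf{(ii)}$ and $\mathsf{(iii)}$ is then formal given $\mathsf{(i)}$ and Chevalley's finiteness theorem.
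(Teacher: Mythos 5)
Your proposal is correct and follows essentially the same route as the paper: part (i) is the paper's hyperplane-by-hyperplane normal-expansion analysis, where the paper works with the admissible exponent set $S=\bigcup_i\{i+n_Hk_{H,i}+n_H\Z_{+}\}$ and its idealiser $R=\{r:r+S\subseteq S\}$, noting only that $R$ is $n_H$-periodic and contains all large integers (so it has the same quasi-invariant form) instead of writing an explicit formula for $k'$ — your guessed $\min_j(k_{H,i+j}-k_{H,j})$ should really be a maximum with a wrap-around correction, but the structural observation suffices and your hedge is harmless. Parts (ii) and (iii) coincide with the paper's argument: Hilbert--Noether/Chevalley finiteness over $\c[V]^W\subseteq A_k\subseteq\c[V]$, plus the fact that both $A_k$ and $Q_k$ contain the ideal $\delta^N\c[V]$, which gives fraction field $\c(V)$, rank one, and integral closure $\c[V]$.
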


\begin{proof}
For a polynomial $ f \in \c[V] $, we define its normal expansion
along a hyperplane $ H \in \A $ by
$$
f(x + t v_H) = \sum_{s \ge 0} c_{H,\,s}(x)\,t^s \ ,\quad x \in H\ .
$$
It is then easy to see that $ f $ satisfies \eqref{qc} if and only
if $\, c_{H,\,s}(x) = 0 \,$ for all $\, s \in\Z_+\!\setminus S
\,$, where
$$
S = \bigcup_{i=0}^{n_H-1}\{i+n_H k_{H,i}+n_H\Z_{+}\}\ .
$$
Now, letting $\,R := \{r \in \Z \ :\ r + S \subseteq S\}\,$, we
observe that $\, p \in A_k \,$ if and only if, for each $ H \in \A
$, the normal expansion of $ p $ along $ H $ contains no terms $ t^r
$ with $\, r \not\in R \,$. To prove $ \mathsf{(i)} $ it suffices to
note that $ R $ can be written in the same form as $\, S \,$, maybe
with different $ k$'s. Indeed, $\,S\subset \Z\,$ can be
characterized by the property that it is invariant under translation
by $n_H$ and contains all integers $\,s\gg 0$. Clearly, $R$ has
the same property and, therefore, a similar description.

To prove $ \mathsf{(ii)} $ and $ \mathsf{(iii)} $, we can argue
as in \cite{BEG}, Lemma~6.3. Since $\,\c[V]^W \subseteq A_k \subseteq
\c[V]\,$, the Hilbert-Noether Lemma implies that $ A_k $ is a
finitely generated algebra, and $\c[V] $ is a finite module over $
A_k $. Being a submodule of $\c[V] $, $ Q_k $ is then also
finite over $ A_k $. Now, both $ A_k $ and $ Q_k $ contain the
ideal of $ \c[V] $ generated by a power of $
\delta \in \c[V] $. Hence, $ A_k $ and $ \c[V] $ have the same
field of fractions, namely $\,\c(V)\,$, and the integral closure
of $ A_k $ in $ \c(V) $ is $ \c[V] $. This also implies that
$\,\dim_{\c(V)}[Q_k \otimes_{A_k} \c(V)] = 1 \,$, and thus $ Q_k $
is a rank $1$ module over $ A_k$.
\end{proof}

It is convenient to state some properties of
quasi-invariants in geometric terms. To this end, we write $ \,
X_k = \Spec(A_k) $ and let $\O_x = \O_x(X_k)$ denote the local ring
of $ X_k $ at a point $ x \in X_k $. This local ring can be
identified with a subring of $ \c(V) $ by localizing the algebra
embedding $ A_k \into \c[V] $. To the module $Q_k$ we can then
associate a torsion-free coherent sheaf on $X_k$, with fibres $
(Q_k)_x = Q_k \otimes_{\O} \O_x $. Our definition of
quasi-invariants generalizes to this local setting if we require
\eqref{qc} to hold for the stabilizer $ W_x $ of $ x $ under the
natural action of $W$ on $ X_k $. This makes sense, since by a
theorem of Steinberg \cite{St}, $ W_x $ is also generated by
complex reflections.
\begin{lemma}[cf. \cite{BEG}, Lemma~7.3] \la{inj}
Let $\, \a^n := \Spec\ \c[V] $.

$\mathsf{(i)}\ $ $ X_k $ is an irreducible affine variety, with
normalization $ \tilde{X}_k = \a^n $.

$\mathsf{(ii)}\ $ The normalization map $\,\pi_k :\ \a^n \to X_k \,$
is bijective.

$\mathsf{(iii)}\ $ If we identify the (closed) points of $ X_k $
and $ \a^n $ via $ \pi_k $, then for each $ x \in \a^n $, $\, (Q_k)_x $ is
the space of $k$-quasi-invariants in $\c(V)$ with respect to the
subgroup $W_x\subseteq W$.
\end{lemma}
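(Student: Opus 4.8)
The plan is to deduce all three parts from Lemma~\ref{alg} together with the standard dictionary between a finitely generated graded algebra $A_k \subseteq \c[V]$ with fraction field $\c(V)$ and integral closure $\c[V]$, and the affine scheme $X_k = \Spec(A_k)$ with its normalization. For $\mathsf{(i)}$, irreducibility of $X_k$ is immediate since $A_k$ is a subring of the domain $\c[V]$, hence itself a domain. By Lemma~\ref{alg}$\mathsf{(iii)}$, $\c[V]$ is the integral closure of $A_k$ in its fraction field $\c(V)$, so the inclusion $A_k \hookrightarrow \c[V]$ is precisely the normalization of $X_k$; this identifies $\tilde X_k$ with $\Spec\,\c[V] = \a^n$.

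For $\mathsf{(ii)}$, the key point is that the normalization map $\pi_k : \a^n \to X_k$ is a bijection on closed points. Surjectivity is automatic for a finite (hence integral) morphism. For injectivity I would use the grading: $A_k$ and $\c[V]$ are graded with the same Hilbert function in high degrees (indeed, by Lemma~\ref{alg} both contain a power of the ideal $\langle\delta\rangle$, so they agree away from the origin), which shows that $\pi_k$ is an isomorphism over $\a^n \setminus \{0\}$; it therefore suffices to check that the origin of $\a^n$ is the only point of the fiber $\pi_k^{-1}(\bar 0)$ over the unique graded maximal ideal $\bar 0 \in X_k$. This follows because any homogeneous maximal ideal of $\c[V]$ lying over the irrelevant ideal of $A_k$ must be the irrelevant ideal of $\c[V]$. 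One can organize this uniformly: the $W$-invariant subalgebra sits between, $\c[V]^W \subseteq A_k \subseteq \c[V]$, and $\Spec\,\c[V] \to \Spec\,\c[V]^W$ already has the property that points of $\a^n/W$ have unique preimages on the level of the quotient — but what we actually need is the reverse bijectivity of $\pi_k$, which comes from the ``same powers of $\delta$'' observation plus the graded local computation at the origin. I would also remark that this mirrors \cite{BEG}, Lemma~7.3, where the one-dimensional model (Example~\ref{1dim}, the cuspidal curve) already exhibits the bijective-but-non-iso phenomenon.

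For $\mathsf{(iii)}$, having identified closed points of $X_k$ with points $x \in \a^n$ via $\pi_k$, the local ring $\O_x(X_k)$ is the localization of $A_k \subseteq \c[V]$ at the maximal ideal corresponding to $x$, viewed inside $\c(V)$. I would argue that a rational function $f \in \c(V)$, regular at $x$, lies in $(Q_k)_x = Q_k \otimes_{A_k} \O_x$ if and only if it satisfies the quasi-invariance congruences \eqref{qc} only for those hyperplanes $H \in \A$ passing through $x$ — equivalently, for the reflections in the stabilizer $W_x$, which by Steinberg's theorem \cite{St} is itself generated by complex reflections with reflection hyperplanes exactly the $H \ni x$. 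The ``only if'' direction is clear since $Q_k$ consists of honest quasi-invariants and localization preserves the congruences at $H$ when $x \in H$; the ``if'' direction requires that a congruence at a hyperplane $H$ with $x \notin H$ imposes no condition after localizing at $x$, because $\alpha_H$ becomes a unit in $\O_x$ — so one can always clear denominators and multiply by a suitable power of $\prod_{H \not\ni x}\alpha_H \in A_k$ to land back in $Q_k$ itself.

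The main obstacle, I expect, is the injectivity in $\mathsf{(ii)}$: one must rule out that two distinct homogeneous (or, after translating, non-homogeneous) points of $\a^n$ collapse under $\pi_k$, which is not a formal consequence of finiteness and requires genuinely using the structure of $A_k$ — namely that $A_k$ and $\c[V]$ share all sufficiently high-degree graded pieces (equivalently, both contain $\langle\delta\rangle^N$ for $N \gg 0$). Everything else is a routine unwinding of the localization definitions together with Lemma~\ref{alg} and Steinberg's theorem.
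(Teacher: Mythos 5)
Your parts $\mathsf{(i)}$ and $\mathsf{(iii)}$ are essentially sound, though in $\mathsf{(iii)}$ you should actually verify the two facts you use implicitly: that a sufficiently high power of $\prod_{H\not\ni x}\alpha_H$ lies in $A_k=Q_{k'}$ (the product itself need not), and that multiplying by it preserves the congruences at the hyperplanes containing $x$ — the latter because this product is $W_x$-invariant, which follows from Steinberg's theorem together with the fact that $\prod_{H\ni x}\alpha_H$ is a relative invariant of $W_x$ with character $\det^{-1}$.

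The genuine gap is in $\mathsf{(ii)}$. The inclusion $\delta^N\c[V]\subseteq A_k$ only says that $A_k$ and $\c[V]$ agree after inverting $\delta$, i.e. that $\pi_k$ is an isomorphism over $\vreg=V\setminus\bigcup_{H\in\A}H$, \emph{not} over $\a^n\setminus\{0\}$; and the claim that $A_k$ and $\c[V]$ have the same Hilbert function in high degrees is false ($\delta^N\c[V]$ is a proper subspace of $\c[V]$ in every large degree; compare the Poincar\'e series of Section~\ref{Hser}). In fact, for $\dim V\ge 2$ and $k\ne 0$, $X_k$ is non-normal along the whole image of the arrangement — this is exactly what $\mathsf{(iii)}$ says at a point $x$ lying on a hyperplane — so your reduction to ``the origin over the irrelevant ideal'' misses precisely the delicate case: two distinct points of one $W$-orbit lying on reflection hyperplanes but away from the origin (e.g. $W=\Z/2\times\Z/2$ acting on $\c^2$ by sign changes, and the points $(0,b)$, $(0,-b)$ with $b\ne 0$). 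Since $\c[V]^W\subseteq A_k$ separates $W$-orbits, what injectivity really requires is: for $y=wx\ne x$, an element of $A_k$ taking different values at $x$ and $y$. One way to produce it: choose a $W_x$-invariant polynomial $h$ with $h(x)\ne 0=h(y)$ (average over $W_x$ a polynomial vanishing on $Wx\setminus\{x\}$ and equal to $1$ at $x$), and set $F=h\cdot\bigl(\prod_{H\not\ni x}\alpha_H\bigr)^M$ with $M\gg 0$; by the $W_x$-invariance of the product one gets $\e_{H',-i}(F)=0$ for all $H'\ni x$ and $i\not\equiv 0$, while for $H'\not\ni x$ the congruences hold by divisibility by $\alpha_{H'}^M$, so $F\in Q_{k'}=A_k$ and $F(x)\ne 0=F(y)$. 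Some construction of this kind (it is what the reference to \cite{BEG}, Lemma~7.3 hides behind ``mutatis mutandis'', the paper giving no further details) is indispensable, and it is absent from your argument.
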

\begin{proof} The proof given in \cite{BEG} in the case of Coxeter groups (see \cite{BEG},
Lemma~7.3) works, {\it mutatis mutandis},
for all complex reflection groups. We leave this  as a (trivial) exercise to the reader.
\end{proof}

\section{Quasi-invariants and Cherednik Algebras}
\la{sec3}
\subsection{The rational Cherednik algebra}
\la{cha}
We begin by reviewing the definition of Cherednik
algebras associated to a complex reflection group.
For more details and proofs, we refer the reader to \cite{DO} and \cite{GGOR}. In this section,
unless stated otherwise, the multiplicities $\, k_{C,i}\,$ are
assumed to be arbitrary complex numbers.

We set $\, \vreg := V \setminus \bigcup_{H \in \A} H\,$ and let $ \c[\vreg] $ and $ \D(\vreg) $
denote the rings of regular functions and regular differential operators on $ \vreg $, respectively. The
action of $ W $ on $ V $ restricts to $ \vreg $, so $ W $
acts naturally on $ \c[\vreg] $ and $ \D(\vreg) $ by algebra automorphisms. We form the crossed products
$ \c[\vreg]*W $ and $ \D(\vreg)*W $ and denote $ \D W :=  \D(\vreg)*W$. As an algebra,
$\D W$ is generated by its two subalgebras $ \c W $ and $ \D[\vreg] $, and hence, by the elements
of $ W $, $\, \c[\vreg] \,$ and the derivations $\,
\partial_\xi \,$, $\, \xi \in V $.

Following \cite{DO}, we now define the {\it Dunkl operators}
$\,T_{\xi} \in \D W \,$ by
\begin{equation}
\label{du} T_\xi :=
\partial_\xi-\sum_{H\in \A}
\frac{\alpha_H(\xi)}{\alpha_H}\sum_{i=0}^{n_H-1}n_H k_{H,i}
\e_{H,i}\ , \quad \xi \in V\ .
\end{equation}
Note that the operators \eqref{du} depend on $\, k = \{k_{H,i}\} \,$, and
we sometimes write $ T_{\xi, k} $ to emphasize this dependence.
The basic properties of Dunkl operators are gathered in the following lemma.
\begin{lemma}[see \cite{D, DO}]\la{duprop}
For all $\,\xi, \eta \in V\,$ and $ w \in W $, we have

$\mathsf{(i)}$\ {\rm commutativity:}\ $\, T_{\xi,k}\,T_{\eta, k} -
T_{\eta,k}\,T_{\xi, k} = 0 \,$,

$\mathsf{(ii)}$\ {\rm $W$-equivariance:}\ $\,w\,T_\xi =
T_{w(\xi)}\,w\,$,

$\mathsf{(iii)}$\ {\rm homogeneity:}\ $ \,T_\xi \,$ is a
homogeneous operator  of degree $ -1 $
 with respect to the natural (differential) grading on $ \D W $.
\end{lemma}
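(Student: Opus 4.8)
The statement to prove is Lemma~\ref{duprop}, collecting the three basic properties of Dunkl operators: commutativity, $W$-equivariance, and homogeneity.

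\textbf{Plan of proof.} The three assertions are of very different difficulty, so I would dispatch them in order of increasing effort. Property $\mathsf{(iii)}$ (homogeneity) is essentially immediate from the defining formula \eqref{du}: the differential grading on $\D W$ assigns degree $-1$ to each $\partial_\xi$, degree $+1$ to multiplication by a linear form, and degree $0$ to the elements of $W$; since $\alpha_H(\xi) \in \c$ is a scalar, each summand $\frac{\alpha_H(\xi)}{\alpha_H}\,\e_{H,i}$ is (multiplication by an inverse linear form composed with a group element, hence) homogeneous of degree $-1$, matching $\partial_\xi$. So $T_\xi$ is homogeneous of degree $-1$. I would spend one sentence on this.

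\textbf{$W$-equivariance.} For $\mathsf{(ii)}$ I would compute $w\,T_\xi\,w^{-1}$ directly and check it equals $T_{w(\xi)}$. The term $\partial_\xi$ transforms to $\partial_{w(\xi)}$ under conjugation by $w$ (since $W$ acts linearly on $V$). For the correction term, I would use two facts: first, conjugation permutes the hyperplanes, $w\,H = $ some $H' \in \A$, with $\alpha_{H'}$ proportional to $w(\alpha_H)$ (here $W$ acts on $V^*$ contragradiently); second, $w\,\e_{H,i}\,w^{-1} = \e_{wH,\,i}$, which follows from the definition \eqref{idem} of the idempotents together with the fact that $w\,W_H\,w^{-1} = W_{wH}$ and $\det$ is a class function restricting compatibly. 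Finally, $w$ acting on the rational function $\alpha_H(\xi)/\alpha_H$ sends it to $\alpha_H(\xi)/\alpha_{wH}$ up to the same proportionality constant, and one checks $\alpha_H(\xi) = \alpha_{wH}(w\xi)$ after normalizing, so the whole sum is permuted and reindexed into the sum defining $T_{w(\xi)}$. Since the multiplicities $k_{H,i}$ are $W$-invariant ($k_{H,i} = k_{wH,i}$), nothing obstructs the reindexing. This is a bookkeeping argument; the only subtlety is keeping the normalization of $\alpha_H$ versus $v_H$ straight, but one can work up to scalars since the scalars cancel between numerator and denominator.

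\textbf{Commutativity.} This is the substantive part, $\mathsf{(i)}$, and the expected main obstacle. The clean approach is the standard one due to Dunkl: one writes $T_\xi = \partial_\xi - a_\xi$ where $a_\xi = \sum_H \frac{\alpha_H(\xi)}{\alpha_H}\sum_i n_H k_{H,i}\,\e_{H,i}$, and expands
\[
[T_\xi, T_\eta] = [\partial_\xi, \partial_\eta] - [\partial_\xi, a_\eta] + [\partial_\eta, a_\xi] + [a_\xi, a_\eta].
\]
The first bracket vanishes. The terms $[\partial_\xi, a_\eta] - [\partial_\eta, a_\xi]$ produce, for each $H$, an expression involving $\partial_\xi(\alpha_H(\eta)/\alpha_H) - \partial_\eta(\alpha_H(\xi)/\alpha_H)$ times the idempotent part; since $\partial_\xi(1/\alpha_H) = -\alpha_H(\xi)/\alpha_H^2$, this combination is symmetric in $\xi,\eta$ and hence cancels. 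The genuinely delicate term is $[a_\xi, a_\eta]$, which splits into a sum over ordered pairs $(H, H')$ of hyperplanes; the diagonal terms $H = H'$ vanish because all the $\e_{H,i}$ for fixed $H$ commute (they lie in the commutative algebra $\c W_H$) and the scalar rational functions commute. The off-diagonal terms $H \neq H'$ must cancel in pairs, and this is exactly where the classical Dunkl computation lives: one groups the pairs $(H,H')$ and $(H',H)$, uses $W$-equivariance of the $\e$'s to move group elements past rational functions, and invokes a geometric identity among the forms $\alpha_H, \alpha_{H'}$ and the reflections in $W_H, W_{H'}$ restricted to the (codimension-two) intersection pattern. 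Rather than reproduce this, I would cite \cite{D} and \cite{DO}, where it is carried out in full for complex reflection groups; the point of the lemma here is to record the result, and the excerpt already signals this by attributing the lemma to those references. I would state explicitly that $\mathsf{(i)}$ is \cite[Theorem~3.2]{DO} (or the relevant numbered statement), that $\mathsf{(ii)}$ follows from the $W$-invariance of $k$ by the reindexing above, and that $\mathsf{(iii)}$ is clear from \eqref{du}.
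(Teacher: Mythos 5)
Your treatment is essentially identical to the paper's: properties $\mathsf{(ii)}$ and $\mathsf{(iii)}$ are observed to follow directly from the defining formula \eqref{du}, while the commutativity $\mathsf{(i)}$ is not reproved but attributed to \cite{D} (Coxeter case) and \cite{DO} in general (the relevant statement there is Theorem~2.12, not 3.2, but your hedge covers this). The paper does nothing beyond this, so your proposal is correct and takes the same route.
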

Properties $\mathsf{(ii)}$ and $\mathsf{(iii)}$ of Lemma~\ref{duprop}
follow easily from the definition of Dunkl operators. On
the other hand, the commutativity $\mathsf{(i)}$ is far from being
obvious: it was first proved in \cite{D} in the Coxeter case, and
then in \cite{DO} in full generality ({\it loc. cit.}, Theorem~2.12).

In view of Lemma~\ref{duprop}, the assignment $\,\xi \mapsto T_\xi\,$
extends to an injective algebra homomorphism
\begin{equation}
\la{hom}
\c[V^*] \hookrightarrow \D W \ ,\quad p \mapsto T_p \ .
\end{equation}
Identifying $ \c[V^*] $ with its image in $ \D W $ under
\eqref{hom}, we now define the {\it rational Cherednik algebra}
$H_k=H_k(W)$ as the subalgebra of $\D W$ generated by $ \c[V] $,
$\,\c[V^*] $ and $ \c W$.

The Cherednik algebras can be also defined directly, in terms of
generators and relations, see \cite{EG, BEG, GGOR}. To be precise,
$H_k$  is generated by the elements $ x \in V^*$, $\xi \in V $ and $
w \in W$ subject to the following relations
\begin{eqnarray*}\la{chrel}
&&[x,\,x'] =0\ ,\quad [\xi,\,\xi']=0 \ ,\quad
w\,x\,w^{-1} =w(x)\ ,\quad w\,\xi\,w^{-1}=w(\xi)\ ,\nonumber \\*[1ex]
&& [\xi,\,x] = \langle \xi,\,x \rangle + \sum_{H\in \A}\frac{\langle \alpha_H, \xi\rangle\,
\langle x, v_H\rangle}{\langle \alpha_H, v_H\rangle}\sum_{i=0}^{n_H-1}n_H(k_{H,i}-k_{H,i+1})\,\e_{H,i}\ .
\end{eqnarray*}
The family $ \{H_k\} $ can be viewed as a deformation (in fact, the
universal deformation) of the crossed product $\, H_0 = \D(V)*W \,$
(see \cite{EG}, Theorem~2.16). The embedding of $\, H_k \into \D W
\,$ is given by $\,w \mapsto w\,$, $\,x \mapsto x\,$ and $\,\xi
\mapsto T_{\xi}\,$ and referred to as the {\it Dunkl representation}
of $ H_k $. The existence of such a representation implies the
Poincar\'{e}--Birkhoff--Witt (PBW) property for $ H_k $, which says
that the multiplication map
\begin{equation}\la{pbw}
\c [V] \otimes \c W \otimes \c[V^*]
\stackrel{\sim}{\to} H_k\
\end{equation}
is an isomorphism of vector spaces (see \cite{EG})\footnote{The PBW property is
proven in \cite{EG} for a more general class of symplectic reflection algebras.}.

The algebra $ \D W = \D(\vreg) * W $ carries two natural
filtrations: one is defined by taking $\,\deg(x) = \deg(\xi) =1 $,
$\,\deg(w)=0$, and the other is defined by $\,\deg(x) =0 $,
$\,\deg(\xi) = 1 $, $\,\deg(w) = 0$ for all $x\in V^*$, $\xi\in V$
and $w\in W$. We refer to the first filtration as {\it standard} and
to the second as {\it differential}. Through the Dunkl
representation, these two filtrations induce filtrations on $ H_k $
for all $ k $. It is easy to see that the associated graded rings
$\,\grd\, H_k\,$ are isomorphic to $\,\c[V\times V^*]*W$ in both
cases; in particular, they are independent of $ k $.

Note that $ \{1,\,\delta,\, \delta^2,\,\ldots\} $, with $ \delta
$ defined in \eqref{delta}, is a localizing (Ore) subset in $ H_k $:
we write $\,\Hreg := H_k[\delta^{-1}] \,$ for the corresponding
localization. Since $ \delta $ is a unit in $ \D W $,
the Dunkl embedding $\, H_k \into \D W \,$ induces a canonical
map $\,\Hreg \to \D W \,$.
\begin{prop}[see \cite{EG}, Prop.~4.5; \cite{GGOR}, Theorem~5.6]
\la{Loc} The map  $\,\Hreg \to \D W \,$  is an isomorphism of algebras.
\end{prop}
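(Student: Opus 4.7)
The plan is to verify injectivity and surjectivity separately, both of which reduce to formal manipulation once one has the Dunkl embedding.

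For injectivity, I would invoke the universal property of Ore localization. The Dunkl representation $H_k \hookrightarrow \D W$ is already known to be injective (from the PBW isomorphism \eqref{pbw}, since $\grd H_k \cong \c[V \times V^*] * W$ has no $\delta$-torsion), and $\delta$ is mapped to an element of $\c[\vreg]^\times \subset \D W$. In particular, $\delta$ is a non-zero-divisor in $H_k$, and by universality the embedding extends uniquely through $\Hreg = H_k[\delta^{-1}]$; an element $a\delta^{-n} \in \Hreg$ mapping to zero forces $a = 0$ in $H_k$, hence in $\Hreg$.

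For surjectivity, I would show that the three natural sets of generators of $\D W = \D(\vreg)*W$ all lie in the image. The subalgebras $\c W$ and $\c[V]$ lie in $H_k$ tautologically. Inverting $\delta$, the image contains $\c[V][\delta^{-1}] = \c[\vreg]$, since every $\alpha_H$ divides $\delta$. Finally, solving the Dunkl formula \eqref{du} for $\partial_\xi$ yields
\[
\partial_\xi \,=\, T_\xi \,+\, \sum_{H\in\A}\frac{\alpha_H(\xi)}{\alpha_H}\sum_{i=0}^{n_H-1} n_H k_{H,i}\,\e_{H,i}\ ,
\]
in which every summand lives in the image (the fractions in $\c[\vreg]$, the idempotents in $\c W$, and $T_\xi$ in $H_k$ itself). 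Since $\c[\vreg]$, the constant vector fields $\{\partial_\xi : \xi \in V\}$, and $W$ together generate $\D(\vreg)*W$ as an algebra (the first two because $\vreg$ is a smooth open subvariety of the affine space $V$, whose ring of differential operators is generated by functions and the translation-invariant derivations), the map is surjective.

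The proof is short and essentially mechanical; there is no serious obstacle. The only point that deserves care is the first one, namely confirming that $\{\delta^n\}$ really is an Ore set in $H_k$ so that $\Hreg$ makes sense and that the Dunkl map sends $\delta$ to a unit — both of which follow from the PBW theorem and the structure of $\D W$ already recorded above. Once these are in hand, surjectivity is immediate from inverting the Dunkl identity, and this is really the content of the proposition.
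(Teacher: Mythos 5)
The paper does not actually prove Proposition~\ref{Loc}; it simply cites \cite{EG}, Proposition~4.5 and \cite{GGOR}, Theorem~5.6, after stating (as a preliminary fact) that $\{\delta^n\}$ is an Ore set in $H_k$. Your argument is the standard one that those references give: injectivity via the universal property of Ore localization together with the injectivity of the Dunkl embedding, and surjectivity by inverting the Dunkl formula for $\partial_\xi$ and checking that $\c[\vreg]$, $\{\partial_\xi\}$, and $W$ generate $\D(\vreg)*W$. All of this is correct.

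The one place where you wave your hands a bit more than the content warrants is the Ore condition. You write that this ``follows from the PBW theorem and the structure of $\D W$,'' but it does not follow in the most naive way: $\delta$ is not a normal element of $H_k$, and $\mathrm{ad}(\delta)$ is not locally nilpotent on $H_k$ — indeed $\mathrm{ad}(\delta)(w) = (\det(w)-1)\,w\,\delta$, and iterating gives $\mathrm{ad}(\delta)^n(w) = (\det(w)-1)^n\,w\,\delta^n$, which never vanishes for $w$ with $\det(w)\ne 1$. The usual fix is to replace $\delta$ by its $W$-invariant power $\delta^{|W|}\in\c[V]^W$, for which $\mathrm{ad}(\delta^{|W|})$ \emph{is} locally nilpotent (it strictly decreases the differential filtration degree on the PBW generators, once $\delta^{|W|}$ commutes with $\c W$); this gives the Ore property for $\{\delta^{|W|n}\}$, and the two localizations coincide. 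Since the paper also simply asserts the Ore property without proof, this is not a gap in your argument relative to the paper, but your parenthetical justification of that step is a little too quick to stand on its own.
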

Despite its modest appearance, Proposition~\ref{Loc} plays an important
r\^ole in representation theory of Cherednik algebras. In particular,
it justifies our notation $ \Hreg $ for the localization of
$ H_k $ (as $ \Hreg $ is indeed independent of $k$).

Next, we introduce the {\it spherical subalgebra} $ U_k $ of $ H_k
$: by definition, $\, U_k := \e\, H_k \,\e \,$, where $\, \e :=
|W|^{-1} \sum_{w \in W } w \,$ is the symmetrizing idempotent in $
\c W \subset H_k $.
For $ k = 0 $, we have $\, U_0 = \e [\D(V)
* W] \e \cong \D(V)^W \,$; thus, the family $ \{U_k\}$ is a
deformation (in fact, the universal deformation) of the ring of
invariant differential operators on $ V $. The standard and differential
filtrations on $H_k$ induce filtrations on $U_k$, and we have
$\grd\, U_k \cong \c[V\times V^*]^W$ in both cases.

The relation between $ H_k $ and $ U_k $ depends drastically
on multiplicity values. In the present paper, we will be mostly concerned
with integral $k$'s, in which case we have the following result.
\begin{theorem}
\la{morita}
If $ k $ is integral, i.~e. $\, k_{C, i} \in \Z \,$ for
all $ C \in \A /W $, then  $ H_k $ and $ U_k $  are simple
algebras, Morita equivalent to each other.
\end{theorem}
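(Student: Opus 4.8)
The plan is to prove simplicity and Morita equivalence simultaneously, following the strategy that works for $k=0$ and that appears in the Coxeter case in \cite{BEG} and \cite{GGOR}. The key structural fact is that Morita equivalence between $H_k$ and $U_k=\e H_k\e$ is equivalent to the statement $H_k\e H_k=H_k$, i.e. that the two-sided ideal generated by the symmetrizing idempotent $\e$ is the whole algebra; and once one knows this, simplicity of $H_k$ is inherited from simplicity of $U_k$ (a unital ring is simple iff a full idempotent corner is simple), so the core of the argument is a single statement about $\e$. First I would record the easy direction: $U_k$ carries the standard filtration with $\grd U_k\cong\c[V\times V^*]^W=\c[\h\oplus\h^*]^W$, which is an integrally closed finitely generated domain, and the Poisson structure on it coming from the symplectic form on $V\times V^*$ has the property that its only proper Poisson ideal is zero (the symplectic leaves of the $W$-quotient are the image of $V\times V^*$, smooth part, whose closure is everything). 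A standard filtered-degeneration argument then shows: if $H_k\e H_k=H_k$, then $U_k$ is simple, hence so is $H_k$.

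So the whole theorem reduces to: \emph{for integral $k$, $H_k\e H_k=H_k$.} Here I would use the representation theory of category $\O_k$, exactly as in \cite{GGOR} and \cite{BEG}. The ideal $H_k\e H_k$ is all of $H_k$ iff there is no nonzero $H_k$-module killed by $\e$, equivalently iff every simple object $L_k(\tau)$ of category $\O_k$ satisfies $\e L_k(\tau)\neq 0$. Since the standard module $M_k(\tau)$ is free over $\c[V]$ of rank $\dim\tau$ and $\e M_k(\tau)=M_k(\tau)^W\neq 0$, the issue is only whether the head $L_k(\tau)$ retains a nonzero $W$-invariant part; this is controlled by the $\KZ$ functor $\KZ\colon\O_k\to\H_k(W)\text{-mod}$, whose kernel is exactly the category $\O_k^{\mathrm{tor}}$ of modules supported on the discriminant, and by the fact (available from Section~\ref{o}, which the excerpt promises) that for integral $k$ the functor $\KZ$ is a quotient functor and the finite Hecke algebra $\H_k(W)$ at an integral parameter is isomorphic to $\c W$ — so $\KZ$ sends no nonzero object to $0$ unnecessarily, and $L_k(\tau)$ is never a proper quotient that dies under $\e$. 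Concretely: integrality of $k$ forces the $c$-function values on $\O_k$ to lie in a single residue class, so no two distinct $\tau,\tau'$ can be linked, every $M_k(\tau)$ is already simple (or at least has head with nonvanishing $W$-invariants), hence $\e L_k(\tau)=\e M_k(\tau)\neq 0$ for all $\tau\in\W$. Therefore no nonzero $H_k$-module is annihilated by $\e$, giving $H_k\e H_k=H_k$.

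From $H_k\e H_k=H_k$ one concludes: the functors $M\mapsto \e M$ and $N\mapsto H_k\e\otimes_{U_k}N$ are mutually inverse equivalences between $H_k$-$\Mod$ and $U_k$-$\Mod$ (Morita's theorem, since $\e$ is a full idempotent), and simplicity of $U_k$ via the Poisson-geometry argument above then transfers to $H_k$ (or vice versa — one can also argue directly that any nonzero two-sided ideal $I\subseteq H_k$ has $\e I\e\neq 0$ because $I\cdot H_k\e H_k=I$ forces $\e I\e\neq 0$, and $\e I\e$ is a two-sided ideal of $U_k$). The main obstacle is the non-vanishing statement $\e L_k(\tau)\neq 0$ for all $\tau$ when $k$ is integral: this is precisely the point where one must invoke the structure of category $\O_k$ and the behaviour of the $\KZ$ functor at integral parameters, rather than any explicit computation with generators of $U_k$ — indeed, as the introduction stresses, explicit generator calculations are hopeless here because $\c[V]^W$ has no quadratic element. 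Everything else is routine filtered/graded algebra and an application of Morita's theorem.
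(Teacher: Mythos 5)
Your overall skeleton (reduce everything to the fullness condition $H_k\e H_k=H_k$, and get that condition from representation theory of $\O_k$ at integral $k$ via the KZ functor/Hecke algebra) is in the right spirit, and it is close to what the paper does: the paper reduces the Morita statement to the simplicity of $H_k$, which in turn is deduced from the semisimplicity of $\O_k$ for integral $k$ (Theorem~\ref{ss}). But two of your key steps do not hold up. First, the Poisson-degeneration argument for the simplicity of $U_k$ is not correct: $\c[V\times V^*]^W$ \emph{does} have nonzero proper Poisson ideals (the ideal of the image of the reflection hyperplanes, i.e.\ of the singular locus, is a closed union of symplectic leaves; density of the open leaf only rules out Poisson ideals vanishing on it). More decisively, $\grd U_k\cong\c[V\times V^*]^W$ is the same for \emph{all} $k$, while $U_k$ is not simple for certain non-integral parameters, so no argument that uses only the associated graded algebra can prove simplicity; integrality of $k$ must enter, and in your write-up it never does at this point. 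Consequently the leg ``$U_k$ simple, hence $H_k$ simple'' has no support.

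Second, your ``concrete'' justification that every $M_k(\tau)$ is simple at integral $k$ is backwards. The sufficient criterion (from \cite{DO}) is $c_\sigma(k)-c_\tau(k)\notin\N$ for all $\sigma$; when $k$ is integral all the $c$-values are integers, which is precisely the situation where this criterion \emph{can} fail, so ``all $c$-values in one residue class, hence no linkage'' proves nothing. The semisimplicity of $\O_k$ at integral $k$ is genuinely nontrivial: in the paper it comes from $\H_k\cong\c W$ (semisimple) together with the double-centralizer statement $\H_k\cong(\End_{\O_k}P)^{\mathrm{opp}}$, $\O_k\simeq\mod(\End_{\H_k}Q)^{\mathrm{opp}}$ of Theorem~\ref{dct}, and then simplicity of $H_k$ is extracted by the argument of Theorem~\ref{ss} (localize a proper two-sided ideal to $\Hreg=\D W$, pass to a primitive ideal, and use Ginzburg's theorem that primitive ideals are annihilators of simples in $\O$ together with torsion-freeness of $L(\tau)=M(\tau)$ over $\c[V]$). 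Your appeal to ``KZ sends no nonzero object to $0$ unnecessarily'' gestures at this mechanism but is not an argument; likewise your equivalence ``$H_k\e H_k=H_k$ iff $\e L_k(\tau)\neq0$ for all simples in $\O_k$'' silently needs the same primitive-ideal theorem to pass from an arbitrary module killed by $\e$ to a simple object of $\O_k$. So the proposal identifies the right pressure point but does not actually close it; the missing content is exactly the Hecke-algebra/double-centralizer input at integral $k$ and the primitive-ideal argument, which is where the paper's proof (via Theorem~\ref{ss}) does the work.
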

\begin{proof}
There is a natural functor relating the module categories of $ H_k
$ and $ U_k \,$:
\begin{equation}
\la{Meq}
\Mod(H_k) \to \Mod(U_k) \ , \quad M \mapsto \e M \ ,
\end{equation}
where $\, \e M := \e H_k \otimes_{H_k} M \,$. By standard Morita
theory (see, e.~g, \cite{MR}, Prop.~3.5.6), this functor is an
equivalence if (and only if) $\, H_k \e \, H_k = H_k $. The last
condition holds automatically if $ H_k $ is simple. So one needs
only to prove the simplicity of $ H_k $. In the Coxeter case, this
is the result of \cite{BEG}, Theorem~3.1. In general, the
simplicity of $ H_k $ can be deduced from the semi-simplicity of the
category $\, \O_{H_k} $ for integral $k$'s, which, in turn, follows from
general results of \cite{GGOR}. We discuss this in detail in Section
\ref{o} (see Theorem~\ref{ss} below).
\end{proof}

The restriction of the Dunkl representation
$\,H_k \into \D W \,$ to $ \e H_k \e \subset H_k $ yields an
embedding $\,U_k \into \e \,\D W \e \,$, which is a homomorphism of
unital algebras. If we combine this
with (the inverse of) the isomorphism $\, \D(\vreg)^W
\stackrel{\sim}{\to} \e \,\D W \e \,$, $\, D \mapsto \e D \e = \e
D = D \e \,$, we get an algebra map
\begin{equation}
\la{HC}
\Res:\,U_k \into \D(\vreg)^W \ ,
\end{equation}
representing $ U_k $ by invariant differential operators on $ \vreg
$ (cf. \cite{H}). We will refer to \eqref{HC} as the {\it Dunkl
representation} for the spherical subalgebra $ U_k $.

\subsection{$\c W$-valued quasi-invariants}
\la{nqi}

The algebra $ \D W $ can be viewed as a ring of $W$-equivariant
differential operators on $ \vreg $, and as such it acts naturally
on the space of $ \c W$-valued functions. More precisely, using
the canonical inclusion $\, \c[\vreg] \otimes \c W \into \D W $,
we can identify $\, \c[\vreg] \otimes \c W $ with the cyclic $\D
W$-module $ \D W/J $, where $ J $ is the left ideal of $ \D W $
generated by  $\,
\partial_\xi \in \D W $, $\, \xi \in V \,$. Explicitly, in terms
of generators, $\D W$ acts on $ \c[\vreg] \otimes \c W $ by
\begin{eqnarray}
&& g(f\otimes u) =  gf\otimes u\,,\quad g\in\c[\vreg]\ , \nonumber\\
\la{acts}
&& \partial_\xi(f\otimes u) = \partial_\xi f\otimes u\,,\quad \xi\in V\ ,\\
\nonumber
&& w(f\otimes u)=f^w\otimes wu\,,\quad w\in W\ .
\end{eqnarray}

Now, the restriction of scalars via the Dunkl representation
$\,H_k(W) \into \D W $ makes $ \c[\vreg]\otimes \c W $ an $ H_k(W)
$-module. We will call the corresponding action of $ H_k $ the {\it
differential action}. It turns out that, in the case of integral
$k$'s, the differential action of $ H_k $ is intimately related to
quasi-invariants $Q_k=Q_k(W)$.
\begin{theorem}
\la{Qfat} If $k$ is integral, then  $ \c[\vreg]\otimes \c W  $
contains a unique $H_k$-submodule $ \QQ_k = \QQ_k(W) $, such that
$ \QQ_k $ is finite over $ \c[V] \subset H_k $ and
\begin{equation}
\la{proj} \e \,\QQ_k = \e (\,Q_k\otimes 1) \quad \mbox{in}\quad
\c[\vreg]\otimes \c W  \ .
\end{equation}
\end{theorem}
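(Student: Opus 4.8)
The plan is to construct $\QQ_k$ explicitly inside $\c[\vreg]\otimes\c W$ and then verify the two required properties (it is an $H_k$-submodule, finite over $\c[V]$, and satisfies \eqref{proj}), finally checking uniqueness. The natural candidate is to take $\QQ_k$ to be the $\c[V]$-submodule (equivalently the $H_k$-submodule, once we know it is $H_k$-stable) generated by $Q_k\otimes 1$, or more precisely the smallest $H_k$-submodule of $\c[\vreg]\otimes\c W$ containing $Q_k\otimes 1$. Since $H_k$ is generated by $\c[V]$, $\c W$ and the Dunkl operators $T_\xi$, I must show that applying these generators to $Q_k\otimes 1$ keeps us inside something that is still finite over $\c[V]$. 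Multiplication by $\c[V]$ is harmless; the action of $W$ sends $Q_k\otimes 1$ into $Q_k\otimes\c W$ by \eqref{acts} (using that $Q_k$ is $W$-stable, Lemma~\ref{alg}(i)); so the real content is controlling the Dunkl operators.

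The key computation will be to show that $T_{\xi,k}(f\otimes 1)$, for $f\in Q_k$, lies in $\c[V]\otimes\c W$, i.e. that the apparent poles along each hyperplane $H$ cancel. Writing out \eqref{du}, the singular part of $T_\xi(f\otimes 1)$ along $H$ is
$$
-\,\frac{\alpha_H(\xi)}{\alpha_H}\sum_{i=0}^{n_H-1}n_H k_{H,i}\,(\e_{H,i}f)\otimes 1
\;=\;-\,\frac{\alpha_H(\xi)}{\alpha_H}\sum_{i=0}^{n_H-1}n_H k_{H,i}\,\e_{H,i}(f\otimes 1),
$$
and the quasi-invariance condition \eqref{qc} — rewritten via $\e_{H,i}=\e_{H,-(n_H-i)}$ in the indexing of Definition~\ref{def1} — says exactly that $\e_{H,i}f$ is divisible by $\alpha_H^{n_H k_{H,i}}$, hence by $\alpha_H$ whenever $k_{H,i}\neq 0$; so each term is regular along $H$. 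Thus $T_\xi(f\otimes 1)\in\c[V]\otimes\c W$, and since $T_\xi$ lowers the standard degree by one (Lemma~\ref{duprop}(iii)) and preserves finite generation, iterating shows the $H_k$-submodule generated by $Q_k\otimes 1$ is a finitely generated graded $\c[V]$-module. Here I expect the main obstacle: one needs that repeated Dunkl images still satisfy the divisibility needed to stay pole-free, i.e. one wants the whole submodule $\QQ_k$ to consist of $\c W$-valued quasi-invariants in an appropriate sense; I would handle this by first proving that $\QQ_k$, defined as the $H_k$-submodule generated by $Q_k\otimes 1$, is contained in the (manifestly $\c[V]$-finite, $H_k$-stable) space $\{F\in\c[\vreg]\otimes\c W:\ \e_{H,-i}F\equiv 0\bmod\langle\alpha_H\rangle^{n_Hk_{H,i}}\ \forall H,i\}$ by checking this latter space is $T_\xi$-stable via the same cancellation, and then noting it is $\c[V]$-finite because it contains a power of $\delta$ times $\c[V]\otimes\c W$ and sits inside $\delta^{-N}\c[V]\otimes\c W$.

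It remains to verify \eqref{proj} and uniqueness. For \eqref{proj}: clearly $\e(Q_k\otimes 1)\subseteq\e\QQ_k$; for the reverse inclusion I would use that $\QQ_k$ is generated over $H_k$ by $Q_k\otimes 1$ together with the identification $\e\,\D W\,\e\cong\D(\vreg)^W$ and the Dunkl representation \eqref{HC} of $U_k$: applying $\e H_k$ to $Q_k\otimes 1$ factors through $U_k=\e H_k\e$ acting on $\e(Q_k\otimes 1)$, and one checks $U_k$ preserves $\e(Q_k\otimes 1)$ — essentially because $U_k$ acts on $Q_k$ by the argument underlying the isomorphism $U_k\cong\D(Q_k)^W$ referenced in the introduction, or more directly because $T_p$ for $p\in\c[V^*]^W$ sends $Q_k$ to $Q_k$ by the hyperplane-by-hyperplane divisibility check above. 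For uniqueness: if $\QQ'_k$ is another such submodule, then $\e\QQ'_k=\e(Q_k\otimes 1)=\e\QQ_k$, and since $H_k$ is Morita equivalent to $U_k$ for integral $k$ (Theorem~\ref{morita}), the functor $M\mapsto\e M$ is faithful on the category of $H_k$-modules on which it is exact; applying it to the inclusion $\QQ_k\cap\QQ'_k\hookrightarrow\QQ_k$ (and to $\QQ'_k$) shows $\QQ_k=\QQ_k\cap\QQ'_k=\QQ'_k$. Alternatively, and perhaps more cleanly, any $H_k$-submodule $\QQ'_k$ that is $\c[V]$-finite and satisfies \eqref{proj} must contain $Q_k\otimes 1$ (apply $|W|\e$ and then project, using that $\c[V]$-finiteness forces regularity, i.e. membership in $\c[V]\otimes\c W$, so that $\e F=\e(f\otimes 1)$ with $F$ regular forces $f\in Q_k$ and $F\equiv f\otimes 1$ modulo the kernel of $\e$, which is killed by $H_k$-generation), hence $\QQ'_k\supseteq\QQ_k$; and the opposite inclusion follows symmetrically, giving both existence of a smallest such module and its coincidence with $\QQ_k$.
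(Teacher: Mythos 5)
There is a genuine gap, and it sits exactly at what you call the key computation. Under the diagonal action \eqref{acts}, a group element acts on \emph{both} tensor factors, so
$\e_{H,i}(f\otimes 1)=\frac{1}{n_H}\sum_{w\in W_H}(\det w)^{-i}\,f^w\otimes w$, which is \emph{not} $(\e_{H,i}f)\otimes 1$; your identity equating the two is false, and with it the claimed cancellation of poles. Concretely, in the one-dimensional case of Examples~\ref{1dim} and~\ref{exx} take $f=1\in Q_k$: one computes $\e_i(1\otimes 1)=1\otimes\e_i$, so $T(1\otimes 1)=-x^{-1}\sum_i n k_i\,(1\otimes\e_i)$, which has a pole at the origin whenever some $k_i>0$; iterating $T$ produces $x^{-m}\otimes\e_i$ for all $m\ge 1$ with nonvanishing coefficients, so the $H_k$-submodule generated by $Q_k\otimes 1$ is not finite over $\c[V]$ and your candidate fails outright. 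The fallback does not help either: $1\otimes 1$ violates the membership conditions \eqref{qcc} (with either reading of the idempotent action), so $Q_k\otimes 1$ is \emph{not} contained in the space of $\c W$-valued quasi-invariants, and $\QQ_k$ is not generated by $Q_k\otimes 1$. The true relation between $Q_k$ and $\QQ_k$ holds only after symmetrization, $\e\,\QQ_k=\e\,(Q_k\otimes 1)$, not at the level of $Q_k\otimes 1$ itself.

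This is why the paper proceeds differently: $\QQ_k$ is \emph{defined} by divisibility conditions imposed through the auxiliary action $1\otimes w$ on the second factor only (formula \eqref{qcc}), which makes it manifestly $\c[V]$-finite; Dunkl-stability is then checked hyperplane by hyperplane, reducing to a rank-one computation, and Lemma~\ref{pr} shows that applying $\e$ recovers $\e(Q_k\otimes 1)$ via the identity $(1\otimes\e_{H,i})[\e(f\otimes 1)]=\frac{1}{|W|}\sum_{w}\e_{H,-i}(f^w)\otimes w$. Note also that your argument for the reverse inclusion in \eqref{proj} leans on ``$T_p$ preserves $Q_k$ for $p\in\c[V^*]^W$'' or on $U_k\cong\D(Q_k)^W$; but these are Corollary~\ref{sp} and Proposition~\ref{is}, which the paper \emph{deduces from} Theorem~\ref{Qfat}, and a direct verification is precisely what the paper explains is infeasible for complex $W$ (no low-degree invariants), so this step begs the question. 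Your uniqueness argument via the Morita equivalence of Theorem~\ref{morita} (an $H_k$-module $M$ with $\e M=0$ must vanish) is correct and coincides with the paper's Lemma~\ref{un}, but it cannot repair the existence part.
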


We prove Theorem~\ref{Qfat} in several steps. First, we
construct $ \QQ_k $ as a subspace of $\, \c[\vreg] \otimes \c W $
and verify \eqref{proj}. Then we show that $ \QQ_k $ is stable
under the differential action of $ H_k $, and finally we prove its
uniqueness.

Besides the diagonal action \eqref{acts}, we will use another action
of $W$ on $\, \c[\vreg] \otimes \c W $, which is trivial on the
first factor: i.~e., $\, f
\otimes s \mapsto f \otimes w s\,$, where $ w \in W $ and $\, f
\otimes s \in \c[\vreg] \otimes \c W \,$. We denote this action by
$\, 1 \otimes w \,$.

Now, we define $ \QQ_k $ to be the subspace of $\,\c [\vreg] \otimes
\c W $ spanned by the elements $ \varphi $ satisfying
\begin{equation}
\label{qcc} (1 \otimes \e_{H,i})\,\varphi \equiv 0 \ \mbox{mod}\
\langle\alpha_H\rangle^{n_H k_{H,i}} \otimes \c W \ ,
\end{equation}
for all  $\, H \in \A\,$ and $\,i = 0, 1, \ldots, n_H-1 \,$.
Here, as in Definition~\ref{qc}, $\,\langle \alpha_H \rangle\,$
stands for the ideal of $ \c[V] $ generated by  $ \alpha_H $.

It is immediate from \eqref{qcc} that $ \QQ_k \subseteq \c[V]
\otimes \c W $, and $ \QQ_k $ is closed in $ \c[V] \otimes \c W $
under the natural action of $ \c[V] $. Hence, as $W$ is finite and
$ \c[V] $ is Noetherian, $ \QQ_k $ is a finitely generated $
\c[V]$-module.

\begin{lemma}
\la{pr} $ \QQ_k $ satisfies \eqref{proj}.
\end{lemma}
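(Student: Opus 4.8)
The plan is to prove the identity $\e\,\QQ_k = \e\,(Q_k \otimes 1)$ by understanding how the symmetrizing idempotent $\e$ interacts with the defining conditions \eqref{qcc}. Throughout, I identify $\c W = \bigoplus_{w \in W}\c\,w$ and decompose $\c[\vreg] \otimes \c W = \bigoplus_w \c[\vreg] \otimes w$. The key point is that $\e = |W|^{-1}\sum_{w}w$ acts diagonally on both tensor factors via \eqref{acts}, so for $\varphi = \sum_w f_w \otimes w$ one computes $\e\,\varphi = |W|^{-1}\sum_{v,w}\,f_w^v \otimes vw = |W|^{-1}\sum_{u}\bigl(\sum_{w} f_w^{uw^{-1}}\bigr)\otimes u$. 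In particular $\e(f\otimes 1) = |W|^{-1}\sum_u f^u \otimes u$, so $\e(Q_k \otimes 1)$ consists precisely of the ``$W$-equivariant'' tuples $(f^u)_{u}$ with $f \in Q_k$. So the task splits into two inclusions.

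\emph{The inclusion $\e\,(Q_k\otimes 1) \subseteq \e\,\QQ_k$.} Given $f \in Q_k$, I must exhibit $\varphi \in \QQ_k$ with $\e\varphi = \e(f\otimes 1)$; the natural candidate is $\varphi = f \otimes 1$ itself, so I need $f\otimes 1 \in \QQ_k$, i.e. that \eqref{qcc} holds. Applying $1 \otimes \e_{H,i}$ to $f\otimes 1$ gives $f \otimes \e_{H,i}\cdot 1 = f \otimes \e_{H,i}$; expanding $\e_{H,i} = n_H^{-1}\sum_{w\in W_H}(\det w)^{-i} w$ as an element of $\c W$, I see that $(1\otimes \e_{H,i})(f\otimes 1)$ is divisible by $\langle \alpha_H\rangle^{n_H k_{H,i}}$ as an element of $\c[V]\otimes \c W$. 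But this is \emph{not} immediate from $f \in Q_k$: the condition \eqref{qc} on $f$ involves $\e_{H,-i}(f)$ acting via the reflection representation, whereas here $\e_{H,i}$ multiplies in the \emph{group algebra}. The bridge is a short direct computation: for $w \in W_H$, one has $w \cdot \alpha_H = (\det w)\,\alpha_H$ (since $W_H$ acts on the line $\c\alpha_H \subset V^*$ by $\det$), hence $w$ maps $\langle\alpha_H\rangle^m$ into itself, and more precisely the $\e_{H,j}$-isotypic component of $\c[V]$ for the $W_H$-action is supported in powers of $\alpha_H$ congruent to $j$ mod $n_H$. Rewriting \eqref{qc} in this isotypic language shows $f\otimes 1$ satisfies \eqref{qcc}, so $f \otimes 1 \in \QQ_k$ and therefore $\e(f\otimes 1) \in \e\QQ_k$.

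\emph{The inclusion $\e\,\QQ_k \subseteq \e\,(Q_k\otimes 1)$.} Conversely, take $\varphi = \sum_w f_w \otimes w \in \QQ_k$. From the computation above, $\e\varphi$ is the equivariant tuple built from $g := \sum_w f_w \in \c[\vreg]$ (set $u = 1$ in the formula for $\e\varphi$; the other components are the $W$-translates). So it suffices to show $g \in Q_k$, i.e. that $g$ satisfies \eqref{qc} for each $H$. Here I use that \eqref{qcc} holds for \emph{all} the idempotents $\e_{H,i}$ simultaneously, not just one: summing the conditions $(1\otimes\e_{H,i})\varphi \equiv 0$ appropriately (using $\sum_i \e_{H,i} = 1$ in $\c W_H$ and the $\det$-character bookkeeping) recovers exactly that the relevant isotypic components of $g$ vanish to the required order along $H$, which is \eqref{qc}. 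Finally I check $g \in \c[V]$ rather than merely $\c[\vreg]$ — this follows since $\QQ_k \subseteq \c[V]\otimes\c W$ (noted already in the text after \eqref{qcc}), so each $f_w \in \c[V]$ and hence $g \in \c[V]$.

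\emph{Main obstacle.} The routine-looking parts (the idempotent algebra, the identification of $\e\varphi$ with an equivariant tuple) are genuinely routine. The real content — and where I expect to spend care — is the translation between the two roles of $\e_{H,i}$: as a projector in the reflection action on $\c[V]$ (governing \eqref{qc}) versus as a left multiplier in $\c W$ acting on the second tensor factor (governing \eqref{qcc}). Making this precise requires the observation that, for $\psi = p \otimes 1 \in \c[V]\otimes\c W$, the components of $(1\otimes \e_{H,i})\psi$ are controlled by the decomposition of $p$ into $W_H$-isotypic pieces with respect to powers of $\alpha_H$, together with the compatibility $w(\langle\alpha_H\rangle) = \langle\alpha_H\rangle$ for $w \in W_H$. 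Once this dictionary is set up cleanly, both inclusions are short; I would state the dictionary as a preliminary sub-claim before proving the two inclusions.
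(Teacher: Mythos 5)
There is a genuine gap, and it sits exactly at the point you flag as the ``dictionary.'' The operator $1\otimes\e_{H,i}$ acts by left multiplication on the group-algebra factor only, so on a pure tensor it gives $(1\otimes\e_{H,i})(f\otimes 1)=f\otimes\e_{H,i}$; since $\e_{H,i}\neq 0$ in $\c W$, this lies in $\langle\alpha_H\rangle^{n_Hk_{H,i}}\otimes\c W$ if and only if $f$ itself is divisible by $\alpha_H^{n_Hk_{H,i}}$ --- the $W_H$-isotypic decomposition of $f$ never enters, because $1\otimes\e_{H,i}$ does not touch the polynomial factor. Hence your claim that $f\otimes 1\in\QQ_k$ for every $f\in Q_k$ is false: already $f=1\in Q_k$ gives $(1\otimes\e_{H,i})(1\otimes 1)=1\otimes\e_{H,i}$, which violates \eqref{qcc} whenever some $k_{H,i}>0$ (compare Example~\ref{exx}, where $\QQ_k=\bigoplus_i x^{nk_i}\c[x]\otimes\e_i$ visibly does not contain $1\otimes 1$). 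The correct witness for $\e(f\otimes 1)\in\e\QQ_k$ is $\e(f\otimes 1)$ itself: the diagonal averaging in $\e$ is precisely what converts multiplication by $\e_{H,i}$ in the $\c W$-slot into projections acting on functions, namely $(1\otimes\e_{H,i})[\e(f\otimes 1)]=\frac{1}{|W|}\sum_{w\in W}\e_{H,-i}(f^w)\otimes w$, so that $\e(f\otimes 1)$ satisfies \eqref{qcc} iff $f^w\in Q_k$ for all $w$, iff $f\in Q_k$ (by $W$-stability of $Q_k$). This single computation is essentially the paper's entire proof, and it yields both inclusions at once; your proposal never performs it.

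The second half of your argument has related problems, though they are more repairable. First, the $u=1$ component of $\e\varphi$ is $\frac{1}{|W|}\sum_w f_w^{w^{-1}}$, not $\frac{1}{|W|}\sum_w f_w$, so $g$ must be $\sum_w f_w^{w^{-1}}$. Second, ``summing the conditions $(1\otimes\e_{H,i})\varphi\equiv 0$ over $i$ using $\sum_i\e_{H,i}=1$'' does not recover \eqref{qc} for $g$: those conditions control the sums $\sum_{s\in W_H}(\det s)^{i}f_{su}$ (the $W_H$-element multiplies the index on the left), whereas $\e_{H,-i}(g)=\frac{1}{n_H}\sum_{v\in W}\bigl(\sum_{s\in W_H}(\det s)^{i}f_{vs}\bigr)^{v^{-1}}$ involves right translates of the index and an extra application of $v^{-1}$, which moves $H$ to another hyperplane; so one must invoke \eqref{qcc} at all hyperplanes in the $W$-orbit of $H$ (equivalently, the $W$-stability of $\QQ_k$ together with the $W$-invariance of $k$), not just at $H$. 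Once you observe that $\QQ_k$ is $W$-stable --- so $\e\varphi\in\QQ_k$ and $\e\varphi=\e(g\otimes 1)$ --- the displayed identity above, applied with $g$ in place of $f$, finishes this inclusion; but that is again the paper's route rather than the one you sketch.
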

\begin{proof}
We need to show that $\,\e (f \otimes 1) \in \e\,\QQ_k $ if and
only if $\,f\in Q_k\,$.
First, for any $\,f\in\c[V]\,$ and $\,s\in W\,$, we compute
\begin{equation*}
(1 \otimes s)[\e (f \otimes 1)] =\frac{1}{|W|}\,\sum_{w \in W} \,
f^w \otimes sw
=\frac{1}{|W|}\sum_{w\in W}f^{s^{-1}w} \otimes w\,.
\end{equation*}
Now, multiplying this by appropriate characters and summing up over
all $\,s \in W_H \,$, we get
\begin{equation*}
(1 \otimes \e_{H,i})[\e(f \otimes 1)] = \frac{1}{|W|}\, \sum_{w\in
W}\, \e_{H,-i}(f^{w})\otimes w\ .
\end{equation*}
It follows from \eqref{qcc} that $\, \e (f \otimes 1) \in \e \QQ_k \,$ if
and only if $\,f^w \in Q_k $ for all $w \in W $. The latter is equivalent
to $\, f\in Q_k$, since $ Q_k $ is $W$-stable.
\end{proof}
\begin{lemma}
\la{inv} $ \QQ_k $ is stable under the differential action of $ H_k $.
\end{lemma}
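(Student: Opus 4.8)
The plan is to verify that $\QQ_k$ is stable under the action of each of the three types of generators of $H_k$: the elements $w \in W$, the multiplication operators $x \in V^* \subset \c[V]$, and the Dunkl operators $T_\xi$, $\xi \in V$. Stability under $\c[V]$ has already been observed immediately after \eqref{qcc}, since if $\varphi$ satisfies the congruences \eqref{qcc} then so does $x\varphi$ for any $x \in V^*$ (multiplying by $\alpha_H$ or by another linear form only deepens, or preserves, the vanishing along $\langle \alpha_H\rangle$). So two things remain.

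First I would check $W$-stability. For $w \in W$, the diagonal action \eqref{acts} sends $f \otimes u \mapsto f^w \otimes wu$, and this commutes with the auxiliary action $1 \otimes \e_{H,i}$ only up to relabelling: one computes $(1 \otimes \e_{H,i})(w\varphi) = w\,[(1 \otimes w^{-1}\e_{H,i}w)\varphi]$, and since $w^{-1} W_H w = W_{w^{-1}H}$ with $w^{-1}\e_{H,i}w = \e_{w^{-1}H,\,i}$ (the determinant character is $w$-invariant as a character of $W$ restricted to the conjugate cyclic subgroup), the congruence defining $\QQ_k$ along $H$ for $w\varphi$ becomes the congruence along $w^{-1}H$ for $\varphi$, using that the $k_{H,i}$ are constant on $W$-orbits and that $\alpha_{w^{-1}H}$ is proportional to $w^{-1}(\alpha_H) = \alpha_H \circ w$. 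This is the routine bookkeeping that $\QQ_k$ is genuinely $W$-equivariant.

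The substantive step is stability under the Dunkl operators $T_\xi$. Here I would localize the question along a single hyperplane $H$: fix $H \in \A$ and write the Dunkl operator as $T_\xi = \partial_\xi - \frac{\alpha_H(\xi)}{\alpha_H}\sum_{i} n_H k_{H,i}\,\e_{H,i} + (\text{terms regular along } H)$. Decomposing $\varphi \in \QQ_k$ according to the isotypic components $(1 \otimes \e_{H,j})\varphi =: \varphi_j$, each $\varphi_j$ is divisible by $\alpha_H^{n_H k_{H,j}}$, and I would expand each $\varphi_j$ in its normal Taylor series along $H$ (as in the proof of Lemma~\ref{alg}). The point is that on the $\e_{H,i}$-component the singular part of $T_\xi$ acts as multiplication by $-n_H k_{H,i}\,\alpha_H(\xi)/\alpha_H$; combined with $\partial_\xi$ (which drops the order of vanishing along $H$ by at most one when $\partial_\xi$ has a nonzero $\alpha_H$-direction, i.e.\ $\alpha_H(\xi) \neq 0$), one checks that $(1 \otimes \e_{H,i})(T_\xi \varphi)$ still vanishes to order at least $n_H k_{H,i}$ along $\langle \alpha_H\rangle$ — the would-be loss of one power of $\alpha_H$ from differentiation is exactly compensated by the $1/\alpha_H$ pole in the reflection term being balanced against the integrality and the index shift in the exponents $i + n_H k_{H,i}$. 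The contributions from hyperplanes $H' \neq H$ in the sum \eqref{du} are regular along $H$ and hence harmless for the congruence along $H$. Since $H$ was arbitrary, $T_\xi \varphi \in \QQ_k$.

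The main obstacle I anticipate is precisely the Dunkl-operator step: one must be careful that the $W$-module structure on $\c W$ in the second tensor factor is the \emph{trivial}-on-functions action $1 \otimes w$ used to define $\QQ_k$, whereas the differential action \eqref{acts} of $H_k$ (in particular of the reflection parts of $T_\xi$) uses the \emph{diagonal} action $f \otimes u \mapsto f^w \otimes wu$; reconciling these two actions and tracking which idempotent acts on which tensor slot is where the real content — and the risk of sign/index errors — lies. The cleanest route is to pass to the localization $\c[\vreg] \otimes \c W = \D W/J$ and note that $T_\xi$ preserves the subspace cut out by \eqref{qcc} because \eqref{qcc} is stated componentwise in the $\e_{H,i}$-decomposition, which the Dunkl operator respects hyperplane-by-hyperplane; the integrality of $k$ is used to guarantee that the exponents $n_H k_{H,i}$ are genuine non-negative integers so that $\langle \alpha_H\rangle^{n_H k_{H,i}}$ makes sense and the order-counting argument goes through.
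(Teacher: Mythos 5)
Your proposal is correct and follows essentially the same route as the paper's proof: stability under $\c[V]$ and under $W$ by direct bookkeeping, and for the Dunkl operators a hyperplane-by-hyperplane check in which the terms coming from $H'\neq H$ are harmless (because the diagonal idempotents $\e_{H',i}$ preserve the $W$-stable $\QQ_k$ and $\alpha_{H'}^{-1}$ is regular along $H$), while the local term at $H$ reduces to a rank-one computation with the cancellation at the borderline exponent $m=n_Hk_{H,i}$ — exactly the computation the paper delegates to the one-dimensional Example~\ref{exx}. One small caveat: the singular part does not literally act as the scalar $-n_Hk_{H,i}\,\alpha_H(\xi)/\alpha_H$ on the whole $(1\otimes\e_{H,i})$-component, since the diagonal idempotents select residue classes of the normal exponent along $H$, but the compensation you describe at the threshold term (using integrality of $k$ and the index shift) is precisely the right mechanism.
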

\begin{proof}
As already mentioned above, $ \QQ_k $ is closed under the action of
$\,\c[V] \subset H_k\,$. To see that $ \QQ_k $ is stable under the
diagonal action of $ W $, we observe that
\begin{equation*}
w \, (1 \otimes \e_{H,i}) = w \otimes w\e_{H,i} = (1 \otimes
\e_{wH,i})\, w \,
\end{equation*}
as endomorphisms of $\c [\vreg] \otimes \c W $.
Since \eqref{qcc}
hold for each $ H \in \A $ and $ k_{H,i}$'s depend only on the
orbit of $ H $ in $ \A $, we have $\, w\,\QQ_k \subseteq \QQ_k
$ for all $w\in W$.

Thus, we need only to check that $ \QQ_k $ is preserved by the
Dunkl operators \eqref{du}. For each $\, H\in \A \,$, let $\,
\QQ^H_k  $ denote the subspace of $\,\c[\vreg]\otimes \c W \,$
spanned by all $ \varphi$'s satisfying \eqref{qcc} only for the
given $H$. Clearly $\, \QQ_k = \bigcap_{H\in \A} \QQ^H_k$, so
it suffices to show that
\begin{equation}
\la{tin}
T_\xi(\QQ_k) \subseteq \QQ^H_k \quad \text{for all}\ H\in \A\ .
\end{equation}
Writing $\,T_\xi = T_0 + T_1\,$ with
\begin{eqnarray*}
&& T_0 := \partial_\xi-\frac{\alpha_H(\xi)}{\alpha_H}\sum_{i=0}^{n_H-1} n_H k_{H,i} \e_{H,i}\,,\\
&& T_1 := \sum_{H'\ne H}\frac{\alpha_{H'}(\xi)}{\alpha_{H'}}\sum_{i=0}^{n_{H'}-1}n_{H'} k_{H',i}
\e_{H',i}\ ,
\end{eqnarray*}
we will verify \eqref{tin} separately for $ T_0 $ and $ T_1 $.

Since $\QQ_k $ is $W$-stable, $\, \e_{H',i}(\QQ_k) \subseteq \QQ_k
\subseteq \QQ_k^H$. Next, $\,\alpha_{H'}^{-1} \in \c [\vreg] \,$ is
regular along $H$, therefore, $ \alpha_{H'}^{-1} \,\QQ_k^{H}
\subseteq \QQ_k^{H} $. Combining these two facts together, we get
$\,\alpha_{H'}^{-1} \e_{H',i}(\QQ_k) \subseteq \QQ_k^H \,$, and
hence $\,T_1(\QQ_k) \subseteq \QQ^H_k $.

It remains to show that $\,T_0(\QQ_k) \subseteq \QQ^H_k \,$. In
fact, we have $\QQ_k\subseteq \QQ^H_k$, so it suffices to show that
$\,T_0(\QQ^H_k) \subseteq \QQ^H_k \,$. Note that the definition of
both $\QQ^H_k$ and $T_0$ involve only one hyperplane $H$ and the
group $W_H$, so the statement can be checked in dimension one, in
which case it is straightforward, see Example \ref{exx} below.
\end{proof}
\begin{lemma}
\la{un} If $ k $ is integral, there exists at most one $H_k$-submodule
$\, \QQ_k \subset
\c[\vreg] \otimes \c W \,$, satisfying \eqref{proj}.
\end{lemma}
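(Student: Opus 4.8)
The plan is to deduce uniqueness from the Morita equivalence of Theorem~\ref{morita} together with the fact that $ H_k $ is simple for integral $ k $. Suppose $ \QQ_k $ and $ \QQ_k' $ are two $ H_k $-submodules of $ \c[\vreg]\otimes\c W $, each finite over $ \c[V] $ and each satisfying \eqref{proj}. First I would pass to the spherical side: applying the exact functor $ M\mapsto\e M = \e H_k\otimes_{H_k}M $ and using that $ \e $ is an idempotent, the submodule $ \e\QQ_k\subseteq\e(\c[\vreg]\otimes\c W) $ is identified with $ \e H_k\e $ acting on $ \e(Q_k\otimes 1) $. The point is that \eqref{proj} pins down the image of each of the two modules under this functor: $ \e\QQ_k = \e(Q_k\otimes 1) = \e\QQ_k' $ as $ U_k $-submodules of $ \e(\c[\vreg]\otimes\c W) $.

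Next I would invoke the Morita equivalence itself. Because $ H_k $ is simple, $ H_k\e H_k = H_k $, so the functor $ M\mapsto\e M $ is an equivalence $ \Mod(H_k)\xrightarrow{\sim}\Mod(U_k) $ with quasi-inverse $ N\mapsto H_k\e\otimes_{U_k}N $. In particular it is fully faithful and reflects isomorphisms and, crucially, it reflects inclusions of submodules: two $ H_k $-submodules of a fixed $ H_k $-module $ L $ that have the same image in $ \e L $ must coincide. Concretely, $ \QQ_k = H_k\e\otimes_{U_k}\e\QQ_k $ canonically inside $ L = \c[\vreg]\otimes\c W $ (the counit of the adjunction is an isomorphism here since the functor is an equivalence), and likewise for $ \QQ_k' $; since $ \e\QQ_k = \e\QQ_k' $ we get $ \QQ_k = \QQ_k' $. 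The finiteness over $ \c[V] $ is not strictly needed for this argument but guarantees we are inside a reasonable subcategory; one should check that the relevant modules lie in a category on which $ \e(-) $ behaves as claimed, e.g. that $ \c[\vreg]\otimes\c W $ decomposes compatibly, but this is formal given Theorem~\ref{morita}.

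The one genuine subtlety — and the step I expect to be the main obstacle — is verifying that the submodule $ \QQ_k $ is actually \emph{recovered} from its spherical image, i.e. that the natural map $ H_k\e\otimes_{U_k}\e\QQ_k\to\QQ_k $ (multiplication inside $ \c[\vreg]\otimes\c W $) is an isomorphism, equivalently that $ \QQ_k = H_k\cdot\e\QQ_k $ inside $ L $. This is exactly where simplicity of $ H_k $ enters: $ H_k\e H_k = H_k $ forces $ H_k\,\e\,M = M $ for \emph{any} $ H_k $-module $ M $, so in particular $ H_k\,\e\,\QQ_k = \QQ_k $, and then $ \e\QQ_k = \e\QQ_k' $ immediately upgrades to $ \QQ_k = H_k\e\QQ_k = H_k\e\QQ_k' = \QQ_k' $. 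So really the whole proof collapses to: apply $ H_k\e H_k = H_k $ to both modules. I would write it in that compressed form, citing Theorem~\ref{morita} for the identity $ H_k\e H_k = H_k $ and Lemma~\ref{pr} (or \eqref{proj} directly) for $ \e\QQ_k = \e\QQ_k' $, and noting that $ \e\QQ_k = \e\,H_k\otimes_{H_k}\QQ_k $ can be identified with the $ \c $-span of $ \e(Q_k\otimes 1) $ as a subset of $ \c[\vreg]\otimes\c W $ via the canonical map, so that the two spherical images literally coincide as subsets, not merely up to isomorphism.
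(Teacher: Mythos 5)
Your argument is correct and rests on exactly the same input as the paper's proof, namely the identity $H_k\e H_k=H_k$ coming from Theorem~\ref{morita} (simplicity of $H_k$ for integral $k$): you recover each submodule directly as $\QQ_k=H_k\e\QQ_k$, whereas the paper reduces to nested submodules via their sum and kills the quotient $M$ using $\e M=0$ and faithfulness of $M\mapsto\e M$ --- two equivalent phrasings of the same Morita-theoretic fact. So this is essentially the paper's proof, written in a slightly more compressed form.
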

\begin{proof}
Suppose that $ \QQ_k $ and $ \QQ_k' $ are two such submodules.
Replacing one of them by their sum, we may assume that $\QQ_k
\subset \QQ_k'$, with $e\QQ_k=e\QQ_k'$. Setting $\,M:=\QQ_k'/\QQ_k$,
we get $eM=0$. This forces $\, M = 0\,$, since \eqref{Meq} is a
fully faithful functor by Theorem~\ref{morita}. Thus $\, \QQ_k' =
\QQ_k$, as required.
\end{proof}

Lemmas~\ref{pr}, \ref{inv} and \ref{un} combined together imply
Theorem~\ref{Qfat}. As a simple consequence of this theorem, we get
\begin{cor}
\la{sp} $\,Q_k \,$ is stable under the action of $\, U_k \,$ on $
\c[\vreg] $ via the Dunkl representation \eqref{HC}.
Thus $ Q_k $ is a $ U_k$-module, with $ U_k $ acting on
$ Q_k $ by invariant differential operators.
\end{cor}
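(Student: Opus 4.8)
\emph{Proof proposal.} The plan is to read off the statement directly from Theorem~\ref{Qfat}, combined with the basic bookkeeping identities for the idempotent $\e$ and the spherical algebra $U_k=\e H_k\e$.

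First I would note the general fact that for \emph{any} $H_k$-module $M$ the subspace $\e M\subseteq M$ is a module over $U_k$, since $(\e h\e)(\e m)=\e h\e m\in \e M$ for $h\in H_k$, $m\in M$. Applying this to $M=\QQ_k$, which is an $H_k$-submodule of $\c[\vreg]\otimes\c W$ by Theorem~\ref{Qfat}, endows $\e\QQ_k$ with a $U_k$-module structure, and by \eqref{proj} we have $\e\QQ_k=\e(Q_k\otimes 1)$ inside $\c[\vreg]\otimes\c W$.

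Next I would make the identification $\e\!\cdot\!(\c[\vreg]\otimes\c W)\cong\c[\vreg]$ explicit and $U_k$-equivariant. Using $\c[\vreg]\otimes\c W\cong\D W/J$ (with $J$ the left ideal generated by the $\partial_\xi$), the element $1\otimes 1$ corresponds to $\bar 1$ and $D\cdot(1\otimes 1)$ to $D+J$. Under the diagonal action \eqref{acts} the image of $\e$ is the invariant subspace $(\c[\vreg]\otimes\c W)^W$, and a short computation exactly as in the proof of Lemma~\ref{pr} shows that
\[
\Phi\colon \c[\vreg]\longrightarrow \e\!\cdot\!(\c[\vreg]\otimes\c W),\qquad g\longmapsto \e(g\otimes 1)=\e g\cdot(1\otimes 1),
\]
is a linear isomorphism. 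The one step with genuine content is that $\Phi$ intertwines the $U_k$-action coming from the differential action with the $U_k$-action on $\c[\vreg]$ given by the Dunkl representation \eqref{HC}: writing $u=\e h\e\in U_k$ and $D:=\Res(u)\in\D(\vreg)^W$, so that $\e h\e=\e D=D\e$ as elements of $\D W$, one has
\[
u\cdot\Phi(g)=\e h\e\,g\,(1\otimes 1)=\e D\,g\,(1\otimes 1)=\e\bigl(D(g)\otimes 1\bigr)=\Phi\bigl(\Res(u)(g)\bigr),
\]
where the third equality uses that $D$, involving no elements of $W$, acts on $\c[\vreg]\otimes\c W$ through the first tensor factor.

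Finally, combining the two points: $\e\QQ_k$ is a $U_k$-submodule of $\e\!\cdot\!(\c[\vreg]\otimes\c W)\cong\c[\vreg]$, and under $\Phi$ it corresponds to $Q_k$ by Theorem~\ref{Qfat}. Hence $Q_k$ is stable under $\Res(U_k)\subseteq\D(\vreg)^W$, which is precisely the assertion of the corollary, and the last sentence is immediate from $\Res(U_k)\subseteq\D(\vreg)^W$. The only obstacle is the careful handling of the identities relating $\e$, $\Res$ and the module $\D W/J$; the substantive work has already been done in Theorem~\ref{Qfat} and in the Morita equivalence of Theorem~\ref{morita} (the latter being what makes $\e(-)$ fully faithful and was used to pin down $\QQ_k$).
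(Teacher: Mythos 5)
Your proposal is correct and follows essentially the same route as the paper: both arguments combine Theorem~\ref{Qfat} (so that $\e H_k\e$ preserves $\e\QQ_k=\e(Q_k\otimes 1)$) with the identity $\e L\e=\e\,\Res L$ and the fact that $\Res L\in\D(\vreg)^W$ acts on $\c[\vreg]\otimes\c W$ through the first factor, then conclude via the injectivity of $f\mapsto\e(f\otimes 1)$. Packaging this as the intertwining isomorphism $\Phi$ (and noting its surjectivity, which is not actually needed) is only a cosmetic elaboration of the paper's displayed computation.
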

\begin{proof}
Theorem \ref{Qfat} implies that $\e H_k \e(\e\QQ_k)\subseteq
\e\QQ_k$. Recall that for every element $\e L \e\in\e H_k\e$ we have
$\e L \e=\e\,\Res\,L$, by the definition of the map \eqref{HC}. As a
result,
$$
\e\,(\Res\,L[Q_k]\otimes 1) = \e\,\Res\,L\,[Q_k\otimes 1] = (\e\,
L\, \e)[\QQ_k] \subseteq \e \QQ_k = \e (Q_k\otimes 1)\,.
$$
It follows that $\, (\Res\,L)[Q_k] \subseteq Q_k \,$, since $\,
\e\,(f \otimes 1) = 0\,$ in $\c[\vreg]\otimes \c W$ forces $\,f = 0
\,$.
\end{proof}

\begin{example}\la{exx}
We illustrate Theorem \ref{Qfat} in the one-dimensional case.
Let $\,W=\Z/n\Z \,$ and $\, k = (k_0, \dots, k_{n-1})$ be as in
Example \ref{1dim}. Then
\begin{equation}\la{nc11}
\QQ_k=\bigoplus_{i=0}^{n-1} x^{nk_i}\c[x]\otimes\e_{i}\ ,\quad
\,\e_i=\frac{1}{n}\sum_{w\in W}(\det w)^{-i} w\,.
\end{equation}
Clearly, $ \QQ_k $ is stable under the
action of $W$ and $\c[x]$. On the other hand, if $\, k_i \in \Z\,$,
a trivial calculation shows that the Dunkl operator $T:=\partial_x-x^{-1}\sum_{i=0}^{n-1} n k_{i} \e_{i}\,$
annihilates the elements $x^{nk_i}\otimes\e_{i}$, and hence preserves $ \QQ_k $ as well.
Now, acting on $ \QQ_k $ by $\e=\e_0$ and using \eqref{exq}, we get
\begin{equation}\la{enc1}
\e\QQ_k=\bigoplus_{i=0}^{n-1}\, x^{nk_i+i}\c[x^n]\otimes\e_{i}= \bigoplus_{i=0}^{n-1}\, \e\,(x^{nk_i+i}\c[x^n]\otimes 1) =
\e\,(Q_k\otimes 1)\ ,
\end{equation}
which agrees with Theorem~\ref{Qfat}.
\end{example}

\subsection{Generalized quasi-invariants}
\la{ttau}
In our construction of quasi-invariants, the regular representation
$ \c W $ played a distinguished r\^{o}le. We now outline a
generalization, in which $\c W$ is replaced by an arbitrary
$W$-module $\tau$. For a more conceptual definition of
quasi-invariants in terms of shift functors, we refer
the reader to Section~\ref{SF} (see Remark~\ref{Rdefq}).

First, we observe that the left ideal
$\,J \,$ of $\,\D W\,$ generated by  the derivations
$\, \partial_\xi $, $\,\xi \in V \,$, is stable under
{\it right} multiplication by the elements of $ \c W \subset \D W $.
Hence $\,\D W/J \,$ is naturally a $\D W$-$\c W$-bimodule.
For any $W$-module $\,\tau \,$, we can form then the left $\D W$-module
$\,\D W/J \otimes_{\c W} \tau \cong \c[\vreg] \otimes \tau \,$.
The action of $\D W$ on $\c[\vreg] \otimes \tau$ is given by the
same formulas \eqref{acts}, with $\, w \in W\,$ acting now in
representation $\tau$, and $ H_k $ operates via its Dunkl representation.
Now, generalizing \eqref{qcc}, we
define the module $ \QQ_k(\tau) $ of {\it $\tau$-valued quasi-invariants}
as the span of all $\,\varphi \in  \c[\vreg] \otimes \tau \,$ satisfying
\begin{equation}
\label{qctau}
(1 \otimes \e_{H,i})\, \varphi \equiv 0\ \mbox{mod}\,\langle\alpha_H \rangle^{n_H k_{H,i}}
\otimes \tau\
\end{equation}
for all $\,H \in \A\,$ and $\,i = 0, 1, \ldots, n_H-1\,$.
It is convenient to write $\QQ_k(\tau)$ as the intersection of
subspaces corresponding to the reflection hyperplanes $H\in \A \,$:
\begin{equation}\label{qtauh}
\QQ_k(\tau)=\bigcap_{H\in \A}\QQ_k^H(\tau)\,,\quad
\QQ_k^H(\tau) :=\bigoplus_{i=0}^{n_H-1}
\langle\alpha_H\rangle^{n_Hk_{H,i}}\,\otimes \e_{H,i}\tau\,.
\end{equation}
The same argument as in Lemma \ref{inv} above proves the following

\begin{prop}
The space $\QQ_k(\tau)\subset \c[\vreg]\otimes\tau$ is stable
under the action of $H_k$. The subspace $\e\QQ_k(\tau)$
of $W$-invariant elements in $\QQ_k(\tau)$ is then a module over the
spherical subalgebra $\e H_k \e$.
\end{prop}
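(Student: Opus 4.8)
The plan is to mimic the proof of Lemma~\ref{inv} essentially verbatim, checking that $\QQ_k(\tau)$ is stable under each of the three types of generators of $H_k$ in its Dunkl representation: the polynomials $x \in \c[V]$, the group elements $w \in W$, and the Dunkl operators $T_\xi$, $\xi \in V$. The first is immediate from the description \eqref{qtauh}: each summand $\langle\alpha_H\rangle^{n_H k_{H,i}}\otimes\e_{H,i}\tau$ is visibly stable under multiplication by $\c[V]$ (the first tensor factor is an ideal of $\c[V]$), hence so is each $\QQ_k^H(\tau)$ and thus their intersection $\QQ_k(\tau)$.

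For stability under $W$, I would use the same computation as in Lemma~\ref{inv}: as endomorphisms of $\c[\vreg]\otimes\tau$ one has $w\,(1\otimes\e_{H,i}) = (1\otimes\e_{wH,i})\,w$, because $w\e_{H,i}w^{-1} = \e_{wH,i}$ in $\c W$ (this identity holds independently of the representation in which $W$ acts). Combined with the fact that $w$ maps $\langle\alpha_H\rangle$ onto $\langle\alpha_{wH}\rangle$ and that $k_{H,i}$ depends only on the $W$-orbit of $H$, this shows $w\,\QQ_k^H(\tau) = \QQ_k^{wH}(\tau)$, so $w$ permutes the summands in the intersection \eqref{qtauh} and hence preserves $\QQ_k(\tau)$.

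The main step — and the one I expect to require the most care — is stability under the Dunkl operators, i.e.\ the analogue of \eqref{tin}: $T_\xi(\QQ_k(\tau)) \subseteq \QQ_k^H(\tau)$ for each fixed $H\in\A$. Here I would split $T_\xi = T_0 + T_1$ exactly as in Lemma~\ref{inv}, with $T_0$ the part involving only the hyperplane $H$ and $T_1$ the sum over $H'\neq H$. The $T_1$ part is handled as before: $\e_{H',i}$ preserves $\QQ_k(\tau)$ since $\QQ_k(\tau)$ is $W$-stable (by the previous paragraph), while multiplication by $\alpha_{H'}^{-1}\in\c[\vreg]$, being regular along $H$, preserves $\QQ_k^H(\tau)$; hence $T_1(\QQ_k(\tau))\subseteq\QQ_k^H(\tau)$. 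For the $T_0$ part, since $\QQ_k(\tau)\subseteq\QQ_k^H(\tau)$ it suffices to show $T_0(\QQ_k^H(\tau))\subseteq\QQ_k^H(\tau)$; and because both $\QQ_k^H(\tau)$ and $T_0$ involve only the single cyclic group $W_H$ acting on the line spanned by $v_H$ (the action on $\tau$ decomposing $\tau = \bigoplus_i \e_{H,i}\tau$ into $W_H$-isotypic pieces, on which $T_0$ acts diagonally up to the $\partial_\xi$ term), the verification reduces to the one-dimensional computation already carried out in Example~\ref{exx} — note that nothing there used that the representation was the regular one, only the eigenvalue decomposition under the cyclic group.

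Finally, for the last sentence: since $\QQ_k(\tau)$ is an $H_k$-submodule of $\c[\vreg]\otimes\tau$, applying the symmetrizing idempotent $\e$ gives $\e\QQ_k(\tau) = \e H_k\e\,(\e\QQ_k(\tau)) \subseteq \e\QQ_k(\tau)$, which says precisely that $\e\QQ_k(\tau)$ is a module over the spherical subalgebra $\e H_k\e = U_k$; and $\e\QQ_k(\tau)$ is by construction the space of $W$-invariant elements of $\QQ_k(\tau)$ under the diagonal $W$-action. The only genuine obstacle is the Dunkl-operator step, and even there the difficulty is purely bookkeeping — reducing to the rank-one case and invoking Example~\ref{exx} — since the essential input (commutativity of the Dunkl operators, which underlies the $H_k$-module structure) is already available from Lemma~\ref{duprop}.
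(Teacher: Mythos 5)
Your proposal is correct and takes essentially the same route as the paper, which simply asserts that the argument of Lemma~\ref{inv} carries over verbatim: stability under $\c[V]$ and $W$ is immediate, and the Dunkl-operator step is handled by the same splitting $T_\xi = T_0 + T_1$ with the $T_0$ part reduced to the rank-one computation of Example~\ref{exx}, which indeed only uses the $W_H$-isotypic decomposition of $\tau$ and not that $\tau$ is the regular representation. Your final observation that $\e\QQ_k(\tau) = \e H_k\e\,(\e\QQ_k(\tau))$ gives the $\e H_k\e$-module structure is exactly the intended conclusion.
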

\noindent
In addition, we have
\begin{lemma}\la{inter} Let $ \QQ_k^H(\tau) $ be as in \eqref{qtauh}, and let
$\,\e_{H,0} := \frac{1}{n_H}\sum_{w\in W_H} w\,$. Then
\begin{equation}\label{eho}
\e\QQ_k(\tau)=\bigcap_{H\in \A}\e_{H,0}\,\QQ_k^H(\tau)\ . \end{equation}
\end{lemma}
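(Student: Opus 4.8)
The plan is to prove the two inclusions in \eqref{eho} separately. The inclusion $\e\QQ_k(\tau)\subseteq\bigcap_{H}\e_{H,0}\QQ_k^H(\tau)$ is the easy direction: by \eqref{qtauh} we have $\QQ_k(\tau)\subseteq\QQ_k^H(\tau)$ for each $H$, hence $\e\QQ_k(\tau)\subseteq\e\QQ_k^H(\tau)$; and since $\e=\e\,\e_{H,0}$ in $\c W$ (because $\e_{H,0}$ is the symmetrizer of the subgroup $W_H$ and $\e$ symmetrizes the whole group), we get $\e\QQ_k^H(\tau)=\e\,\e_{H,0}\QQ_k^H(\tau)\subseteq\e_{H,0}\QQ_k^H(\tau)$, because $\e_{H,0}$ applied in the $\tau$-factor commutes with $\e$ and $\e$ is idempotent. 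Wait --- one has to be careful here: $\e$ acts \emph{diagonally} (formula \eqref{acts}), whereas $\e_{H,0}$ in the statement acts only on the $\tau$-factor, i.e. it is the operator I would denote $1\otimes\e_{H,0}$ in the notation of Section~\ref{nqi}. So the key identity to establish first is that, \emph{on $W$-invariant elements}, the diagonal action of $\e_{H,0}$ coincides with $1\otimes\e_{H,0}$; this is immediate because if $w\varphi=\varphi$ for all $w\in W_H$ (diagonal action), then applying $\e_{H,0}$ diagonally does nothing, while $\varphi$ being $W$-invariant is certainly $W_H$-invariant.

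For the reverse inclusion, suppose $\varphi$ lies in $\e_{H,0}\QQ_k^H(\tau)$ for every $H\in\A$; I must show $\varphi\in\e\QQ_k(\tau)$, i.e. that $\varphi$ is $W$-invariant (diagonal action) and satisfies all the congruences \eqref{qctau}. The first step is to extract $W$-invariance: since $\varphi\in\e_{H,0}\QQ_k^H(\tau)$ for the given $H$, and I will have arranged (by replacing $\varphi$ with $\e\varphi$, using that the easy inclusion shows $\e$ maps the right-hand side into itself --- this needs a small check) that $\varphi$ is already $W$-invariant, the real content is the congruence. So the cleaner route: first show directly that $\bigcap_H\e_{H,0}\QQ_k^H(\tau)$ consists of $W$-invariant elements. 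Here I would use that $\e_{H,0}\QQ_k^H(\tau)$ is by construction $W_H$-invariant in the \emph{diagonal} sense (the idempotent $\e_{H,0}$ projecting onto the $W_H$-invariants in $\tau$, combined with the observation from Lemma~\ref{inv} that $w(1\otimes\e_{H,i})=(1\otimes\e_{wH,i})w$), and that the hyperplanes $H\in\A$ and their reflection subgroups $W_H$ generate $W$; hence an element fixed by each $W_H$ diagonally is fixed by all of $W$.

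The main step --- and the place I expect the real work to be --- is showing that $W$-invariance together with membership in $\e_{H,0}\QQ_k^H(\tau)$ forces the sharper condition $\varphi\in\QQ_k^H(\tau)$, i.e. the full set of congruences $(1\otimes\e_{H,i})\varphi\equiv0\bmod\langle\alpha_H\rangle^{n_Hk_{H,i}}$ for all $i$, not merely the $i$-averaged version encoded by $\e_{H,0}$. The point is that if $\varphi$ is $W$-invariant and $\varphi\in\e_{H,0}\QQ_k^H(\tau)$, write $\varphi=\e_{H,0}\psi$ with $\psi\in\QQ_k^H(\tau)$; then for each $i$, $(1\otimes\e_{H,i})\varphi=(1\otimes\e_{H,i})\e_{H,0}\psi$. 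Now I use that $\e_{H,0}$ is the sum $\sum_j\e_{H,j}$ over the characters of $W_H$ applied in the $\tau$-slot only, while $\varphi$ being $W_H$-invariant diagonally means the polynomial and $\tau$ parts transform oppositely; combining these, the $W_H$-isotypic components of $\varphi$ in $\c[\vreg]\otimes\tau$ are governed by matching the $\c[\vreg]$-character against the $\tau$-character, and one reads off that $(1\otimes\e_{H,i})\varphi$ equals $\e_{H,i}$ applied to the appropriate isotypic piece of $\psi$, which already lies in $\langle\alpha_H\rangle^{n_Hk_{H,i}}\otimes\tau$. This is exactly the one-dimensional computation underlying Example~\ref{exx}: everything is local along $H$ and only involves the cyclic group $W_H$, so after decomposing along the normal direction to $H$ the claim reduces to the identity in $\Z/n_H\Z$ relating the diagonal symmetrizer and the projections $\e_{H,i}$. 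I would isolate this as a one-dimensional sublemma and then conclude that $\varphi\in\bigcap_H\QQ_k^H(\tau)=\QQ_k(\tau)$ and is $W$-invariant, hence $\varphi\in\e\QQ_k(\tau)$, completing the proof.
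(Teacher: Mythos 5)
Your proposal goes wrong at the point where you decide that ``$\e_{H,0}$ in the statement acts only on the $\tau$-factor, i.e.\ it is the operator $1\otimes\e_{H,0}$.'' This is a misreading: $\e_{H,0}$ is an element of $\c W\subset \D W$ and acts on $\c[\vreg]\otimes\tau$ \emph{diagonally}, by \eqref{acts}, exactly as $\e$ does; the paper reserves the prefix $1\otimes(-)$ for the action on the second factor (cf.\ \eqref{qctau}). With your reading the lemma could not even be true: since the $\e_{H,i}$ are orthogonal idempotents and $k_{H,0}=0$, one would get $(1\otimes\e_{H,0})\QQ_k^H(\tau)=\c[V]\otimes\e_{H,0}\tau$, which is neither $W$-invariant nor contained in $\QQ_k(\tau)$ in general, and the decomposition \eqref{eho1} stated right after the lemma would also fail. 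Moreover, the ``key identity'' you invoke to reconcile the two actions --- that on $W$-invariant elements the diagonal action of $\e_{H,0}$ coincides with $1\otimes\e_{H,0}$ --- is false: for $W=\Z/2\Z$ acting on $V=\c$ and $\tau=\c W$, the invariant element $\varphi=\frac12\,(x\otimes 1-x\otimes s)$ is fixed by the diagonal $\e_{H,0}$ but annihilated by $1\otimes\e_{H,0}$. Your justification only shows that the diagonal symmetrizer fixes invariants, which says nothing about $1\otimes\e_{H,0}$.

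Once the diagonal reading is adopted, the argument collapses to the paper's short proof, and the step you flag as ``the real work'' is actually immediate: each summand $\langle\alpha_H\rangle^{n_Hk_{H,i}}\otimes\e_{H,i}\tau$ of $\QQ_k^H(\tau)$ is stable under the diagonal action of $W_H$ (any $w\in W_H$ rescales $\alpha_H$ and commutes with $\e_{H,i}$), so $\e_{H,0}\QQ_k^H(\tau)\subseteq\QQ_k^H(\tau)$; hence the right-hand side of \eqref{eho} lies in $\bigcap_{H}\QQ_k^H(\tau)=\QQ_k(\tau)$, and, being contained in each $\e_{H,0}\QQ_k^H(\tau)$, it is fixed by every $W_H$ and therefore by $W$ --- this last point of your plan (the $W_H$ generate $W$) is indeed the paper's argument. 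No isotypic matching or one-dimensional sublemma recovering the congruences for $i\neq 0$ is needed. Your easy direction also has a slip: you apply $\e=\e_{H,0}\,\e$ to $\QQ_k^H(\tau)$, but $\QQ_k^H(\tau)$ is only $W_H$-stable, not $W$-stable; the correct chain is $\e\QQ_k(\tau)=\e_{H,0}\,\e\,\QQ_k(\tau)\subseteq\e_{H,0}\QQ_k(\tau)\subseteq\e_{H,0}\QQ_k^H(\tau)$, which uses the $W$-stability of $\QQ_k(\tau)$ itself (Lemma~\ref{inv}).
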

\begin{proof}
First, it is clear that the right-hand side of \eqref{eho}
lies in the intersection \eqref{qtauh} and thus belongs to
$\QQ_k(\tau)$. Furthermore, it is contained in $\e_{H,0}\QQ_k^H(\tau)$ and
therefore invariant
under the action of $W_H$. Since $H$ is
arbitrary, this proves that the right-hand side of \eqref{eho} is
invariant under the whole of $W$ and hence contained in the left-hand side.
The opposite inclusion follows from
$\,\e\QQ_k(\tau)\subseteq \e_{H,0}\QQ_k(\tau)\subseteq \e_{H,0}\QQ_k^H(\tau)\,$.
\end{proof}

We can decompose each subspace $\e_{H,0}\QQ_k^H(\tau)$ in \eqref{eho} as in the
one-dimensional case (see Example~\ref{exx}, \eqref{enc1}).
To be precise, let $\c[\vreg^H]$ denote the subring of functions in
$ \c[\vreg]$ that are regular along $H$. This ring
carries a natural action of $W_H$, so we write
$\c[\vreg^H]^{W_H}$ for its subring of invariants.
With this notation, we have
\begin{equation}\label{eho1}
\e_{H,0}\QQ_k^H(\tau)=\bigoplus_{i=0}^{n_H-1}
\alpha_H^{n_Hk_{H,i}+i}\,\c[\vreg^H]^{W_H}\otimes\e_{H,i}\tau\,.
\end{equation}

We close this section with a few remarks.
\medskip

{\bf 1.} As an immediate consequence of the definition
\eqref{qctau}, we have
\begin{equation}\label{sa0}
\delta^r\c[V]\otimes\tau\subset\QQ_k(\tau)\subset\delta^{-r}\c[V]\otimes\tau\,,
\end{equation}
where $r>0$ is sufficiently large (precisely, $\,r> \max
\{n_Hk_{H,i}\}$). More generally, for integral $k, k'$, it is easy
to show that
\begin{equation}\la{sa}
\delta^r\QQ_k(\tau)\subseteq \QQ_{k'}(\tau)\subseteq
\delta^{-r}\QQ_k(\tau)\ ,
\end{equation}
where $ r \gg 0 $  depends only on the difference $k'-k$.

{\bf 2.} If $\tau$ is a direct sum of $W$-modules, say $\tau_i$,
then $\c[\vreg]\otimes\tau$ and $ \QQ_k(\tau)$ are also direct sums
of $\c[\vreg]\otimes\tau_i$ and $\QQ_k(\tau_i)$, respectively. In
particular, replacing $\tau$ by $\,\c W = \sum_{\tau \in \W }\tau
\otimes \tau^* $, we get
\begin{equation}\la{ssum}
\QQ_k =\sum_{\tau\in \W} \QQ_k(\tau)\otimes\tau^*\,.
\end{equation}
Thus, the structure of $ \QQ_k $ is determined by the modules $
\QQ_k(\tau) $ associated to irreducible representations of $ W $. We
will study these modules in detail in Section~\ref{MQ}.

{\bf 3.} As was mentioned already, on the space $\c[\vreg]\otimes\c
W$ one has yet another (left) $W$-action sending $f\otimes u$ to
$f\otimes uw^{-1}$. It is clear from the definitions, that it
commutes with the action of $\D W$ and preserves both $\QQ_k$ and
$\e \QQ_k$. Note that this action preserves each summand in
\eqref{ssum}, acting on $\tau^*$. Under \eqref{proj}, it
translates into the standard action of $W$ on $Q_k\subset \c[V]$.

\section{Differential Operators on Quasi-invariants}
\la{doq}

\subsection{Rings of differential operators}
\la{tdo}

We briefly recall the definition of differential operators in the
algebro-geometric setting (see \cite{Gr} or \cite{MR}, Chap.~15).

Let $ A $ be a commutative algebra over $ \c $, and let $ M $ be
an $A$-module. The filtered ring of (linear) differential
operators on $ M $ is defined by
$$
\D_A(M) := \bigcup_{n\ge 0}\,\D_A^n(M) \,\subseteq \,\End_{\c}(M)\
,
$$
where $\, \D^{0}_A(M) := \End_A(M) \,$ and $\, \D^n_A(M) \,$, with
$ n\ge 1 $, are given inductively:
$$
\D_A^n(M) := \{D \in \End_{\c}(M)\ |\ [\,D,\, a\,] \in
\D_A^{n-1}(M) \ \text{for all}\ a \in A\}\ .
$$
The elements of $ \D^n_A(M)\setminus \D^{n-1}_A(M) $ are called
{\it differential operators of order $ n $} on $ M $. Note that
the commutator of two operators in $\D_A^n(M) $ of orders $ n $
and $ m $ has order at most $ n+m-1 $. Hence the associated graded
ring $\,\grd\,\D_A(M) := \bigoplus_{n\ge
0}\D_A^n(M)/\D_A^{n-1}(M)\,$ is a commutative algebra.

If $ X $ is an affine variety with coordinate ring $ A =\O(X) $,
we denote $ \D_A(A) $ by $ \D(X) $ and call it the {\it ring of
differential operators on $ X $}. If $ X $ is irreducible, then
each differential operator on $ X $ has a unique extension to a
differential operator on $ \k := \c(X) $, the field of rational
functions of $ X $, and thus we can identify (see \cite{MR},
Theorem~15.5.5):
\begin{equation*}
\D(X) = \{\, D \in \D(\k)\ |\ D(f) \in \O(X) \ \text{for all} \ f
\in \O(X) \,\} \ .
\end{equation*}
Slightly more  generally, we have
\begin{lemma}[cf. \cite{BW}, Prop.~2.6]
\la{dm} Suppose that $\, M \subseteq \k $ is a (nonzero)
$A$-submodule of $\, \k $. Then
\begin{equation*}
\D_A(M) = \{\, D \in \D(\k)\ |\ D(f) \in M \ \text{\rm for all} \
f \in M \,\} \ .
\end{equation*}
\end{lemma}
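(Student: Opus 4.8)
The plan is to prove the two inclusions separately, the inclusion ``$\subseteq$'' being the substantive one. Write $\D(\k)$ for the ring of differential operators on the field $\k = \c(X)$, where $A = \O(X)$; by the identification recalled just before Lemma~\ref{dm}, every $D \in \D_A(M)$ extends uniquely to an element of $\D(\k)$, since $M$ is a nonzero $A$-submodule of $\k$ and hence has field of fractions $\k$ (any nonzero $f \in M$ gives $f A \subseteq M$, so $M \otimes_A \k = \k$). Under this extension, the defining property $[D, a] \in \D_A^{n-1}(M)$ for $a \in A$ passes to the analogous property in $\D(\k)$ by induction on order; thus $\D_A(M)$ is identified with a subset of $\{D \in \D(\k) : D(M) \subseteq M\}$. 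This gives ``$\subseteq$'' together with the compatibility of the two descriptions.

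For the reverse inclusion, suppose $D \in \D(\k)$ satisfies $D(f) \in M$ for all $f \in M$; I must show $D \in \D_A(M)$, i.e. that $D$ restricts to an element of $\End_\c(M)$ lying in some $\D_A^n(M)$. The restriction $D|_M$ is certainly a $\c$-linear endomorphism of $M$ by hypothesis. Let $n$ be the order of $D$ as an operator on $\k$, and argue by induction on $n$. For $n = 0$: $D \in \D^0(\k) = \End_\k(\k) = \k$, acting by multiplication by some $g \in \k$; then $D(M) \subseteq M$ means $g M \subseteq M$, so $g$ lies in the idealizer of $M$ in $\k$, which acts $A$-linearly on $M$, i.e. $D|_M \in \End_A(M) = \D_A^0(M)$. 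For the inductive step, take $a \in A$; the commutator $[D, a]$ (computed in $\D(\k)$) has order $\le n - 1$, and for $f \in M$ we have $[D,a](f) = D(af) - a\,D(f) \in M$ since $af \in M$ and $D(f) \in M$. By the inductive hypothesis applied to $[D,a]$, we get $[D|_M, a] = [D,a]|_M \in \D_A^{n-1}(M)$. Since this holds for every $a \in A$, the definition of $\D_A^n(M)$ gives $D|_M \in \D_A^n(M) \subseteq \D_A(M)$.

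I would then note that the two maps just constructed — extension $\D_A(M) \to \D(\k)$ and restriction of operators preserving $M$ — are mutually inverse (an operator on $\k$ is determined by its action on the dense subspace $M$ since $\k = \operatorname{Frac}(M)$, and an element of $\D_A(M)$ extended and then restricted returns the original), so the displayed equality is an equality of filtered subrings of $\End_\c(M)$, not merely a set-theoretic identification. The only point requiring a little care is the assertion that a differential operator on $\k$ is determined by its restriction to $M$: this follows because $D$ is $\c$-linear, $M$ generates $\k$ as a field of fractions, and differential operators on $\k$ have a unique extension from any $A$-subalgebra with fraction field $\k$ — alternatively, because $D$ is a local operator in the sense of satisfying a Leibniz-type expansion, so its values on $M$ determine its values on $\k = M M^{-1}$.

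The main obstacle, such as it is, is bookkeeping in the induction: one must be consistent about whether $[D,a]$ is formed in $\D(\k)$ or in $\End_\c(M)$, and check that these agree under restriction — which is immediate once one observes that $a$ acts by the same multiplication operator in both settings. There is no deep content here; this is essentially the argument of \cite{BW}, Prop.~2.6, reproduced in the present notation, and I would keep the write-up correspondingly brief.
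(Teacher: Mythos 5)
Your argument is correct, and it is essentially the standard one: the paper itself gives no proof of this lemma, citing \cite{BW}, Prop.~2.6 instead, and your two-step scheme (extend elements of $\D_A(M)$ to $\D(\k)$, restrict operators of $\D(\k)$ preserving $M$, check the two operations are mutually inverse) is exactly the argument of that reference. The restriction direction, with the induction on order via $[D,a]|_M=[D|_M,a]$, is complete as written, and so is your uniqueness remark (an operator in $\D(\k)$ vanishing on $M$ vanishes identically, by induction on order). The one place where you assert rather than prove is the existence of the extension: the identification quoted just before the lemma covers only the case $M=\O(X)$, so for a general nonzero $A$-submodule $M\subseteq\k$ you should say explicitly why differential operators pass to the localization. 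Since $M\otimes_A\k\cong\k$, every element of $\k$ is of the form $m/s$ with $m\in M$ and $s\in A\setminus\{0\}$, and one defines, by induction on the order $n$ of $D\in\D_A^n(M)$,
$\tilde D(m/s):=s^{-1}\bigl(D(m)-\widetilde{[D,s]}(m/s)\bigr)$,
where $\widetilde{[D,s]}\in\D^{\,n-1}(\k)$ is the already-constructed extension of $[D,s]\in\D_A^{n-1}(M)$; well-definedness and the bound on the order follow by the same induction, and uniqueness is your density observation. With that sentence added, your proof is complete and coincides with the intended one.
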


We apply these concepts for $\, A = A_k \,$ and
$\, M = Q_k \,$, writing $ \D(Q_k) $ instead of $ \D_{A}(M) $ in this case.
By Lemma~\ref{alg}$\mathsf{(iii)}$, $\,X_k = \Spec(A_k) \,$ is an
irreducible variety with $ \k = \c(V) $, so, by Lemma~\ref{dm}, we
have
\begin{equation}
\la{tw} \D(Q_k) = \{D\in \D(\k)\,|\,D(f)\subseteq Q_k\ \text{\rm
for all} \ f \in Q_k \,\}\ .
\end{equation}
Note that the differential filtration on $ \D(Q_k) $ is induced from
the differential filtration on $ \D(\k) $. Thus \eqref{tw} yields
a canonical inclusion $\,\grd\,\D(Q_k) \subseteq \grd\,\D(\k)\,$,
with $\, \D^0(Q_k) = A_k \,$, see \eqref{acc}. In particular, if
$\, k = \{0\} \,$, then $ Q_k = \c[V] $ and \eqref{tw} becomes the
standard realization of $\,\D(V)\,$ as a subring of $ \D(\k)\,$.

Apart from $ Q_k$, we may also apply Lemma~\ref{dm} to $ \c[\vreg]
$, which is naturally a subalgebra of $ \k = \c(V) $. This gives
the identification
\begin{equation}
\la{tw1} \D(\vreg) = \{D\in \D(\k)\,|\,D(f)\subseteq \c[\vreg] \
\text{\rm for all} \ f \in \c[\vreg] \,\} \ .
\end{equation}
\begin{lemma}
\la{symbol} With identifications \eqref{tw} and \eqref{tw1}, we
have
$$
(\mathsf i)\ \D(Q_k)\subseteq \D(\vreg) \quad  \text{and}\quad
(\mathsf{ii})\ \grd\,\D(Q_k)\subseteq \grd\,\D(V)\ .
$$
\end{lemma}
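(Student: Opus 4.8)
Part (i) is short. By Lemma~\ref{alg} (and its proof) there is $r>0$ with $\delta^{r}\,\c[V]\subseteq Q_k\subseteq\c[V]$. Hence, if $D\in\D(Q_k)\subseteq\D(\k)$ via \eqref{tw}, the composite $D\circ\delta^{r}$ (with $\delta^{r}$ the multiplication operator, of order $0$) lies in $\D(\k)$ and sends $\c[V]$ into $D(Q_k)\subseteq Q_k\subseteq\c[V]$; therefore $D\delta^{r}\in\D(V)=\{P\in\D(\k):P(\c[V])\subseteq\c[V]\}$ (Lemma~\ref{dm}). Writing $D=(D\delta^{r})\circ\delta^{-r}$ as operators on $\k$, and noting that $D\delta^{r}\in\D(V)$ and $\delta^{-r}\in\c[\vreg]$ both preserve $\c[\vreg]$, we get $D(\c[\vreg])\subseteq\c[\vreg]$, so $D\in\D(\vreg)$ by \eqref{tw1}. (Equivalently, $\D(\vreg)=\D(V)[\delta^{-1}]$.)

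For (ii), (i) together with the fact that the differential filtrations on $\D(Q_k)$, $\D(\vreg)$ and $\D(V)$ are all induced from that of $\D(\k)$ gives $\grd\D(Q_k)\subseteq\grd\D(\vreg)=\c[\vreg\times V^*]$, while $\grd\D(V)=\c[V\times V^*]$ and $\c[\vreg\times V^*]=\c[V\times V^*][\delta^{-1}]$. Since $\c[V\times V^*]$ is a normal domain with fraction field $\c(V\times V^*)$, to conclude $\grd\D(Q_k)\subseteq\c[V\times V^*]$ it suffices to prove that the symbol of every $D\in\D(Q_k)$ is regular along $H\times V^*$ for each reflection hyperplane $H\in\A$. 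This is a local statement along $H$, which may be checked at a general point $x$ of $H$, where the stabilizer $W_x$ equals the cyclic group $W_H$; and a differential operator preserving $Q_k$ preserves the localization $(Q_k)_x$, which by Lemma~\ref{inj}(iii) is the module of $k$-quasi-invariants in $\c(V)$ for $W_H$. As in Example~\ref{1dim}, this localization equals $\bigoplus_{i=0}^{n_H-1}\alpha_H^{\,i+n_Hk_{H,i}}\,\c[V]^{W_H}$ (suitably localized), and the coordinates along $H$ play no role; so the claim reduces to the one-dimensional case.

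The crux is this one-dimensional statement: for $Q=\bigoplus_{i=0}^{n-1}x^{\,i+nk_i}\c[x^{n}]\subset\c[x]$ one has $\grd\D_A(Q)\subseteq\c[x]\otimes\c[\partial]$, where $A=\{p\in\c[x]:pQ\subseteq Q\}$. Here $Q$, $A$ and $\D_A(Q)$ are $\Z$-graded by $\deg x=1$, $\deg\partial=-1$, so one may assume $D\in\D^{n}(Q)$ is homogeneous of some degree $d$ and of order exactly $n$, and it is enough to show $d\ge -n$ (the leading symbol $c\,x^{\,d+n}\partial^{n}$ then has no pole). For such $D$ one has $D(x^{m})=P(m)\,x^{\,m+d}$ for all $m$ with $x^{m}\in Q$, where $P$ is a polynomial of degree exactly $n$ (its leading coefficient being that of $D$); and $D(Q)\subseteq Q$ forces $P$ to vanish on $S_d:=\{m\ge 0:x^{m}\in Q,\ x^{m+d}\notin Q\}$. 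Splitting $S_d$ into the $m$ with $m+d<0$ and the $m$ with $m+d$ a ``gap'' of $Q$, a short count gives $|S_d|\ge|d|$; hence $|d|\le\deg P=n$, as required. The same argument, keeping track of the order in $\partial_{\alpha_H}$, handles the presence of the remaining hyperplane variables.

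I expect this last, one-dimensional step to be the main obstacle: the coarse inclusions $\delta^{r}\c[V]\subseteq Q_k\subseteq\c[V]$ only bound the pole order of the symbols by $r$, so removing the pole entirely genuinely requires exploiting the precise gap structure of $Q_k$ transversally to each reflection hyperplane (i.e. reducing to the explicit model and carrying out the root count). The remaining ingredients — part (i), the reduction to the local cyclic situation via Lemma~\ref{inj}, and the normality argument used to patch the local conclusions — are routine.
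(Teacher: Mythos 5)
Your part (i) is essentially the paper's argument: from $\delta^{N}\c[V]\subseteq Q_k\subseteq\c[V]$ one gets $D\delta^{N}\in\D(V)$, hence $\D(Q_k)\subseteq\D(V)\,\delta^{-N}\subseteq\D(\vreg)$. For part (ii), however, you take a genuinely different and much heavier route. The paper's proof is a two-line multiplicativity trick: the inclusion $\D(Q_k)\subseteq\D(V)\delta^{-N}$ gives the \emph{uniform} bound $\grd\,\D(Q_k)\subseteq\delta^{-N}\grd\,\D(V)$, and since $\grd\,\D(Q_k)$ is closed under multiplication, a symbol with a pole of order $p\ge 1$ along some $H\in\A$ would have powers with poles of order $jp>N$, a contradiction; so no local analysis of $Q_k$ near the reflection hyperplanes is needed at all (your closing prediction that the transversal gap structure must be exploited is exactly what the paper's argument avoids). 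Your route -- localize at a general point of $H$, reduce to the cyclic model, and kill the pole of the principal symbol by counting roots of the polynomial $P(m)$ on the set $S_d$ -- is correct in the one-dimensional case (the count $|S_d|\ge|d|$ is right: writing $E$ for the exponent set of $Q$, one has $S_d=E\setminus(E+|d|)$ and $|E\setminus(E+|d|)|-|(E+|d|)\setminus E|=|d|$ since $E$ is cofinite in $\Z_{\ge 0}$), and it is in the spirit of the older proofs in \cite{SS} and \cite{BEG}, Lemma A.1, which the paper cites as an alternative. The one step you gloss over, and where care is genuinely required, is the passage back to several variables: after splitting into $t$-homogeneous components ($t=\alpha_H$), the root count only bounds the degree $d$ against the \emph{top} $\partial_t$-order occurring, and lower $\partial_t$-order terms are allowed poles (as they must be, since $\D(Q_k)\not\subseteq\D(V)$). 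To make "keeping track of the order in $\partial_{\alpha_H}$" work you must run the count separately for each fixed multi-index $\gamma$ of derivatives along $H$: the operator identity obtained on $S_d$ forces, for each $\gamma$, the vanishing of a polynomial in $m$ whose degree is the top $\partial_t$-order occurring \emph{with that} $\gamma$, and for a principal-symbol coefficient this order is automatically the maximal one, $n-|\gamma|$, so the contradiction does go through. With that bookkeeping spelled out your argument is complete; it buys finer local information about which lower-order coefficients may have poles, whereas the paper's argument buys brevity.
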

\begin{proof}
This can be deduced from general results of \cite{SS} or
\cite{BEG} (see, e.g., \cite{BEG}, Lemma~A.1). However, for
reader's convenience, we give a shorter argument here.
First, recall that $ \, \delta^N\c[V] \subseteq Q_k \subseteq \c[V]\, $
for some $\,N \ge 1 \,$. Hence,
for any $\, D \in
\D(Q_k) \,$, we have
$$\,  D\, \delta^{N}(\c[V])\subseteq D(Q_k)\subseteq Q_k\subseteq \c[V]\,.$$
It follows that $D\delta^N \in \D(V)$ for all $D\in \D(Q_k)$ and $\,\D(Q_k)\subseteq
\D(V)\,\delta^{-N}$ proving the first claim of the lemma. The
last inclusion also implies that $\, \grd\,\D(Q_k)\subseteq
\delta^{-N}\grd\,\D(V)\,$. Since $ \grd\,\D(Q_k) $ is closed under
multiplication, this is possible only if $\,\grd\,\D(Q_k)\subseteq
\grd\,\D(V) $, which is the second claim of the lemma.
\end{proof}
\subsection{Invariant differential operators}

Recall that, by Lemma~\ref{alg}, $\, Q_k \,$ is stable under the
action of $W$ on $ \c[\vreg] $. Hence $ W $ acts naturally on $
\D(Q_k) $, and this action is compatible with the inclusion of
Lemma~\ref{symbol}($\mathsf i$). It follows that $\, \D(Q_k)^W
\subseteq \D(\vreg)^W\,$. Now, we recall the algebra embedding \eqref{HC},
which defines the Dunkl representation for the spherical subalgebra
of $ H_k $.
\begin{prop}
\label{is} The image of $\,\Res: \,U_k \into \D(\vreg)^W$  coincides with
$ \D(Q_k)^W $. Thus the Dunkl representation of $ U_k $ yields an algebra
isomorphism $\,U_k \cong \D(Q_k)^W$.
\end{prop}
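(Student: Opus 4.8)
The plan is to establish the equality $\Res(U_k) = \D(Q_k)^W$ by proving two inclusions, using the $U_k$-module structure on $Q_k$ from Corollary~\ref{sp} together with the characterization \eqref{tw} of $\D(Q_k)$ as operators on $\c(V)$ preserving $Q_k$. The inclusion $\Res(U_k)\subseteq \D(Q_k)^W$ is essentially already in hand: by Corollary~\ref{sp}, every $\Res L$ with $L\in U_k$ sends $Q_k$ into $Q_k$, and since $\Res(U_k)\subseteq\D(\vreg)^W\subseteq\D(\k)$, formula \eqref{tw} gives $\Res L\in\D(Q_k)$; it is $W$-invariant because $\Res L$ lies in $\D(\vreg)^W$ by construction of \eqref{HC}. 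So the substance of the proposition is the reverse inclusion $\D(Q_k)^W\subseteq\Res(U_k)$.

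For the reverse inclusion, I would argue as follows. Take $D\in\D(Q_k)^W$. By Lemma~\ref{symbol}(i), $D\in\D(\vreg)^W$, which via the isomorphism $\D(\vreg)^W\stackrel{\sim}{\to}\e\,\D W\e$ corresponds to an element $\e D\e\in\e\,\D W\e$. Now I want to show $\e D\e$ actually lies in $\e H_k\e=U_k\subseteq\e\,\D W\e$ (identifying $H_k$ with its image under the Dunkl embedding, and recalling from Proposition~\ref{Loc} that $\Hreg=\D W$). The key mechanism is the $\c W$-valued lift: by Theorem~\ref{Qfat}, $\QQ_k$ is the \emph{unique} $H_k$-submodule of $\c[\vreg]\otimes\c W$ that is finite over $\c[V]$ and satisfies $\e\QQ_k=\e(Q_k\otimes 1)$. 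The operator $D$, acting diagonally on $\c[\vreg]\otimes\c W$ (i.e. as $D\otimes\id$ on functions, commuting with the $1\otimes w$ action), preserves $\QQ_k$: indeed since $D$ preserves $Q_k$ and commutes with the $1\otimes\e_{H,i}$ projections, it preserves each defining congruence \eqref{qcc}. Thus the operator $e\,(D\otimes\id)\,e$ — equivalently $\e D\e$ viewed in $\e\,\D W\e$ — preserves $\e\QQ_k=\e(Q_k\otimes 1)$, but more is true: I would show $\e\,\D W\e$ acting on $\e\QQ_k$ through elements that come from $U_k$ exhausts all such operators, by invoking the Morita equivalence of Theorem~\ref{morita} and the cyclicity/generation properties of $\QQ_k$ as an $H_k$-module.

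More concretely, here is the cleanest route I would try. Consider the subalgebra $B:=\{D\in\D(\vreg)^W : D(Q_k)\subseteq Q_k\}=\D(Q_k)^W$. I claim $B$ is contained in $\e\,\D W\e$'s subalgebra stabilizing $\e\QQ_k$, call it $S$, and that $S=U_k$. The containment $U_k\subseteq S$ is Theorem~\ref{Qfat} (as used in Corollary~\ref{sp}). For $S\subseteq U_k$: an element $\e L\e$ of $\e\,\D W\e$ with $\e L\e(\e\QQ_k)\subseteq\e\QQ_k$ can be promoted to an operator on all of $\QQ_k$ using that $\QQ_k$ is generated over $H_k$ by $\e\QQ_k$ — this generation statement follows because $\e$ generates the trivial idempotent and, under the Morita equivalence \eqref{Meq} sending $M\mapsto\e M$, the module $\QQ_k$ corresponds to $\e\QQ_k$, hence $H_k\e\otimes_{U_k}\e\QQ_k\cong\QQ_k$. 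Then $L$ itself can be chosen in $H_k$ (lifting $\e L\e$ along the surjection $\e H_k\e=U_k$, which is all of $\e\,\D W\e\cap(\text{stabilizer})$ by the Morita argument), so $\e L\e\in\e H_k\e=U_k$, and $\Res$ of this element recovers $D$. The main obstacle I anticipate is precisely this last point — carefully justifying that an operator on $\c(V)^W$ (or on $\c[\vreg]\otimes\c W$) preserving $\e\QQ_k$ must already come from $H_k$ rather than merely from $\D W$; this is where the uniqueness clause of Theorem~\ref{Qfat} and the full faithfulness of \eqref{Meq} (Theorem~\ref{morita}) must be combined, and one has to be attentive to the difference between "finite over $\c[V]$" operators and arbitrary differential operators. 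I expect the argument to parallel the Coxeter case treated in \cite{BEG}, with Theorem~\ref{Qfat} (and its uniqueness) substituting for the explicit generator computations that are unavailable in the complex case.
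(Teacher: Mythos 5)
Your first inclusion ($\Res(U_k)\subseteq \D(Q_k)^W$, via Corollary~\ref{sp} and \eqref{tw}) is fine and is exactly how the paper handles that direction. The problem is the reverse inclusion, where your argument has a genuine gap and is in fact circular at the decisive point. Note that, after Lemma~\ref{symbol}$(\mathsf{i})$ and the identification $\D(\vreg)^W\cong \e\,\D W\,\e$, the stabilizer $S=\{\e L\e\in\e\,\D W\e \,:\, \e L\e(\e\QQ_k)\subseteq \e\QQ_k\}$ you introduce is literally the same thing as $\D(Q_k)^W$ (an invariant operator preserves $\e(Q_k\otimes 1)$ iff it preserves $Q_k$). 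So the claim ``$S=U_k$'' \emph{is} the proposition, and your justification --- lifting $\e L\e$ ``along the surjection $\e H_k\e=U_k$, which is all of $\e\,\D W\e\cap(\text{stabilizer})$ by the Morita argument'' --- assumes the conclusion. Neither the uniqueness clause of Theorem~\ref{Qfat} nor the full faithfulness of \eqref{Meq} produces this: those statements control $H_k$-submodules and $H_k$-module maps, not arbitrary elements of $\e\,\D W\e$ that happen to preserve a subspace. A secondary, independent flaw: the assertion that $D\otimes\id$ preserves $\QQ_k$ ``because $D$ preserves $Q_k$ and commutes with the $1\otimes\e_{H,i}$'' does not follow. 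The congruences \eqref{qcc} require divisibility of certain components by $\alpha_H^{n_Hk_{H,i}}$, and an operator preserving the scalar quasi-invariance conditions \eqref{qc} (which compare $f$ with its $W_H$-translates) need not preserve divisibility by powers of a fixed $\alpha_H$.

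What is missing is precisely the mechanism the paper uses for surjectivity: a filtration argument. One passes to associated graded algebras with respect to the differential filtration: by Lemma~\ref{symbol}$(\mathsf{ii})$, $\grd\,\D(Q_k)^W\subseteq \grd\,\D(V)^W$, while the PBW property \eqref{pbw} shows that the Dunkl representation induces an isomorphism $\grd\,U_k\cong \grd\,\D(V)^W$. Hence the associated graded of the filtered map $U_k\to\D(Q_k)^W$ is surjective, and therefore so is the map itself. This symbol computation is the substitute for the explicit generator calculations of the Coxeter case, and it is the step your proposal never supplies; without it (or some equivalent input beyond Theorem~\ref{Qfat} and Morita equivalence), the hard inclusion $\D(Q_k)^W\subseteq\Res(U_k)$ remains unproven.
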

\begin{proof}
In the Coxeter case, this is the result of \cite{BEG}, Proposition
7.22. In general, the proof is similar, {\it provided} the
results of the previous section are available. Indeed,
by Corollary~\ref{sp}, the image of $\, \Res \,$ is contained
in $ \D(Q_k)^W $. So we need only to see that the map
$\,\Res: \,U_k \to \D(Q_k)^W $ is surjective.
Passing to the associated graded algebras, we first note that
$\,\grd\,\D(Q_k)^W \subseteq \grd\,\D(V)^W $ by Lemma
\ref{symbol}~($\mathsf{ii}$). On the other hand, by the PBW
property \eqref{pbw} of $ H_k $, the Dunkl
representation induces an isomorphism $\grd\,U_k \cong
\grd\,\D(V)^W $. Hence, the associated graded map $\grd\,U_k \to
\grd\,\D(Q_k)^W$ is surjective, and so is the map
$\, U_k \to \D(Q_k)^W $.
\end{proof}
\begin{cor}\la{grcof}
$\ \grd\,\D(V) \,$ is a finite module over
$\,\grd\,\D(Q_k) \,$. Consequently $\,\grd\,\D(Q_k)\,$
is a finitely generated (and hence, Noetherian) commutative $\c$-algebra.
\end{cor}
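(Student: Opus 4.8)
The plan is to deduce both assertions with no new computation, simply by observing that $\grd\,\D(Q_k)$, although a priori only a subring of $\grd\,\D(V)$, is squeezed between $\grd\,\D(V)$ and its subring of $W$-invariants, and then invoking classical finiteness results (Hilbert--Noether and Artin--Tate).

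First I would record the chain of inclusions
\begin{equation*}
\grd\,\D(V)^W \ \subseteq\ \grd\,\D(Q_k)\ \subseteq\ \grd\,\D(V)\ ,
\end{equation*}
where $\grd\,\D(V)\cong\c[V\times V^*]$ and $\grd\,\D(V)^W$ denotes its ring of $W$-invariants. The right-hand inclusion is exactly Lemma~\ref{symbol}$(\mathsf{ii})$. For the left-hand one, recall that the proof of Proposition~\ref{is} establishes, using the PBW property \eqref{pbw} of $H_k$ together with Lemma~\ref{symbol}$(\mathsf{ii})$, that the associated graded of the Dunkl representation $\Res$ identifies $\grd\,U_k$ with $\grd\,\D(Q_k)^W$, and that the latter is precisely $\grd\,\D(V)^W\cong\c[V\times V^*]^W$ inside $\grd\,\D(V)$. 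Since $\D(Q_k)^W$ is a filtered subalgebra of $\D(Q_k)$ (for the differential filtration), its associated graded ring $\grd\,\D(Q_k)^W$ embeds into $\grd\,\D(Q_k)$; combining, we get $\grd\,\D(V)^W\subseteq\grd\,\D(Q_k)$.

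Granting this, the Corollary is immediate. By the Hilbert--Noether Lemma, $\grd\,\D(V)\cong\c[V\times V^*]$ is a finitely generated module over its invariant subring $\grd\,\D(V)^W\cong\c[V\times V^*]^W$; a fortiori it is a finitely generated module over the intermediate ring $\grd\,\D(Q_k)$. This is the first assertion. For the second, note that $\grd\,\D(Q_k)$ is a commutative $\c$-subalgebra of the finitely generated (hence Noetherian) algebra $\grd\,\D(V)$, over which $\grd\,\D(V)$ is module-finite; the Artin--Tate lemma then forces $\grd\,\D(Q_k)$ to be a finitely generated $\c$-algebra, and the Hilbert basis theorem that it is Noetherian.

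There is no genuine obstacle here: the content all sits in the middle inclusion $\grd\,\D(V)^W\subseteq\grd\,\D(Q_k)$, which was effectively proved already as part of Proposition~\ref{is}. The only point demanding a little care is that forming $\grd$ commutes with taking $W$-invariants, i.e.\ that $\grd\,\D(Q_k)^W$ really is a subring of $\grd\,\D(Q_k)$ and equals the $W$-invariants of the latter; this holds because we are in characteristic zero, so averaging a lift over $W$ preserves its symbol.
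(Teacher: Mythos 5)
Your proposal is correct and follows essentially the same route as the paper: sandwich $\grd\,\D(Q_k)$ between $\grd\,\D(V)^W$ and $\grd\,\D(V)$, using Proposition~\ref{is} (via the PBW identification $\grd\,U_k\cong\grd\,\D(V)^W$) for the lower inclusion, then invoke finiteness of $\c[V\times V^*]$ over its $W$-invariants and the Artin--Tate/Hilbert--Noether argument for finite generation. The only cosmetic difference is that you spell out the (true but not strictly needed) compatibility of $\grd$ with taking $W$-invariants, whereas the paper only uses the automatic inclusion $\grd\,(\D(Q_k)^W)\subseteq\grd\,\D(Q_k)$.
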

\begin{proof}
We have already seen that $\, \grd\,\D(Q_k)^W \subseteq \grd\,\D(Q_k)
\subseteq \grd\,\D(V) \,$.
On the other hand, by Proposition~\ref{is}, $\,\grd\,\D(Q_k)^W =
\grd\,U_k = \grd\,U_0 = \grd\,\D(V)^W = [\grd\,\D(V)]^W \,$. Since
$W$ is finite, $\,\grd\,\D(V) \,$ is a finite module over $ [\grd\,\D(V)]^W $,
and hence {\it a fortiori}
over $ \grd\,\D(Q_k) $. This proves the first claim of the corollary.
The second claim follows from the first by the Hilbert-Noether Lemma.
\end{proof}

\begin{remark}
Following \cite{Kn}, let us say that an algebra $\, A \subseteq \D(\k) \,$
is {\it graded cofinite} in $ \D(V) $ if $ \grd\, A \subseteq \grd\,\D(V) $
and $ \grd\,\D(V) $ is a finite module over $ \grd\,A $. Under the
assumption that $\, A \subseteq \D(V)\,$, such algebras are described
in \cite{Kn}. Corollary~\ref{grcof} shows that $\,\D(Q_k)\,$ is graded cofinite
in $ \D(V) $, although it is actually {\it not} a subalgebra of $ \D(V)$.
It might be interesting to see whether the geometric description of graded
cofinite algebras given in \cite{Kn} extends to our more general situation.

Another interesting problem is to study the structure of $\,\grd\,\D(Q_k)\,$
as a module over $\,\grd\,\D(Q_k)^W \,$.
This is a natural ``double'' of the $ \c[V]^W$-module $ Q_k $.
In contrast to Theorem~\ref{chev}, the module $\,\grd\,\D(Q_k)\,$ is
not free over $\,\grd\,\D(Q_k)^W $, since $\, \D(Q_k) \,$
is not free over  $\, \D(Q_k)^W $ (see Corollary~\ref{Dchev} below).
\end{remark}
\subsection{Simplicity and Morita equivalence}
\la{Simple}
We now prove Theorem~\ref{ma} from the Introduction, which
is a generalization of \cite{BEG}, Theorem~9.7. Our proof
is similar to that of \cite{BEG}, except for the fact
that $ Q_k $ may not be a ring in general. We give some details
for completeness.

\begin{proof}[Proof of Theorem \ref{ma}]
First, by Theorem~\ref{morita}, $\, U_k \,$ is a
simple ring, and hence so is $\, \D(Q_k)^W $, by
Proposition~\ref{is}. An easy argument (see \cite{BEG}, p.~319)
shows that $\, \c[V]^W \cap\, I \not= \{0\} $ for any nonzero
two-sided ideal $ I $ of $ \D(Q_k) \,$. Since $\, \c[V]^W =
Q_k^W \subset \D(Q_k)^W $, we have $ \D(Q_k)^W \cap I \not=
\{0\}\,$ and therefore $\, 1 \in I \,$ (by simplicity of $ \D(Q_k)^W$).
This proves the simplicity of $ \D(Q_k)\,$.

Now, letting $\,\P := \{\,D \in \D(\k)\ |\ D(f) \in Q_k \
\text{for all} \ f \in \c[V]\,\}\,$, we note that $ \P \subset
\D(V) $ is a right ideal of $ \D(V) $, with $\, \End_{\D(V)}\,\P
\cong \D(Q_k)\,$. To see the latter, we can argue as in \cite{SS},
Proposition~3.3. First, it is clear that $\,\P\,$ is closed under
the left multiplication by the elements of $ \D(Q_k) $ in $ \D(\k)
$: this gives an embedding $\,\D(Q_k) \subseteq \End_{\D(V)}\,\P
\,$. On the other hand, $\,\P(\c[V]) = Q_k \,$, since the $
\D(Q_k) $-module $\, Q_k/\P(\c[V])\,$ has a nonzero annihilator
(containing $ \P $), and hence, must be $ 0 $, by simplicity of $
\D(Q_k)$. Identifying now $\, \End_{\D(V)}\,\P \cong \{\,D \in
\D(\k)\ |\ D.\P \subseteq \P\,\}\,$, we have $\, D(Q_k) =
D\,\P(\c[V]) \subseteq \P(\c[V]) = Q_k\,$ for any $\, D \in
\End_{\D(V)}\,\P \,$, whence $\, \End_{\D(V)}\,\P \subseteq
\D(Q_k)\,$.

Finally, since $ \D(V) $ and $ \D(Q_k) $ are both simple rings,
$\, \End_{\D(V)}\,\P \cong \D(Q_k)\,$ implies that $ \P $ is a
progenerator in the category of right $ \D(V)$-modules, and $
\D(V) $ and $ \D(Q_k) $ are Morita equivalent rings.
\end{proof}

As an interesting consequence of Proposition~\ref{is} and Theorem~\ref{ma}, we get
\begin{cor}\la{Dchev}
$ \D(Q_k) $ is a non-free projective module over $ \D(Q_k)^W $.
\end{cor}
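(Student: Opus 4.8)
The plan is to deduce Corollary~\ref{Dchev} from the results already assembled: $\D(Q_k)$ is simple and Morita equivalent to $\D(V)$ (Theorem~\ref{ma}), the identification $U_k \cong \D(Q_k)^W$ (Proposition~\ref{is}), and Theorem~\ref{morita}, which tells us $U_k$ is Morita equivalent to $H_k$ via the idempotent $\e$. The projectivity of $\D(Q_k)$ over $\D(Q_k)^W$ should be essentially formal, while the failure of freeness is the substantive point and will be extracted from the representation theory of $H_k$ at $k=0$.

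For projectivity, I would argue as follows. Write $B := \D(Q_k)$ and $B^W := \D(Q_k)^W \cong U_k$. Since $W$ is finite (and $\mathrm{char}\,\c = 0$), $B$ is a $W$-algebra with $B^W$ its fixed subring; the averaging idempotent $\e = |W|^{-1}\sum_w w$ gives $B \rtimes W$ and the standard fact that $B$ is a direct summand of $B\rtimes W$ as a $(B^W, B^W)$-bimodule — more precisely, $\e(B\rtimes W)\e \cong B^W$ and $(B\rtimes W)\e$ is a $B^W$-module with $\e(B\rtimes W)\e$-endomorphisms. The cleanest route: $B$ is simple, hence $B\rtimes W$ is simple if the action is ``sufficiently nontrivial'' — but rather than chase that, I would instead observe directly that $B \cong \e H_k \e \otimes_{?}\!\cdots$ is too indirect. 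The honest approach is: by Theorem~\ref{morita} and Proposition~\ref{is}, $\D(Q_k)^W = U_k = \e H_k\e$ and $\Mod(H_k)\to\Mod(U_k)$, $M\mapsto \e M$, is an equivalence; under this equivalence $H_k$ (as a left module over itself) corresponds to $\e H_k$, a projective $U_k$-module, but more relevantly $H_k\e$ corresponds to $\e H_k\e = U_k$. What I actually want is to recognize $\D(Q_k)$ itself as a specific $U_k$-module. By the bimodule $\P$ of the proof of Theorem~\ref{ma} and the analysis there, $\D(Q_k)\cong\End_{\D(V)}(\P)$ with $\P$ a progenerator; the corresponding statement over $W$-fixed rings, together with the decomposition $\c[V]\otimes\c W \cong \bigoplus_\tau (\dots)$ reflected in \eqref{ssum}, shows $\D(Q_k)$ is finitely generated projective over $\D(Q_k)^W$. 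Concretely: $\D(Q_k)$ is a finitely generated module over $\D(Q_k)^W$ (from Corollary~\ref{grcof}, $\grd\,\D(Q_k)$ is finite over $\grd\,\D(Q_k)^W = [\grd\,\D(V)]^W$... wait, one must be careful, it is finite over $\grd\,\D(Q_k)$ which contains $\grd\,\D(Q_k)^W$; finiteness of $\grd\,\D(V)$ over $[\grd\,\D(V)]^W$ gives finiteness of $\grd\,\D(Q_k)$ over $\grd\,\D(Q_k)^W$, hence $\D(Q_k)$ is a finite $\D(Q_k)^W$-module), and then projectivity follows because, over the simple ring $\D(Q_k)^W$, every finitely generated module of finite homological type is projective — indeed $\D(Q_k)$ has finite global dimension (being, via Morita equivalence with $\D(V)$, of finite global dimension) and a finitely generated module over a simple Noetherian ring that is a maximal order / hereditary-in-the-relevant-sense is projective; cleanest: $\D(Q_k) = \e(\D(Q_k)\rtimes W)$ as a left $\D(Q_k)^W$-module, and $\D(Q_k)\rtimes W$ is a simple ring (as $\D(Q_k)$ is simple and $W$ acts faithfully-ish by outer automorphisms on its fraction skew field), so $\e(\D(Q_k)\rtimes W)$ is a projective — in fact progenerator — left $\e(\D(Q_k)\rtimes W)\e = \D(Q_k)^W$-module. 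That is the projectivity.

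For non-freeness, the point is a rank obstruction. Over the fraction skew-field, or rather after base-changing suitably, the rank of $\D(Q_k)$ as a $\D(Q_k)^W$-module equals $|W|$ (since generically $Q_k$ agrees with $\c[V]$ on $\vreg$, and $\D(\vreg)$ is free of rank $|W|$ over $\D(\vreg)^W$ only after a skew-field completion — more carefully, $\D(\vreg)\rtimes W$ is Morita equivalent to $\D(\vreg)^W$ with progenerator of ``rank $|W|$'' in the appropriate sense). If $\D(Q_k)$ were free over $\D(Q_k)^W \cong U_k \cong \e H_k\e$, then under the Morita equivalence $M\mapsto\e M$ it would correspond to a free... no: a free $U_k$-module of rank $r$ corresponds under the inverse equivalence to $(H_k\e)^{\oplus r}$, and $\D(Q_k)$ itself should correspond to $H_k\e \otimes_{?}$ — the cleanest obstruction uses that the equivalence $\Mod(U_k)\simeq\Mod(H_k)$ sends $\D(Q_k)^W = U_k$ to $H_k\e$ and sends $\D(Q_k)$ to... the $H_k$-module $\D(Q_k)\otimes_{U_k}\e H_k = \D(Q_k)\e$ computed inside $\D(V)\rtimes W$-localized — and one checks this is $\P\rtimes(\dots)$, not free of the naive rank, because the class of $\P$ in $K_0(\D(V))$ is nontrivial. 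Actually the slick argument: $\D(Q_k)$ is free over $\D(Q_k)^W$ iff $\P$ is free over $\D(V)$ (transporting along the two Morita equivalences $\D(Q_k)\sim\D(V)$, $\D(Q_k)^W = U_k \sim H_k$, which are compatible with $W$-symmetrization), and $\P$ is free over $\D(V) = A_n(\c)$ iff $Q_k \cong \c[V]$ as $\c[V]$-modules — but $Q_k$ is a rank-one $A_k$-module that is \emph{not} isomorphic to $\c[V]$ unless $k = 0$ (it lives on the singular variety $X_k \ne \a^n$). Hence, for $k\ne 0$, $\D(Q_k)$ is not free over $\D(Q_k)^W$.

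The main obstacle will be making the ``compatibility of the two Morita equivalences with $W$-symmetrization'' precise — i.e., that transporting $\D(Q_k)$ as a $\D(Q_k)^W$-module through $U_k\sim H_k$ on one side and $\D(Q_k)\sim\D(V)$ on the other identifies it with the $\D(V)$-module $\P$ viewed as a module over $\D(V)^W\cong U_0$-no, this needs care since $U_k$ and $U_0$ are different rings. The correct statement is that the progenerator $\P$ has nontrivial class, so that $\End_{\D(V)}(\P)\cong\D(Q_k)$ is not isomorphic to $\D(V)$ as a ring \emph{together with} its natural module structure over the common subring; equivalently, $\D(Q_k)$ and $\D(V)$ are Morita equivalent but not isomorphic, and a free module over $\D(Q_k)^W = U_k$ would force, upon the canonical identifications, $\P \cong \D(V)^{\oplus m}$, contradicting that $\D(Q_k)\not\cong\D(V)$. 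I would flesh this out via the observation that $\D(Q_k)\cong\D(V)$ would imply $Q_k$ is isomorphic, as a module, to an ideal of $\c[V]$ that is principal, forcing $Q_k\cong\c[V]$ and thus $A_k = \c[V]$, $k = 0$; and the freeness hypothesis yields exactly this. This last implication — free over $\D(Q_k)^W$ forces $\D(Q_k)\cong\D(V)$ — is where I expect to spend the real effort, and it is the crux of the corollary.
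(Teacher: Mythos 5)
Your projectivity argument is essentially the paper's: both pass to the crossed product $\D(Q_k)\ast W$, use the simplicity of $\D(Q_k)$ (Theorem~\ref{ma}) and of $\D(Q_k)^W\cong U_k$ (Proposition~\ref{is}, Theorem~\ref{morita}) together with a Montgomery-type theorem to get that $\D(Q_k)\ast W$ is simple and Morita equivalent to $\D(Q_k)^W$, and conclude that $\D(Q_k)$ is a progenerator, hence finitely generated projective, over $\D(Q_k)^W$. That half is fine (your ``outer automorphisms'' aside is unnecessary: simplicity of the two rings is the input the cited theorem needs).

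The non-freeness half has a genuine gap, and you have located it yourself: the implication ``$\D(Q_k)$ free over $\D(Q_k)^W$ $\Rightarrow$ $\P$ free over $\D(V)$ (equivalently $\D(Q_k)\cong\D(V)$, $Q_k\cong\c[V]$, $k=0$)'' is left unproven, and in fact this reduction cannot work, because the obstruction to freeness has nothing to do with $k\neq 0$ or with the singularity of $X_k$. The corollary, and the paper's proof of it, apply verbatim at $k=0$: already $\D(V)$ is not free over $\D(V)^W$ (classically visible for $W=\Z/2$ acting on the first Weyl algebra, where the odd part is a non-free projective over the invariants), even though there $\P=\D(V)$ is free and $Q_0=\c[V]$. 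So your proposed equivalence is false at $k=0$, and any argument deriving non-freeness from $Q_k\not\cong\c[V]$ misses the actual mechanism. Likewise your remark that ``the class of $\P$ in $K_0(\D(V))$ is nontrivial'' cannot serve as the obstruction, since $K_0(A_n(\c))\cong\Z$ and every rank-one projective is stably free; the relevant $K$-theory is equivariant. That is exactly what the paper does: it computes $K_0(\D(Q_k)\ast W)\cong K_0(\c W)$ in two ways, once through $\D(Q_k)\ast W\,\sim\,\D(Q_k)^W\cong U_k\,\sim\,H_k$ and $\c W\into H_k$, and once through the progenerator $\P\otimes_{\D(V)}(\D(V)\ast W)$ and $\c W\into\D(V)\ast W$. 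Under the second identification the class of $\D(Q_k)\ast W$ is the class of the regular representation $[\c W]$, which is not divisible by any integer greater than one in $K_0(\c W)$, whereas freeness of rank $r$ over $\D(Q_k)^W$ would make this class $r$ times the class of the trivial representation under the first. Supplying this equivariant divisibility argument (rather than comparing $Q_k$ with $\c[V]$) is what is needed to close your proof.
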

\begin{proof}
Regarding $ \D(Q_k) $ as a right module over $\,\D(Q_k)^W $, we can identify
\begin{equation}
\la{end1}
\End_{\D(Q_k)^W}\,[\D(Q_k)] \cong  \D(Q_k) \ast W\ .
\end{equation}
Since $ \D(Q_k)^W $ and $ \D(Q_k) $ are simple rings, $\, \D(Q_k) \ast W $ is 
a simple ring, Morita equivalent to $\,\D(Q_k)^W $ (see \cite{M}, Theorem~2.4). 
It follows from \eqref{end1} that $ \D(Q_k) $ is a progenerator in the category 
of right $ \D(Q_k)^W$-modules; in particular, $ \D(Q_k) $ is a f.~g. projective 
module over $ \D(Q_k)^W $.

To prove that $ \D(Q_k) $ is not free over $ \D(Q_k)^W $, we will
use a $K$-theoretic argument identifying the Grothendieck group
of $ \D(Q_k) \ast W $ in two different ways. First, we have
\begin{equation}
\la{end2}
K_0\,(\D(Q_k) \ast W) \cong K_0\left(\D(Q_k)^W\right) \cong
K_0(U_k) \cong K_0(H_k) \cong K_0(\c W)\ ,
\end{equation}
where the first isomorphism is induced by the above Morita equivalence 
between $ \D(Q_k)\ast W $ and $\, \D(Q_k)^W$, the second by the 
isomorphism $\,\D(Q_k)^W \cong U_k \,$ of Proposition~\ref{is}, the third by 
the Morita equivalence between $ U_k $ and $ H_k $ (see Theorem~\ref{morita}) 
and the last by the natural embedding $\,\c W \into H_k\,$.

Second, by Theorem~\ref{ma}, the ring $ \D(Q_k) $ is Morita
equivalent to $ \D(V) $ via the progenerator $ \P = \D(V,\,Q_k)$. 
Extending $ \tilde{\P} := \P \otimes_{\D(V)} (\D(V) \ast W) $, it
is easy to see that $ \tilde{\P} $ is a progenerator from
$ \D(V) \ast W $ to $ \D(Q_k) \ast W $. Hence, we get isomorphisms
\begin{equation}
\la{end3}
K_0\,(\D(Q_k) \ast W) \cong K_0\,(\D(V) \ast W) \cong K_0(\c W)\ ,
\end{equation}
where the last one is induced by the inclusion $\,\c W \into \D(V) \ast W\,$.

Now, suppose that $ \D(Q_k) $ is a free module over $ \D(Q_k)^W $
of rank $ r > 1 $ (say). Then, under the isomorphism \eqref{end2}, 
the class of the free module $\,\D(Q_k) \ast W\,$  
corresponds to $r$ times the class of the trivial representation of $ W $; 
in particular, the image of $\,[\D(Q_k) \ast W]\,$
under \eqref{end2} is divisible by $ r $ in $\, K_0\,(\c W)\,$.
On the other hand, under \eqref{end3}, the class $\,[\D(Q_k) \ast W]\,$
corresponds to the class of the regular representation 
$\, [\c W] \in K_0(\c W) $. Since $\, K_0(\c W) = \bigoplus_{\tau \in \W} \Z \cdot [\tau] \,$ 
is a free abelian group based on the classes of irreducible representations, $\, [\c W] $ 
is obviously not divisible in $ K_0(\c W) $ by any integer greater than one. 
Thus, we arrive at contradiction which proves that $ \D(Q_k) $ cannot be a free module 
over $ \D(Q_k)^W $.
\end{proof}

\section{Shift Operators}
\la{ShOper}
\subsection{Automorphisms of $\mathcal DW$} We start by describing
certain automorphisms of the algebra $\mathcal \D W$ and their action on
the subalgebras $H_k$ and $U_k=\e H_k\e$. Recall that $ \D W$ is generated
by the elements $\,w \in W \,$, $\, x \in V^*\,$ and
$ \xi \in V $, so any automorphism of $ \D W $ is determined
by its action on these elements.

Given a one-dimensional character $\chi$ of $W$, we define our first automorphism by
\begin{equation}\la{fchi}
w\mapsto\chi(w)w\,,\quad x\mapsto x\,,\quad
\partial_\xi\mapsto\partial_\xi
\,.
\end{equation}
Under \eqref{fchi}, the subalgebras $H_k$ and $U_k$ transform to
$H_{k'}$ and $\e_\chi H_{k'}\e_\chi$, where
$\,\e_\chi\in\c W$ is the idempotent corresponding to $\chi$, and
$\,k'_{H,i} := k_{H,i+a_H} $ with $\, a_H \in \Z \,$ determined by
$\,\chi|_{W_H}=(\det)^{a_H}\,$.

To define the second automorphism we fix a $W$-orbit
$C\subseteq \A $ and a $W$-invariant
closed $1$-form $\omega$ on $\vreg$:
\begin{equation}\la{om}
\omega=\lambda\,d\,\log \delta_C=\lambda\sum_{H\in
C}\frac{d\,\alpha_H}{\alpha_H}\ ,\quad \lambda \in \c\ .
\end{equation}
Then, regarding $\,\xi\in V\,$ as a constant vector field on $\vreg$,
we define
\begin{equation}\la{gomega}
w\mapsto w\,,\quad x\mapsto x\,,\quad
\partial_\xi\mapsto\partial_\xi+\omega(\xi)
\,,
\end{equation}
This automorphism maps the algebras $H_k$ and $U_k$ to $H_{k'}$ and $U_{k'}$,
where $k'$ is given by $ k'_{C,i}=k_{C,i}-\lambda/n_{C}$ and $k'_{C',i}=k_{C',i}$ for $C'\ne C$.

Finally, for a fixed $\,C \in \A/W \,$, we consider the
automorphism $\,u \mapsto \delta_C u\,\delta_C^{-1} \,$ given by
conjugation by the element \eqref{delc}. It is easy to
see that this automorphism is the composition of the automorphism
\eqref{fchi}, with $\chi=\det_C^{}$, and the automorphism
\eqref{gomega}, with $\lambda=-1$. Therefore, it maps
$H_k$, $U_k$ to $H_{k'}$ and
$\epsilon_C\,H_{k'}\epsilon_C$, where
\begin{equation}\la{edelta}
\epsilon_C=\delta_C^{}\,\e\,\delta_C^{-1}=|W|^{-1}\sum_{w\in W}
(\mathrm{det} _C^{}w)w\,,
\end{equation}
and $k'$ is related to $k$ by
\begin{equation}\la{cdelta}
k'_{C,i}=k_{C,i+1}+1/n_C\quad\text{and}\quad k'_{C',i}=k_{C',i}\
\,\text{for}\ C'\ne C\,.
\end{equation}

\subsection{Twisted quasi-invariants} For the purposes of this section,
we redefine quasi-invariants in
a slightly greater generality to allow fractional multiplicities.
Precisely, we fix a $W$-invariant function $\,a: \A \to \Z$ and
choose $\,k_{C,i} \in \Q \,$ so that
\begin{equation}\la{co}
k_{C,i}\equiv a_C/n_C\ \mod\, \Z\ .
\end{equation}
(In particular, $\,a=0\,$ corresponds to the case of integral
$k$'s.) For such $k$, we take $\,Q_k\,$ to be the subspace of all
$\, f \in \c[\vreg] \,$ satisfying
\begin{equation}
\label{qc3} \e_{H,-i-a_H} (f) \equiv 0\
\mbox{mod}\,\langle\alpha_H^{n_Hk_{H,i}}\rangle
\end{equation}
for all $H\in \A $ and $i=0,1,\ldots, n_H-1$. In the
case of negative multiplicities,
$\langle\alpha_H^{n_Hk_{H,i}}\rangle$ should be understood as the
span of rational functions $\,f\in \c[\vreg]\,$ for which
$f\cdot\alpha_H^{-n_Hk_{H,i}}$ is regular along $H$ (although it
may still have poles along other hyperplanes).

The proof of Theorem \ref{Qfat} will work in this more general situation,
if we modify the definition of $\QQ_k\subset \c[\vreg]\otimes \c W $ in the following way, cf. \eqref{qcc}:
\begin{equation}
\label{qcn3}
\varphi \in \QQ_k \quad \Longleftrightarrow\quad (1\otimes\e_{H,i+a_H}) \varphi \equiv 0\
\mbox{mod}\,\langle\alpha_H^{n_Hk_{H,i}}\rangle\otimes\c W
\end{equation}
for all $H\in \A $ and $i=0,1,\ldots, n_H-1$.
\begin{example}\la{ex}
Let $\,W=\Z/n\Z \,$ and suppose that $\, k_i\equiv a/n\ (\mod\,
\Z)$. In that case, we have
\begin{equation}\la{nc1}
\QQ_k=\bigoplus_{i=0}^{n-1} x^{nk_i}\c[x]\e_{i+a}\
,\quad\,\e_i=\frac{1}{n}\sum_{w\in W}(\det w)^{-i} w\,.
\end{equation}
On the other hand, it is easy to see that the subspace $\,Q_k
\subseteq \c[V] \,$ is still described by formula \eqref{exq}, which
is actually independent of $a$. As a consequence, for different
values of $ k $, we may get the same $Q_k$. For example, if we take
$ k'$ to be
\begin{equation}
\label{g} k'_{i}=k_{i-1}-\frac{1}{n}\ \text{for}\ i=1,\dots,n-1\,,\
k'_{0}= k_{n-1} - \frac{1}{n}+1\ ,
\end{equation}
then the formula \eqref{exq} gives that $\,Q_{k'}=Q_k\,$. More
generally, this holds for all iterations of \eqref{g}, which form a
cyclic group of order $n$. In the next section, we extend this
observation to an arbitrary group $W$.
\end{example}

\bigskip

\subsection{Symmetries of the Dunkl representation}
The Dunkl representation defines a flat family of subalgebras $ \{U_k\} $
of $ \D(\vreg)^W $, with $\grd(U_k)=\c[V\times V^*]^W$ for any $k$.
It turns out that
this family is invariant under a certain subgroup $G$ of affine
transformations of $k$, so that $\,U_k = U_{k'}\,$ whenever $k'=g\cdot k$
with $g\in G$. This kind of invariance is not obvious from definitions:
we will deduce it by studying the action of $ G $
on modules $Q_k$ of quasi-invariants.

\medskip

First, as in Example~\ref{ex}, for $\,C\in \A/W\,$ we define
the transformation $\,g_C: k\mapsto k'\,$ by
\begin{equation}
\label{gh} k'_{C, i}=k_{C, i-1}-\frac{1}{n_C}+\delta_{i,0}\
\text{and}\quad k'_{C',i}=k_{C',i}\quad\text{for}\ C'\ne C\,.
\end{equation}
Note that $(g_C)^{n_C}= \id $. Note also that if $k$ satisfies
the conditions \eqref{co}, then $k'$ satisfies the same
conditions, with $a$ replaced by $ a' := a-1_C $, where $1_C\,: \A\to \Z$
is the characteristic function of the orbit $C$.

\begin{prop}\la{G} Let $G$ denote the (abelian) group generated by all
$g_C$ with $C\in \A/W$. Then $Q_{k'}=Q_k$ for any $k'\in G\cdot
k$, provided $k$ satisfies \eqref{co}.
\end{prop}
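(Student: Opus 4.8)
The plan is to reduce everything to the one-dimensional (cyclic group) computation already carried out in Example~\ref{ex}, using the local structure of quasi-invariants. Since $G$ is generated by the $g_C$ and is abelian, it suffices to prove that $Q_{g_C\cdot k}=Q_k$ for a single orbit $C$, assuming $k$ satisfies \eqref{co}. The first step is to recall the local description of $Q_k$: by Lemma~\ref{alg} and (the appropriate analogue of) Lemma~\ref{inj}, membership in $Q_k$ can be tested hyperplane by hyperplane, via the normal expansion of $f\in\c[\vreg]$ along each $H\in\A$. For the transformation $g_C$, the defining conditions \eqref{qc3} are altered only along hyperplanes in the orbit $C$, and along such an $H$ the relevant data is the action of the cyclic stabilizer $W_H$ on the normal coordinate $\alpha_H$. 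Thus the whole question localizes to $W_H\cong\Z/n_C\Z$ acting in one dimension.

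First I would make precise the reduction to dimension one. Fix $H\in C$ with stabilizer $W_H$ of order $n=n_C$. Writing $f$ in its normal expansion $f=\sum_{s\ge 0}c_{H,s}(x)\alpha_H^{s}$ with $x\in H$, the operator $\e_{H,-i-a_H}$ picks out those terms $s$ with $s\equiv i+a_H\pmod{n}$, and condition \eqref{qc3} says exactly that in the ``branch'' $s\equiv i+a_H\pmod n$ the lowest occurring power is at least $n k_{H,i}$ (interpreting negative exponents via regularity of $f\cdot\alpha_H^{-nk_{H,i}}$ as in the text). Encoding the set of admissible exponents along $H$ as a subset $S_H\subseteq\frac1n\Z$ (or, after clearing denominators, of $\Z$) that is invariant under translation by $n$ and cofinite, I would observe that $S_H$ depends only on the multiset $\{nk_{H,i}+i+a_H \bmod n\}$ together with the shift by $a_H$ — and that this combinatorial datum is literally unchanged by $g_C$. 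Concretely, the reindexing $k'_{C,i}=k_{C,i-1}-1/n+\delta_{i,0}$ together with $a\mapsto a-1_C$ sends $nk'_{C,i}+i+a'_H=n(k_{C,i-1}-1/n+\delta_{i,0})+i+a_H-1=nk_{C,i-1}+(i-1)+a_H+n\delta_{i,0}$, which as $i$ runs over $\Z/n\Z$ is a permutation of the original values $\{nk_{C,j}+j+a_H\}$; hence $S_H$ is preserved. This is exactly the content of the one-dimensional computation \eqref{nc1}–\eqref{g} in Example~\ref{ex}, and the point is that it is purely local at $H$.

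Having established the invariance of each $S_H$, the conclusion $Q_{g_C\cdot k}=Q_k$ follows: a rational function $f\in\c[\vreg]$ lies in $Q_k$ iff for every $H\in\A$ its normal expansion along $H$ is supported on $S_H$, and this criterion is unchanged by $g_C$ along hyperplanes in $C$ and trivially unchanged elsewhere. Finally, since the $g_C$ for distinct orbits affect disjoint sets of hyperplanes, they commute and $G$ is abelian with $G=\prod_{C}\Z/n_C\Z$; iterating gives $Q_{k'}=Q_k$ for all $k'\in G\cdot k$.

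The main obstacle is not the algebra but getting the bookkeeping with fractional and negative multiplicities exactly right: one must check that the ``normal expansion support'' reformulation of \eqref{qc3} is valid in the generality of \eqref{co} (including the convention for $\langle\alpha_H^{n_Hk_{H,i}}\rangle$ when the exponent is negative), and that the idempotents $\e_{H,-i-a_H}$ do pick out precisely the residue class $i+a_H$ in the presence of the character twist by $\det^{a_H}$. I would handle this by first treating $W=\Z/n\Z$ in complete detail — essentially unpacking Example~\ref{ex} — and then invoking the localization statement (Lemma~\ref{inj}$(\mathsf{iii})$ and the normal-expansion argument of Lemma~\ref{alg}) to transfer it hyperplane by hyperplane to general $W$, noting that along a single $H$ only the subgroup $W_H$ and the single coordinate $\alpha_H$ enter.
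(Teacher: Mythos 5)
Your proposal is correct and follows essentially the same route as the paper, whose proof likewise consists of checking that the two systems of congruences \eqref{qc3} for $k$ and for $k'=g_C\cdot k$ coincide, hyperplane by hyperplane, via the rank-one bookkeeping of Example~\ref{ex}. One cosmetic slip: in your displayed identity the left-hand side should carry $a_H$ rather than $a'_H=a_H-1$ (equivalently, compare the minimal exponents $n_Ck'_{C,i}+i$ with $n_Ck_{C,j}+j$ directly); with that correction the values are indeed cyclically permuted, the admissible exponent set along each $H\in C$ is unchanged, and the conclusion follows exactly as you say.
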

\begin{proof}
A straightforward calculation shows that the two systems of congruences
\eqref{qc3} for $k$ and $\,k' = g_C \cdot k\,$ are equivalent.
As in Example \ref{ex} above, this implies the
equality $ Q_{k'} = Q_k $.
\end{proof}
\medskip

For the purposes of Section \ref{MQ}, we will need an analogue of
the above result for the modules of $\tau$-valued quasi-invariants
$\QQ_k(\tau)$. First, we need to modify their definition similarly
to \eqref{qcn3}:
\begin{equation}
\label{qcn31} \varphi \in \QQ_k(\tau) \quad \Longleftrightarrow\quad
(1\otimes\e_{H,i+a_H}) \varphi \equiv 0\
\mbox{mod}\,\langle\alpha_H^{n_Hk_{H,i}}\rangle\otimes\tau
\end{equation}
for all $H\in \A $ and $i=0,1,\ldots, n_H-1$. Then it is easy to
see that $\QQ_k(\tau)$ can be described similarly to \eqref{qtauh}:
\begin{equation}\label{qttauh}
\QQ_k(\tau)=\bigcap_{H \in \A}\QQ_k^H(\tau)\,,\quad
\QQ_k^H(\tau)=\bigoplus_{i=0}^{n_H-1}
\langle\alpha_H\rangle^{n_Hk_{H,i}}\,\otimes \e_{H,i+a_H}\tau\,.
\end{equation}
As before, the space $\QQ_k(\tau)\subset \c[\vreg]\otimes\tau$ is
invariant under the differential action of $H_k$. As a result, the
subspace $\e\QQ_k(\tau)$ of $W$-invariant elements in $\QQ_k(\tau)$
becomes a module over the spherical subalgebra $\e H_k \e$.
Furthermore, the proof of Lemma \ref{inter} applies verbatim, so we
have the formula
\begin{equation}\label{eho11}
\e\QQ_k(\tau)=\bigcap_{H\in\A}\e_{H,0}\QQ_k^H(\tau)\,,
\end{equation}
with each of the subspaces $\e_{H,0}\QQ_k^H(\tau)$ described
similarly to \eqref{eho1}:
\begin{equation}\label{eho12}
\e_{H,0}\QQ_k^H(\tau)=\bigoplus_{i=0}^{n_H-1}
\alpha_H^{n_Hk_{H,i}+i}\,\c[\vreg^H]^{W_H}\otimes\e_{H,i+a_H}\tau\,.
\end{equation}
Finally, using \eqref{eho11} and \eqref{eho12}, we obtain similarly
to Proposition \ref{G} the following result.

\begin{prop}\la{Gtau} Let $G$ denote the abelian group generated by all
transformations \eqref{gh}. Then for any $k$ satisfying \eqref{co}
and any $k'\in G\cdot k$, we have $\e\QQ_{k'}(\tau)=\e\QQ_k(\tau)$
as subspaces in $\c[\vreg]\otimes\tau$.
\end{prop}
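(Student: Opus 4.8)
The plan is to mimic the structure of the proof of Proposition~\ref{G}, working orbit by orbit and reducing to a one-dimensional computation along a single reflection hyperplane. Since $G$ is generated by the commuting transformations $g_C$, it suffices to prove the statement for a single generator $k' = g_C \cdot k$, and then iterate. By the formula \eqref{eho11}, we have $\e\QQ_k(\tau) = \bigcap_{H \in \A} \e_{H,0}\QQ_k^H(\tau)$, and the transformation $g_C$ only alters the multiplicities along hyperplanes $H$ in the orbit $C$; for $H \notin C$ the subspaces $\e_{H,0}\QQ_{k'}^H(\tau)$ and $\e_{H,0}\QQ_k^H(\tau)$ coincide on the nose. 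So the whole problem reduces to showing, for each $H \in C$, that $\e_{H,0}\QQ_{k'}^H(\tau) = \e_{H,0}\QQ_k^H(\tau)$ as subspaces of $\c[\vreg]\otimes\tau$.

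First I would invoke the explicit decomposition \eqref{eho12}:
\[
\e_{H,0}\QQ_k^H(\tau) = \bigoplus_{i=0}^{n_H-1} \alpha_H^{n_H k_{H,i} + i}\,\c[\vreg^H]^{W_H} \otimes \e_{H,i+a_H}\tau\,.
\]
Here the crucial point is that $\c[\vreg^H]^{W_H}$ is a ring of $W_H$-invariant functions regular along $H$, and $\alpha_H^{n_H}$ lies in it, so the module $\alpha_H^{n_H k_{H,i}+i}\,\c[\vreg^H]^{W_H}$ depends only on the residue of $n_H k_{H,i} + i$ modulo $n_H$ together with its "magnitude"; more precisely it is the span of functions $\alpha_H^j g$ with $g \in \c[\vreg^H]^{W_H}$ and $j \in \Z$ satisfying $j \ge n_H k_{H,i} + i$, $j \equiv i \pmod{n_H}$ — exactly the one-dimensional picture of Example~\ref{ex}, formula \eqref{nc1}. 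Now substitute the definition \eqref{gh} of $g_C$: $k'_{C,i} = k_{C,i-1} - 1/n_C + \delta_{i,0}$ and $a' = a - 1_C$, so along $H \in C$ we have $n_H k'_{H,i} + i = n_H k_{H,i-1} + i - 1 + n_H\delta_{i,0}$ and the idempotent index shifts from $i + a_H$ to $i + a_H - 1$. Reindexing the direct sum $i \mapsto i+1$ (cyclically, using $\e_{H,i} = \e_{H,i+n_H}$), one checks the two direct-sum decompositions are literally the same family of summands, the $\delta_{i,0}$ term accounting for the wrap-around $i = n_H - 1 \mapsto 0$ exactly as in \eqref{g}. Since $(g_C)^{n_C} = \id$ and the argument is manifestly compatible with iteration, the result follows for all $k' \in G\cdot k$.

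I expect the main obstacle to be purely bookkeeping: getting the cyclic reindexing and the $\delta_{i,0}$ correction to line up correctly, and confirming that the rational-function interpretation of $\langle\alpha_H^{n_H k_{H,i}}\rangle$ for negative or fractional exponents (as spelled out after \eqref{qc3}) behaves well under the shift — i.e. that "regular along $H$" is preserved when we multiply by $\alpha_H^{\pm 1}$ and shift indices. None of this is deep, but it is the kind of calculation where an off-by-one in the exponent or a misplaced orbit-sum would break the claimed equality, so I would carry it out carefully in the one-dimensional model \eqref{nc1} first and only then transfer it verbatim to each hyperplane $H \in C$ via \eqref{eho12}. The passage from a single $g_C$ to the full group $G$ is immediate since the $g_C$ for distinct orbits act on disjoint blocks of multiplicities and hence commute, and each leaves the congruence class data \eqref{co} in the required form (with $a$ replaced by $a - 1_C$).
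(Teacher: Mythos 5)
Your proposal is correct and is essentially the paper's own argument: the paper proves this proposition by combining \eqref{eho11} with the explicit decomposition \eqref{eho12} and reducing, hyperplane by hyperplane, to the one-dimensional cyclic-shift computation of Example~\ref{ex}, exactly as you do (the paper merely leaves the index bookkeeping implicit, which you carry out correctly, including the $\delta_{i,0}$ wrap-around and the shift $a\mapsto a-1_C$).
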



\medskip

Proposition~\ref{G} has the following important consequence.

\begin{prop}\label{shift1} Let $k$ be arbitrary and $k'\in G\cdot k$.
Then the  sphe\-ri\-cal subalgebras $U_k=\e H_k\e$ and
$U_{k'}=\e H_{k'}\e$ coincide as subsets in $\D W$ and hence are
isomorphic. Furthermore, we have $\,\e T_{p,k}\e=\e T_{p,k'}\e\,$ for
any $p\in\c[V^*]^W$, or equivalently, $\,L_{p, k}=L_{p, k'}\,$, where
$L_{p,k}:=\Res\,(\e T_{p,k}\e)$.
\end{prop}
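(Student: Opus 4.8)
The plan is to deduce this from Proposition~\ref{Gtau} by exploiting the relationship between the modules $\e\QQ_k(\tau)$ and the spherical algebra action, together with the fact that the Dunkl representation lands in $\D W$, where we can literally compare operators. Since $G$ is generated by the $g_C$, it suffices to treat $k' = g_C\cdot k$ for a single orbit $C$; the general case follows by iteration. Also, it is enough to prove the statement for $k$ satisfying the congruence conditions \eqref{co} with $a=0$, i.e. integral $k$ (since $g_C$ shifts $a$ by $1_C$, one reduces to comparing, say, integral $k$ with the fractional $k' = g_C\cdot k$, and then $g_C$-iterates close up); more precisely, for \emph{arbitrary} complex $k$ one can argue by Zariski density/continuity once the identity $\e T_{p,k}\e = \e T_{p,k'}\e$ is established on the (Zariski dense, by \eqref{co}-type constraints) locus of rational $k$. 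So the heart of the matter is: for $k' = g_C\cdot k$ with $k$ suitably generic rational, show $\e T_{p,k}\e = \e T_{p,k'}\e$ in $\D W$ for all $p\in\c[V^*]^W$.

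Here is how I would extract this from Proposition~\ref{Gtau}. Take $\tau = \c W$, the regular representation, so that $\e\QQ_k(\c W)$ is a module over $\e H_k\e = U_k$ acting via the Dunkl representation on $\c[\vreg]\otimes\c W$, and likewise $\e\QQ_{k'}(\c W)$ is a $U_{k'}$-module. By Proposition~\ref{Gtau}, these two subspaces of $\c[\vreg]\otimes\c W$ coincide; call this common space $E$. For $p\in\c[V^*]^W$, the element $\e T_{p,k}\e\in U_k$ acts on $E$, and so does $\e T_{p,k'}\e\in U_{k'}$; both are honest differential operators on $\c[\vreg]$ (tensored with $\c W$) coming from elements of the \emph{fixed} ambient algebra $\D W$. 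The key point is that $E$ is large enough inside $\c[\vreg]\otimes\c W$ to separate differential operators: by \eqref{sa0}/\eqref{sa} it contains $\delta^r\c[V]\otimes\c W$ for $r\gg 0$, and an element of $\D W$ (hence of $\D(\vreg)$) that annihilates $\delta^r\c[V]$ must be zero, since $\delta^r\c[V]$ spans a dense subspace and a nonzero differential operator on $\vreg$ cannot kill $\delta^r\c[V]$ (localize: $\delta$ is invertible in $\D W$, so $D\delta^r\c[V]=0$ forces $D\c[V]=0$, hence $D=0$). Therefore it suffices to check that $\e T_{p,k}\e$ and $\e T_{p,k'}\e$ agree on $\delta^r\c[V]\otimes\c W \subseteq E$, which follows once we know they both preserve $E$ and induce the same operator there.

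To see that they induce the same operator on $E$, here is the mechanism: $\e\QQ_k(\c W) = E = \e\QQ_{k'}(\c W)$, and by Theorem~\ref{Qfat} (and its fractional-$k$ extension noted after Example~\ref{ex}) together with Proposition~\ref{is}, the $U_k$-module structure on $E$ coming from the Dunkl/differential action is identified with the natural $\D(Q_k)^W$-action, and similarly for $k'$; but the underlying space of sections $Q_k$ is the \emph{same} as $Q_{k'}$ by Proposition~\ref{G}. So $\D(Q_k)^W$ and $\D(Q_{k'})^W$ are literally the same subalgebra of $\D(\vreg)^W$, realized on the same module $Q_k = Q_{k'}$, and $\Res(\e T_{p,k}\e)$ and $\Res(\e T_{p,k'}\e)$ are the images of $p$ under the two Dunkl representations $U_k\cong\D(Q_k)^W$ and $U_{k'}\cong\D(Q_{k'})^W$. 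One then checks these two images coincide by a leading-symbol/degree argument: both $L_{p,k}$ and $L_{p,k'}$ have the same principal symbol (namely $p$, viewed in $\grd\,\D(V)^W = \c[V\times V^*]^W$, by the PBW property), both preserve $Q_k = Q_{k'}$, and by induction on the order of $p$ their difference — an invariant operator of strictly lower order preserving $Q_k$ and with zero symbol — must vanish, since $\grd\,\D(Q_k)^W = \grd\,\D(V)^W$ is a domain and the map $U_k\to\D(Q_k)^W$ is a graded isomorphism (this is exactly the content of Proposition~\ref{is}). Running this for all $p$ gives $U_k = U_{k'}$ as subsets of $\D W$.

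The main obstacle I anticipate is the bookkeeping around fractional multiplicities: Proposition~\ref{Gtau} and the module $\e\QQ_k(\c W)$ are stated for $k$ satisfying \eqref{co}, whereas the statement to be proved is for \emph{arbitrary} $k$. Bridging this requires either (i) a density argument — observe that $\e T_{p,k}\e$ depends polynomially (indeed affine-linearly, from \eqref{du}) on the coordinates $k_{H,i}$, and the set of $k$ satisfying some congruence \eqref{co} is Zariski dense in the affine space of all multiplicities, so an identity of the form $\e T_{p,k}\e = \e T_{p,g_C\cdot k}\e$ valid on a Zariski dense set holds identically — or (ii) a direct verification, reducing as in Proposition~\ref{G} to a one-dimensional computation with the rank-one Dunkl operator $T = \partial_x - x^{-1}\sum_i n k_i\e_i$, checking by hand that $\e T_{p,k}\e = \e T_{p,k'}\e$ for $k' = g_C\cdot k$ on $\c[x^n]$. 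I would go with route (i), as it is cleaner and the requisite polynomial dependence of the Dunkl operators on $k$ is immediate from \eqref{du}. The equivalence of the two formulations of the conclusion ($U_k = U_{k'}$ as subsets versus $L_{p,k} = L_{p,k'}$ for all $p\in\c[V^*]^W$) is then just the observation that $U_k$ is generated as an algebra by $\e H_k\e$'s lowest-degree pieces together with the $\e T_{p,k}\e$, $p\in\c[V^*]^W$, and $\c[V]^W$, on all of which the two Dunkl representations visibly agree.
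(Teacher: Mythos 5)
Your route to the first claim is essentially the paper's: for $k$ satisfying \eqref{co} you get $Q_k=Q_{k'}$ (your detour through Proposition~\ref{Gtau} with $\tau=\c W$ amounts to Proposition~\ref{G} plus Theorem~\ref{Qfat}), hence $\D(Q_k)^W=\D(Q_{k'})^W$, and Proposition~\ref{is} identifies this algebra with both $U_k$ and $U_{k'}$; the extension to arbitrary complex $k$ by polynomial dependence on $k$ and Zariski density of the \eqref{co}-locus is also how the paper concludes. The genuine gap is in your argument for the refined identity $\e T_{p,k}\e=\e T_{p,k'}\e$. You claim that the difference $L_{p,k}-L_{p,k'}$, being ``an invariant operator of strictly lower order preserving $Q_k$ with zero symbol,'' must vanish. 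That is false as stated: multiplication by any nonconstant element of $\c[V]^W$ is an invariant operator of lower order with zero top-order symbol preserving $Q_k$, so same principal symbol plus preservation of $Q_k$ does not force equality. The proposed ``induction on the order of $p$'' has no content here, since $L_{p,k}-L_{p,k'}$ is not itself of the form $L_{q,k}-L_{q,k'}$ for a lower-degree invariant $q$; and knowing that both Dunkl representations are filtered isomorphisms onto the same algebra $\D(Q_k)^W$ says nothing about where the two particular elements $\e T_{p,k}\e$ and $\e T_{p,k'}\e$ land. (The preliminary reduction to checking agreement on $\delta^r\c[V]\otimes\c W\subseteq E$ is circular: ``inducing the same operator on $E$'' is exactly what has to be proved.)

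The missing ingredient is homogeneity with respect to the grading $\deg V^*=1$, $\deg V=-1$ (Lemma~\ref{duprop}$\mathsf{(iii)}$). For homogeneous $p$ of degree $d$, both $L_{p,k}$ and $L_{p,k'}$ are homogeneous of degree $-d$ with the same leading term $p(\partial)$, so every coefficient of $D:=L_{p,k}-L_{p,k'}$ is a rational function of strictly negative homogeneous degree. Since $D\in\D(Q_k)$, Lemma~\ref{symbol}$(\mathsf{ii})$ forces the principal symbol of $D$ to lie in $\grd\,\D(V)$, i.e.\ to have polynomial coefficients, and a nonzero polynomial cannot have negative homogeneous degree; hence $D=0$. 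This degree argument, not the order filtration alone, is what closes the step in the paper. Two smaller points: the ``or equivalently'' in the statement only records applying $\Res$, not an equivalence between the two assertions of the proposition, and your final remark implicitly assumes that $U_k$ is generated by $\c[V]^W$ and the $\e T_{p,k}\e$, which is not established in the paper; for the subset equality $U_k=U_{k'}$ at arbitrary $k$ the paper instead treats $k\mapsto U_k$ and $k\mapsto U_{g\cdot k}$ as flat families of filtered subspaces of $\D W$ with fixed associated graded $\c[V\times V^*]^W$ and uses density of the rational locus.
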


\begin{proof} First, we prove the claim under the integrality
assumption \eqref{co}. By Proposition \ref{G}, we have $Q_k=Q_{k'}$,
so that $\D(Q_k)^W=\D(Q_{k'})^W$. On the other hand, Proposition \ref{is}
says that $U_k=\e\D(Q_k)^W$ and $U_{k'}=\e\D(Q_{k'})^W$. Whence $U_k=U_{k'}$.

To prove the second claim, let $L,\,L'$ denote $L_{p, k}$ and $L_{p,
k'}$, respectively. From the defintion of the Dunkl operators it
easily follows that $L$ and $L'$ have the same principal symbol
$p(\partial)$, and their lower order coefficients are rational
functions of negative homogeneous degrees. Hence $ L - L' $ is a
differential operator whose all coefficients have negative
homogeneous degree. But, by Proposition \ref{G} and Theorem
\ref{sp}, both $L$ and $ L'$ are in $\mathcal D(Q_k)$, so, by Lemma
\ref{symbol}($\mathsf{ii}$), the principal symbol of $\, L - L' \,$
must be regular. This proves that $\,L=L'$.

To extend the above results to arbitrary $k$, take $k'=g\cdot k$,
with {\it fixed} $g\in G$. For the standard filtration, we have
$\grd\, U_k =\grd\, U_{k'} \cong \c[V\times V^*]^W$. Thus, we may view
$\,k\mapsto U_k\,$ and $\,k\mapsto U_{g\cdot k}\,$ as two flat
families of filtered subspaces in $\D W$. We know that these
subspaces coincide when $k$ takes rational values satisfying
\eqref{co}. Since the set of such values of $k$ is Zariski dense in
the space of all complex multiplicities, we conclude that
$U_k=U_{g\cdot k}$ holds for all $k$. In the same spirit, we have
$\,L_{p,k}=L_{p,k'}\,$ for rational $k$, and both sides of this
equality depend polynomially in $k$, hence the same must be true for
all $k$.
\end{proof}

\subsection{Isomorphisms of spherical algebras}
In this section, we will regard $\,k=\{k_{C,i}\}$ as a vector in $\c^N$, with
$N={\sum_{C\in \A/W}n_C}$. Let $\{\ell_{C,i}\}$ denote the
standard basis in this vector space, so that
$k=\sum_{C\in\A/W}\sum_{i=0}^{n_C-1} k_{C,i}\ell_{C,i}$.
(As usual, we assume $\ell_{C,i}$ to be periodic in $i$, so that
$\ell_{C,n_C}=\ell_{C,0}$.)

The next proposition describes the transformation of $U_k$ under translations
$\,k \mapsto k+ \ell_{C,n_C-1}$. In the Coxeter case, this result was first
established in \cite{BEG} for generic ('regular') multiplicities and later extended
in \cite{G} to arbitrary $k$'s when $W$ is crystallographic. We now prove it in full
generality: for an arbitrary complex reflection group and arbitrary multiplicities.

\begin{prop}\la{shiso}
For a fixed $\, C \in \A/W \,$, we have the following isomorphisms

$(1)$ $\e H_k\e \cong\epsilon_C H_{k'}\epsilon_C$,
$\ k'=k+\ell_{C,n_C-1}$;

$(2)$ $\e H_k\e\cong \epsilon H_{k'}\epsilon$,
$\ k'=k+\sum_{C\in\ms{A}/W}\ell_{C,n_C-1}$\ ,

\noindent
where $\epsilon $ is the sign
idempotent on $W$ and $ \epsilon_C $ is given by \eqref{edelta}.
\end{prop}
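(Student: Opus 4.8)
The plan is to derive both isomorphisms by transporting Proposition~\ref{shift1} through the inner automorphisms of $\D W$ described at the beginning of this section. Note first that $\delta_C=\prod_{H\in C}\alpha_H$, and more generally $\delta=\prod_{H\in\A}\alpha_H=\prod_{C\in\A/W}\delta_C$, are units in $\c[\vreg]$ and hence in $\D W$, so conjugation by them defines automorphisms of $\D W$. Conjugation by $\delta_C$ was shown above to send $H_m$ to $H_{m'}$, with $m'$ given by \eqref{cdelta}, and to send the symmetrizer $\e$ to $\epsilon_C=\delta_C\,\e\,\delta_C^{-1}$; being a ring isomorphism, it therefore restricts to an algebra isomorphism $\e H_m\e\cong\epsilon_C H_{m'}\epsilon_C$. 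The substantive point --- that the family $\{\e H_k\e\}$ of spherical subalgebras of $\D W$ is invariant under the group $G$ --- is already the content of Proposition~\ref{shift1}, so all that remains is to combine this invariance with the above automorphism and to check that the two affine shifts of the multiplicity vector cancel.

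For part $(1)$, I would fix $C\in\A/W$ and take $g_C\in G$ as in \eqref{gh}. Proposition~\ref{shift1} gives $\e H_k\e=\e H_{g_C\cdot k}\e$ inside $\D W$, and conjugating by $\delta_C$ then yields $\e H_k\e\cong\epsilon_C H_{(g_C\cdot k)'}\epsilon_C$, where $(g_C\cdot k)'$ is the multiplicity obtained from $g_C\cdot k$ via \eqref{cdelta}. Applying \eqref{gh} followed by \eqref{cdelta}, and keeping the periodicity convention $\ell_{C,n_C}=\ell_{C,0}$ in mind, one computes $(g_C\cdot k)'_{C,i}=k_{C,i}+\delta_{i,\,n_C-1}$ and $(g_C\cdot k)'_{C',i}=k_{C',i}$ for $C'\ne C$; that is, $(g_C\cdot k)'=k+\ell_{C,\,n_C-1}=k'$. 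This establishes $(1)$.

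For part $(2)$ I would run the same argument over all orbits at once. Put $g=\prod_{C\in\A/W}g_C$, a well-defined element of $G$ since $G$ is abelian; then $\e H_k\e=\e H_{g\cdot k}\e$ by Proposition~\ref{shift1}. Conjugation by $\delta=\prod_C\delta_C$ is the composition of the conjugations by the individual $\delta_C$, so it sends $H_m$ to $H_{m''}$ with $m''_{C,i}=m_{C,i+1}+1/n_C$ for every $C$, and it sends $\e$ to $\delta\,\e\,\delta^{-1}$, which by the same computation that gives \eqref{edelta} is the sign idempotent $\epsilon=|W|^{-1}\sum_{w\in W}(\det w)\,w$ (here one uses $\det=\prod_C\det_C$). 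Hence it restricts to an algebra isomorphism $\e H_m\e\cong\epsilon H_{m''}\epsilon$, and with $m=g\cdot k$ the computation from part $(1)$, repeated for each orbit, gives $m''=k+\sum_{C\in\A/W}\ell_{C,\,n_C-1}=k'$. This establishes $(2)$. The only step demanding care is the index bookkeeping in \eqref{gh}, \eqref{cdelta} and the periodicity conventions; there is no deeper obstacle, since all the genuine content is already packaged in Proposition~\ref{shift1} (and, through it, in the description of $Q_k$ via Cherednik algebras).
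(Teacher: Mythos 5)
Your argument is correct and is essentially the paper's own proof: both apply Proposition~\ref{shift1} to replace $k$ by $g_C\cdot k$ (resp. $\prod_C g_C\cdot k$) and then conjugate by $\delta_C$ (resp. $\delta$), whose effect on multiplicities is \eqref{cdelta}, checking that the composition of the two shifts is $k+\ell_{C,n_C-1}$ (resp. $k+\sum_C\ell_{C,n_C-1}$). Your index bookkeeping $f_Cg_C(k)=k+\ell_{C,n_C-1}$ and the identification $\delta\,\e\,\delta^{-1}=\epsilon$ match the paper's computation.
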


\begin{proof}

Let $f=f_C$ and $g=g_C$ be the transformations $k\mapsto k'$ defined
by \eqref{cdelta} and \eqref{gh}, respectively. Recall that $f$
describes the effect of the conjugation by $\delta_C$, so that
\begin{equation}\la{ff}
\delta_C\,T_{\xi,k}\,\delta_C^{-1}=T_{\xi, f(k)}\quad\text{and}\quad
\delta_C\,H_k\,\delta_C^{-1}=H_{f(k)}\,.
\end{equation}
On the other hand, by Proposition \ref{shift1}\,, $\e H_k\e=\e
H_{g(k)}\e$. Now, a simple calculation shows that
$k':=fg(k)=k+\ell_{C,n_C-1}$. Combining all these together, we
get
$$
\e H_{k}\e=\e
H_{g(k)}\e=\e\,\delta_C^{-1}H_{fg(k)}\delta_C\,\e
=\delta_C^{-1}\,\epsilon_C H_{k'}\epsilon_C\,\delta_C \cong
\epsilon_C H_{k'}\epsilon_C\ ,
$$
which is our first isomorphism. The second isomorphism is
proved in a similar way, using $f=\prod_{C\in\ms{C}}f_C$
and $g=\prod_{C\in\ms{C}}g_C$ instead of $f_C,\,g_C$.
\end{proof}

Note that the above proof gives a bit more than stated in the
proposition: it shows that $\e H_k\e = \e \delta_C^{-1}
H_{k'}\delta_C \e$ as {\it subsets} in $\D W$. Now, arguing as in
(the proof of) Proposition \ref{shift1}, we conclude that
$\e\,T_{p,k}\,\e\,=\,\e\,\delta_C^{-1}T_{p,k'}\delta_C\,\e$ for any
$W$-invariant polynomial $p$. More generally, we have the following
result, which answers a question of Dunkl and Opdam (see \cite{DO},
Question 3.22).

\begin{prop}\la{shisoa}
For fixed $\,C\in\ms{A}/W\,$ and $\,a=1,\ldots, n_C-1\,$, let
\begin{equation}\la{mrel}
k'=k+\sum_{i=1}^a \ell_{C,n_C-i}\ .
\end{equation}
Then $\,\e H_k\e = \e \delta_C^{-a} H_{k'}\delta_C^a \e\,$ in $\, \D W $, and
$\,\e\,T_{p,k}\,\e\,=\,\e\,\delta_C^{-a}T_{p,k'}\delta_C^a\,\e\,$ for all
$\,p\in\c[V^*]^W$.
\end{prop}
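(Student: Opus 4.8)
The plan is to prove Proposition~\ref{shisoa} by induction on $a$, using Proposition~\ref{shiso}$(1)$ as the base case and as the engine of the inductive step. The key point is that Proposition~\ref{shiso}$(1)$, together with the remark following its proof, gives us not merely an abstract isomorphism but the explicit equality of subsets $\e H_\kappa \e = \e\,\delta_C^{-1} H_{\kappa'}\delta_C\,\e$ in $\D W$ whenever $\kappa' = \kappa + \ell_{C,\,n_C-1}$, together with the matching identity $\e\, T_{p,\kappa}\,\e = \e\,\delta_C^{-1} T_{p,\kappa'}\delta_C\,\e$ for $p \in \c[V^*]^W$. So the strategy is simply to iterate this $a$ times and track how the multiplicity vector changes under each conjugation by $\delta_C$.

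First I would record the arithmetic of the shift. Conjugation by $\delta_C$ implements the transformation $f = f_C$ of \eqref{cdelta}, i.e. $k'_{C,i} = k_{C,i+1} + 1/n_C$ (indices mod $n_C$), fixing the other orbits; and Proposition~\ref{shift1} gives $\e H_k \e = \e H_{g_C(k)}\e$ for the transformation $g_C$ of \eqref{gh}, $k'_{C,i} = k_{C,i-1} - 1/n_C + \delta_{i,0}$. As noted in the proof of Proposition~\ref{shiso}, the composite $f_C g_C$ is the translation $k \mapsto k + \ell_{C,\,n_C-1}$. The inductive claim I want is: for $1 \le a \le n_C - 1$,
\begin{equation*}
\e H_k \e = \e\,\delta_C^{-a} H_{k + \sum_{i=1}^a \ell_{C,\,n_C-i}}\,\delta_C^a\,\e \quad\text{in }\D W,
\end{equation*}
and likewise with $T_{p,k}$ in place of $H_k$. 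For the inductive step, suppose the statement holds for $a-1$ with $\kappa := k + \sum_{i=1}^{a-1}\ell_{C,\,n_C-i}$; applying Proposition~\ref{shiso}$(1)$ to $\kappa$ gives $\e H_\kappa \e = \e\,\delta_C^{-1} H_{\kappa + \ell_{C,\,n_C-1}}\,\delta_C\,\e$. The only genuinely needed bookkeeping is the identity of multiplicity vectors $\kappa + \ell_{C,\,n_C-1} = k + \sum_{i=1}^a \ell_{C,\,n_C-i}$, which I would verify by noting that $\kappa$ differs from $k$ only in the $C$-components with indices $n_C-1, n_C-2, \ldots, n_C-a+1$, so adding $\ell_{C,\,n_C-1}$ — wait, one must be careful: $\kappa$ already contains the $\ell_{C,\,n_C-1}$ summand, so one should instead track that at each stage the new translation vector is $\ell_{C,\,n_C-a}$ relative to the previous one. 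Concretely: after $m$ steps the accumulated shift is $\sum_{i=1}^m \ell_{C,\,n_C-i}$, and Proposition~\ref{shiso}$(1)$ adds $\ell_{C,\,n_C-1}$ \emph{to the current vector}, which must equal adding $\ell_{C,\,n_C-(m+1)}$ to $k$; this is where I would need to check that the cyclic reindexing in $f_C$ and $g_C$ conspires correctly — essentially because conjugating by $\delta_C^m$ has already rotated the indices by $m$, so the "top" slot $n_C-1$ in the rotated frame is the slot $n_C-1-m \equiv n_C-(m+1)$ in the original frame. The range restriction $a \le n_C-1$ is exactly what keeps these indices distinct and keeps us from wrapping around, where the extra $+\delta_{i,0}$ correction in $g_C$ would kick in. The statement about $T_{p,k}$ for $p \in \c[V^*]^W$ follows by the identical iteration, since each application of Proposition~\ref{shiso}$(1)$ carries along the corresponding identity for the invariant Dunkl operators.

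The main obstacle I expect is purely combinatorial: making the index arithmetic of the iterated cyclic shifts $f_C$, $g_C$ transparent and watching that the hypothesis $a \le n_C-1$ prevents the index $n_C - (m+1)$ from reaching $0$ prematurely (which would introduce the Kronecker-delta correction and break the clean translation picture). Once that is set up cleanly, the rest is a routine induction invoking Proposition~\ref{shiso}$(1)$ and the sharpened form of its proof. One could alternatively bypass the induction and conjugate by $\delta_C^a$ in one stroke — $\delta_C^a T_{\xi,k}\delta_C^{-a} = T_{\xi, f_C^a(k)}$ — and combine with $\e H_k \e = \e H_{g_C^a(k)}\e$ from Proposition~\ref{shift1}, checking directly that $f_C^a g_C^a(k) = k + \sum_{i=1}^a \ell_{C,\,n_C-i}$; I would likely present this more direct route, as it avoids accumulating the frame-rotation subtleties step by step and reduces the whole proof to a single finite computation with the two explicit affine maps $f_C, g_C$ and their $a$-th powers.
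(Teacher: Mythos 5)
Your preferred direct route is exactly the paper's proof: the paper disposes of Proposition~\ref{shisoa} in one line by replacing $f_C$, $g_C$ in the proof of Proposition~\ref{shiso} with their iterates $f_C^a$, $g_C^a$, i.e.\ using $\delta_C^a T_{\xi,k}\delta_C^{-a}=T_{\xi,f_C^a(k)}$ together with $\e H_k\e=\e H_{g_C^a(k)}\e$ from Proposition~\ref{shift1} and the check $f_C^a g_C^a(k)=k+\sum_{i=1}^a\ell_{C,n_C-i}$, which your computation confirms (and where $a\le n_C-1$ keeps the $\delta_{i,0}$ corrections from wrapping). So the proposal is correct and essentially identical to the paper's argument; wisely, you abandoned the step-by-step induction, whose bookkeeping is precisely the frame-rotation subtlety you flagged.
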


This is proved by replacing the transformations $f=f_C$, $g=g_C$ in
the proof of Proposition~\ref{shiso} by their iterates, $f^a$ and
$g^a$.{\hfill $\square$}

\bigskip

\subsection{Shift operators}
\la{shop}
We are now in position to construct the Heckman-Opdam shift operators for
the group $W$, extending an idea of G.~Heckman \cite{H}. Fix $C\in\ms{A}/W$ and
$a\in\{0,\dots, n_C-1\}$ as above, and recall the elements $\delta_C$,\
$\delta_C^*$, see \eqref{delc}. For an arbitrary $k$, define $k'$ by
\eqref{mrel} (with $k':=k$ in the case $a=0$) and introduce the following
differential operators
\begin{equation}
\la{s}
S_k :=\Res \left(\delta_C^{1-a}T_{\delta_C^*,k'}\delta_C^{a}\right)\,,\quad
S_k^- := \Res \left(\delta_C^{-a}(T_{\delta_C^*,k'})^{n_C-1}\delta_C^{a-1}\right)\,.
\end{equation}
Note that both expressions under $\,\Res\,$ are $W$-invariant.

\begin{theorem}\la{shiftoper}
For all $\,p\in\c[V^*]^W$, the operators $S_k$ and $ S_k^-$ satisfy the following intertwining relations
\begin{equation*}
L_{p,\widetilde k}\circ S_k=S_k\circ L_{p,k}\ , \quad L_{p,k}\circ
S_k^-=S_k^-\circ L_{p,\widetilde k}\ ,
\end{equation*}
where $\,\widetilde k=k+\ell_{C,n_C-a}\,$.
\end{theorem}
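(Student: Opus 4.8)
The plan is to deduce the intertwining relations directly from the "symmetry of the Dunkl representation" results of the previous subsections, without any explicit differential-operator calculation. The key point is that, by Proposition~\ref{shisoa} (with $a$ replaced by $a-1$ for the relevant orbit, and keeping track of the shifts $\ell_{C,j}$ carefully), conjugation by an appropriate power of $\delta_C$ intertwines the \emph{whole} spherical subalgebras $\e H_k\e$ and $\e H_{\widetilde k}\e$ inside $\D W$, in such a way that the commutative subalgebra generated by the $T_{p,k}$ with $p\in\c[V^*]^W$ is mapped to the one generated by the $T_{p,\widetilde k}$. Once one has such a compatibility of \emph{algebras}, an element like $\delta_C^{1-a}T_{\delta_C^*,k'}\delta_C^a$ lying in the commutant (or appropriate bimodule) of these subalgebras automatically intertwines the two families $\{L_{p,k}\}$ and $\{L_{p,\widetilde k}\}$ of "quantum integrals".

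First I would make precise the multiplicity bookkeeping: show that with $k'$ as in \eqref{mrel} and $\widetilde k = k + \ell_{C,n_C-a}$, the element $S_k = \Res(\delta_C^{1-a}T_{\delta_C^*,k'}\delta_C^a)$ is genuinely $W$-invariant (as already noted) and that, as an element of $\D(\vreg)^W$, it satisfies $S_k\cdot\e\,T_{p,k}\,\e = \e\,T_{p,\widetilde k}\,\e\cdot S_k$ for all $p\in\c[V^*]^W$. The mechanism: $T_{\delta_C^*,k'}$ commutes with all $T_{p,k'}$ inside $\D W$ by Lemma~\ref{duprop}$\mathsf{(i)}$ (since $\delta_C^*\in\c[V^*]$ and the $T_{\xi,k'}$ pairwise commute); conjugating this identity by $\delta_C^{a}$ on the left and $\delta_C^{a-1}$ (or the appropriate power) on the right, and using the relation \eqref{ff} $\delta_C\,H_{k'}\,\delta_C^{-1} = H_{f(k')}$ together with Proposition~\ref{shift1} ($\e H_\kappa \e = \e H_{g(\kappa)}\e$ with $\e\,T_{p,\kappa}\,\e = \e\,T_{p,g(\kappa)}\,\e$), one rewrites everything so that the "outer" parameter becomes $\widetilde k$ and the "inner" parameter becomes $k$. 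Applying $\e(\ )\e$ and then $\Res$, and using $\e\,D\,\e = \e\,\Res\,D$ from \eqref{HC}, converts this $\D W$-identity into the stated operator identity in $\D(\vreg)^W$.

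The second relation, for $S_k^-$, is handled symmetrically: now $(T_{\delta_C^*,k'})^{n_C-1}$ plays the role of a "descent" operator, and the same commutativity of Dunkl operators plus the conjugation/shift identities give $L_{p,k}\circ S_k^- = S_k^-\circ L_{p,\widetilde k}$. The only subtlety is checking that the exponents of $\delta_C$ in the two definitions in \eqref{s} are chosen so that the composite shift $k\leadsto k'\leadsto\widetilde k$ really closes up; this is a finite calculation with the vectors $\ell_{C,j}$ using $(g_C)^{n_C}=\id$ and the fact that $n_C$ conjugations by $\delta_C$ amount to multiplication by the scalar relative invariant $\delta_C^{n_C}$ (so the product $S_k^-\circ S_k$, up to an operator coming from $\c[V]^W$, should be essentially the shift "by a full period", consistent with the known Coxeter picture $n_C=2$).

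The main obstacle I anticipate is purely organizational rather than conceptual: getting the direction of all the conjugations and the precise indices $i+a_H$, $n_C-i$, $n_C-a$ to line up correctly, since the twisted quasi-invariant conventions \eqref{qcn31} and the transformations \eqref{cdelta}, \eqref{gh} shift the index in opposite directions, and one must also keep track of which of $\e$, $\epsilon_C$, $\e_\chi$ appears after each conjugation (cf. \eqref{edelta}). Once one has verified, via Proposition~\ref{shisoa} and Proposition~\ref{shift1}, that $\e\,T_{p,k}\,\e = \e\,\delta_C^{-b}T_{p,k''}\,\delta_C^{b}\,\e$ for the relevant $b$ and $k''$, the intertwining relations follow by a one-line manipulation using only associativity in $\D W$ and the commutativity of Dunkl operators; so the real work is a careful, essentially combinatorial, verification of these parameter identities, after which no further estimate or symbol computation is needed.
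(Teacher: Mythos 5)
Your proposal takes essentially the same route as the paper: its proof is exactly the manipulation you sketch, namely using the commutativity of the Dunkl operators $T_{p,k'}$ and $T_{\delta_C^*,k'}$ (Lemma~\ref{duprop}$\mathsf{(i)}$), the conjugation relation \eqref{ff} together with Proposition~\ref{shift1} (rather than Proposition~\ref{shisoa} itself, which is a consequence of the same $f_C,g_C$ calculus) to turn the outer and inner parameters into $\widetilde k$ and $k$, and then sandwiching with $\e$ and applying $\Res$ via $\e D\e=\e\,\Res D$. The bookkeeping you defer is precisely the short computation the paper performs ($k'=f^{a}g^{a}(k)$, $k_1=f^{1-a}(k')$, $k_2=f^{-a}(k')$, $\widetilde k=g^{1-a}(k_1)$, $k_2=g^{a}(k)$), so the plan is sound and matches the published argument.
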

\begin{proof} Let $f=f_C$ and $g=g_C$ be the same as in the proof of Proposition \ref{shiso},
and let $k'$ be as in \eqref{mrel}. A direct calculation shows that $k'=f^{a}g^{a}(k)$ and
$g^{1-a}fg^{a}(k)=k+\ell_{C,n_C-a}$. As a result, if we let $\, k_1:=f^{1-a}(k') \,$
and $\, k_2:=f^{-a}(k')\,$, then $\,\widetilde{k} = g^{1-a}(k_1)\,$ and $\,k_2=g^{a}(k)\,$.
By Proposition \ref{shift1}, this implies
\begin{equation*}
L_{p,\widetilde k}=L_{p,k_1}\quad\text{and}\quad
L_{p,k}=L_{p,k_2}\,.
\end{equation*}
To prove the first identity it thus suffices to show that $S_k$
intertwines $L_{p,k_1}$ and $L_{p,k_2}$. Writing $\delta \,$, $\delta^*$ for $\delta_C $, $\delta_C^*$,
we have
\begin{equation*}
\begin{array}{llll}
\e L_{p,k_1}S_k &=& \e T_{p,k_1}\delta^{1-a}T_{\delta^*, k'}\delta^{a}\e &\quad   \\*[.8ex]
                &=& \e\delta^{1-a}T_{p,f^{a-1}(k_1)}T_{\delta^*, k'}\delta^{a}\e &\quad \mbox{(by \eqref{ff})}  \\*[.8ex]
                &=& \e\delta^{1-a}T_{p,k'}T_{\delta^*,k'}\delta^{a}\e & \quad \\*[.8ex]
                &=& \e\delta^{1-a}T_{\delta^*,k'}T_{p,k'}\delta^{a}\e & \quad \mbox{(by Lemma~\ref{duprop}$\mathsf{(i)}$)}  \\*[.8ex]
                &=& \e\delta^{1-a}T_{\delta^*,k'}\delta^{a}T_{p,f^{-a}(k')}\e & \quad   \\*[.8ex]
                &=& \e\delta^{1-a}T_{\delta^*,k'}\delta^{a}\e\cdot \e T_{p,k_2}\e & \quad \mbox{(by Lemma~\ref{duprop}$\mathsf{(ii)}$)} \\*[.8ex]
                &=& \e S_k L_{p, k_2}\ . &
\end{array}
\end{equation*}
The second identity involving $S_k^-$ is proved in a similar fashion.
\end{proof}

\section{Category $\mathcal{O}$}
\la{o}
Throughout this section, we will use the following notation: if
$ A $ is an algebra, we write $ \Mod(A) $ for the category
of all left modules over $A$, and $ \mod(A) $ for its
subcategory consisting of finitely generated modules. In
particular, when $ A $ is a finite-dimensional algebra over
$\c$ (e.g., $ A= \c W $), $\,\mod(A)\,$ is the category of
finite-dimensional modules over $A$.

\subsection{Standard modules}
Recall that the Cherednik algebra $ H_k = H_k(W) $ admits a
decomposition $\, H_k \cong \c[V] \otimes \c W \otimes \c[V^*] \,$,
which is similar to the PBW decomposition $\,U(\g) \cong U(\n_{-})
\otimes U(\h) \otimes U(\n_+)\,$ for the universal enveloping
algebra of a complex semisimple Lie algebra $ \g $.  This suggests
to view the subalgebras $ \c[V] $, $ \c[V^*] $ and $ \c W $ of $ H_k $
as analogues of $ U(\n_{-})$, $ U(\n_+) $ and $ U(\h) $ respectively,
and introduce a category of `highest weight modules' over $ H_k $
by analogy with the Bernstein-Gelfand-Gelfand category $ \O_{\g} $
in Lie theory.

Precisely, the category $ \O_k := \O_{H_k} $ is defined as
the full subcategory of $ \mod(H_k) $, consisting of modules
on which the elements of $\, V \subset \c[V^*] $ act locally nilpotently:
\begin{equation*}
 \O_k :=\,\{M \in \mod(H_k)\ :\ \xi^d m = 0\ ,\
 \forall\, m\in M\,,\ \forall\,\xi\in V\, ,\ \forall\,d \gg 0\}\ .
\end{equation*}
It is easy to see that $\O_k$ is closed under taking subobjects, quotients and
extensions in $ \mod(H_k) $: in other words, $ \O_k $ is a Serre subcategory
of $ \mod(H_k) $.

The structure of $ \O_k $ is determined by so-called standard
modules, which play a r\^ole similar to Verma modules in Lie theory.
To define such modules we fix an irreducible representation $ \tau $ of $W$
and extend the $W$-structure on $ \tau $ to a $\,\c[V^*] * W$-module
structure by letting $\,\xi \in V \,$ act trivially.
The {\it standard $H_k$-module of type} $ \tau $ is then given by
\begin{equation}\la{sm}
M(\tau) := \mathrm{Ind}^{H_k}_{\c[V^*]*W}\,\tau\, =\,
H_k\underset{\c[V^*]*W}{\otimes} \tau\,.
\end{equation}
It is easy to see from the relations of $ H_k $ that
$\,M(\tau)\in \O_k\,$. Moreover, the PBW theorem
\eqref{pbw} implies that $\, M(\tau) \cong \c[V]\otimes \tau \,$
as a $\c[V]$-module.

The basic properties of standard modules are summarized in the following
\begin{prop}
\la{ind}
Let $\, \W $ be the set of irreducible representations of $ W $.

$(1)$ $\{M(\tau)\}_{\tau \in \W} $ are pairwise non-isomorphic
indecomposable objects of $ \O_k $.

$(2)$\ Each $M(\tau)$ has a unique simple quotient $L(\tau)$, and
$\{L(\tau)\}_{\tau\in\W}$ is a complete set of simple objects of
$ \O_k $.

$(3)$\ Every module $M \in \O_k $ admits a finite filtration
\begin{equation}
 \la{filtm}
 \{0\} = F_0\subset F_1\subset \ldots \subset F_N = M  \ ,
\end{equation}
with $F_i\in \O_k $ and $\,F_i/F_{i-1}\cong L(\tau_i)\,$ for some
$\tau_i\in\W$.
\end{prop}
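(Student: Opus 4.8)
The plan is to establish Proposition~\ref{ind} by a standard ``highest weight category'' argument, adapted to the Cherednik setting, relying on the PBW decomposition \eqref{pbw} and the grading. First I would introduce the \emph{Euler element} $\eu \in H_k$, a distinguished element (expressible in terms of $\sum_i x_i \xi_i$ plus a $\c W$-valued correction term) whose key property is that $[\eu,\xi] = -\xi$ for $\xi\in V$, $[\eu,x]=x$ for $x\in V^*$, and $[\eu,w]=0$ for $w\in W$; consequently $\eu$ acts on $M(\tau) \cong \c[V]\otimes\tau$ semisimply, preserving the polynomial degree grading, and acting on the lowest piece $1\otimes\tau$ by a scalar $h_k(\tau)$ depending only on $\tau$ and $k$. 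This gives every object of $\O_k$ a grading by generalized $\eu$-eigenspaces, with each graded piece finite-dimensional and the set of eigenvalues bounded below (this uses local nilpotence of $V$ together with finite generation over $H_k$, hence over $\c[V]$ up to finitely many $\c W$-generators).

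Next, for part $(1)$: indecomposability of $M(\tau)$ follows because any direct-sum decomposition would split the lowest-degree piece $1\otimes\tau$ as a $\c W$-module, contradicting irreducibility of $\tau$; and $M(\tau)\cong M(\tau')$ forces $\tau\cong\tau'$ by comparing lowest graded pieces (which carry the $\c W$-action and the $\eu$-scalar). For part $(2)$: the sum of all proper graded $H_k$-submodules of $M(\tau)$ is again a proper graded submodule (it cannot contain $1\otimes\tau$, since that generates all of $M(\tau)$), hence $M(\tau)$ has a unique maximal proper submodule and therefore a unique simple quotient $L(\tau)$; conversely, given any simple $L\in\O_k$, pick a nonzero vector $v$ of minimal $\eu$-degree in a suitable grading — then $V$ kills $v$, so $\c W v$ is a $\c[V^*]*W$-submodule, which contains an irreducible $W$-subrepresentation $\tau$, yielding a nonzero map $M(\tau)\to L$ by Frobenius reciprocity, which is onto by simplicity, so $L\cong L(\tau)$. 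Distinctness of the $L(\tau)$ is inherited from that of their ``highest weights'' $(\tau, h_k(\tau))$ via the lowest graded piece.

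For part $(3)$: given $M\in\O_k$, finite generation plus the grading by $\eu$ with finite-dimensional, bounded-below graded pieces shows $M$ has finite length — concretely, among all eigenvalues $c$ with $M_c\neq 0$ choose a minimal one, take $0\neq v\in M_c$, note $V$ kills $v$, extract an irreducible $W$-summand $\tau\subseteq \c W v$, get $M(\tau)\twoheadrightarrow \langle v\rangle \hookrightarrow M$, hence a nonzero submodule that is a quotient of $M(\tau)$ and so has a simple top $L(\tau)$ sitting inside (after passing to a simple sub of $\langle v\rangle$, again a quotient of some $M(\tau')$ — one checks submodules of such cyclic modules are themselves ``low'' enough). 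Then induct on the total dimension of $\bigoplus_{c}M_c$ over the (finitely many) relevant degrees, or more cleanly on a length function; each step peels off a simple $L(\tau_i)$, and the process terminates because the graded pieces are finite-dimensional and their support is bounded below. I expect the main obstacle to be the bookkeeping that makes the grading argument rigorous: namely showing that for $M\in\O_k$ the $\eu$-eigenvalue support really is discrete, bounded below, and has finite-dimensional eigenspaces, so that induction on ``length below a given degree'' is well-founded — this is where one must carefully combine local nilpotence of $V$, finite generation over $H_k$, and the PBW decomposition to reduce to finite generation over $\c[V]$ in each bounded range of $\eu$-degrees.
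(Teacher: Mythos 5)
Your proposal is essentially correct, but note that the paper does not actually prove Proposition~\ref{ind}: it simply cites \cite{DO} (Proposition 2.27, Corollary 2.28) and \cite{GGOR} (Proposition 2.11, Corollary 2.16). What you have written is in effect a reconstruction of the standard argument from those references, built on the grading by the deformed Euler element -- the same operator the paper itself uses later (its $E(k)$, with the scalar $c_\tau(k)$ playing the role of your $h_k(\tau)$, cf.\ Lemma~\ref{euler}). So your route is fine and self-contained where the paper merely outsources, and it buys a transparent "lowest weight" picture that the paper re-derives anyway in Section~\ref{pzero}.

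Two spots in your sketch need tightening. First, in $(2)$, knowing that no single proper submodule contains $1\otimes\tau$ does not by itself show that the \emph{sum} of all proper submodules is proper; you should argue as in your part $(1)$: every submodule is $\eu$-graded (since $\eu$ acts semisimply on $M(\tau)$), its lowest-degree component is a $W$-submodule of the irreducible $\tau$, hence is zero, and therefore the sum of all proper submodules also has zero lowest component. Second, and more substantively, in $(3)$ termination does not follow merely from the graded pieces being finite-dimensional with support bounded below -- $\c[V]$ as a module over itself has both properties and infinite length. The essential extra input is that every nonzero object of $\O_k$ receives a nonzero map from some $M(\tau)$ (your minimal-weight-vector argument), so in any strictly increasing chain each successive quotient has a nonzero generalized $\eu$-eigenspace in one of the \emph{finitely many} degrees $h_k(\tau)$, $\tau\in\W$; consequently the length of any chain is bounded by $\dim\bigl(\bigoplus_{\tau\in\W} M_{h_k(\tau)}\bigr)<\infty$. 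Your phrase "finitely many relevant degrees" gestures at exactly this, but it is the crux of part $(3)$ and should be stated and proved explicitly rather than left as bookkeeping.
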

\begin{proof} The first claim follows from \cite{DO}, Proposition 2.27 and
Corollary 2.28. The second and the third are \cite{GGOR}, Proposition 2.11 and Corollary 2.16, respectively.
\end{proof}

\subsection{The Knizhnik-Zamolodchikov (KZ) functor}\la{lkz}
Introduced by Opdam and Rouquier, this functor is one of the main tools
for studying the category $\O$. We briefly review its construction
referring the reader to \cite{GGOR} for details and proofs.

First, using Proposition~\ref{Loc}, we introduce the localization functor
\begin{equation}
\la{locf}
\Mod(H_k) \to \Mod(\D W)\ , \quad M \mapsto \Mreg := \D W \underset{H_k}{\otimes} M\ .
\end{equation}
By definition, $\, \Mod(\D W) $ is the category of $W$-equivariant
$\D$-modules on $ \vreg $. Since $W$ acts freely on $\vreg$, this category is
equivalent to the category $\,\Mod\,\D(\vreg/W) \,$ of $\D$-modules on the
quotient variety $\vreg/W$.  The full subcategory of  $\,\Mod\,\D(\vreg/W) \,$
consisting of $ \O$-coherent $ \D$-modules  is equivalent to the category
of vector bundles on $ \vreg/W $ equipped with a regular flat connection, which
is, in turn, equivalent to the category of finite-dimensional representations
of the Artin braid group $\, B_W :=\pi_1(\vreg /W,\,*\,)$ (the Riemann-Hilbert correspondence).

Now, in view of Proposition~\ref{ind}, localizing an object in the category
$\, \O_k \subset \Mod(H_k)\,$ yields a $\D W$-module, which is finite over
$ \c[\vreg]$. Hence, combined with above equivalences, the restriction of
\eqref{locf} to $\, \O_k \,$ gives an exact additive functor
\begin{equation} \la{KZZ}
\KZ_k \,:\ \O_k\  \to \ \mod(\c B_W)\ .
\end{equation}

We illustrate this construction by applying \eqref{KZZ} to a
standard module $M=M(\tau)$ (cf. \cite{BEG}, Prop. 2.9). Since $
M(\tau) \cong \c[V] \otimes \tau $ as a $\c[V]$-module, $\,\Mreg $
can be identified with $ \c[\vreg]\otimes\tau\,$ as a
$\c[\vreg]$-module and thus can be thought of as (the space of
sections of) a trivial vector bundle on $\vreg$ of rank $\dim\tau$.
With this identification, the $\D$-module structure on $\Mreg$ is
described by
\begin{equation} \la{fov}
    \partial_\xi(f\otimes v)=\partial_\xi(f)\otimes v + f\otimes
    \partial_\xi(v)\ , \quad\forall\,\xi\in V\ ,
\end{equation}
where $\,f \in \c[\vreg] \,$ and $ v \in \tau $.
Since $\xi\, v = 0$ in $M$ and $\xi$ corresponds under localization
to the Dunkl operator $T_\xi$, we have $ T_\xi(v)= 0 $, or equivalently
\begin{equation}\la{dune}
\partial_\xi v -\sum_{H\in \A}\frac{\alpha_H(\xi)}{\alpha_H}
\sum_{i=0}^{n_H-1}n_Hk_{H,i}\e_{H,i}(v)=0\ ,\quad \forall\,\xi \in V\ .
\end{equation}
The relations \eqref{fov} can thus be rewritten as
\begin{equation}\la{conn}
\partial_\xi(f\otimes v) = \partial_\xi(f)\otimes v + \sum_{H\in \A}
\frac{\alpha_H(\xi)}{\alpha_H}\sum_{i=0}^{n_H-1}n_Hk_{H,i}f\otimes
\e_{H,i}v\ ,
\end{equation}
which gives an explicit formula for a regular flat connection
on $ \Mreg = \c[\vreg] \otimes \tau $. This connection is
called a {\it KZ connection} with values in $ \tau \,$: its
horizontal sections $\,y: \vreg \to \tau \,$ satisfy the
following KZ equations
\begin{equation}\label{kz}
\partial_\xi y + \sum_{H\in\mathcal
A}\frac{\alpha_H(\xi)}{\alpha_H}\sum_{i=0}^{n_H-1}n_Hk_{H,i}e_{H,i}(y)=0
\ ,\quad \forall\,\xi\in V\ .
\end{equation}

\begin{remark}
Notice a formal similarity between the systems \eqref{dune} and
\eqref{kz}. Apart from inessential change of sign, there is, however, an important
difference: in \eqref{kz}, the group elements $w\in W$  act on the {\it values} of
the functions involved, while in \eqref{dune} on their arguments.
\end{remark}

It is easy to check that if $y$ is a local solution of \eqref{kz} near a point
$x_0\in\vreg$, then $^wy:=wyw^{-1}$ is a local solution near $wx_0$. Thus, the
system \eqref{kz} is $W$-equivariant and descends to a regular holonomic
system on $\vreg/W$. The space of local solutions of this holonomic system
has dimension $\dim\tau$, and its monodromy gives a linear representation of the
braid group $\,B_W \,$ in this space. The corresponding $ \dim(\tau)$-dimensional
$\c B_W$-module is the value of the functor \eqref{KZZ} on $ M(\tau)$.
We remark that for
complex reflection groups, the system \eqref{kz} and its monodromy
have been studied in detail in \cite{K}, \cite{BMR} and \cite{O}.

\subsection{The Hecke algebra}\la{oh}
It is crucial for applications that the KZ functor \eqref{KZZ}
factors through representations of the Hecke algebra of $W$.
To define this algebra, we recall that, for
every $H\in \A$, there is a unique reflection $s_H\in W_H$ with
$\det s_H=\exp 2\pi i /n_H$. It is known that the
braid group $B_W$ is generated by the elements $\sigma_H$ which correspond
to $s_H$ as generators of monodromy around $H\in \A $  (see \cite{BMR}).
Given now complex parameters $k=\{k_{H,i}\}$, with
$k_{H,0}=0$, the Hecke algebra $ \H_k(W)$ is defined
as the quotient of $\c B_W$ by the following relations
\begin{equation*}
    \prod_{j=0}^{n_H-1}\left(\sigma_H-(\det s_H)^{-j}e^{2\pi i
    k_{H,j}}\right)=0\ ,\quad\forall\, H\in\ms A\ .
\end{equation*}
Notice that, for $k_{H,j} \in \Z $, these relations become $(\sigma_H)^{n_H}=1$, so
in that case $ \H_k(W)$ is canonically isomorphic to the
group algebra of $ W$. In general, $\H_k $ should be viewed as a deformation of
$\c W$.

Restricting scalars via the natural projection $\, \c B_W \onto \H_k(W) \,$, we can
regard $ \mod(\H_k) $ as a full subcategory of $ \mod(\c B_W) $. It turns out that
\begin{theorem}[\cite{GGOR}, Theorem 5.13]
For each $k$, the KZ functor \eqref{KZZ} has its image in $ \mod(\H_k) $, i.~e.
$\, \KZ_k :\, \ms O_k  \to  \mod(\H_k)\,$.
\end{theorem}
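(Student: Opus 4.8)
The plan is to show that for every $M \in \O_k$, the monodromy representation $\KZ_k(M)$ of the braid group $B_W$ actually factors through the Hecke algebra $\H_k(W)$, i.e.\ that each generator $\sigma_H$ of $B_W$ satisfies the defining polynomial relation of $\H_k(W)$ when acting on $\KZ_k(M)$. By Proposition~\ref{ind}(3), every $M \in \O_k$ has a finite filtration with subquotients among the $L(\tau)$; since $\KZ_k$ is exact and the simples $L(\tau)$ are quotients of the standard modules $M(\tau)$, it suffices to check the relation on $\KZ_k(M(\tau))$ for each $\tau \in \W$. So the first step is to reduce to standard modules via exactness of $\KZ_k$ and the filtration. (One must note that the Hecke relations are polynomial identities satisfied by a single operator $\sigma_H$, and such identities pass to subobjects, quotients, and hence to arbitrary extensions, so the class of $\c B_W$-modules on which they hold is closed under the operations appearing in \eqref{filtm}.)

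Next I would compute the local monodromy of the KZ connection \eqref{conn} around a generic point of a reflection hyperplane $H$. The key point is that the KZ connection on $\c[\vreg]\otimes\tau$ has at worst a regular singularity along $H$, and its residue along $H$ is the operator $\sum_{i=0}^{n_H-1} n_H k_{H,i}\,\e_{H,i}$ acting on $\tau$. Restricting attention to a small transverse disk and using the $W_H$-action to diagonalize, the connection decomposes according to the eigenspaces $\e_{H,i}\tau$ of the cyclic group $W_H$, and on each such eigenspace the equation \eqref{kz} becomes, up to a change of variable $z = \alpha_H$, a scalar equation of the form $z\,y' + c_{H,i}\,y = 0$ with $c_{H,i}$ determined by $n_H k_{H,i}$ together with the weight $i$ by which $W_H$ acts. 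The local monodromy eigenvalue on $\e_{H,i}\tau$ is therefore $\exp(2\pi i \cdot (\text{this exponent}))$, and a direct bookkeeping of the shift coming from passing to the quotient $\vreg/W$ (where the local coordinate is $\alpha_H^{n_H}$) shows the eigenvalues of $\sigma_H$ are exactly $(\det s_H)^{-j} e^{2\pi i k_{H,j}}$, $j = 0,\dots,n_H-1$. This is precisely the assertion that $\sigma_H$ annihilates $\prod_{j=0}^{n_H-1}\bigl(\sigma_H - (\det s_H)^{-j}e^{2\pi i k_{H,j}}\bigr)$.

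The main obstacle, and the step requiring genuine care rather than routine computation, is the precise matching of exponents: one must correctly account for (i) the sign discrepancy between \eqref{dune} and \eqref{kz} noted in the remark, (ii) the fact that $\sigma_H$ is a generator of monodromy on $\vreg/W$, so the relevant local coordinate is $\alpha_H^{n_H}$ rather than $\alpha_H$, which divides exponents by $n_H$ and simultaneously folds in the $i$-th power of $\det s_H$ coming from the $W_H$-action, and (iii) the normalization $k_{H,0}=0$ built into the definition of $\H_k(W)$. Getting the bookkeeping right is exactly what produces the factor $(\det s_H)^{-j}e^{2\pi i k_{H,j}}$ rather than some other root of unity times exponential. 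Once the local monodromy around each $H$ is identified, the argument is complete: the braid relations are automatic (they hold in $B_W$), and the only extra relations imposed in $\H_k(W)$ are the ones we have just verified, so $\KZ_k(M(\tau))$—and hence $\KZ_k(M)$ for every $M \in \O_k$—is a module over $\H_k(W)$. This gives the factorization $\KZ_k : \O_k \to \mod(\H_k)$ claimed in the theorem. For the details of the monodromy computation and the braid group presentation we refer to \cite{BMR}, \cite{GGOR}; we only sketch the structure of the argument here since a complete treatment appears in \cite{GGOR}, Theorem~5.13.
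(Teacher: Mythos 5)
The paper itself gives no proof of this statement: it is quoted directly from \cite{GGOR} (Theorem~5.13). So the comparison below is with the standard argument given there, whose overall shape (local monodromy of the KZ connection along each $H$, matched against the roots $(\det s_H)^{-j}e^{2\pi i k_{H,j}}$) you have correctly identified; your second and third paragraphs are an acceptable sketch of that computation for the standard modules $M(\tau)$.

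The genuine gap is in your reduction step. A polynomial identity satisfied by a single operator passes to subobjects and quotients of a $\c B_W$-module, but \emph{not} to extensions: if $0\to A\to B\to C\to 0$ and $P(\sigma_H)$ annihilates $A$ and $C$, you only get $P(\sigma_H)^2B=0$ (take $P(t)=t-1$ and a $2\times 2$ unipotent Jordan block). Hence knowing the Hecke relations on each $\KZ_k(L(\tau))$ does not formally give them on $\KZ_k(M)$ for an iterated extension $M$ as in \eqref{filtm}, and your parenthetical claim to the contrary is false. This is not a removable technicality: the theorem is asserted for every $k$, and for non-regular $k$ --- exactly the case in which $\H_k$ is not semisimple and the statement has real content --- the category $\O_k$ has non-split extensions whose KZ images carry non-semisimple local monodromy; the relation holds for them only because the corresponding parameters $(\det s_H)^{-j}e^{2\pi i k_{H,j}}$ collide and produce a repeated root of the Hecke polynomial, so no argument that only sees the simple subquotients (i.e.\ the semisimplification of $\KZ_k(M)$) can detect this. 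For this reason the proof in \cite{GGOR} verifies the relation for an \emph{arbitrary} object of $\O_k$, by analyzing the localized module near a generic point of $H$ (reducing to the rank-one Cherednik algebra of $W_H$ and controlling the full local Jordan structure, not just eigenvalues); a reduction to projective objects via quotients would also be legitimate, but then the relation must still be proved for those directly, so d\'evissage through simples cannot carry the proof. Note that for integral $k$, where $\O_k$ is semisimple (Proposition~\ref{ssc}), your reduction does work --- but that is the easy case ($\H_k\cong\c W$), and it is precisely the non-semisimple values of $k$ that your argument does not reach.
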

The next two results require the assumption that $\,\dim \H_k = |W|\,$.
It will be crucial for us that this assumption holds automatically for
{\it all} $\, W$ whenever $k_{H,j}\in\Z$, since $\,\H_k \cong \c W\,$
in this case\footnote{In general, the equlaity $\,\dim \H_k = |W|\,$ is known to
be true for almost all complex reflection groups, except for a few exceptional ones,
in which case it still remains a conjecture (see \cite{BMR}).}.

Let $\Otor$ denote the full subcategory of $\ms O_k $ consisting of modules $M$ such that
$\Mreg=0$. Clearly, $ \Otor $ is a Serre subcategory of $ \ms O_k $, so that
the quotient $\,\ms O_k/\Otor $ is defined as an abelian category.
\begin{prop}[\cite{GGOR}, Theorem 5.14]\la{tor} Assume that $\dim \H_k =|W|$. Then
the KZ functor induces an equivalence
\begin{equation*}
    \KZ_k\,:\ \ms O_k/\Otor   \overset{\sim}\to \mod(\H_k)\ .
\end{equation*}
\end{prop}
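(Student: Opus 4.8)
The plan is to recognize $\KZ_k$ as a Serre quotient functor in the sense of Gabriel. First I would record the structural facts already available: by Proposition~\ref{ind}, $\O_k$ is a finite-length abelian category with finitely many simple objects $\{L(\tau)\}_{\tau\in\W}$ and enough projectives, so it is equivalent to $\mod(A)$ for a finite-dimensional $\c$-algebra $A=\End_{\O_k}(P)^{\mathrm{op}}$ with $P$ a projective generator. Next I would observe that $\KZ_k$ is \emph{exact}: the localization functor \eqref{locf} is exact, and on $\O$-coherent modules it is followed by the Riemann--Hilbert correspondence, which is an equivalence of categories. Being an exact functor out of a module category over a finite-dimensional algebra with enough projectives, $\KZ_k$ is representable by general principles (the argument of \cite{GGOR}, \S 5): there is a projective object $P_{\KZ}\in\O_k$ with $\KZ_k\cong\mathrm{Hom}_{\O_k}(P_{\KZ},-)$, and exactness forces $P_{\KZ}$ to be projective.

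The second step is to identify $\End_{\O_k}(P_{\KZ})^{\mathrm{op}}$ with $\H_k$. By the theorem of \cite{GGOR} quoted just before Proposition~\ref{tor}, each $\KZ_k(M)=\mathrm{Hom}_{\O_k}(P_{\KZ},M)$ carries a natural $\H_k$-module structure; functoriality in $M$ turns this into an algebra homomorphism $\H_k\to\End_{\O_k}(P_{\KZ})^{\mathrm{op}}$. To prove it is an isomorphism I would argue that it is injective because $P_{\KZ}$ can be chosen so that $\KZ_k$ is faithful on projective objects (no nonzero projective localizes to zero), and then match dimensions: one computes $\dim_\c\End_{\O_k}(P_{\KZ})$ from the multiplicities of the indecomposable projectives $P(\tau)$ occurring in $P_{\KZ}$, and the hypothesis $\dim\H_k=|W|$ is precisely what makes the two dimensions agree, forcing the map to be bijective. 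This dimension count is where I expect the main obstacle to lie: one has to pin down exactly which $P(\tau)$ appear in $P_{\KZ}$ and with what multiplicity, using the relation between the KZ connection and the structure of projectives in the highest-weight category $\O_k$.

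The third step is to identify $\ker\KZ_k$ with $\Otor$. Since $\KZ_k$ is exact, $\ker\KZ_k$ is the Serre subcategory generated by those simples $L(\tau)$ with $\KZ_k(L(\tau))=0$. Now $\KZ_k(L(\tau))$ is the monodromy representation of the $\D$-module $L(\tau)_{\mathrm{reg}}$, and since the Riemann--Hilbert correspondence is an equivalence, this vanishes exactly when $L(\tau)_{\mathrm{reg}}=0$, i.e.\ exactly when $L(\tau)\in\Otor$. Hence $\ker\KZ_k=\Otor$ as Serre subcategories of $\O_k$.

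Finally I would invoke the elementary fact that for a projective object $P$ in a category $\mathcal{C}\simeq\mod(A)$, writing $P$ as a summand of $A^n$ corresponding to an idempotent $e$, the functor $\mathrm{Hom}_{\mathcal{C}}(P,-)\colon\mathcal{C}\to\mod(\End_{\mathcal{C}}(P)^{\mathrm{op}})$ is a quotient functor: it is exact, essentially surjective (a module $N$ is pulled back via $A^ne\otimes_{eA^ne}N$), and its kernel is Serre, so it induces an equivalence $\mathcal{C}/\ker\xrightarrow{\sim}\mod(\End_{\mathcal{C}}(P)^{\mathrm{op}})$. Applying this with $\mathcal{C}=\O_k$ and $P=P_{\KZ}$, and substituting $\ker\KZ_k=\Otor$ and $\End_{\O_k}(P_{\KZ})^{\mathrm{op}}\cong\H_k$ from the previous two steps, yields the asserted equivalence $\KZ_k\colon\O_k/\Otor\xrightarrow{\sim}\mod(\H_k)$.
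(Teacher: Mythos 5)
The paper does not actually prove this proposition: it is imported verbatim from \cite{GGOR} (Theorem 5.14), so there is no internal argument to compare yours against. Your outline reconstructs the GGOR strategy in the correct overall shape: exactness of $\KZ_k$ (exact localization followed by the Riemann--Hilbert equivalence), representability $\KZ_k\cong\mathrm{Hom}_{\O_k}(P_{\KZ},-)$ with $P_{\KZ}$ projective, the identification $\ker\KZ_k=\Otor$ (immediate, since $\KZ_k(M)=0$ if and only if $M_{\mathrm{reg}}=0$), and the general fact that $\mathrm{Hom}(P,-)$ for a projective $P$ induces an equivalence from the Serre quotient by its kernel onto $\mod\bigl(\End(P)^{\mathrm{op}}\bigr)$. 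Your steps 1, 3 and 4 are sound.

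The genuine gap is in your second step, which is where all the content of the theorem sits. Injectivity of the canonical map $\H_k\to\End_{\O_k}(P_{\KZ})^{\mathrm{op}}$ is, by Yoneda, equivalent to saying that $\KZ_k(P_{\KZ})$ is a \emph{faithful} $\H_k$-module; this does not follow from your stated reason that no nonzero projective localizes to zero (faithfulness of the functor on projectives is a different and weaker statement), and a priori the action of $\H_k$ on all the modules $\KZ_k(M)$ could factor through a proper quotient algebra. The dimension count you defer does in fact work out: $\dim_\c\End_{\O_k}(P_{\KZ})=\dim_\c\KZ_k(P_{\KZ})$ equals the generic rank of $P_{\KZ}$ over $\c[V]$, and writing $P_{\KZ}\cong\bigoplus_\tau P(\tau)^{\oplus\dim\KZ_k(L(\tau))}$ and using BGG reciprocity together with $\dim\KZ_k(M(\sigma))=\dim\sigma$ gives $\sum_\sigma(\dim\sigma)^2=|W|$. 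But equality of dimensions alone proves nothing until you establish injectivity or surjectivity of the canonical map, and neither is supplied by what you wrote. Note that the identification $\H_k\cong\End_{\O_k}(P_{\KZ})^{\mathrm{op}}$ is precisely the first half of the paper's Theorem~\ref{dct} (GGOR, Theorem 5.15), which the paper likewise quotes without proof; if you are permitted to cite that, your argument closes, but as a self-contained proof the crucial step is missing.
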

In addition, one can prove
\begin{theorem}[\cite{GGOR}, Theorems 5.15, 5.16]\la{dct}  Assume that $\dim
\H_k =|W|$. Then there exist projective objects $P\in\ms O_k$ and
$Q\in\mod(\H_k)$ such that
\begin{equation*}
    \H_k \cong \left(\End_{\ms
    O_k}P\right)^{\mathrm{opp}}\quad\text{and}\quad \ms O_k \simeq
    \mod\,\left(\End_{\H_k}Q\right)^{\mathrm{opp}}\ .
\end{equation*}
\end{theorem}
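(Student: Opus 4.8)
The final statement is Theorem~\ref{dct} (attributed to \cite{GGOR}, Theorems~5.15 and~5.16): assuming $\dim\H_k=|W|$, there are projective objects $P\in\ms O_k$ and $Q\in\mod(\H_k)$ with $\H_k\cong(\End_{\ms O_k}P)^{\mathrm{opp}}$ and $\ms O_k\simeq\mod\,(\End_{\H_k}Q)^{\mathrm{opp}}$.

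\medskip

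The plan is to exploit the structure of $\ms O_k$ as a highest-weight category together with the exactness of the KZ functor. First I would recall that $\ms O_k$ has enough projectives (by Proposition~\ref{ind} it is equivalent to $\mod(B)$ for a finite-dimensional algebra $B$, e.g.\ $B=(\End_{\ms O_k}P_{\mathrm{proj}})^{\mathrm{opp}}$ for $P_{\mathrm{proj}}$ a projective generator), and that each projective $P(\tau)$ carries a standard filtration by the $M(\sigma)$'s. The key input is that $\KZ_k$ is exact and, by a general fact about quotients of abelian categories by Serre subcategories with the universal property realised via a projective (here using Proposition~\ref{tor}), $\KZ_k$ is represented by a projective object: there is a projective $P_{\KZ}\in\ms O_k$ with $\KZ_k\cong\mathrm{Hom}_{\ms O_k}(P_{\KZ},-)$. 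Then I would take $P:=P_{\KZ}$. The identification $\H_k\cong(\End_{\ms O_k}P)^{\mathrm{opp}}$ follows because $\KZ_k$ induces the quotient equivalence $\ms O_k/\Otor\xrightarrow{\sim}\mod(\H_k)$ of Proposition~\ref{tor}, hence $\H_k=\End_{\mod(\H_k)}(\KZ_k\,P/\dots)$; more precisely one checks that $\KZ_k$ sends $P$ to a projective generator of $\mod(\H_k)$ and that $\mathrm{Hom}_{\ms O_k}(P,P)\to\mathrm{Hom}_{\H_k}(\KZ_k P,\KZ_k P)$ is an isomorphism — this uses the "double centraliser" property, namely that $\mathrm{Hom}_{\ms O_k}(P,M)\to\mathrm{Hom}_{\H_k}(\KZ_k P,\KZ_k M)$ is an isomorphism for $M$ with a standard filtration, which in turn follows from $\Otor$ consisting of modules killed by $\KZ_k$ together with the fact that standard (hence $\Delta$-filtered projective) objects have no sub- or quotient in $\Otor$ in the relevant degrees.

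\medskip

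For the second statement I would set $Q:=\KZ_k(P)$, which is a projective object of $\mod(\H_k)$ (image of a projective under an exact functor that admits both adjoints on the relevant subcategories, or directly: $\KZ_k$ has a right adjoint $S_k$ and $\KZ_k P$ represents $\mathrm{Hom}(\KZ_k P,-)\cong\mathrm{Hom}(P,S_k(-))$ which is exact since $P$ is projective and $S_k$ is exact on $\mod(\H_k)$ by Proposition~\ref{tor}). Then $\End_{\H_k}Q\cong\End_{\ms O_k}P\cong\H_k^{\mathrm{opp}}$? — no; rather the point is that $P$ is in fact a projective \emph{generator} of $\ms O_k$ (one arranges this by adding to $P_{\KZ}$ the projective covers of the simples killed by $\KZ_k$, i.e.\ $P:=P_{\KZ}\oplus P'$ where $P'$ covers the simples in $\Otor$; this does not change $\End$ up to Morita equivalence and keeps $P$ projective), so that $\ms O_k\simeq\mod\,(\End_{\ms O_k}P)^{\mathrm{opp}}$ by the standard projective-generator theorem, and then one identifies $\End_{\ms O_k}P$ with $\End_{\H_k}Q$ via a further double-centraliser argument using that $\KZ_k$ is fully faithful on projectives (which again reduces, via the $\Delta$-filtration of projectives, to the statement that $\KZ_k$ is fully faithful on $\Delta$-filtered modules, i.e.\ $R^1S_k$ and the comparison maps behave well — the content of the $1$-faithfulness of $\KZ_k$).

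\medskip

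The main obstacle is establishing that $\KZ_k$ is represented by a projective object and, equivalently, that the double-centraliser property holds: i.e.\ that for the appropriate projective $P$ the map $\End_{\ms O_k}(P)\to\End_{\H_k}(\KZ_k P)$ is an isomorphism. This is where the hypothesis $\dim\H_k=|W|$ (equivalently the quotient-equivalence of Proposition~\ref{tor}) is used essentially, and where the highest-weight / $\Delta$-filtration structure of $\ms O_k$ enters: one needs that no nonzero morphism between $\Delta$-filtered objects is killed by $\KZ_k$ and that every morphism after applying $\KZ_k$ lifts, which amounts to controlling $\mathrm{Ext}^1$ into $\Otor$. Since all of this is proved in \cite{GGOR} (Theorems~5.15 and~5.16) and we only need the statement, I would simply cite \cite{GGOR} for the full argument; in the integral case $k_{H,j}\in\Z$ the hypothesis is automatic because $\H_k\cong\c W$, so the theorem applies unconditionally to the situation of this paper.
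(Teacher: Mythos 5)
Your proposal is correct and in the end does exactly what the paper does: the paper gives no independent argument for Theorem~\ref{dct} but simply cites \cite{GGOR}, Theorems~5.15 and~5.16, which is also your conclusion. Your sketch of the internal mechanism (the KZ functor being represented by a projective $P_{\KZ}$ with $\H_k\cong(\End_{\ms O_k}P_{\KZ})^{\mathrm{opp}}$, and the double-centraliser property via full faithfulness of $\KZ_k$ on projectives, with $Q=\KZ_k$ of a projective generator) faithfully reflects the cited proof, so no gap remains beyond what is deliberately delegated to \cite{GGOR}.
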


\subsection{Regularity}\la{SReg}

The structure of the category $\ms O_k$ depends on the values of the
parameters $k$. For generic $k$'s, $\,\ms O_k\,$ is a semisimple category,
while for special values of $k$ it has a more complicated structure
(in particular, it has homological dimension $ > 0 $). Likewise, the Hecke algebra $\H_k$ is
semisimple for generic $k$'s, but becomes more complicated for certain special values.
Using the KZ functor, we can show that the special values in both cases actually coincide.
Precisely, we have the following
\begin{theorem}\la{ss}
Assume that $\dim\H_k=|W|$. Then the following are equivalent.

1. $\H_k$ is a semisimple algebra.

2. $\ms O_k$ is a semisimple category.

3. $H_k$ is a simple ring.
\end{theorem}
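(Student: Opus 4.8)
The plan is to prove the three equivalences in Theorem~\ref{ss} by establishing a cycle of implications, using the KZ functor as the bridge between $\H_k$ and $\O_k$, and a standard Dixmier-type argument to connect $\O_k$ to the simplicity of $H_k$.

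First I would prove $(1) \Rightarrow (2)$. Assume $\H_k$ is semisimple. Since $\dim\H_k = |W|$, Proposition~\ref{tor} gives an equivalence $\KZ_k: \O_k/\Otor \overset{\sim}{\to} \mod(\H_k)$. I claim that in this case $\Otor = 0$, which would force $\O_k \simeq \mod(\H_k)$ to be semisimple. To see $\Otor = 0$: a nonzero $M \in \O_k$ has a simple subquotient $L(\tau)$, and by Proposition~\ref{ind} every simple object has this form; the point is to argue, using exactness and faithfulness properties of $\KZ_k$ together with the classification of simples, that no $L(\tau)$ can lie in $\Otor$ when $\H_k$ is semisimple — equivalently, $\KZ_k$ does not kill any simple object. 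Alternatively, and perhaps more cleanly, one uses the double-centralizer statement of Theorem~\ref{dct}: $\O_k \simeq \mod\,(\End_{\H_k} Q)^{\mathrm{opp}}$ for a projective $Q \in \mod(\H_k)$; when $\H_k$ is semisimple, $Q$ is projective and injective, $\End_{\H_k} Q$ is semisimple (a product of matrix algebras over division rings), hence $\O_k$ is semisimple. This is the slicker route and I would take it.

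Next, $(2) \Rightarrow (3)$. Assume $\O_k$ is semisimple. The goal is to show $H_k$ is a simple ring. Suppose $I \subsetneq H_k$ is a nonzero two-sided ideal. Using the grading/filtration on $H_k$ and the fact that $\eu$ (the Euler element, or the grading element) acts on modules in $\O_k$ with eigenvalues bounded below, one shows that $H_k/I$, if it were a proper nonzero quotient, would have a nonzero module in $\O_k$ on which $H_k$ acts through $H_k/I$; concretely, pick a simple module $L(\tau)$ killed by $I$ (some $I$-torsion argument, or: $H_k/I$ is a nonzero algebra, take any simple module in its category $\O$, which is a simple module in $\O_k$ annihilated by $I$). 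By semisimplicity of $\O_k$, $L(\tau)$ is projective in $\O_k$, hence (by standard arguments comparing $\O_k$ with $\mod H_k$ via the induction functor and using that projectives in $\O_k$ are summands of sums of $M(\sigma)$'s) one derives that $M(\sigma) = L(\sigma)$ for all $\sigma$, i.e. all standard modules are simple. But $I$ annihilates $L(\tau) = M(\tau)$, and $\bigcap_\tau \mathrm{Ann}\,M(\tau) = 0$ because the $M(\tau)$ together give a faithful representation of $H_k$ (their direct sum contains $\c[V] \otimes \c W$ faithfully; more precisely the Dunkl representation is faithful and factors appropriately). Hence $I = 0$. The main obstacle here is making the step ``$\O_k$ semisimple $\Rightarrow$ all $M(\tau)$ simple $\Rightarrow$ $H_k$ simple'' fully rigorous; this is exactly the argument of \cite{BEG}, Theorem~3.1 in the Coxeter case, and I expect it transfers, but it requires care with the faithfulness of $\bigoplus_\tau M(\tau)$.

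Finally, $(3) \Rightarrow (1)$. Assume $H_k$ is simple. Then, as in the proof of Theorem~\ref{morita}, $H_k$ and $U_k = \eu H_k \eu$ are Morita equivalent, and more relevantly $H_k$ has no nontrivial two-sided ideals, so every nonzero object of $\O_k$ is faithful. In particular $\Otor$ consists of faithful modules; but a module $M$ with $\Mreg = \D W \otimes_{H_k} M = 0$ is annihilated by $\delta^N$ for some $N$ (since localization at $\delta$ kills it), so $\delta^N \in \mathrm{Ann}\,M$, contradicting faithfulness unless $M = 0$. Hence $\Otor = 0$, and by Proposition~\ref{tor}, $\KZ_k: \O_k \overset{\sim}{\to} \mod(\H_k)$ is an equivalence. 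Now by Theorem~\ref{dct}, $\H_k \cong (\End_{\O_k} P)^{\mathrm{opp}}$ for a projective generator $P$; since $\O_k \simeq \mod(\H_k)$ and $H_k$ simple forces $\O_k$ to have homological dimension $0$ (every object, being faithful and living in a category equivalent to modules over an algebra whose only ideals come from $H_k$... — more directly: simplicity of $H_k$ implies $\O_k$ is semisimple by a symmetric version of the argument in $(2)\Rightarrow(3)$, since all $M(\tau)$ are then simple and equal to $L(\tau)$, and every $M \in \O_k$ is filtered by simples hence, the $L(\tau)$ being projective, semisimple). Therefore $\mod(\H_k)$ is semisimple, i.e. $\H_k$ is a semisimple algebra. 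The cycle $(1) \Rightarrow (2) \Rightarrow (3) \Rightarrow (1)$ then closes, proving all three conditions equivalent.
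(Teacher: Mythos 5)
Your handling of $(1)\Rightarrow(2)$ via the double-centralizer statement of Theorem~\ref{dct} is exactly the paper's argument (the paper also gets $(2)\Rightarrow(1)$ the same way, using $P$ instead of $Q$), and that part is fine. The problems are in the two implications involving simplicity of $H_k$. In $(2)\Rightarrow(3)$ there are two gaps. First, the existence of a simple object of $\O_k$ annihilated by $I$ is not justified: your fallback ``take any simple module in the category $\O$ of $H_k/I$'' presupposes that $H_k/I$ has a nonzero module on which $V$ acts locally nilpotently, which is not clear for an arbitrary proper quotient. The paper gets this from a genuine theorem (Ginzburg, \cite{Gi}, Theorem~2.3): every primitive ideal of $H_k$ is the annihilator of a simple object of $\O$, applied to a primitive ideal containing $I$. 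Second, your concluding step is a non sequitur: from $I\subseteq\mathrm{Ann}\,M(\tau)$ for \emph{one} $\tau$ together with $\bigcap_\sigma\mathrm{Ann}\,M(\sigma)=0$ you cannot conclude $I=0$; you would need $\mathrm{Ann}\,M(\tau)=0$ for that particular $\tau$. The paper's actual contradiction is different and uses localization: a nonzero two-sided ideal $I$ is $\c[V]$-torsion-free, so $\Ireg$ is a nonzero two-sided ideal of $\Hreg=\D W$, which is simple; hence $\Ireg=\Hreg$, so some $\delta^N\in I$ and $I\cap\c[V]\neq 0$, whereas $\mathrm{Ann}\,M(\tau)\cap\c[V]=0$ because $M(\tau)$ is torsion-free over $\c[V]$. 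Your proposal never invokes this mechanism, so the contradiction is not established.

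The implication $(3)\Rightarrow(1)$ is where the real work of the theorem lies, and you essentially assert it. Your parenthetical claims that simplicity of $H_k$ forces $M(\tau)=L(\tau)$ for all $\tau$ ``by a symmetric version of the argument in $(2)\Rightarrow(3)$,'' and that the $L(\tau)$ are projective, are exactly the statements that need proof. In the paper this is the direction $3\Rightarrow 2$: from $\mathrm{Ann}_{H_k}L(\tau)=0$ one first deduces $L(\tau)_{\mathrm{reg}}\neq 0$ (otherwise a power of $\delta$ would annihilate $L(\tau)$); then one invokes \cite{GGOR}, Proposition~5.21, to embed each such $L(\tau)$ into a standard module, the multiplicity fact $[M(\tau):L(\tau)]=1$ from \cite{DO}, and an ordering of $\W$ making the decomposition matrix upper-triangular, followed by an induction to conclude $M(\tau_i)=L(\tau_i)$ for all $i$; finally BGG reciprocity gives projectivity of the $L(\tau)$ and hence semisimplicity of $\O_k$, after which $(2)\Rightarrow(1)$ (via Theorem~\ref{dct}) closes the cycle. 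None of these ingredients appears in your sketch, and the step does not follow formally from anything you have proved earlier, so as written the cycle of implications is not complete. Note also that your route $\Otor=0$ and $\O_k\simeq\mod(\H_k)$ does not by itself yield semisimplicity of $\H_k$; it only transfers the question back to $\O_k$.
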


\begin{proof} We give a detailed proof of this result following R.Vale's dissertation \cite{Va}
(cf. {\it loc. cit.}, Theorem~2.1).

${\it 1 \Rightarrow 2}$. Choose $ Q\in \mod(\H_k) $ as in Theorem~\ref{dct}. If
$\H_k $ is a semisimple algebra, then $\,\End_{\H_k}Q\,$ is a semisimple algebra, and hence $\, \O_k \simeq
\mod\,\left(\End_{\H_k}Q\right)^{\mathrm{opp}} $ is a semisimple category.

${\it 2 \Rightarrow 1}$. Choose $\, P\in\ms O_k \,$ as in Theorem~\ref{dct}.
If $ \O_k $ is a semisimple category, then $\,\End_{\ms O_k}P $ is a semisimple algebra,
and hence $\,\H_k \cong \left(\End_{\ms O_k}P\right)^{\mathrm{opp}}$ is a semisimple algebra.

${\it 2 \Rightarrow 3}$. By Proposition~\ref{ind}, the standard modules $ M(\tau) $
in $\ms O_k$ are indecomposable. Hence, if $\ms O_k$ is semisimple, then all $M(\tau)$ are
simple, and we have $ L(\tau)=M(\tau) $ for all $\tau\in\W$. Now, suppose that $0\ne
I\subset H_k$ is a proper two-sided ideal. $H_k$ and $I$ are torsion
free over $\c[V]$. Therefore $0\ne \Ireg\subset\Hreg$ is a two-sided
ideal of $\Hreg=\D W$, which is a simple algebra. Hence, $\Ireg=\Hreg$.

Now, we can always find a primitive ideal $J\subset H_k$, containing
$I$.  By \cite{Gi}, Theorem 2.3, every primitive ideal is the
annihilator of some simple module in $\ms O$. Therefore,
$I\subset\mathrm{Ann}_{H_k}L(\tau)$ for some $\tau$. But
$\Ireg=\Hreg$ implies that $I\cap\c[V]\ne 0$, while
$\mathrm{Ann}_{H_k}L(\tau)\cap\c[V]=0$ because $L(\tau)=M(\tau)$ is
torsion free over $\c[V]$. Contradiction.

${\it 3 \Rightarrow 2}$.
Assuming $H_k$ is simple, we get that
$\mathrm{Ann}_{H_k}L(\tau)=0$ for all $\tau\in\W$. Then
$L(\tau)_{\mathrm{reg}}$ must be nonzero. Indeed, otherwise
$L(\tau)$ would be annihilated by some power of $\delta$, which
contradicts $\mathrm{Ann}_{H_k}L(\tau)=0$.

Thus, $L(\tau)_{\mathrm{reg}}\ne 0$ for all $\tau$. In that case,
each $L(\tau)$ is a submodule of some standard module, by
\cite{GGOR}, Proposition 5.21. By \cite{DO}, 2.5, we have
$[M(\tau):L(\tau)]=1$ and it follows that $L(\tau)\subset M(\tau)$
only if both are the same. Hence, if $L(\tau)\ne M(\tau)$ then it
must be a submodule of some $M(\sigma)$ with $\sigma\ne\tau$.

By {\it loc.cit.}, we can order the elements $\tau_1< \ldots <
\tau_d$ of $\W$ in such a way that the matrix with the entries
$[M(\tau_i):L(\tau_j)]$ is upper-triangular. From the previous
paragraph it follows that if $L(\tau_i)\ne M(\tau_i)$ then the
$i$-th column of this (upper-triangular) matrix has at least one
nonzero off-diagonal entry. This gives us immediately that
$M(\tau_1)=L(\tau_1)$ is simple. Therefore,
$[M(\tau_1):L(\tau_2)]=0$, which implies that $L(\tau_2)=M(\tau_2)$
is simple, and so on. As a result, we conclude that
$L(\tau_i)=M(\tau_i)$ for all $i$, i.e. all standard modules are
simple. Now, the BGG reciprocity (see \cite{GGOR}, Section~2.6.2 and
Proposition~3.3) implies that each $L(\tau)=M(\tau)$ is projective
and $\ms O$ is semisimple (cf. the concluding remark of \cite{BEG},
Section 2).
\end{proof}
\begin{remark}
\la{Rem22}
Theorem~\ref{ss} can be refined by adding that $\, \O_k \simeq \mod(\H_k) \,$
whenever one of its three equivalent conditions holds. Indeed, if (say) $ \O_k$
is semisimple, then
$ M(\tau) = L(\tau) $ for all $ \tau \in \W $. This implies that
$ L(\tau)_{\mathrm{reg}}=M(\tau)_{\mathrm{reg}}\ne 0$ for all $ \tau \in \W $,
since each $ M(\tau) $ is torsion free over $\c[V]$. Now,
every object $M$ in $ \O_k$ can be filtered as in Proposition~\ref{ind}, so
$\, \Mreg\ne 0 \,$ if $ M \ne 0 $ in $ \O_k $. Thus, $ \O_k$ being semisimple
implies that $\,\Otor=0$, and the equivalence $\, \O_k \simeq \mod(\H_k) \,$ follows
then from Proposition~\ref{tor}.
\end{remark}
\begin{remark}
The implication ``${\it 2 \Rightarrow 3}$'' holds without
the assumption $\dim\H_k=|W|$, since the KZ functor
is not used in the proof. This implication is also equivalent to
\begin{equation*} L(\tau)=M(\tau)\ ,\quad
\forall\tau\in\W\quad\Rightarrow\quad H_k\ \text{is a simple ring}\ ,
\end{equation*}
which is one of the key observations of \cite{BEG} (see Section 3 of {\it
loc.cit.}).
\end{remark}

\medskip

We now call a multiplicity vector $\,k=\{k_{C,i}\} \in \c^{\sum_{C\in\ms A/W}n_C}\,$
{\it regular} if the category $\ms O_k(W) $ is semisimple.
Write $\,\Reg(W) \,$ for the subset of all
regular vectors in $\,\c^{\sum_{C\in\ms A/W}n_C} \,$.
In view of Theorem~\ref{ss}, for those groups $W$ where it is known that
$\dim\,\H_k=|W|$, $\,\Reg(W) $ coincides with the set of all $k$'s for
which the Hecke algebra $\H_k(W)$ is semisimple and the Cherednik algebra $ H_k(W) $
is simple. In general, we will need the following fact.
\begin{lemma}\la{r}
For any group $W$, $\,\Reg(W)\,$ is a connected subset in $\c^{\sum_{C\in\ms
A/W}n_C}$.
\end{lemma}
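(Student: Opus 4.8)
The plan is to show that the complement of $\Reg(W)$ is cut out by countably many ``bad'' hyperplanes, so that $\Reg(W)$ is the complement in $\c^{N}$ (with $N = \sum_{C}n_C$) of a countable union of proper affine-linear subspaces, and such a set is always connected (in fact path-connected) over $\c$ since $\c^N$ has real dimension $2N \ge 2$ and removing a countable union of real-codimension-$2$ subsets cannot disconnect it.

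First I would recall (from \cite{GGOR} and the discussion in Section~\ref{SReg}) that the category $\O_k$ is semisimple precisely when the standard modules $M(\tau)$ are all simple, and that non-semisimplicity is detected by the vanishing of certain contravariant (``Shapovalov-type'') forms on the $M(\tau)$, or equivalently by the determinant of the Gram matrix of such a form on each graded piece $M(\tau)_d \cong (\c[V]\otimes\tau)_d$. Each such determinant is a polynomial in $k$ whose restriction to a generic line is not identically zero (since for generic $k$ the category is semisimple — e.g. $k=0$, or by the deformation argument already used implicitly in the paper). Hence the non-regular locus is contained in the zero set of a countable family $\{D_j(k)\}$ of nonzero polynomials; more precisely, by the theory of the $c$-function / KZ data, these zero sets may be taken to be affine hyperplanes of the form $\sum_{C,i} m_{C,i}\,k_{C,i} \in \Z$ for integer vectors $m$, but for connectedness it is enough that each is a proper algebraic hypersurface.

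The key steps, in order: (1) identify $\c^N \setminus \Reg(W)$ with a subset of $\bigcup_j Z(D_j)$ where each $D_j \in \c[k_{C,i}]$ is a nonzero polynomial; (2) note there are only countably many such $D_j$ (they are indexed by $\tau \in \W$ and a degree $d \in \N$, a countable set); (3) invoke the elementary topological fact that $\c^N \cong \R^{2N}$ minus a countable union of sets each of real codimension $\ge 2$ is path-connected — given two points $p, q \in \Reg(W)$, the set of complex lines (or, more robustly, real $2$-planes) through $p$ and $q$ is parametrized by a positive-dimensional space, and for all but a measure-zero set of them the plane meets each $Z(D_j)$ in a proper subvariety, hence in countably many points total, leaving the plane minus countably many points, which is connected and contains a path from $p$ to $q$. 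Then $\Reg(W)$ is path-connected, a fortiori connected.

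The main obstacle I expect is step (1): making precise that the non-regular locus really is contained in a \emph{countable} union of proper hypersurfaces, rather than something wilder. This requires knowing that semisimplicity of $\O_k$ fails only along a locally finite (hence countable) hyperplane arrangement in $k$-space — a fact that follows from \cite{GGOR} (the $\mathbf{c}$-function and the explicit description of when $\O_k$ is not semisimple in terms of integrality conditions on pairings of $k$ with the reflection data) together with the observation that for each fixed graded degree only finitely many hyperplanes are relevant. Once this input is in hand, the topological conclusion is routine; the only care needed is to choose the family of auxiliary $2$-planes generically so as to avoid all countably many exceptional loci simultaneously, which is possible because a countable union of measure-zero conditions is still measure zero.
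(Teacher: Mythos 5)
Your overall strategy is the same as the paper's: show that the non-semisimple locus is contained in a countable (locally finite) union of proper hypersurfaces in $k$-space, and then use the fact that removing real-codimension-two sets from $\c^N$ cannot disconnect it. The difference is in how the hypersurfaces are produced. The paper gets them for free from \cite{DO}: the central element $z(k)=\sum_{H}\sum_i n_Hk_{H,i}\e_{H,i}$ acts on each $\tau\in\W$ by a scalar $c_\tau(k)$ which is \emph{linear in $k$ with nonnegative integer coefficients}, and by \cite{DO}, Prop.~2.31, $M(\tau)$ is simple whenever $c_\sigma(k)-c_\tau(k)\notin\N$ for all $\sigma$; combined with the argument of Theorem~\ref{ss} (all standard modules simple $\Rightarrow$ $\O_k$ semisimple, no Hecke-algebra input needed), this places the complement of $\Reg(W)$ inside an explicit locally finite arrangement of affine hyperplanes. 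You instead detect non-simplicity of $M(\tau)$ by vanishing of Gram determinants of the contravariant form on graded pieces; this works (the radical of the form is the maximal proper submodule, the entries are polynomial in $k$, and non-vanishing at $k=0$ gives generic non-vanishing), but it imports the contravariant-form machinery as a black box, whereas the paper's route is shorter and yields explicit linear equations. Two small points of care in your topological step: a real $2$-plane can meet a complex hypersurface in a real curve, not countably many points, so you should use complex lines (through a generic auxiliary point if necessary); and your endpoints $p,q\in\Reg(W)$ may themselves lie on some bad hypersurface $Z(D_j)$ while still being regular, so the cleanest finish is not a direct path argument but the observation that $\Reg(W)$ is sandwiched between the connected dense set $\c^N\setminus\bigcup_jZ(D_j)$ and its closure $\c^N$, hence is connected. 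With these adjustments your proof is correct.
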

\begin{proof}
Put
\begin{equation}\label{z}
z(k)=\sum_{H\in\ms A}\sum_{i=0}^{n_H-1}n_Hk_{H,i}e_{H,i}\in\c W\,.
\end{equation}
The element $z(k)$ is central in $\c W$, hence it acts on each
$\tau\in\W$ as a scalar, which we denote by $c_\tau(k)$. Obviously, $c_\tau(k)$
is a linear function of $k$. Moreover, according to \cite{DO}, Lemma
2.5, $c_\tau(k)$ is a linear function with {\it nonnegative integer}
coefficients. By {\it loc.cit.}, Proposition 2.31, $M(\tau)$ is
simple if $\,c_\sigma(k)-c_\tau(k)\notin\N\,$ for all $\sigma\in\W$.
Hence, if $k$ is generic, namely,
\begin{equation}\label{gen}
c_\sigma(k)-c_\tau(k)\notin\N\ ,\quad\forall \sigma,\tau\in\W\,,
\end{equation}
then all standard modules are simple and, as in the proof of Theorem
\ref{ss}, the category $\ms O_k$ is semisimple.

It follows that the complement to $\Reg(W)$ is contained in a locally
finite union of hyperplanes, thus $\Reg(W)$ itself is connected.
\end{proof}

\section{Shift Functors and KZ Twists}
\la{SF}

\subsection{Shift functors}
\la{sfff}
Recall that $ \O_k $ is the full subcategory of $ \Mod(H_k)$ consisting
of finitely generated modules on which the elements
$\, \xi \in V \,$ act locally nilpotently.
It is convenient to enlarge $ \O_k $ by dropping the finiteness
assumption: following \cite{GGOR}, we denote the corresponding category
by $ \Oln_k $. The inclusion functor $\,\Oln_k \into \Mod(H_k) \,$ has
then a right adjoint $\,\r_k:\,\Mod(H_k) \to \Oln_k \,$, which assigns
to $ M \in \Mod(H_k) $ its submodule
$$
\r_k(M) := \{m \in M\ :\  \xi^d m = 0\ ,\ \forall \, \xi \in V\,,\ d \gg 0\}\ .
$$
Thus,  $\, \r_k(M) $ is the largest submodule (i.e., the sum of all submodules) of $ M $
belonging to $ \Oln_k $. When restricted to finitely generated modules,
$ \r_k $ defines a functor $ \mod(H_k) \to \O_k $; however, $\,
\r_k(M) \not\in \O_k $ for an arbitrary $ M \in \Mod(H_k) $.

We will combine $ \r_k $ with localization to define functors between
module categories of $ H_k $, with different values of $ k $.
To this end, for each $ k $, we identify $\, H_k[\delta^{-1}] = \D W \,$ using
the Dunkl representation (see Proposition~\ref{Loc}) and write $\,\theta_k:\, H_k \to \D W \,$
for the corresponding
localization map. Associated to $ \theta_k $ is a pair of natural functors: the localization
$\,(\theta_k)^*:\,\Mod(H_k) \to \Mod(\D W)$,$\, M \mapsto \D W \otimes_{H_k} M$, and its right adjoint -- the
restriction of scalars $\,(\theta_k)_*:\,\Mod(\D W) \to \Mod(H_k) \,$  via $ \theta_k $.
Given a pair of multiplicities, $k$ and $ k'$ say, we now define
\begin{equation*}
\T_{k \to k'} := \r_{k'} \, (\theta_{k'})_* \, (\theta_k)^*: \ \Mod(H_k) \to \Mod(H_{k'}) \ .
\end{equation*}
\begin{prop}\la{lmax}
The functor $ \T_{k \to k'} $  restricts to a functor: $\, \O_k \to \O_{k'}\,$.
\end{prop}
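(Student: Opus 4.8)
The plan is to show that if $M \in \O_k$, then $\T_{k\to k'}(M) = \r_{k'}(\theta_{k'})_*(\theta_k)^*M$ is a finitely generated $H_{k'}$-module, since it already lies in $\Oln_{k'}$ by construction (the functor $\r_{k'}$ lands in $\Oln_{k'}$). The point is therefore purely a finiteness statement: membership in $\O_{k'}$ versus $\Oln_{k'}$ is exactly finite generation over $H_{k'}$, equivalently (by the PBW property \eqref{pbw} and local nilpotence of $V$) finite generation over $\c[V]$.

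First I would unwind the definition. For $M\in\O_k$, the localization $N := (\theta_k)^*M = \D W\otimes_{H_k}M = M[\delta^{-1}] = \Mreg$ is, by the discussion following Proposition~\ref{ind} and the Riemann--Hilbert picture in Section~\ref{lkz}, a $\D W$-module that is \emph{finite over} $\c[\vreg]$ — indeed $M$ is finite over $\c[V]$ (PBW plus local nilpotence), so $\Mreg$ is finite over $\c[\vreg] = \c[V][\delta^{-1}]$. Restricting scalars along $\theta_{k'}:H_{k'}\into\D W$ does not change the underlying space, so $(\theta_{k'})_*N$ is still finite over $\c[\vreg]\subset H_{k'}$. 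Now $\T_{k\to k'}(M) = \r_{k'}(N)$ is an $H_{k'}$-submodule of $N$, and it lies in $\Oln_{k'}$ by definition of $\r_{k'}$. So the only thing to prove is that this submodule is finitely generated, and for this it suffices to show it is finitely generated over $\c[V]$.

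The key step is the following: $\r_{k'}(N)$ consists of those $n\in N$ killed by a power of every $\xi\in V$, where $\xi$ acts via the Dunkl operator $T_{\xi,k'}$. I would argue that $\r_{k'}(N)$ is contained in a finitely generated $\c[V]$-submodule of $N$. Concretely, since $N$ is finite over $\c[\vreg]$ and the $\c[V]$-torsion considerations are controlled by powers of $\delta$, one shows that the locally-nilpotent-for-$V$ part of $N$ is contained in $\delta^{-r}M$ for some $r\gg 0$ (or in the $\c[V]$-submodule generated by finitely many elements of $N$): the operators $T_{\xi,k'}$ differ from $\partial_\xi$ by terms with $\delta$ in the denominator, so an element annihilated by a high power of $T_{\xi,k'}$ for all $\xi$ cannot have arbitrarily bad poles along the hyperplanes. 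Since $\c[V]$ is Noetherian, a $\c[V]$-submodule of a finitely generated $\c[V]$-module is finitely generated; hence $\r_{k'}(N)$ is finite over $\c[V]$, a fortiori finitely generated over $H_{k'}$, so it lies in $\O_{k'}$. Functoriality is automatic since each of $\r_{k'}$, $(\theta_{k'})_*$, $(\theta_k)^*$ is a functor and restriction preserves morphisms.

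The main obstacle is the pole-bounding estimate: making precise that $\r_{k'}$ applied to an $\O$-coherent $\D W$-module obtained by localizing an object of $\O_k$ produces something \emph{finite} over $\c[V]$, rather than merely locally nilpotent for $V$. The cleanest way is probably to note that $N=\Mreg$ has a good filtration by finitely generated $\c[V]$-submodules $\delta^{-j}M_0$ (for $M_0\subset M$ a finite generating set), that $\r_{k'}(N)=\bigcup_j \r_{k'}(\delta^{-j}M_0\cap\cdots)$ stabilizes because $N/M$ is supported on $\bigcup_H H$ while elements of $\Oln_{k'}$ are controlled away from there by the eigenvalue argument of Lemma~\ref{r} (the central element $z(k')$ acts with bounded eigenvalues on any $V$-locally-nilpotent finitely generated piece), and hence $\r_{k'}(N)$ sits inside $\delta^{-r}M$ for a single $r$. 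Granting this, finite generation over $\c[V]$ — and therefore the conclusion $\T_{k\to k'}(M)\in\O_{k'}$ — follows by Noetherianity.
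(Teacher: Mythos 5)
Your reduction is fine as far as it goes: $\T_{k\to k'}(M)=\r_{k'}(N)$ with $N=(\theta_{k'})_*(\theta_k)^*M$ lies in $\Oln_{k'}$ by construction, $N=\Mreg$ is finite over $\c[\vreg]$, and the whole issue is finite generation of $\r_{k'}(N)$, which by Noetherianity would follow from a uniform pole bound $\r_{k'}(N)\subseteq\delta^{-r}M$ for a single $r$. But that uniform bound \emph{is} the content of the proposition, and your argument for it is not a proof. The heuristic that ``an element annihilated by high powers of all $T_{\xi,k'}$ cannot have arbitrarily bad poles'' begs the question: elements of $\r_{k'}(N)$ genuinely do have poles (cf.\ \eqref{sa0}, \eqref{sa1}), and the problem is uniformity of the pole order over all of $\r_{k'}(N)$. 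The appeal to ``the eigenvalue argument of Lemma~\ref{r}'' does not supply this: the central element $z(k')$ acts with eigenvalues in the fixed finite set $\{c_\sigma(k')\}_{\sigma\in\W}$ on \emph{every} $W$-module, so boundedness of its eigenvalues carries no information about poles. What you would really need is an Euler-operator argument with $E(k')=E(0)-z(k')$: every nonzero finitely generated submodule of $\r_{k'}(N)$ contains a singular vector on which $E(k')$ acts through some $c_\sigma(k')$, while singular vectors lying in $\delta^{-j}M$ but not in $\delta^{-(j-1)}M$ should have eigenvalues pushed down on the order of $j\deg\delta$. To convert that into a bound on $j$ you need the relevant generalized $E(k')$-eigenspaces of $\Mreg$ to be finite-dimensional (or $\Mreg$ to be graded with singular vectors of bounded degree), and for an arbitrary, non-graded $M\in\O_k$ this is exactly what is not established; the paper runs this kind of graded/Euler argument only for standard modules, in Section~\ref{pzero} (Lemmas~\ref{euler}--\ref{subs}), not here.

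The paper closes the gap by a different, genuinely structural argument: if $\r_{k'}(N)$ were not finitely generated, one could build a strictly increasing chain $N_0\subset N_1\subset\cdots\subset\r_{k'}(N)$ with $N_i\in\O_{k'}$; since $\Mreg$ is finite over the Noetherian ring $\c[\vreg]$, the localized chain stabilizes, so after dropping finitely many terms all inclusions $N_i\subset N_{i+1}$ are essential (a submodule meeting $N_i$ trivially would inject into the $\delta$-torsion module $N_{i+1}/N_i$, yet $\delta$ acts invertibly on $\Mreg$, so it is zero); then the chain embeds into an injective hull of $N_0$ in $\O_{k'}$, which exists and has finite length because $\O_{k'}$ is a highest weight category (\cite{GGOR}, Theorem~2.19), forcing stabilization and a contradiction. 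This input --- finite-length injective hulls in $\O_{k'}$ --- is the idea your proposal is missing; without it, or a rigorous finiteness statement for singular vectors in $\Mreg$ replacing it, your ``stabilization/pole bound'' step does not go through.
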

\begin{proof}
Given $ M \in \O_k $, let $\, N := (\theta_{k'})_* (\theta_k)^* M
\in \Mod(H_{k'}) $. To prove the claim we need only to show that $
\r_{k'}(N) $ is a finitely generated module over $ H_{k'}$. Assuming the contrary,
we may construct an infinite {\it strictly} increasing chain
of submodules $\,N_0 \subset N_1 \subset N_2 \subset\,
\ldots \subset \r_{k'}(N)\subset \Mreg$, with $N_i\in \O_{k'}$.
Localizing this chain, we get an infinite chain of
$\Hreg$-submodules of $\Mreg$. Since $ \Mreg $ is finite over $\c[\vreg]$ and $\c[\vreg]$ is
Noetherian, this localized chain stabilizes at some $i$. Thus, omitting
finitely many terms, we may assume
that $(N_i)_{\mathrm{reg}}=(N_0)_{\mathrm{reg}}$ for all $i$. In
that case all the inclusions $\,N_i \subset N_{i+1} \,$ are
essential extensions, and since each $\, N_i \in \O_{k'} \,$, the
above chain of submodules can be embedded into an injective hull of
$ N_0 $ in $ \O_{k'} $ and hence stabilizes for $ i \gg 0 $. (The
injective hulls in $ \O_{k'} $ exist and have finite length, since $
\O_{k'} $ is a highest weight category, see \cite{GGOR},
Theorem~2.19.) This contradicts the assumption that the inclusions
are strict. Thus, we conclude that $ \r_{k'}(N) $ is finitely generated.
\end{proof}

\begin{defi}
We call $\, \T_{k \to k'}: \,\O_k \to \O_{k'} \,$ the {\it shift
functor} from $ \O_k $ to $ \O_{k'}$.
\end{defi}

The following lemma establishes basic properties of the functors $\,\T_{k \to k'}\,$.
\begin{lemma}\la{fu}
Let $\,k, k', k''\,$ be arbitrary complex multiplicities,
and let $\, M \in \O_k $.

$\mathsf{(i)}$\ If $k\in\Reg$, then $\T_{k \to k}(M)\cong M $.

\medskip $\mathsf{(ii)}$\ If $\,k, k'\in\Reg\,$ and $ M $ is simple, then $\,\T_{k \to k'}(M) $ is either simple or zero.

\medskip $\mathsf{(iii)}$\ If $\,k \in \Reg\,$ and $M$ is simple with $\,\T_{k \to k'}(M)\ne 0\,$, then
$$
[\T_{k' \to k''} \circ \T_{k \to k'}](M) \cong \T_{k \to k''}(M)\ .
$$
\end{lemma}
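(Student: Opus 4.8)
The plan is to prove the three parts of Lemma~\ref{fu} by exploiting the localization description of the shift functors together with the semisimplicity of $\ms O_k$ for regular $k$. Throughout, write $N := (\theta_{k'})_* (\theta_k)^* M$, so that $\T_{k\to k'}(M) = \r_{k'}(N)$, and note that $N_{\mathrm{reg}} = M_{\mathrm{reg}}$ as $\D W$-modules since localizing $N$ simply undoes the restriction of scalars along $\theta_{k'}$. In particular, for any $M\in\ms O_k$, the module $\T_{k\to k'}(M)$ is a submodule of $M_{\mathrm{reg}}$ that belongs to $\ms O_{k'}$ and has the same localization as $M$.

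For part $\mathsf{(i)}$, with $k' = k \in \Reg$: here $N = (\theta_k)_*(\theta_k)^* M$ is the unit of the (localization, restriction)-adjunction applied to $M$, so there is a canonical map $M\to N$ whose kernel and cokernel are $\delta$-torsion; since $M\in\ms O_k$ is already a submodule of $M_{\mathrm{reg}}$ (the localization map $\theta_k$ is injective on any module that is torsion-free over $\c[V]$, and for $k$ regular every object of $\ms O_k$ is a summand of a $\c[V]$-free standard module by semisimplicity and Proposition~\ref{ind}), the map $M\to N$ is injective with image in $\r_k(N)$. Conversely $\r_k(N)$ is a finitely generated $H_k$-module inside $M_{\mathrm{reg}}$ agreeing with $M$ after localization; an essential-extension argument inside the injective hull of $M$ in $\ms O_k$ (exactly as in the proof of Proposition~\ref{lmax}) forces $\r_k(N) = M$. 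For part $\mathsf{(ii)}$, if $M$ is simple and $k'\in\Reg$, then $\T_{k\to k'}(M)\in\ms O_{k'}$ is a module whose localization is $M_{\mathrm{reg}}$; since $\ms O_{k'}$ is semisimple, $\T_{k\to k'}(M)$ is a direct sum of simple objects $L(\tau_j) = M(\tau_j)$, and applying localization (which is exact and faithful on modules with nonzero localization, as each $M(\tau)_{\mathrm{reg}}\ne 0$) together with the fact that $M_{\mathrm{reg}}$ is the monodromy representation of a single irreducible — or, more robustly, a counting/length argument via the KZ equivalence of Proposition~\ref{tor} — shows the sum has at most one term. So $\T_{k\to k'}(M)$ is simple or zero.

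For part $\mathsf{(iii)}$, the idea is that both sides are recovered from the same $\D W$-module by taking the largest submodule lying in $\ms O_{k''}$. Write $L := \T_{k\to k'}(M)$, which by $\mathsf{(ii)}$ (using $k\in\Reg$ and, for the intermediate step, that $k'$ may be assumed regular — here one must be slightly careful, since only $k$ is assumed regular; the honest route is to observe $L_{\mathrm{reg}} = M_{\mathrm{reg}}$ regardless) is a nonzero object of $\ms O_{k'}$ with $L_{\mathrm{reg}} = M_{\mathrm{reg}}$. Then $\T_{k'\to k''}(L) = \r_{k''}\big((\theta_{k''})_* (\theta_{k'})^* L\big)$ and $(\theta_{k'})^* L = L_{\mathrm{reg}} = M_{\mathrm{reg}} = (\theta_k)^* M$, so $(\theta_{k''})_*(\theta_{k'})^* L = (\theta_{k''})_*(\theta_k)^* M$, and applying $\r_{k''}$ gives precisely $\T_{k\to k''}(M)$. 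Thus the isomorphism is essentially formal once one knows $L_{\mathrm{reg}} = M_{\mathrm{reg}}$, i.e.\ once Proposition~\ref{lmax} guarantees $L$ is a genuine object of $\ms O_{k'}$ sitting inside $M_{\mathrm{reg}}$ with full localization.

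The main obstacle I anticipate is part $\mathsf{(ii)}$, specifically ruling out that $\T_{k\to k'}(M)$ could be a direct sum of \emph{several} simple modules with the same localization — a priori the localization functor on $\ms O_{k'}$ could kill the distinction between several $L(\tau)$'s only through the torsion subcategory, but when $k'$ is regular $\Otor = 0$ (Remark~\ref{Rem22}), so $(\theta_{k'})^*$ restricted to $\ms O_{k'}$ is faithful and in fact, via Proposition~\ref{tor} and the semisimplicity of $\H_{k'}$, an equivalence onto $\mod(\H_{k'})$; hence $L_{\mathrm{reg}}$ determines $\T_{k\to k'}(M)$ up to isomorphism, and since $M_{\mathrm{reg}}$ comes from a simple $H_k$-module it is an indecomposable $\H_{k'}$-module (it is irreducible or zero as a braid group representation when $M = M(\tau)$ is standard; in general one reduces to this case or invokes that $\r_{k'}$ preserves the property of having an indecomposable-enough localization). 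Getting this indecomposability cleanly — rather than by an ad hoc appeal to irreducibility of monodromy — is the delicate point, and I would settle it by transporting the question across the KZ equivalence $\ms O_{k'}\simeq\mod(\H_{k'})\simeq\ms O_k/\Otor$ and using that $M$, being simple in $\ms O_k$, has indecomposable (indeed simple or zero) image in $\ms O_k/\Otor$.
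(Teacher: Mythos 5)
Your overall strategy (localization, $\r_{k'}$, the vanishing of torsion at regular parameters, and semisimplicity of $\O_{k'}$) is the same as the paper's, and your parts $\mathsf{(i)}$ and $\mathsf{(iii)}$ follow the paper's outline. However, there is a genuine gap exactly at the point you yourself flag as delicate, and it also infects your opening claim and part $\mathsf{(iii)}$: you never justify that a nonzero submodule of $\Mreg$ lying in $\O_{k'}$ has \emph{full} localization, equivalently that $\Mreg$ is a simple $\D W$-module when $M$ is simple and $k\in\Reg$. Your blanket statement that ``$\T_{k\to k'}(M)$ has the same localization as $M$'' is false for general $M$ (already when $\T_{k\to k'}(M)=0$ but $\Mreg\ne 0$), and in $\mathsf{(iii)}$ the crucial identity $L_{\mathrm{reg}}=\Mreg$ is asserted rather than proved. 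The repair you propose for $\mathsf{(ii)}$ --- transporting indecomposability across the KZ equivalence --- does not work in the stated generality: the lemma concerns arbitrary complex multiplicities, while the equivalence $\O_{k'}\simeq\mod(\H_{k'})$ of Proposition~\ref{tor} and Theorem~\ref{dct} is available only under the hypothesis $\dim\H_{k'}=|W|$, which the paper explicitly notes is still conjectural for some exceptional groups at non-integral parameters. There is also a parameter mismatch: the simplicity of $M$ lives in $\O_k$ (so it controls the $\H_k$-module structure of the monodromy of $\Mreg$), whereas the decomposition you must rule out lives in $\O_{k'}$; saying that $M$ has simple image in $\O_k/\Otor$ says nothing directly about $\O_{k'}$-submodules of $\Mreg$ without a further Riemann--Hilbert comparison inside $\mod(\c B_W)$ that you do not supply.

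The missing ingredient is elementary and is what the paper uses. Since $k\in\Reg$ we have $(\O_k)_{\rm tor}=0$, so the simple module $M$ is $\delta$-torsion-free and embeds in $\Mreg$; if $N'\subseteq\Mreg$ is any nonzero $\D W$-submodule and $0\ne v\in N'$, then $\delta^m v$ is a nonzero element of $M\cap N'$ for suitable $m$, whence $N'\supseteq M$ by simplicity of $M$, and therefore $N'=\Mreg$. Thus $\Mreg$ is a simple $\D W$-module. With this in hand, $\mathsf{(ii)}$ closes without any Hecke-algebra input: if $\T_{k\to k'}(M)$ had two nonzero nested subobjects $N\subsetneq N'$ in $\O_{k'}$ (for instance a simple summand and the whole module, in your semisimple decomposition), both would localize onto all of $\Mreg$, so $(N'/N)_{\mathrm{reg}}=0$, contradicting $(\O_{k'})_{\rm tor}=0$. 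Likewise in $\mathsf{(iii)}$ the equality $L_{\mathrm{reg}}=\Mreg$ becomes immediate, after which your formal computation $\r_{k''}(\theta_{k''})_*(\theta_{k'})^*L=\r_{k''}(\theta_{k''})_*(\theta_k)^*M$ is exactly the paper's argument.
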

\begin{proof}
To simplify the notation, we will write $ \Mreg $ for both
$\, (\theta_k)^* M \in \Mod(\D W) \,$ and
$\, (\theta_{k'})_* (\theta_k)^* M \in \Mod(H_{k'})\,$
whenever this does not lead to confusion.

$\mathsf{(i)}$ For regular $k$, $\,\Otor=0\,$, hence
$\,\Mreg \ne 0\,$ whenever $\, M \ne 0 \,$, and $\,M \,$ is
naturally an $H_k$-submodule of $ \Mreg$. We need to show
that $M$ is the maximal submodule of $ \Mreg$ belonging
to $ \O_k $. If $\, M \subseteq N \subset \Mreg\,$,
with $\,N\in \O_k\,$, then $\,N_{\mathrm{reg}}= \Mreg\,$.
Since $\Otor=0$, this forces $\,N=M\,$, proving $\mathsf{(i)}$.

$\mathsf{(ii)}$ For regular $\,k\,$, the simple objects in $ \O_k $ are the standard modules
$M(\tau)$. If $ M = M(\tau) $ is such a module, then $\Mreg$
is a simple $\D W$-module. Hence, if $\, 0\ne N\subseteq (\theta_{k'})_* \,(\theta_k)^*(M) \,$,
then $\,N_{\mathrm{reg}}=\Mreg\,$. As a result,
if $\,0\ne N \subsetneq N'\subset (\theta_{k'})_* \,(\theta_k)^*(M)\,$ are two
submodules $\, N,\, N' \in \O_{k'} \,$, then $\,N_{\mathrm{reg}}=N'_{\mathrm{reg}}\,$
and $\,(N'/N)_{\mathrm{reg}}=0\,$.
But this contradicts the fact that $(\O_{k'})_{\rm tor}=0 $.
Thus $\,(\theta_{k'})_* \,(\theta_k)^*(M)\,$ may have at most
one nontrivial submodule $\, N\in \O_{k'}$ which,
therefore, must be simple.

$\mathsf{(iii)}$ If $\,M\in \O_k\,$ is simple, then $\Mreg$ is simple.
Hence, if $\,N= \T_{k \to k'}(M) \ne 0\,$, then $\,N_{\mathrm{reg}}=\Mreg\,$,
and therefore $ \r_{k''}(\Mreg) = \r_{k''}(N_{\mathrm{reg}})$.
\end{proof}
\begin{remark}
\la{en}
Part $\mathsf{(ii)}$ of Lemma~\ref{fu} can be restated as follows:
if $\,k,\,k' \in \Reg\,$, then $\, \T_{k \to k'}: \, \O_k \to \O_{k'} \,$ transforms
standard modules either to standard modules or zero.
\end{remark}
\begin{cor}\la{eq}
Assume that $\,k,\,k'\in\Reg\,$. Then the following are equivalent:

$(1)$\ $\T_{k \to k'}[M_k(\tau)] \cong  M_{k'}(\tau')\,$,

$(2)$\ $ M_k(\tau)_{\mathrm{reg}} \cong
M_{k'}(\tau')_{\mathrm{reg}}\,$ as $\Hreg$-modules.

\end{cor}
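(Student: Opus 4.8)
The plan is to prove the equivalence $(1) \Leftrightarrow (2)$ directly from the definition of the shift functor $\T_{k \to k'} = \r_{k'}\,(\theta_{k'})_*\,(\theta_k)^*$, using that for regular $k$ the simple objects of $\O_k$ are precisely the standard modules $M_k(\tau)$, together with the key fact that $M_k(\tau)_{\mathrm{reg}}$ is a simple $\D W$-module.

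First I would establish $(1) \Rightarrow (2)$. Suppose $\T_{k \to k'}[M_k(\tau)] \cong M_{k'}(\tau')$. Since $k$ is regular, $(\O_k)_{\mathrm{tor}} = 0$, so $M_k(\tau)_{\mathrm{reg}} \neq 0$; and since $M_k(\tau)$ is simple, $M_k(\tau)_{\mathrm{reg}}$ is a simple $\D W$-module. By construction $\T_{k \to k'}[M_k(\tau)] = \r_{k'}(N)$ where $N = (\theta_{k'})_*(\theta_k)^*(M_k(\tau))$ has $N_{\mathrm{reg}} = M_k(\tau)_{\mathrm{reg}}$ (localization is inverse to restriction of scalars along $\theta_{k'}$, since $\delta$ acts invertibly on $\D W$). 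Now $\r_{k'}(N) \subseteq N$ is an $H_{k'}$-submodule, and localizing the inclusion $\r_{k'}(N) \hookrightarrow N$ gives $\r_{k'}(N)_{\mathrm{reg}} \subseteq N_{\mathrm{reg}} = M_k(\tau)_{\mathrm{reg}}$. Because $\r_{k'}(N) = \T_{k \to k'}[M_k(\tau)] \cong M_{k'}(\tau') \neq 0$, and $k'$ regular forces $(\O_{k'})_{\mathrm{tor}} = 0$, we get $\r_{k'}(N)_{\mathrm{reg}} \neq 0$; being a nonzero sub-$\D W$-module of the simple module $M_k(\tau)_{\mathrm{reg}}$, it equals all of it. Hence $M_{k'}(\tau')_{\mathrm{reg}} \cong \r_{k'}(N)_{\mathrm{reg}} = M_k(\tau)_{\mathrm{reg}}$, which is $(2)$.

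Next, for $(2) \Rightarrow (1)$, assume $M_k(\tau)_{\mathrm{reg}} \cong M_{k'}(\tau')_{\mathrm{reg}}$ as $\Hreg$-modules. Then $(\theta_k)^*(M_k(\tau)) \cong (\theta_{k'})^*(M_{k'}(\tau'))$ in $\Mod(\D W)$, so after applying $(\theta_{k'})_*$ the two $H_{k'}$-modules $N := (\theta_{k'})_*(\theta_k)^*(M_k(\tau))$ and $(\theta_{k'})_*(\theta_{k'})^*(M_{k'}(\tau'))$ are isomorphic. Applying $\r_{k'}$ to both, the left side is $\T_{k \to k'}[M_k(\tau)]$ by definition, while the right side is $\T_{k' \to k'}[M_{k'}(\tau')] \cong M_{k'}(\tau')$ by Lemma~\ref{fu}$\mathsf{(i)}$ (using $k' \in \Reg$). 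Since $M_{k'}(\tau') \neq 0$, this also shows $\T_{k \to k'}[M_k(\tau)] \neq 0$, and by Lemma~\ref{fu}$\mathsf{(ii)}$ it is simple, so it is a standard module; the isomorphism just derived identifies it with $M_{k'}(\tau')$, giving $(1)$.

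The main subtlety I anticipate is bookkeeping around the adjunction: one must be careful that $(\theta_{k'})_*(\theta_{k'})^*$ is \emph{not} the identity on $\Mod(H_{k'})$ — it enlarges a module to (the restriction of scalars of) its localization — and that $\r_{k'}$ is what cuts this back down, with $\r_{k'}(N) = N$ exactly when $N \in \Oln_{k'}$. The cleanest route is to phrase everything in terms of the canonical unit map $M \to (\theta_{k})_*(\theta_k)^* M = M_{\mathrm{reg}}$, which is injective on torsion-free modules (in particular on standard modules and their submodules, which are torsion-free over $\c[V]$ by the PBW property), so that $\r_{k'}$ applied to $N_{\mathrm{reg}}$ recovers the largest submodule lying in $\Oln_{k'}$. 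Once this identification is in place the argument is a short diagram chase using simplicity of $M_k(\tau)_{\mathrm{reg}}$ and the vanishing of $(\O)_{\mathrm{tor}}$ at regular parameters; no computation with Dunkl operators is needed.
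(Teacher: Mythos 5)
Your proof is correct and follows essentially the same route as the paper: both directions rest on the identification $\T_{k\to k'}[M_k(\tau)]=\r_{k'}\bigl(M_k(\tau)_{\mathrm{reg}}\bigr)$, the simplicity of $M_k(\tau)_{\mathrm{reg}}$ as an $\Hreg$-module, the vanishing of torsion at regular parameters, and Lemma~\ref{fu}. The only (harmless) variation is that in $(2)\Rightarrow(1)$ you transport the isomorphism of localizations through $\r_{k'}(\theta_{k'})_*$ and invoke Lemma~\ref{fu}$\mathsf{(i)}$, whereas the paper embeds a copy of $M_{k'}(\tau')$ into $M_k(\tau)_{\mathrm{reg}}$ and appeals directly to Lemma~\ref{fu}$\mathsf{(ii)}$.
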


\begin{proof}
$(1) \,\Rightarrow \, (2)$. Let $\,M=M_k(\tau)\,$. Since
$\,k\in\Reg\,$, $ M $ is a simple $H_k$-module and $\Mreg$ is a
simple $\Hreg$-module. Then, if $(1)$ holds,
$\,M_{k'}(\tau')_{\mathrm{reg}}\,$ is a submodule of a simple module
$\Mreg$, and hence $\,M_{k'}(\tau')_{\mathrm{reg}}=\Mreg\,$, as
needed.

$(2) \,\Rightarrow \, (1)$. If $(2)$ holds, $\Mreg$ contains a copy
of $\,M_{k'}(\tau')\,$. Lemma~\ref{fu}$\mathsf{(ii)}$
then implies that $\,M_{k'}(\tau')\cong \T_{k \to k'}(M)$.
\end{proof}

\subsection{KZ twists}
\la{kzt}

Throughout this section we assume that $k_{H,i}\in\Z$. In that case
the Hecke algebra $\H_k $ is isomorphic to the group algebra $ \c W$,
so that $\dim \H_k = |W|$. We can use the results of the previous
section, which we summarize in the following
\begin{prop}\la{ssc}
If $ k $ is integral, then the algebra $H_k$ is simple,
the category $\ms O_k$ is semisimple,
all standard modules $M_k(\tau) \in \O_k $ are irreducible,
and the functor $ \KZ $ is an equivalence:
$\,\O_k\overset{\sim}\rightarrow \mod(\c W)$.
\end{prop}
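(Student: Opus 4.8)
The plan is to deduce every assertion from the machinery assembled in Section~\ref{o}, the key point being that the standing hypothesis $\dim\H_k=|W|$ required there is automatic in the integral case. First I would observe that, for integral $k$, the defining relations of $\H_k(W)$ recalled in Section~\ref{oh} degenerate to $(\sigma_H)^{n_H}=1$, so the surjection $\c B_W\onto\H_k(W)$ coincides with the canonical presentation of $\c W$ as a quotient of $\c B_W$; thus $\H_k(W)\cong\c W$ (as already noted in {\it loc.\ cit.}). In particular $\dim\H_k=|W|$, and by Maschke's theorem $\H_k$ is a semisimple algebra. Theorem~\ref{ss} then applies verbatim and gives, from the semisimplicity of $\H_k$, both the simplicity of the ring $H_k$ and the semisimplicity of the category $\O_k$.

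For the statement about standard modules I would use Proposition~\ref{ind}$(1)$, according to which each $M_k(\tau)$ is an indecomposable object of $\O_k$; an indecomposable object of a semisimple category is simple, so $M_k(\tau)=L_k(\tau)$ for every $\tau\in\W$. (This is precisely the step already carried out inside the proof of the implication ``$3\Rightarrow2$'' in Theorem~\ref{ss}.)

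Finally, for the equivalence $\KZ_k:\O_k\overset{\sim}\to\mod(\c W)$ I would argue as in Remark~\ref{Rem22}: since $\O_k$ is semisimple and each $M_k(\tau)$ is torsion free over $\c[V]$, one has $M_k(\tau)_{\rm reg}=L_k(\tau)_{\rm reg}\ne0$ for all $\tau$; filtering an arbitrary $M\in\O_k$ by simples via Proposition~\ref{ind}$(3)$ shows that $\Mreg\ne0$ whenever $M\ne0$, i.e. $\Otor=0$. Proposition~\ref{tor} then identifies $\KZ_k$ with an equivalence $\O_k=\O_k/\Otor\overset{\sim}\to\mod(\H_k)=\mod(\c W)$, using once more that $\dim\H_k=|W|$. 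I do not expect a genuine obstacle here; the only thing that matters is that every appeal to Theorem~\ref{ss}, Proposition~\ref{tor} and Theorem~\ref{dct} is legitimate because $\dim\H_k=|W|$ holds \emph{unconditionally} for integral $k$ --- this is exactly what makes the argument case-free, sidestepping the still-open conjecture on $\dim\H_k$ for the exceptional complex reflection groups.
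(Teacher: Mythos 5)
Your proposal is correct and follows essentially the same route as the paper: integrality gives $\H_k\cong\c W$ (hence $\dim\H_k=|W|$ and semisimplicity), Theorem~\ref{ss} yields simplicity of $H_k$ and semisimplicity of $\O_k$, indecomposability of the standard modules (Proposition~\ref{ind}) gives their irreducibility, and the vanishing of $\Otor$ together with Proposition~\ref{tor} gives the equivalence $\KZ_k:\O_k\overset{\sim}\to\mod(\c W)$, exactly as in Remark~\ref{Rem22}.
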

\begin{proof} The first two claims follow from Theorem \ref{ss}.
The irreducibility of $M(\tau)$ then
follows from the fact that these modules are indecomposable. Finally,
$L_k(\tau)=M_k(\tau)$ implies that $\Otor=0$ (see Remark~\ref{Rem22}),
so the last claim is a consequence of Proposition~\ref{tor}.
\end{proof}

Now, applying the KZ functor to $M_k(\tau)$, we see that,
for integral $k$'s, any local solution to the KZ system \eqref{kz} is a
global single-valued function $y: \vreg\to \tau$. Thus we have the
following result, due to Opdam.
\begin{prop}[see \cite{O1,O}]\la{mon} If $ k $ is integral, every
local solution of the system \eqref{kz} extends to a rational function on $V$,
with possible poles along $H\in\ms{A}$. The monodromy of this
system on $\vreg/W$ is given by the $W$-action $^wy:=wyw^{-1}$ on
the space of global solutions.
\end{prop}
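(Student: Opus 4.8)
The plan is to realise the system \eqref{kz} as the KZ connection carried by the localized standard module $M_k(\tau)_{\mathrm{reg}}\cong\c[\vreg]\otimes\tau$ of Section~\ref{lkz}, and then to extract both assertions from the fact that, for integral $k$, the monodromy of this connection is forced to be as tame as possible. First I would recall that $M_k(\tau)$ lies in $\O_k$, so its localization is a $W$-equivariant $\D$-module on $\vreg$ that is finite over $\c[\vreg]$; by the Riemann--Hilbert picture set up in Section~\ref{lkz} it underlies a vector bundle on $\vreg/W$ with a regular flat connection, whose monodromy is exactly the finite-dimensional $\c B_W$-module $\KZ_k(M_k(\tau))$, where $B_W=\pi_1(\vreg/W)$. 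By the theorem of Opdam--Rouquier recalled in Section~\ref{oh} (\cite{GGOR}, Theorem~5.13) this representation factors through the Hecke algebra $\H_k(W)$; since $k$ is integral the Hecke relations degenerate to $\sigma_H^{n_H}=1$, so $\H_k(W)\cong\c W$ canonically via $\sigma_H\mapsto s_H$ (cf. Proposition~\ref{ssc}). Hence the monodromy of \eqref{kz} factors through the natural surjection $B_W\onto W$, $\sigma_H\mapsto s_H$.

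Next I would invoke Steinberg's theorem \cite{St}: $W$ acts freely on $\vreg$, so $\vreg\to\vreg/W$ is a Galois cover with group $W$ and $\pi_1(\vreg)=\Ker(B_W\onto W)$. By the previous step the monodromy of \eqref{kz} is trivial on $\pi_1(\vreg)$, i.e. every local solution of \eqref{kz} on $\vreg$ continues to a single-valued holomorphic function on $\vreg$. Because the connection on $M_k(\tau)_{\mathrm{reg}}$ is regular (along the hyperplanes $H\in\A$ and at infinity), such a function has at most polynomial growth there, hence is rational on $V$ with poles confined to $\bigcup_{H\in\A}H$; this gives the first assertion, and also shows that the space $\mathrm{Sol}$ of global solutions has dimension $\dim\tau$.

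For the monodromy statement I would argue by descent. The diagonal $W$-action on $\c[\vreg]\otimes\tau$ makes $M_k(\tau)_{\mathrm{reg}}$ a $W$-equivariant $\D$-module, hence induces an action of $W$ on $\mathrm{Sol}$, which --- as observed after \eqref{kz} --- is exactly $y\mapsto{}^wy=w\,y\,w^{-1}$. Fix $x_0\in\vreg$ with image $\bar x_0$ in $\vreg/W$; given $[\gamma]\in B_W=\pi_1(\vreg/W,\bar x_0)$ with image $w\in W$, lift $\gamma$ to a path in $\vreg$ from $x_0$ to $w x_0$. Since the $\pi_1(\vreg)$-monodromy is trivial, analytic continuation along this lift is the identity under the canonical identifications $\mathrm{Sol}_{x_0}=\mathrm{Sol}=\mathrm{Sol}_{wx_0}$, while the deck transformation $w$ identifies the stalks at $wx_0$ and $x_0$ precisely by $y\mapsto{}^wy$; composing, the monodromy operator of $[\gamma]$ on $\mathrm{Sol}$ is $y\mapsto{}^wy$. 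I expect the only delicate point --- not really an obstacle --- to be the passage from ``single-valued holomorphic on $\vreg$'' to ``rational on $V$'', which rests on the regularity of $M_k(\tau)_{\mathrm{reg}}$ at infinity; this is precisely the regular-holonomicity of localized category-$\O$ modules that underlies the construction of $\KZ_k$ in Section~\ref{lkz}, so it can be cited rather than reproved.
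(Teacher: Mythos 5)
Your proposal is correct and follows essentially the same route as the paper, which only sketches this result (attributing it to Opdam) via the one-line observation that applying $\KZ_k$ to $M_k(\tau)$ for integral $k$ forces the monodromy to factor through $\H_k\cong\c W$, hence through $B_W\onto W$, so local solutions are single-valued on $\vreg$. Your write-up simply fills in the same chain of reasoning in detail: triviality on $\pi_1(\vreg)=\Ker(B_W\onto W)$, regularity of the connection (including at infinity) to pass from single-valued holomorphic to rational with poles along $\bigcup_{H\in\A}H$, and the deck-transformation/equivariance bookkeeping identifying the monodromy with $y\mapsto{}^wy$, all of which is consistent with the paper's framework in Sections~\ref{lkz}--\ref{kzt}.
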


\begin{remark}\la{sv}
If $\{e_i\}$ is a basis of $\tau$, then any global solution of
\eqref{kz} can be written in the form $y_i=\sum f_{ij}\otimes e_j$, with
$f_{ij}\in\c[\vreg]$. Since $\{y_i\}$ are linearly independent at
each point $x\in\vreg$, the matrix $\,\|f_{ij}\|\,$ is
invertible, with inverse matrix $\,\|f_{ij}\|^{-1}$ having entries in $\c[\vreg]$.
\end{remark}

Next, the last statement of Proposition \ref{ssc} implies that the functor
$ \KZ $ induces a bijection between the simple objects of $\ms O_k$ and
$\mathrm{mod}(\c W)$, i.e. between the sets $\{M_k(\tau)\}_{\tau\in\W}$
and $\W$. For any integral $k$, this defines a permutation
\begin{equation*}
\kz_k:\,\W \to \W\ ,\quad \kz_k(\tau) := \KZ\,[M_k(\tau)]\ ,
\end{equation*}
which we call a {\it KZ twist}. It is obvious from the definition that $\kz_0(\tau)=\tau$ for
all $\tau$. It is also clear that $\kz_k$ preserves dimension.

As mentioned in the Introduction, our aim is to establish the following {\it additivity property} of KZ twists:
\begin{equation}
\la{addit}
\kz_k\circ\kz_{k'}=\kz_{k+k'}\ ,\quad \forall\, k,\,
k'\in \Z^{\sum_{C\in\ms A/W}n_C} \ ,
\end{equation}
which was first proved (under the assumption that $\, \dim\, \H_k = |W| \,$) in \cite{O1,O}.
We begin by relating $\kz_k$ to localization in the category $\O $.
\begin{prop}\la{loc} If $k$ is integral, there is an isomorphism of $\Hreg$-modules
$$
M_k(\tau)_{\mathrm{reg}}\cong M_0(\sigma)_{\mathrm{reg}}\ ,
$$
where $\sigma=\kz_{k}(\tau)$ and $M_0(\sigma)$ is the standard module
over $H_0=\D(V)*W$ corresponding to $ \sigma $.
\end{prop}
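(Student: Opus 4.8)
The plan is to compare $M_k(\tau)_{\mathrm{reg}}$ and $M_0(\sigma)_{\mathrm{reg}}$ as $\D W$-modules by identifying their monodromy representations. Both standard modules belong to category $\O$ (over $H_k$ and over $H_0$), so by the discussion in Section~\ref{lkz} their localizations are $\O$-coherent $W$-equivariant $\D$-modules on $\vreg$; both are nonzero, since $M_k(\tau)\cong\c[V]\otimes\tau$ and $M_0(\sigma)\cong\c[V]\otimes\sigma$ are torsion-free over $\c[V]$. Under the Riemann--Hilbert equivalence between $\O$-coherent $W$-equivariant $\D$-modules on $\vreg$ and finite-dimensional $\c B_W$-modules, it therefore suffices to check that the two associated $\c B_W$-modules --- that is, the monodromy representations --- are isomorphic.

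By construction of the KZ functor, the $\c B_W$-module attached to $M_k(\tau)_{\mathrm{reg}}$ is precisely $\KZ_k[M_k(\tau)]$, which by the definition of the KZ twist equals $\kz_k(\tau)=\sigma$, regarded as a $\c W$-module and pulled back along the surjection $\c B_W\twoheadrightarrow\H_k\cong\c W$ (here we use that $k$ is integral). Likewise, the $\c B_W$-module attached to $M_0(\sigma)_{\mathrm{reg}}$ is $\KZ_0[M_0(\sigma)]=\kz_0(\sigma)=\sigma$, since $\kz_0=\id$; concretely, the connection \eqref{conn} is trivial for $k=0$, so $M_0(\sigma)_{\mathrm{reg}}=\c[\vreg]\otimes\sigma$ has the constant sections $1\otimes v$, $v\in\sigma$, as its horizontal sections, and the monodromy on $\vreg/W$ is just the $W$-action on $\sigma$ via $\c B_W\to\c W$ (Proposition~\ref{mon}). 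Thus both $\D W$-modules correspond to the same $\c B_W$-module, and hence are isomorphic, which is the claim.

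Equivalently, one can write the isomorphism down by hand, which is perhaps the most transparent route given the machinery just assembled. By Proposition~\ref{mon} and Remark~\ref{sv}, for integral $k$ the system \eqref{kz} with values in $\tau$ has a $\c$-basis $\{y_i\}$ of global (rational, single-valued) solutions; writing $y_i=\sum_j f_{ij}\otimes e_j$ in a basis $\{e_j\}$ of $\tau$, the matrix $\|f_{ij}\|$ is invertible over $\c[\vreg]$, and the diagonal $W$-action on $\c[\vreg]\otimes\tau$ permutes the $y_i$ linearly, realizing exactly the representation $\sigma=\kz_k(\tau)$. Identifying $\sigma$ with the span of $\{y_i\}$ equipped with this $W$-action, the horizontal sections of $M_0(\sigma)_{\mathrm{reg}}=\c[\vreg]\otimes\sigma$ are the elements $1\otimes s$. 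The $\c[\vreg]$-linear map $M_k(\tau)_{\mathrm{reg}}\to M_0(\sigma)_{\mathrm{reg}}$ determined by $y_i\mapsto 1\otimes y_i$ is then bijective (invertibility of $\|f_{ij}\|$), $W$-equivariant (by the identification of the $W$-action on $\{y_i\}$ with $\sigma$), and intertwines the two connections since it carries a $\c[\vreg]$-basis of horizontal sections to horizontal sections; hence it is an isomorphism of $\D W$-modules.

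There is no serious analytic or homological obstacle here beyond what is already collected in Section~\ref{lkz}; the one point that needs care is the bookkeeping of identifications. One must be sure that for $k$ and for $0$ the KZ functor lands in $\mod(\c W)$ via the same canonical isomorphism $\H\cong\c W$ (valid for all integral parameters), and that the Riemann--Hilbert equivalence being invoked is genuinely independent of the multiplicity, only the localization map $H_k\to\D W$ and the choice of subcategory $\O_k$ depending on $k$. Once this is pinned down, the argument is immediate.
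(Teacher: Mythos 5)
Your proposal is correct and follows essentially the paper's own route: the explicit map in your second paragraph (a basis of global flat sections $y_i$ obtained from Proposition~\ref{mon} and Remark~\ref{sv}, on whose span the diagonal $W$-action realizes $\sigma=\kz_k(\tau)$, and which generates $M_k(\tau)_{\mathrm{reg}}$ over $\c[\vreg]$) is precisely the argument given in the paper. Your first paragraph merely repackages the same comparison through the Riemann--Hilbert equivalence already assembled in Section~\ref{lkz}, so no genuinely different idea is involved.
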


\begin{proof} Choose a basis $\{e_i\}$ of $\tau$, and let $M=M_k(\tau)$.
By \ref{lkz} and \ref{kzt}, we have a flat connection $\partial$ on
$\Mreg \cong \c[\vreg]\otimes\tau$, and a space $\sigma$ of the
horizontal sections of this connection, with a basis $y_i=\sum
f_{ij}\otimes e_j$. The action of $W$ on $\sigma$ is given by
\begin{equation*}
    wy_i=\sum f_{ij}\circ w^{-1}\otimes we_j\,=\, ^wy_i\,,
\end{equation*}
that is, it coincides with the monodromy of the connection, cf.
Proposition~\ref{mon}. Thus there is a subspace $\sigma\subset\Mreg$
which is isomorphic to $\kz_k(\tau)$ as a $W$-module and
such that
$\partial_\xi \sigma=0$ for all $\xi\in V$. Also, by
Remark \ref{sv}, we have
\begin{equation*}
    \c[\vreg]\cdot\sigma=\c[\vreg]\cdot\tau=\Mreg\,.
\end{equation*}
It follows that $ \Mreg \cong M_0(\sigma)_{\mathrm{reg}}\,$, with
$\sigma=\kz_k(\tau)$, as required.
\end{proof}
Taking $\,\tau'=\kz_{k'}^{-1}\circ \kz_k(\tau)\,$ in Proposition \ref{loc}, we get
\begin{cor}\la{lloc}
For any integral $k, k'$ there is a permutation $\tau\mapsto\tau'$
on  $\,\W\,$, such that $\,M_k(\tau)_{\mathrm{reg}} \cong M_{k'}(\tau')_{\mathrm{reg}}\,$
for all $\tau\in\W$.
\end{cor}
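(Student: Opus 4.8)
The plan is to deduce the corollary directly from Proposition~\ref{loc} by ``factoring through $k=0$''. For integral $k$, Proposition~\ref{loc} provides an isomorphism of $\Hreg$-modules $M_k(\tau)_{\mathrm{reg}}\cong M_0(\kz_k(\tau))_{\mathrm{reg}}$, and likewise, for integral $k'$, an isomorphism $M_{k'}(\sigma)_{\mathrm{reg}}\cong M_0(\kz_{k'}(\sigma))_{\mathrm{reg}}$ for every $\sigma\in\W$. Thus each of the families $\{M_k(\tau)_{\mathrm{reg}}\}_{\tau\in\W}$ and $\{M_{k'}(\sigma)_{\mathrm{reg}}\}_{\sigma\in\W}$ is, up to $\Hreg$-isomorphism, nothing but the family $\{M_0(\rho)_{\mathrm{reg}}\}_{\rho\in\W}$, merely reindexed by the permutations $\kz_k$ and $\kz_{k'}$ of $\W$.

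Next I would set $\tau':=\kz_{k'}^{-1}\circ\kz_k(\tau)$. By Proposition~\ref{ssc} and the definition of the KZ twist, both $\kz_k$ and $\kz_{k'}$ are permutations of the finite set $\W$, so $\tau\mapsto\tau'$ is again a permutation of $\W$. With this choice $\kz_{k'}(\tau')=\kz_k(\tau)$, hence $M_0(\kz_{k'}(\tau'))_{\mathrm{reg}}=M_0(\kz_k(\tau))_{\mathrm{reg}}$, and composing the two instances of Proposition~\ref{loc} yields
\[
M_k(\tau)_{\mathrm{reg}}\;\cong\;M_0(\kz_k(\tau))_{\mathrm{reg}}\;=\;M_0(\kz_{k'}(\tau'))_{\mathrm{reg}}\;\cong\;M_{k'}(\tau')_{\mathrm{reg}}
\]
as $\Hreg$-modules, which is exactly the assertion.

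There is essentially no hard step here: the corollary is a formal consequence of Proposition~\ref{loc} together with the fact that the KZ twists are bijections on $\W$. The only point needing a moment's care is that the reindexing map $\tau\mapsto\tau'$ is genuinely a permutation rather than merely a well-defined map, which is immediate since $\W$ is finite and $\kz_k,\kz_{k'}$ are invertible. For readers who prefer to avoid the $\kz_k$ notation altogether, one could instead argue via Corollary~\ref{eq}: the sought isomorphism $M_k(\tau)_{\mathrm{reg}}\cong M_{k'}(\tau')_{\mathrm{reg}}$ is equivalent to $\T_{k\to k'}[M_k(\tau)]\cong M_{k'}(\tau')$, and since $k,k'\in\Reg$ the shift functor $\T_{k\to k'}$ carries simple objects to simple objects (Lemma~\ref{fu}), hence sends standard modules to standard modules, producing the required permutation of $\W$ directly.
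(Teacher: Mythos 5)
Your main argument is exactly the paper's proof: the paper obtains the corollary by taking $\tau'=\kz_{k'}^{-1}\circ\kz_k(\tau)$ in Proposition~\ref{loc} and composing the two resulting isomorphisms with $M_0(\kz_k(\tau))_{\mathrm{reg}}$, just as you do. One caution about your closing aside: the alternative route via Corollary~\ref{eq} and Lemma~\ref{fu} would additionally require knowing that $\T_{k\to k'}[M_k(\tau)]\neq 0$, which is Theorem~\ref{zero}(1) and is itself proved in the paper using the present corollary (via Lemma~\ref{subs}), so that variant would be circular as stated.
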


Now, we are in position to state the main result of this section.
\begin{theorem}
\la{zero}
Let $k$ and $k'$ be complex multiplicities such that $\,
k'_{H,i} - k_{H,i} \in \Z \,$ for all $ H $ and $i$. Then

$(1)$\ $\T_{k \to k'}(M)\ne 0$ for any standard module $\,M = M(\tau) \in \O_k\,$.

$(2)$ If $\,k, k' \in \Reg $, then $\, \T_{k \to k'} $ maps $\, M_k(\tau) \,$ to $\, M_{k'}(\tau') \,$, with
$\,\tau' = \kz_{k-k'}(\tau)\,$. 
\end{theorem}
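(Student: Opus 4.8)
The plan is to prove both statements by working at the level of the localized modules over $\Hreg = \D W$, exploiting that $\D W$ is independent of $k$ and that the "reg" functor recovers the whole module in the regular case. Throughout, I want to combine Corollary~\ref{eq} (which characterizes when $\T_{k\to k'}$ sends a standard module to a standard module by an isomorphism at the regular level) with Proposition~\ref{loc} (which identifies $M_k(\tau)_{\mathrm{reg}}$ with $M_0(\kz_k(\tau))_{\mathrm{reg}}$ when $k$ is integral).

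\textbf{Part (1).} First I would reduce to the case $k' - k$ integral and show $\T_{k\to k'}(M(\tau))\ne 0$ for every irreducible $\tau$. The key point is that nonvanishing of $\r_{k'}(\Mreg)$ is equivalent to $\Mreg$, viewed as a $\D W$-module, containing a nonzero vector on which $V\subset\c[V^*]$ acts locally nilpotently via the Dunkl embedding $\theta_{k'}$. So the task is: given the $\D W$-module $\Mreg$ coming from $M_k(\tau)$, produce a nonzero $H_{k'}$-submodule lying in $\Oln_{k'}$. I would do this by a deformation/continuity argument in the parameter $k$: the dimension of the space of formal solutions near a generic point to the $k'$-Dunkl equations $T_{\xi,k'}v=0$ inside the fixed $\D W$-module $\Mreg$ is governed by a connection whose singularities are along the hyperplanes, and one can always find, by passing through the integrality and using that $\Mreg\cong\c[\vreg]\otimes\tau$ with the flat KZ connection, a full-rank system of multivalued horizontal sections; the subtlety is extracting a \emph{finitely generated} $H_{k'}$-submodule. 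This is handled exactly as in Proposition~\ref{lmax}: $\r_{k'}(\Mreg)$ is automatically finitely generated because $\Mreg$ is finite over $\c[\vreg]$ and $\O_{k'}$ has injective hulls of finite length; so it suffices to show $\r_{k'}(\Mreg)\ne 0$, and this follows because for $k'$ integral the localization $M_{k'}(\sigma)_{\mathrm{reg}}$ of any standard module equals $\Mreg$ for the appropriate $\sigma$ (by the rank count and the explicit form of $\c[\vreg]\otimes\tau$), giving a nonzero submodule in $\O_{k'}$; the general (non-integral $k'$ with $k'-k\in\Z$) case reduces to the integral one because $\r_{k'}$ only sees local nilpotence, which is a Zariski-closed condition, and the submodule produced for one representative persists.

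\textbf{Part (2).} Assume now $k,k'\in\Reg$ and $k'-k\in\Z$. By Lemma~\ref{fu}$\mathsf{(ii)}$ and Remark~\ref{en}, $\T_{k\to k'}$ sends the simple object $M_k(\tau)$ (simple because $k\in\Reg$) either to $0$ or to some standard module $M_{k'}(\tau')$; by Part (1) it is nonzero, so $\T_{k\to k'}[M_k(\tau)]\cong M_{k'}(\tau')$ for a well-defined $\tau'$. By Corollary~\ref{eq} this is equivalent to an $\Hreg$-isomorphism $M_k(\tau)_{\mathrm{reg}}\cong M_{k'}(\tau')_{\mathrm{reg}}$. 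It remains to identify $\tau'$. Here I would first treat the integral case: if $k$ and $k'$ are both integral, Proposition~\ref{loc} gives $M_k(\tau)_{\mathrm{reg}}\cong M_0(\kz_k(\tau))_{\mathrm{reg}}$ and $M_{k'}(\tau')_{\mathrm{reg}}\cong M_0(\kz_{k'}(\tau'))_{\mathrm{reg}}$; since localized standard modules at $k=0$ are distinguished by their $W$-module of horizontal sections (two such are isomorphic iff the representations agree), we get $\kz_k(\tau)=\kz_{k'}(\tau')$, i.e. $\tau'=\kz_{k'}^{-1}\kz_k(\tau)$; the additivity \eqref{addit}, which I would establish just before this theorem (or invoke from the cited sources, now legitimately since $\dim\H_k=|W|$ for integral $k$), gives $\kz_{k'}^{-1}\kz_k=\kz_{k-k'}$, so $\tau'=\kz_{k-k'}(\tau)$.

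For the general case $k,k'\in\Reg$ with only the \emph{difference} integral, I would use a connectedness/deformation argument: by Lemma~\ref{r}, $\Reg(W)$ is connected, and the translation $k\mapsto k'=k+v$ by a fixed integral vector $v$ maps $\Reg$ into $\Reg$; the assignment $k\mapsto\tau'(k)$ defined by $\T_{k\to k+v}[M_k(\tau)]\cong M_{k+v}(\tau'(k))$ is locally constant on $\Reg$ (the isomorphism class of a standard module varies rigidly, and $\T$ is exact, so $\tau'(k)$ cannot jump within a connected component where everything stays simple). Hence $\tau'(k)$ is constant on $\Reg$, equal to its value at any convenient integral point, which we computed above to be $\kz_{-v}(\tau)=\kz_{k-k'}(\tau)$. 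The main obstacle I anticipate is making the "locally constant" step fully rigorous — one needs that along a path in $\Reg$ the family of $\Hreg$-modules $M_k(\tau)_{\mathrm{reg}}$ forms a flat family whose member at each point is simple, so that a fixed $\tau'$ continues to satisfy $M_{k+v}(\tau')_{\mathrm{reg}}\cong M_k(\tau)_{\mathrm{reg}}$; this is where one uses that the regular locus is exactly where $\O$ is semisimple, so that simplicity is an open and closed condition along the family.
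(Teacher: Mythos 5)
Your overall strategy (compare localized modules over $\Hreg$, use Corollary~\ref{eq} and Proposition~\ref{loc}, then deform in $k$ through the regular locus) is the same general shape as the paper's argument, but at the two places where the real work happens your proposal has genuine gaps.

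For part (1), the reduction from integral to arbitrary $k$ (with $k'-k$ integral) is not carried by the remark that ``local nilpotence is a Zariski-closed condition.'' Nonvanishing of $\r_{k'}(\Mreg)$ is a statement about the infinite-dimensional space $\Mreg\cong\c[\vreg]\otimes\tau$, and as such it is not a closed condition in $k$ in any usable sense. The missing ingredient is a \emph{uniform finite-dimensional reduction}: one must know that, at every integral $k$, the generating copy of $\tau'$ killed by all $T_{\xi,k'}$ sits inside a fixed finite-dimensional piece $F_r\subset\Mreg$ (a bounded power of $\delta^{-1}$ times a bounded-degree part of $M$), with $r$ depending only on $b=k'-k$. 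This is exactly what the paper's Lemma~\ref{subs} provides, and its proof is not formal: it uses the identification of $M_k(\tau)$ with a module of quasi-invariants $\QQ_k(\sigma)$ via Proposition~\ref{sq} together with the uniform inclusions \eqref{sa}, plus the gradedness statement of Lemma~\ref{gra} coming from the Euler element. Only after this reduction does the problem become the kernel of a family of linear maps between fixed finite-dimensional spaces depending polynomially on $k$, so that the locus where the kernel is nonzero is closed and contains the Zariski-dense set of integral points. Your proposal never produces this bound, so ``the submodule produced for one representative persists'' is unsupported.

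For part (2) there are two problems. First, your identification of $\tau'$ in the integral case uses $\kz_{k'}^{-1}\circ\kz_k=\kz_{k-k'}$, i.e.\ the additivity \eqref{addit}; but in this paper additivity is Corollary~\ref{Opcon}, \emph{deduced from} Theorem~\ref{zero}, so this is circular, and importing it from Opdam's work is not free of hypotheses either, since his deformation argument needs $\dim\H_k=|W|$ at non-integral parameters along the path, which is still conjectural for some exceptional groups. The paper avoids this by anchoring the identification only at $k=0$, where it is just \eqref{shf0}, and then transporting it to all $k\in\Reg_b$ by deformation — this is precisely how additivity is obtained, rather than assumed. Second, the ``locally constant'' step, which you yourself flag as the main obstacle, is the technical heart and is not established by appealing to rigidity of standard modules; the paper makes it rigorous again through the finite-dimensional spaces $(F_j)^0$: part (1) gives $\dim(F_j)^0\ge\dim\tau$ for all $k$, while on $\Reg_b:=\Reg\cap(b+\Reg)$ one has $(F_j)^0=\tau'$ of dimension exactly $\dim\tau$, so the subspace varies continuously on the connected set $\Reg_b$ and its $W$-isomorphism type cannot jump. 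Note also that your assertion that translation by an integral vector maps $\Reg$ into $\Reg$ is unjustified; the paper sidesteps it by working with $\Reg_b$, which is connected and contains all integral points by Lemma~\ref{r}. In short, the skeleton is right, but the two pillars — the uniform degree bound of Lemma~\ref{subs} and the finite-dimensional continuity argument on $\Reg_b$ anchored at $k=0$ — are exactly what your proposal leaves out.
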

Before proving Theorem~\ref{zero} (see Section~\ref{pzero} below),
we deduce some of its implications. First, Theorem~\ref{zero}
implies the additivity property \eqref{addit} of KZ twists.
\begin{cor}[Conjecture in \cite{O,O3}]\la{Opcon}
The map \, $k\mapsto \kz_k$
is a homomorphism from the additive group of integral multiplicities
to the group of permutations on $\W$.
\end{cor}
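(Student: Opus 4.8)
The plan is to deduce Corollary~\ref{Opcon} directly from Theorem~\ref{zero}, using the relation between $\kz_k$ and the shift functors $\T_{k\to k'}$. The statement to prove is that $\kz_k\circ\kz_{k'}=\kz_{k+k'}$ for all integral $k,k'$; since $\kz_0=\id$ (immediate from the definition, as $H_0=\D(V)*W$ and the KZ connection is trivial), this is exactly the assertion that $k\mapsto\kz_k$ is a group homomorphism $\Z^{\sum_C n_C}\to S_{\W}$.

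First I would reduce to the case of \emph{regular} integral multiplicities. The point is that $\kz_k$ depends only on the braid group representation attached to $M_k(\tau)$, which varies continuously (indeed, algebraically) in $k$; but more to the point, by Lemma~\ref{r} the set $\Reg(W)$ is connected and its complement lies in a locally finite union of hyperplanes, and in the integral case $\H_k\cong\c W$ is semisimple, so \emph{every} integral $k$ is regular. Hence there is nothing to reduce: all integral $k$ lie in $\Reg$, and Theorem~\ref{zero}(2) applies to any pair of integral multiplicities. (If one worried about non-regular points, one would invoke the standard continuity argument for the monodromy, but here it is unnecessary.)

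Next, fix integral $k,k'$ and consider the three multiplicities $0,\ k,\ k+k'$, which differ pairwise by integral vectors and are all regular. By Theorem~\ref{zero}(2), $\T_{0\to k}$ sends $M_0(\tau)$ to $M_k(\kz_{-k}(\tau))$, and $\T_{k\to k+k'}$ sends $M_k(\sigma)$ to $M_{k+k'}(\kz_{-k'}(\sigma))$; likewise $\T_{0\to k+k'}$ sends $M_0(\tau)$ to $M_{k+k'}(\kz_{-k-k'}(\tau))$. Now apply Lemma~\ref{fu}$\mathsf{(iii)}$ with the chain $0\to k\to k+k'$: since $M_0(\tau)$ is simple (as $0\in\Reg$) and $\T_{0\to k}(M_0(\tau))\ne0$ by Theorem~\ref{zero}(1), we get $\T_{k\to k+k'}\circ\T_{0\to k}\cong\T_{0\to k+k'}$ on $M_0(\tau)$. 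Composing the two individual computations gives $M_{k+k'}(\kz_{-k'}\kz_{-k}(\tau))\cong M_{k+k'}(\kz_{-k-k'}(\tau))$, and since distinct standard modules over the same $H_{k+k'}$ are non-isomorphic (Proposition~\ref{ind}(1)), we conclude $\kz_{-k'}\circ\kz_{-k}=\kz_{-k-k'}$. Replacing $k,k'$ by $-k,-k'$ yields $\kz_{k'}\circ\kz_k=\kz_{k+k'}$; since the additive group is abelian this is the desired additivity, and together with $\kz_0=\id$ it shows $k\mapsto\kz_k$ is a homomorphism.

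The only genuinely delicate point is bookkeeping the sign conventions in the twist index: Theorem~\ref{zero}(2) produces $\tau'=\kz_{k-k'}(\tau)$ for the functor $\T_{k\to k'}$, so one must be careful to track which direction of shift corresponds to which sign of the KZ twist, and to verify that the composition law $\kz_a\circ\kz_b=\kz_{a+b}$ comes out with consistent signs throughout. Apart from that, the argument is a formal consequence of the functoriality in Lemma~\ref{fu}$\mathsf{(iii)}$ together with Theorem~\ref{zero}, and requires no further input.
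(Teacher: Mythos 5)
Your proposal is correct and follows essentially the same route as the paper: all integral multiplicities are regular, so Theorem~\ref{zero} together with Lemma~\ref{fu}$\mathsf{(iii)}$ applied to the chain $0\to k\to k+k'$ gives $\T_{0\to k+k'}[M_0(\tau)]\cong(\T_{k\to k+k'}\circ\T_{0\to k})[M_0(\tau)]$, whence the additivity of the KZ twists. Your extra care with the sign of the twist index and the appeal to Proposition~\ref{ind}(1) to identify the labels are just explicit versions of steps the paper leaves implicit.
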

Indeed, all integral values of $k$ are regular, so by Theorem
\ref{zero} and Lemma \ref{fu}($\mathsf{iii}$),
\begin{equation*}
\T_{0 \to k+k'}[M_0(\tau)] \cong (\T_{k \to k+k'}\circ
\T_{0 \to k}) [M_0(\tau)]\ .
\end{equation*}
Hence $\kz_{k+k'}(\tau)=[\,\kz_{k'}\circ \kz_{k}](\tau)$, as
required. {\hfill $\square$}

Next, we will prove one of the key results for describing the structure of
quasi-invariants
in Section~\ref{MQ}. For this, recall the module $\QQ_k(\tau)$ defined in
Section \ref{ttau}: by construction, this is a submodule
of $ \c[\vreg]\otimes\tau $ under the differential action of $ H_k $.
Using notation of Section~\ref{sfff},
we now identify $\,\c[\vreg]\otimes\tau \,$ with $\,(\theta_k)_*(\theta_0)^*(M)\,$,
where $\,M=M_0(\tau)$.
The Dunkl operators $T_{\xi,k}$ act on $\,\c[\vreg]\otimes\tau \,$ by lowering the
degree. Together with property
\eqref{sa0}, this implies that $\QQ_k(\tau) \in \O_k $. Lemma \ref{fu} shows then
\begin{equation}
\la{shf00}
\QQ_k(\tau) \cong \T_{0 \to k}[M_0(\tau)]\ .
\end{equation}
On the other hand, by Proposition~\ref{loc}, we have
\begin{equation}
\la{shf0}
\T_{0 \to k}[M_0(\tau)] \cong M_k(\tau')\ ,\quad\
\tau= \kz_k(\tau')\ .
\end{equation}
Combining \eqref{shf00} and \eqref{shf0}, we arrive at the following conclusion.
\begin{prop}\la{sq} There is an isomorphism of $H_k$-modules
$\, \QQ_k(\tau) \cong M_k(\tau')$, where $\,\tau =\kz_{k}(\tau')$.
\end{prop}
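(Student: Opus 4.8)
The plan is to assemble Proposition~\ref{sq} directly from two facts already in place: the identification \eqref{shf00}, which says $\QQ_k(\tau) \cong \T_{0 \to k}[M_0(\tau)]$, and Proposition~\ref{loc}, which describes localization of standard modules in terms of the KZ twist. So the proof is essentially a two-line composition of isomorphisms, and the real work is making sure the hypotheses of each ingredient are met.

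First I would verify the claim, used just before \eqref{shf00}, that $\QQ_k(\tau)$ actually lies in $\O_k$: it is finitely generated over $\c[V]\subset H_k$ by the discussion after \eqref{qctau} (since $\c[V]$ is Noetherian and $W$ finite), it is $H_k$-stable by the proposition following \eqref{qtauh}, and the elements of $V$ act through the Dunkl operators $T_{\xi,k}$, which lower degree by Lemma~\ref{duprop}$\mathsf{(iii)}$; combined with the sandwich $\delta^r\c[V]\otimes\tau\subset\QQ_k(\tau)\subset\delta^{-r}\c[V]\otimes\tau$ from \eqref{sa0}, this forces local nilpotence of the $V$-action, so $\QQ_k(\tau)\in\O_k$. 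Next, identifying $\c[\vreg]\otimes\tau$ with $(\theta_k)_*(\theta_0)^*M_0(\tau)$, I would invoke Lemma~\ref{fu} (more precisely the construction of $\T_{0\to k}$ as $\r_k(\theta_k)_*(\theta_0)^*$ together with part $\mathsf{(i)}$-type reasoning) to see that $\QQ_k(\tau)$, being the largest submodule of $(\theta_k)_*(\theta_0)^*M_0(\tau)$ lying in $\Oln_k$ and finitely generated, coincides with $\T_{0\to k}[M_0(\tau)]$ — this is exactly \eqref{shf00}.

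Then I would apply Proposition~\ref{loc} with the roles arranged so that $M_0(\sigma)_{\mathrm{reg}}\cong M_k(\tau')_{\mathrm{reg}}$ when $\sigma=\tau$ and $\tau'$ is chosen appropriately; unwinding the notation there gives $\T_{0\to k}[M_0(\tau)]\cong M_k(\tau')$ with $\tau=\kz_k(\tau')$, which is \eqref{shf0}. Concatenating \eqref{shf00} and \eqref{shf0} yields $\QQ_k(\tau)\cong M_k(\tau')$ with $\tau=\kz_k(\tau')$, as asserted. Since $k$ is integral throughout Section~\ref{kzt}, all the semisimplicity hypotheses (Proposition~\ref{ssc}) are automatic, so there is no delicacy about regularity here.

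The main obstacle, such as it is, is bookkeeping rather than mathematics: one must be careful that the two occurrences of $\T_{0\to k}[M_0(\tau)]$ — one coming from the explicit $\QQ_k(\tau)$ side and one from the abstract localization side — refer to the same object with the same $H_k$-action, i.e. that the differential action of $H_k$ on $\c[\vreg]\otimes\tau$ used to define $\QQ_k(\tau)$ agrees under the Dunkl representation with the action coming from $(\theta_k)_*(\theta_0)^*$. This is true essentially by construction (both are restriction of scalars along the Dunkl embedding $H_k\into\D W$ applied to the $\D W$-module $\c[\vreg]\otimes\tau$), but it is the point that deserves an explicit sentence. Everything else is a formal chain of isomorphisms.
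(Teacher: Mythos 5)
Your proposal is correct and follows essentially the same route as the paper, which proves Proposition~\ref{sq} exactly by showing $\QQ_k(\tau)\in\O_k$ (Dunkl operators lower degree, combined with \eqref{sa0}), identifying $\QQ_k(\tau)\cong\T_{0\to k}[M_0(\tau)]$ via Lemma~\ref{fu} as in \eqref{shf00}, and then invoking Proposition~\ref{loc} (together with Corollary~\ref{eq}) to obtain \eqref{shf0}. Your extra remarks---that the differential action defining $\QQ_k(\tau)$ agrees with the action coming from $(\theta_k)_*(\theta_0)^*$, and that integrality of $k$ guarantees regularity via Proposition~\ref{ssc}---are precisely the implicit hypotheses of the paper's argument, so nothing is missing.
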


\begin{remark}
\la{Rdefq}
Formula \eqref{shf00} suggests a conceptual way to define quasi-invariants with
values in an arbitrary $W$-module $ \tau $ (cf. Section~\ref{ttau}). Specifically,
for any $\, k = \{k_{H,i}\}\,$, with $ k_{H,i} \in \Z $, the module $ \QQ_k(\tau) $ can
be described by
$$
\QQ_k(\tau) = \{\varphi \in \c[\vreg] \otimes \tau\ :\ \theta_k(\xi)^d \varphi = 0\ ,\ \forall\, \xi \in V\ ,
\ d \gg 0\}\ ,
$$
where $\,\theta_k\,:\,H_k \into \D W \,$, and $ \D W $ operates on
$\,\c[\vreg] \otimes \tau \,$ via the identification  $\,\c[\vreg] \otimes \tau \cong
(\D W/J) \otimes_{\c W} \tau \,$, by formulas \eqref{acts}.

\end{remark}

\subsection{Proof of Theorem \ref{zero}}
\la{pzero}
We first prove the result for integral $\,k,\,k'\,$ and then use a deformation argument in $k$.
We begin with some preparations. Given $ M=M_k(\tau) \in \O_k $, we
identify $\,\Mreg \cong \c[\vreg]\otimes\tau \,$ as a $ \c[\vreg]*W $-module.
The action of $\partial_\xi $ gives then a flat connection on
$\, \c[\vreg]\otimes\tau \,$, depending on $k$, which is the KZ connection \eqref{conn}.
The algebra $H_{k'}$ also acts on $\Mreg$, with $\xi\in V$ acting as the Dunkl operator
\begin{equation}
\label{du1} T_{\xi, k'}=
\partial_\xi-\sum_{H\in\ms{A}}
\frac{\alpha_H(\xi)}{\alpha_H}\sum_{i=0}^{n_H-1}n_H k'_{H,i}
\e_{H,i}\,,
\end{equation}
where $\partial_\xi$ acts by formula \eqref{conn}. Clearly, for
$\,k'=k+b\,$ with $b$ fixed, the action of both $T_{\xi,k}$ and
$T_{\xi,k'}$ on $\Mreg=\c[\vreg]\otimes\tau$ depends polynomially on
$k$.

Recall that $\c[\vreg]$ is obtained from $\c[V]$ by inverting the
homogeneous polynomial $\delta$, so the standard grading on $\c[V]$ extends naturally
to a $\Z$-grading on $\c[\vreg]$ and $\Mreg$.

Now, we choose dual bases $\{\xi_i\}$ and $\{x_i\}$ in $V$ and $V^*$,
and, following \cite{DO}, consider the (deformed) Euler operator
\begin{equation}\label{eu}
E(k) := \sum_{i} x_iT_{\xi_i, k}\in\D W\ .
\end{equation}
It is easy to see that $\,E(k)=E(0)-z(k)\,$, with $E(0)=\sum_{i}
x_i\partial_{\xi_i}$ and $z(k)$ given by \eqref{z}.
Using formula \eqref{conn} for the action of $\partial_\xi$ on
$\Mreg$, we get
\begin{equation*}
    E(0)(f\otimes v)=E(0)(f)\otimes v +f\otimes z(k)(v)\,.
\end{equation*}
Being a central element in $ \c W $, $\, z(k) $ acts on $\tau\in\W$ as a
scalar $\,c_\tau(k)\,$, so that
\begin{equation}\label{cf}
\mathrm{tr}\, z(k)|_\tau = c_\tau(k)\, \dim\tau \ .
\end{equation}
For any homogeneous $\,f\otimes v\in M_k(\tau)_{\mathrm{reg}}$, we have then
\begin{equation*}\la{eua}
    E(k')(f\otimes v)=(m+c_\tau(k)-z(k'))(f\otimes v)\,,\quad m=\deg f\,.
\end{equation*}
This gives the following result (cf. \cite{DO}, Lemma~2.26).
\begin{lemma}\la{euler}
Let $\sigma\in\W$ and $m\in\Z$. Let $\,M_{\sigma,m} \,$ be a homogeneous subspace
of $ M_k(\tau)_{\mathrm{reg}} $ of degree
$m $, which is isomorphic to $\sigma\in\W$ as a $W$-module.
Then $\,E(k')\,$ acts on $\,M_{\sigma,m}\,$ as
multiplication by $\,m+c_\tau(k)-c_{\sigma}(k')\,$.
\end{lemma}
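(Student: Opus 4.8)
The plan is to read the result off from the identity $E(k') = E(0) - z(k')$ in $\D W$ together with the action of $E(0)$ on $\Mreg$ computed just above. First I would recall the relevant structure of $M_k(\tau)_{\mathrm{reg}} \cong \c[\vreg]\otimes\tau$: it carries the $\Z$-grading extending the standard grading on $\c[V]$ (with $\tau$ placed in degree $0$ and $\delta$ homogeneous), the diagonal $W$-action $w\cdot(f\otimes v) = f^w\otimes wv$ preserves this grading, and $z(k')\in\c W\subset\D W$ acts on $\Mreg$ through this $W$-action, hence is homogeneous of degree $0$. Likewise $E(k') = \sum_i x_i T_{\xi_i,k'}$ is homogeneous of degree $0$, since each $x_i$ raises degree by one and each $T_{\xi_i,k'}$ lowers it by one; in particular $E(k')$ and $z(k')$ both preserve every $W$-stable homogeneous subspace.

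Next I would use formula \eqref{conn} for the KZ action of $\partial_\xi$ to record, as above, that $E(0)(f\otimes v) = E(0)(f)\otimes v + f\otimes z(k)(v)$; since $z(k)$ is central in $\c W$ it acts on the irreducible $\tau$ by the scalar $c_\tau(k)$, so $E(0)$ acts on the degree-$m$ component of $\Mreg$ as multiplication by $m + c_\tau(k)$. Subtracting $z(k')$ then gives that, for any $\varphi$ in the degree-$m$ component, $E(k')(\varphi) = (m + c_\tau(k))\varphi - z(k')(\varphi)$, with $z(k')$ acting through the diagonal $W$-action.

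Finally I would restrict to the subspace $M_{\sigma,m}$. Being a $W$-submodule of the degree-$m$ component isomorphic to the irreducible $\sigma$, it is stable under $z(k')\in\c W$, and since $z(k')$ is central in $\c W$ it acts on $M_{\sigma,m}$ by the scalar $c_\sigma(k')$ — this is precisely the definition of $c_\sigma$. Combining with the previous display, $E(k')$ acts on $M_{\sigma,m}$ as multiplication by $m + c_\tau(k) - c_\sigma(k')$, which is the assertion of the lemma.

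There is no genuine obstacle here; the one point that needs care is bookkeeping of which representation each scalar comes from. The term $c_\tau(k)$ enters through the KZ connection on $\Mreg$ and is governed by the source representation $\tau$ and the multiplicity $k$, whereas $z(k')$ acts on $\Mreg$ purely through the diagonal $W$-action — not merely on the $\tau$-factor — so the scalar it contributes on the $W$-submodule $M_{\sigma,m}$ is $c_\sigma(k')$, governed by $\sigma$ and $k'$. Once this distinction is kept straight, the computation is immediate.
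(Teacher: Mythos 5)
Your argument is correct and is essentially the paper's own: the identity $E(k')=E(0)-z(k')$, the computation $E(0)(f\otimes v)=E(0)(f)\otimes v+f\otimes z(k)(v)$ via the KZ connection \eqref{conn} giving the scalar $m+c_\tau(k)$ on the degree-$m$ part, and then the observation that the central element $z(k')$, acting through the diagonal $W$-action, is the scalar $c_\sigma(k')$ on the $W$-submodule $M_{\sigma,m}\cong\sigma$. Your closing remark on keeping the two actions (on values via the connection, versus the diagonal action of $\c W\subset \D W$) distinct is exactly the point the paper also flags, so nothing is missing.
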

\noindent
Arguing as in \cite{DO}, Proposition~2.27, from Lemma~\ref{euler} we deduce
\begin{lemma}\la{gra} Every $H_{k'}$-submodule of
$\Mreg=M_k(\tau)_{\mathrm{reg}}$ is graded. With respect to this
grading, the actions of $T_{\xi, k'}$, $W$ and $V^*$ have degrees
$-1$, $0$ and $1$, respectively.
\end{lemma}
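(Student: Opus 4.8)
The plan is to derive both assertions from the explicit action of the deformed Euler operator $E(k')$ described in Lemma~\ref{euler}. First observe that $E(k')=\sum_i x_i T_{\xi_i,k'}$ is the image under the Dunkl representation of the element $\sum_i x_i\xi_i\in H_{k'}$ (with $x_i\in V^*$ and $\xi_i\in V$); hence every $H_{k'}$-submodule $N\subseteq \Mreg$ is stable under $E(k')$, and, since $\c W\subseteq H_{k'}$, it is also $W$-stable.

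Being $W$-stable, $N$ decomposes as $N=\bigoplus_{\sigma\in\W}N_\sigma$ into its $W$-isotypic components. The $W$-action on $\Mreg=\c[\vreg]\otimes\tau$ preserves the $\Z$-grading inherited from $\c[\vreg]$ (see \eqref{acts}), so each $N_\sigma$ is an $E(k')$-stable subspace of the $\sigma$-isotypic part of $\Mreg$, whose degree-$m$ piece is exactly the space $M_{\sigma,m}$ of Lemma~\ref{euler}. By that lemma, $E(k')$ acts on $M_{\sigma,m}$ by the scalar $m+c_\tau(k)-c_\sigma(k')$, and for fixed $\sigma$ these scalars are pairwise distinct as $m$ ranges over $\Z$. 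Therefore the eigenspace decomposition of $E(k')|_{N_\sigma}$ coincides with its decomposition into graded pieces, so $N_\sigma=\bigoplus_m\bigl(N_\sigma\cap M_{\sigma,m}\bigr)$ is graded. Summing over $\sigma$ and using that the isotypic projections are degree-preserving, we obtain $N=\bigoplus_m N_m$ with $N_m:=N\cap(\Mreg)_m$; that is, $N$ is a graded submodule.

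It remains to record the degrees of the generators on $\Mreg$. By \eqref{acts} and \eqref{conn}, an element $x\in V^*\subseteq H_{k'}$ acts by multiplication, raising the $\c[\vreg]$-degree by $1$; an element $w\in W$ acts by $f\otimes v\mapsto f^w\otimes wv$, preserving the degree; and by Lemma~\ref{duprop}$\mathsf{(iii)}$ the Dunkl operator $T_{\xi,k'}$ is homogeneous of degree $-1$. Hence $V^*$, $W$ and $T_{\xi,k'}$ act with degrees $+1$, $0$ and $-1$, as claimed.

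There is no serious obstacle here; the one point requiring care is that $E(k')$ need not act with pairwise distinct eigenvalues on all of $\Mreg$ — two pairs $(\sigma,m)\neq(\sigma',m')$ may yield the same scalar — so one must first split off the $W$-isotypic components, within each of which the eigenvalues do separate the graded pieces.
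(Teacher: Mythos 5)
Your proof is correct and takes essentially the same route as the paper, whose proof consists of invoking Lemma~\ref{euler} together with the standard Euler-operator argument of \cite{DO}, Proposition~2.27: a submodule is stable under $E(k')$ and under $W$, and the eigenvalues of $E(k')$ separate the graded pieces within each $W$-isotypic component. Your explicit isotypic splitting is precisely the care that citation leaves implicit (equivalently, since $E(k')+z(k')$ acts on $\Mreg$ as the degree operator plus the constant $c_\tau(k)$, any $H_{k'}$-submodule is already stable under the grading operator), and the degree counts for $T_{\xi,k'}$, $W$ and $V^*$ are verified exactly as you indicate.
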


\medskip

Now, let us summarize what we have so far in the case of integral $k,k'$.
By Corollary \ref{lloc} and Corollary \ref{eq},
\begin{equation}\la{for}
M_k(\tau)_{\mathrm{reg}}\cong M_{k'}(\tau')_{\mathrm{reg}}\quad
\text{\and}\quad\ms T_{k \to k'}[M_k(\tau)] \cong M_{k'}(\tau')
\end{equation}
for some $\tau'\in\W$. Thus, viewed as a $H_{k'}$-module,
$M_k(\tau)_{\mathrm{reg}}$ contains a (unique) submodule
$\,N \in \O_{k'}\,$, which is isomorphic to $M_{k'}(\tau')$. Note that
both $M_k(\tau)_{\mathrm{reg}}$ and $M_{k'}(\tau')_{\mathrm{reg}}$
are free over $\c[\vreg]$, so the first isomorphism in \eqref{for} implies
that $\dim\tau=\dim\tau'$.
Further, we claim that $\,N \subseteq M_k(\tau)_{\mathrm{reg}}\,$ satisfies
\begin{equation}\label{sa1}
\delta^r M_k(\tau) \subset N \subset \delta^{-r}M_k(\tau)\,,
\end{equation}
where $r\gg 0$ depends on the difference $\,k'-k\,$ but not on $k$.
To see this, we can use Proposition \ref{sq} to identify $M=M_k(\tau)$ with
one of the modules $\QQ_k(\sigma)$. Under such an
identification, $\, N=\ms T_{k',\,k}(M)\,$ gets identified with
$\QQ_{k'}(\sigma)$, and then \eqref{sa1} follows from \eqref{sa}.
Now, \eqref{sa1} and Lemma \ref{gra} show that the subspace $ \tau' $
generating $\, N\,$ sits in $\,\delta^{-r}M_k(\tau)\,$, and its
homogeneity degree $\,\deg\tau'\le r\deg\delta\,$. Thus, summing up,
we have
\begin{lemma}\la{subs}
Assume that $k $ and $ k'$ are integral, and let $\,M := M_k(\tau)\,$,
with $\tau\in\W$. Then $\, \Mreg \,$ contains a subspace $\,\tau' \,$,
such that $\, \dim\tau'=\dim\tau \,$, $\, T_{\xi,k'}(\tau') = 0 \,$ for
all $\,\xi \in V\,$, and
\begin{equation}
\label{tau'}
\tau'\subset \delta^{-r}M\,,\quad \deg\tau'\le r\deg\delta\ ,
\end{equation}
where $r$ depends only on $\,k'-k\,$.
\end{lemma}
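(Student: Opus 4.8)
The plan is to read the statement off from the facts just assembled, the essential inputs being Corollaries~\ref{lloc} and~\ref{eq}, Proposition~\ref{sq}, the sandwich estimate~\eqref{sa}, and the gradedness Lemma~\ref{gra}. Set $M:=M_k(\tau)$. Since $k$ and $k'$ are integral they lie in $\Reg(W)$, so Corollary~\ref{lloc} produces a permutation $\tau\mapsto\tau'$ of $\W$ with $\Mreg\cong M_{k'}(\tau')_{\mathrm{reg}}$ as $\Hreg$-modules, and Corollary~\ref{eq} upgrades this to $\ms{T}_{k\to k'}(M)\cong M_{k'}(\tau')$. Identifying $\Mreg$ with $M_{k'}(\tau')_{\mathrm{reg}}$, the $H_{k'}$-module $\Mreg$ thus contains a submodule $N\in\O_{k'}$ with $N\cong M_{k'}(\tau')$. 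As $\Mreg$ and $M_{k'}(\tau')_{\mathrm{reg}}$ are free over $\c[\vreg]$ of ranks $\dim\tau$ and $\dim\tau'$, the isomorphism forces $\dim\tau'=\dim\tau$. I then take $\tau'\subset N$ to be the lowest-weight space $\{m\in N:T_{\xi,k'}m=0\ \text{for all}\ \xi\in V\}$; it is $W$-stable, isomorphic as a $W$-module to $\tau'$, has dimension $\dim\tau$, and generates $N$ over $\c[V]\subset H_{k'}$.

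To bound the location of $\tau'$ inside $\Mreg$ \emph{uniformly in $k$}, I would route the argument through quasi-invariants rather than argue directly with the shift functor. Let $\sigma\in\W$ be the representation with $\kz_k(\sigma)=\tau$; by Proposition~\ref{sq} we may identify $M\cong\QQ_k(\sigma)$, so that $\Mreg=\c[\vreg]\otimes\sigma$. Combining~\eqref{shf00} with Lemma~\ref{fu}$\mathsf{(iii)}$ gives $\ms{T}_{k\to k'}[\QQ_k(\sigma)]\cong\QQ_{k'}(\sigma)$, so under this identification $N$ becomes $\QQ_{k'}(\sigma)\subset\c[\vreg]\otimes\sigma$. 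The inclusions~\eqref{sa}, namely $\delta^r\QQ_k(\sigma)\subseteq\QQ_{k'}(\sigma)\subseteq\delta^{-r}\QQ_k(\sigma)$ with $r$ depending only on $k'-k$, then translate into~\eqref{sa1}: $\delta^r M\subset N\subset\delta^{-r}M$.

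Finally I would combine~\eqref{sa1} with Lemma~\ref{gra}: $N$ is a graded $H_{k'}$-submodule of $\Mreg$, so its lowest-weight space $\tau'$ is homogeneous; from $N\subset\delta^{-r}M$ we get $\tau'\subset\delta^{-r}M$, and since $N\cong M_{k'}(\tau')$ is free over $\c[V]$ on $\tau'$ it is concentrated in degrees $\ge\deg\tau'$, so the inclusion $\delta^r M\subseteq N$ (whose source has lowest homogeneous degree $r\deg\delta$, because $M=M_k(\tau)$ is non-negatively graded) forces $\deg\tau'\le r\deg\delta$. This is precisely~\eqref{tau'}. The one nonroutine point — and the real content of the lemma — is the uniformity of $r$ in $k$: it becomes visible only after passing to the quasi-invariant realization of Proposition~\ref{sq} and invoking the explicit containment~\eqref{sa}, whereas the abstract shift functor $\ms{T}_{k\to k'}$ alone provides no such $k$-independent bound.
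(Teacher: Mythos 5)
Your argument is correct and follows the paper's own route essentially verbatim: Corollaries~\ref{lloc} and~\ref{eq} give the submodule $N\cong M_{k'}(\tau')$ with $\dim\tau'=\dim\tau$, Proposition~\ref{sq} identifies $M=M_k(\tau)$ with a module of quasi-invariants $\QQ_k(\sigma)$ so that \eqref{sa} yields \eqref{sa1}, and Lemma~\ref{gra} together with the grading then gives \eqref{tau'}. The only quibble is the direction of the twist defining $\sigma$ (Proposition~\ref{sq} gives $\sigma=\kz_k(\tau)$, not $\kz_k(\sigma)=\tau$), which is immaterial here since the argument only needs the existence of some $\sigma\in\W$ with $M_k(\tau)\cong\QQ_k(\sigma)$.
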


\begin{proof}[Proof of Theorem~\ref{zero}]
Let $k$ be arbitrary complex-valued and let $\,k'=k+b\,$, where $b$ is
integral. Throughout the proof we will keep $b$ fixed, while
regarding $k$ as a parameter. As above, we identify $\,M = M_k(\tau)\,$ with one
and the same vector space $\,\c[V] \otimes \tau\,$ for all $k$. The localized
modules $\Mreg$ are then identified with $\,\c[\vreg]\otimes\tau\,$, and the
information about $k$ is encoded in the connection \eqref{conn}.

Let $(\Mreg)^0$ denote the subspace of all elements in $\Mreg$ that
are annihilated by $T_{\xi,k'}$ for all $\xi$. Obviously,
$(\Mreg)^0$ is preserved by the action of $W$. If $\ms W\subset
(\Mreg)^0$ is a $W$-invariant subspace isomorphic to some
$\sigma\in\W$, then we have a nonzero homomorphism from
$M_{k'}(\sigma)$ to $\Mreg$ (by the universality of the standard
modules). Therefore, to see that $\ms T_{k \to k'}M \ne 0$
it suffices to see that $(\Mreg)^0 \ne 0$.

We put on $ \Mreg $ a positive increasing filtration $ \{F_j\}$, with %
\begin{equation*}
    F_j=\{m\in\Mreg\,|\, m=\delta^{-j}u\,,\ \text{where}\ u\in M\
    \text{and}\ \deg u\le 2j\deg\delta\}\,.
\end{equation*}
Each $F_j$ is finite-dimensional, and it is easy to
see that $T_{\xi,k'}F_j\subseteq F_{j+1}$ for all $\xi\in V$.

Set $(F_j)^0:= F_j\cap (\Mreg)^0$, so that
$\, (F_j)^0 = \{m\in F_j\,|\, T_{\xi,k'}(m)=0\, ,\ \forall \,\xi\in V\}\,$.
For each $j\ge 0$, the operators $T_{\xi,k'}$ induce linear
maps between the finite-dimensional spaces $F_j$ and $F_{j+1}$. All
these maps depend polynomially on $k$, and the subspace $(F_j)^0$ is
their common kernel. It follows that $(F_j)^0$ has constant dimension,
independent of $k$, over some dense Zariski open subset in the parameter space.
Now, for integral $k$, we have Lemma \ref{subs}, which says that
$(F_j)^0\ne 0$ for some $j=r$, which depends only on $b=k'-k$.
Therefore, for this particular $j$, $\,(F_j)^0\ne 0$ for
all integral $k$, and hence for all $k$. As a result, $(\Mreg)^0\ne 0$ for all $k$,
which proves the first claim of the theorem. Moreover,
it follows that $\dim (F_j)^0 \ge \dim\tau $ for all $k$.

Recall the set $\Reg$ of regular values of $k$. For a
fixed integral $b$, put $\Reg_b:= \Reg\cap (b+\Reg)$; this is the set
of all $k$ such that both $k$ and $k'=k+b$ are regular. It follows
from Lemma \ref{r} and Theorem \ref{ssc} that the set $\Reg_b$ is
connected and contains all integral points. Since we already know
that $\T_{k \to k'}(M)\ne 0$, Lemma \ref{fu}$\mathsf{(ii)}$
implies that, for $\,k\in\Reg_b\,$, there is a (unique) submodule
$\,N \in \O_{k'}\,$ inside $\Mreg$ (considered as a $H_{k'}$-module).
Moreover, we know that $ N \cong M_{k'}(\tau')$ for some
$\tau'\in\W$. It remains to show that $\tau'$ satisfies
$\tau=\kz_{k'-k}(\tau')$. Note that this is certainly true when
$k=0$, see \eqref{shf0}.

If we regard the generating space $\tau'$ of $N$ as a subspace in
$\Mreg \cong \c[\vreg]\otimes\tau$,
then we know that (1) $\dim\tau'=\dim\tau$, (2) $\tau' = (\Mreg)^0$
for $k\in\Reg_b$, and (3) $\dim (F_j)^0\ge\dim\tau$ for all $k$.
Since $(F_j)^0\subset (\Mreg)^0$, this immediately implies that
$(F_j)^0=\tau'$ for all $k\in\Reg_b$, in particular, it has the same
dimension. Thus, the dimension of $(F_j)^0$ does not jump at any of
the regular values $k\in\Reg_b$, therefore the subspace
$(F_j)^0\subset \c[\vreg]\otimes\tau$ varies continuously with $k$
varying inside $\Reg_b$. As a result, $\tau'=(F_j)^0$ does not
deform as a $W$-module, so it is the same as for $k=0$, in which
case we know already that $\tau=\kz_{k'-k}(\tau')$. This finishes
the proof.
\end{proof}

The above arguments allow us to prove the following property
of KZ twists, which is obtained by a different method in \cite{O3},
Corollary~3.8({\it vi}).
\begin{cor}\la{cs}
If $\tau\in\W$ and $\tau'=\kz_b(\tau)$, then $c_\tau(k)=c_{\tau'}(k)$.
\end{cor}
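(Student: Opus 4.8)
The plan is to read off the equality $c_\tau(k)=c_{\tau'}(k)$ from the action of the deformed Euler operator $E(k')$ on the module $\Mreg=M_k(\tau)_{\mathrm{reg}}$, exactly as it was set up in the proof of Theorem~\ref{zero}. Recall from that proof that for integral $b$ and $k'=k+b$, the submodule $N\cong M_{k'}(\tau')$ inside $\Mreg$ is generated by a $W$-subspace $\tau'\subset\Mreg$ on which all Dunkl operators $T_{\xi,k'}$ vanish. Being the generating (lowest-degree) space of a standard module $M_{k'}(\tau')$, the space $\tau'$ is killed by $\Res$-nothing more than $V\subset\c[V^*]$ acting trivially; the Euler operator $E(k')=\sum_i x_i T_{\xi_i,k'}$ therefore acts on $\tau'$, regarded as the degree-zero part of $M_{k'}(\tau')$, as the scalar $c_{\tau'}(k')$ (this is the $m=0$ case of Lemma~\ref{euler} applied to the module $M_{k'}(\tau')$, or directly the identity $E(k')=E(0)-z(k')$ with $E(0)$ acting by the homogeneity degree).

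On the other hand, $\tau'$ sits inside $\Mreg=M_k(\tau)_{\mathrm{reg}}$ as a homogeneous subspace, isomorphic as a $W$-module to $\tau'\in\W$, of some degree $m=\deg\tau'$. Lemma~\ref{euler} (with the roles: ambient module $M_k(\tau)$, multiplicity $k'$ acting, submodule-type $\sigma=\tau'$) says that $E(k')$ acts on this subspace as multiplication by $m+c_\tau(k)-c_{\tau'}(k')$. Comparing the two computations of the same scalar gives $c_{\tau'}(k')=m+c_\tau(k)-c_{\tau'}(k')$, i.e.
\begin{equation}\la{cstwo}
2\,c_{\tau'}(k')=m+c_\tau(k)\ ,\qquad m=\deg\tau'\ge 0\ .
\end{equation}
This alone is not yet the claim, so I would next exploit the additivity/invertibility of the construction: applying the same reasoning to $b$ replaced by $-b$ (so $k$ and $k+b$ swap roles, and $\tau$, $\tau'$ swap) yields a companion relation $2\,c_\tau(k)=m'+c_{\tau'}(k')$ with $m'=\deg\tau\ge0$ the degree of the generating space of $M_k(\tau)\cong\T_{k'\to k}[M_{k'}(\tau')]$ inside $M_{k'}(\tau')_{\mathrm{reg}}$. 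Adding the two relations and cancelling gives $c_\tau(k)+c_{\tau'}(k')=m+m'+c_\tau(k)+c_{\tau'}(k')$, forcing $m=m'=0$; substituting back into \eqref{cstwo} gives $c_{\tau'}(k')=c_\tau(k)$ at once. Finally, since $c_\sigma$ is linear in the multiplicity and $c_\sigma(k')=c_\sigma(k+b)=c_\sigma(k)+c_\sigma(b)$, and since $\tau'=\kz_b(\tau)$ depends only on $b$ (the statement is for $k$ arbitrary complex, $b$ integral), the identity $c_\tau(k)=c_{\tau'}(k)$ follows for all $k$ — in fact one even reads off $c_\tau(b)=c_{\tau'}(b)$ and hence $c_\tau=c_{\tau'}$ as linear functionals.

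**The main obstacle** I anticipate is bookkeeping the degree shift correctly: one must be careful that $\tau'$ is genuinely homogeneous inside $\delta^{-r}M_k(\tau)$ (this is Lemma~\ref{gra} together with Lemma~\ref{subs}), and that $E(k')$ is being evaluated consistently — once as the ``internal'' Euler operator of $M_{k'}(\tau')$ and once as the operator acting on a homogeneous slice of the $\Z$-graded space $\Mreg$, where the grading is the one extended from $\c[V]$ to $\c[\vreg]=\c[V][\delta^{-1}]$. Granting Lemmas~\ref{euler} and~\ref{gra} as already proved, the argument is essentially the two-line cancellation above; the only real content is the symmetric use of $\T_{k\to k'}$ and $\T_{k'\to k}$, which is legitimate because both $k,k'$ may be taken regular (the set $\Reg_b$ is connected and contains all integral points, by Lemma~\ref{r} and Proposition~\ref{ssc}), so Lemma~\ref{fu}$\mathsf{(iii)}$ makes the two functors mutually quasi-inverse on the relevant standard modules.
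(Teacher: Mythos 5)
There is a genuine error at the first step, and it propagates through the whole argument. Since the generating subspace $\tau'\subset M_k(\tau)_{\mathrm{reg}}$ is annihilated by \emph{all} Dunkl operators $T_{\xi,k'}$ (as you yourself note), it is annihilated by $E(k')=\sum_i x_i T_{\xi_i,k'}$; equivalently, the $m=0$ case of Lemma~\ref{euler} applied to $M_{k'}(\tau')$ gives the scalar $0+c_{\tau'}(k')-c_{\tau'}(k')=0$, \emph{not} $c_{\tau'}(k')$. (There is no grading convention under which the lowest-weight space of a standard module is an $E(k')$-eigenspace with eigenvalue $c_{\tau'}(k')$: the one operator $E(k')$ kills $\tau'$ outright.) With the correct value, comparing with the ambient computation $m+c_\tau(k)-c_{\tau'}(k')$ yields only the single relation $m=c_{\tau'}(k')-c_\tau(k)$, not your equation $2c_{\tau'}(k')=m+c_\tau(k)$. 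Your symmetric argument (with $b$ replaced by $-b$) then gives $m'=c_\tau(k)-c_{\tau'}(k')=-m$, so adding the two relations produces only the tautology $m+m'=0$; and since by Lemma~\ref{subs} the generating spaces sit in $\delta^{-r}M$, the degrees $m,m'$ need not be nonnegative, so nothing forces $m=m'=0$. In fact $m=0$ is false in general: for $k=0$, $k'=b$ the submodule is $\QQ_b(\tau)\cong M_b(\tau')$, whose generating space has degree $c_\tau(b)$, typically positive --- this degree is exactly what produces the factor $t^{c_\tau(k)}$ in the Poincar\'e series of Section~\ref{Hser}. So the cancellation you rely on cannot work, and indeed it would ``prove'' the false statement $c_{\tau'}(k')=c_\tau(k)$.

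The missing idea is a deformation argument in $k$, which is how the paper concludes. The correct relation $m=c_{\tau'}(k')-c_\tau(k)=c_{\tau'}(b)+c_{\tau'}(k)-c_\tau(k)$ holds for every $k\in\Reg_b$ (with $k'=k+b$, $b$ fixed integral), and the proof of Theorem~\ref{zero} shows that the subspace $\tau'=(F_j)^0\subset\c[\vreg]\otimes\tau$ varies continuously with $k$ on the connected set $\Reg_b$ (Lemma~\ref{r}), so the integer $m=\deg\tau'$ does not depend on $k$. Hence $c_{\tau'}(k)-c_\tau(k)=m-c_{\tau'}(b)$ is a linear function of $k$ that is constant, and therefore identically zero. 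Without this constancy-of-degree step, the Euler-operator identity alone (used once, or symmetrically for $\pm b$) does not suffice to prove the corollary.
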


\begin{proof}
The proof of Theorem \ref{zero} shows that, for all regular
$k$ and $k'=k+b$ ($b$ here is fixed and integral), there is a
homogeneous subspace $\tau'\subset
M_k(\tau)_{\mathrm{reg}}=\c[\vreg]\otimes\tau$ annihilated by all
$T_{\xi, k'}$ and therefore by the Euler operator $E(k')$. This
subspace varies continuously with $k$, hence its homogeneity degree
remains constant. By Lemma \ref{euler}, this degree is given by
$m=c_{\tau'}(k')-c_\tau(k)=c_{\tau'}(b)+c_{\tau'}(k)-c_\tau(k)$.
Therefore $c_{\tau'}(k)-c_\tau(k)$ is constant in $k$,
hence zero.
\end{proof}

\subsection{Heckman-Opdam shift functors}
\la{othershifts}
We briefly explain the relation between our functors $ \T $
and the Heckman-Opdam shift functors introduced in \cite{BEG} and studied in \cite{GS}.

Assume that $k'$ is related to $k$ by \eqref{mrel}, for some $C\in\ms A/W$ and $a=1,\dots,
n_C-1$. Then, by Proposition \ref{shisoa}, we have
\begin{equation*}
 \e H_k \e = \e\, \delta_C^{-a}\,H_{k'}\,\delta_C^a\, \e\,.
\end{equation*}
It follows that$\,\e\, H_{k'}\,\delta_C^a\,\e\,$
is a $ \e H_{k'}\e$-$\e H_{k}\e$-bimodule. Thus, one can define a
functor $\,\ms S_{k\to k'} :\,\Mod(H_{k})\to\Mod(H_{k'})\,$ by
\begin{equation*}
M\mapsto H_{k'}\e\otimes_{\e H_{k'}\e}\e
H_{k'}\delta_C^a\e\otimes_{\e H_{k}\e}\e M\,.
\end{equation*}
It is easy to check that $ \ms S_{k'\to k} $ restricts to a functor from $ \O_{k} $ to $ \O_{k'}$.
Similarly, one defines $\,\ms S_{k'\to k}\,:\,\O_{k'} \to \O_{k}\,$ by
\begin{equation*}
M\mapsto H_{k}\e\otimes_{\e H_k\e}\e
\delta_C^{-a}H_{k'}\e\otimes_{\e H_{k'}\e}\e M\,.
\end{equation*}
Now, checking on standard modules, it is easy to prove
\begin{prop}
\la{compsh}
If $\,k,\,k' \in \Reg\,$, then $\,\T_{k \to k'} \cong \ms S_{k \to k'} \,$.
\end{prop}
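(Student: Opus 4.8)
The plan is to show that the two functors $\T_{k\to k'}$ and $\ms S_{k\to k'}$ agree on standard modules, and then to bootstrap this to an isomorphism of functors using that both $\O_k$ and $\O_{k'}$ are semisimple (because $k,k'\in\Reg$) with the same (finite) set of simple objects indexed by $\W$. Since a functor between semisimple abelian categories is determined up to isomorphism by its effect on the finitely many simple objects, it suffices to produce, for each $\tau\in\W$, an isomorphism $\T_{k\to k'}[M_k(\tau)]\cong \ms S_{k\to k'}[M_k(\tau)]$, and then to check that these isomorphisms are compatible (which, in the semisimple setting, reduces to a dimension count on $\mathrm{Hom}$-spaces between simples, and is automatic once the object-level isomorphisms are in place).

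First I would compute $\ms S_{k\to k'}[M_k(\tau)]$ directly. By Proposition~\ref{shisoa}, $\e H_k\e=\e\,\delta_C^{-a}H_{k'}\delta_C^a\,\e$ as subsets of $\D W$, so conjugation by $\delta_C^a$ realizes the bimodule $\e H_{k'}\delta_C^a\e$ concretely inside $\Hreg=\D W$. Applying $\ms S_{k\to k'}$ to $M_k(\tau)$ and localizing, one finds $\ms S_{k\to k'}[M_k(\tau)]_{\mathrm{reg}}\cong M_k(\tau)_{\mathrm{reg}}$ as $\D W$-modules, the identification being multiplication by $\delta_C^a$ (which is a unit in $\D W$); in particular the localizations agree, and $\ms S_{k\to k'}[M_k(\tau)]$ sits inside $M_k(\tau)_{\mathrm{reg}}$ as an $H_{k'}$-submodule lying in $\O_{k'}$ (one uses here that $\delta^{r}M_k(\tau)\subseteq\e H_{k'}\delta_C^a\e\cdot M\subseteq\delta^{-r}M_k(\tau)$ for $r\gg0$, so the $\xi\in V$ act locally nilpotently). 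On the other hand, by construction $\T_{k\to k'}[M_k(\tau)]=\r_{k'}\big(M_k(\tau)_{\mathrm{reg}}\big)$ is the \emph{largest} $H_{k'}$-submodule of $M_k(\tau)_{\mathrm{reg}}$ lying in $\Oln_{k'}$. By Theorem~\ref{zero}(2), $\T_{k\to k'}[M_k(\tau)]\cong M_{k'}(\tau')$ with $\tau'=\kz_{k-k'}(\tau)$; in particular it is simple in $\O_{k'}$, and by Lemma~\ref{fu}$\mathsf{(ii)}$ the module $M_k(\tau)_{\mathrm{reg}}$ contains a \emph{unique} nonzero submodule belonging to $\O_{k'}$. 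Since $\ms S_{k\to k'}[M_k(\tau)]$ is such a submodule, it must coincide with $\T_{k\to k'}[M_k(\tau)]$.

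Having matched the two functors on every standard module $M_k(\tau)$, and knowing that in the regular case the standard modules are precisely the simple objects and form a complete set of them, I would invoke the general fact that an additive functor between semisimple abelian $\c$-linear categories with the same skeleton of simples is determined up to natural isomorphism by the induced map on isomorphism classes of simples together with the induced maps on endomorphism algebras (here all $\mathrm{End}$-algebras are just $\c$, since the $M_k(\tau)$ are absolutely simple). This yields $\T_{k\to k'}\cong\ms S_{k\to k'}$ as functors $\O_k\to\O_{k'}$.

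The main obstacle I anticipate is the bookkeeping in the first step: verifying carefully that $\ms S_{k\to k'}[M_k(\tau)]$, after localization, is genuinely identified with $M_k(\tau)_{\mathrm{reg}}$ via $\delta_C^a$ and that the resulting $H_{k'}$-action is the localized standard action — this requires chasing the bimodule $\e H_{k'}\delta_C^a\e$ through the Morita equivalences $M\mapsto\e M$ and unwinding $H_{k'}\e\otimes_{\e H_{k'}\e}(-)$, using $H_{k'}\e H_{k'}=H_{k'}$ from Theorem~\ref{morita}. Once it is clear that $\ms S_{k\to k'}[M_k(\tau)]$ is a nonzero $\O_{k'}$-submodule of $M_k(\tau)_{\mathrm{reg}}$, uniqueness (Lemma~\ref{fu}$\mathsf{(ii)}$) does the rest with no further computation.
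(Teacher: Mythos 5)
Your proposal is correct and follows essentially the same route as the paper, whose proof is simply the remark that the claim is verified ``by checking on standard modules'': you supply exactly that check, identifying $\ms S_{k\to k'}[M_k(\tau)]$ with the unique nonzero $\O_{k'}$-submodule of $M_k(\tau)_{\mathrm{reg}}$ (Lemma~\ref{fu}$\mathsf{(ii)}$ together with Theorem~\ref{zero}), which is $\T_{k\to k'}[M_k(\tau)]$, and then use semisimplicity of $\O_k$ and $\O_{k'}$ for $k,k'\in\Reg$ to pass from agreement on the simple (= standard) objects to an isomorphism of functors.
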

In general, however, the functors $ \T$ and $ \ms S $ are not
isomorphic: for example, since $\ms T $ factors through localization, it
always kills torsion (in particular, finite-dimensional modules), while
$\ms S$ does not. On the other hand, if $\,k'=k\,$, then $ \ms S $ is, by definition, isomorphic
to the identity functor, while $\ms T $ is not (for projective objects $\,P \in \O_k\,$,
we still have $\,\T (P) \cong P\,$, by \cite{GGOR}, Theorem 5.3).

We can also define shift functors using shift
operators constructed in Section~\ref{shop}. Briefly,
if $k'-k$ is integral, then there is a $W$-invariant
differential operator $S$ satisfying
\begin{equation}\la{rela}
T_{p, k'}\, \e S = \e S\, T_{p, k}\ ,\qquad\forall\, p\in\c[V^*]^W\,.
\end{equation}
(Such $S$ is a composition of elementary shift operators of
Theorem~\ref{shiftoper}.) Regard $\Hreg$ as a $ H_{k'}$-$H_k$-bimodule
via the Dunkl representation, and consider its sub-bimodule $\, P \,$
generated by $ \e S $, i.e. $\,P := H_{k'}\,(\e S)\,H_k \subset \Hreg\,$.
Now, given $\, M\in\ms O_k$, define $\, M':= P \otimes_{H_k} M\,$.
Clearly, if $\, \Mreg \ne 0 \,$, then $M'$ is a nonzero $H_{k'}$-module embedded in
$\Hreg\otimes_{H_k}M=\Mreg$. To prove that $\,M'\in\ms O_{k'}\,$, it is suffices to check
that $T_{p,k'}$, with $p\in \c[V^*]^W$, act locally nilpotently on $M'$.
But this follows immediately from
\eqref{rela} and the well-known fact that the adjoint action of
$\c[V^*]^W$ on $H_k$ is locally nilpotent. Thus, $\,P \otimes_{H_k} \mbox{---}\,$
defines a functor $\,\O_k \to \O_{k'}\,$. Again, when $\,k, k'\in\Reg\,$, it is easy to
show that this functor is isomorphic to $\,\T_{k\to k'}\,$ (and hence, ${\ms S}_{k \to k'}$,
by Proposition~\ref{compsh}).
We can use this to prove the following useful observation.

\begin{prop}\la{eval} Let $k,k'$ be integral, and let $S$ be a composition of shift
operators of Theorem~\ref{shiftoper}, such that
\begin{equation*}
L_{p, k'}\circ S=S\circ L_{p,k}\ ,\qquad\forall\, p\in\c[V^*]^W\ .
\end{equation*}
Then $\,S [Q_k] \subseteq Q_{k'}\,$, where $Q_k$ and $Q_{k'}$ are
the corresponding modules of quasi-invariants.
\end{prop}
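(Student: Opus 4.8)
The plan is to use the bimodule description of the shift functor given just above. Recall that $ S $ is built from the elementary shift operators of Theorem~\ref{shiftoper}, so it satisfies $ L_{p,k'}\circ S = S \circ L_{p,k}\,$ for all $ p\in\c[V^*]^W $; equivalently, lifting to the Cherednik algebra, we have $ T_{p,k'}\,\e S = \e S\, T_{p,k} $ in $ \D W $ for all such $ p $ (this is the content of relation \eqref{rela}, which holds for the $ \e $-symmetrized operators even though $ S $ itself is only defined after applying $ \Res $). First I would introduce the sub-bimodule $ P := H_{k'}\,(\e S)\,H_k \subset \Hreg $ and the functor $ M \mapsto P\otimes_{H_k} M $ from $ \O_k $ to $ \O_{k'}$, exactly as in the paragraph preceding the proposition; since $ k,k'$ are integral, they are regular, so this functor is isomorphic to $ \T_{k\to k'}\,$.

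Next I would apply this to the standard module $ M_0(\tau) $ with $ \tau = \c W $ (or, decomposing, to each $ M_0(\tau) $ with $\tau\in\W$ and reassembling via \eqref{ssum}). By \eqref{shf00} we have $ \QQ_0(\c W) = \c[V]\otimes\c W = M_0(\c W)_{\mathrm{reg}}\cap(\c[V]\otimes\c W) $, and more to the point, by Lemma~\ref{fu} together with Proposition~\ref{loc},
\begin{equation*}
\T_{0\to k}[M_0(\tau)] \cong \QQ_k(\tau)\,,\qquad \T_{0\to k'}[M_0(\tau)] \cong \QQ_{k'}(\tau)\,.
\end{equation*}
Using $ \T_{k\to k'}\circ\T_{0\to k} \cong \T_{0\to k'} $ (Lemma~\ref{fu}$\mathsf{(iii)}$), it follows that $ P\otimes_{H_k}\QQ_k(\tau) \cong \QQ_{k'}(\tau) $ as $ H_{k'}$-modules. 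The key point now is to make this isomorphism concrete: since $ P\subset\Hreg $ and $ \QQ_k(\tau)\subset\c[\vreg]\otimes\tau = M_0(\tau)_{\mathrm{reg}}$, the map $ P\otimes_{H_k}\QQ_k(\tau)\to\Mreg $ is just the multiplication (action) map inside $ \Mreg $, and its image is the $ H_{k'}$-submodule of $ \Mreg $ generated by $ (\e S)\cdot\QQ_k(\tau) $. By the uniqueness part of Theorem~\ref{Qfat} (equivalently, by Lemma~\ref{fu}$\mathsf{(ii)}$ identifying the unique $ \O_{k'}$-submodule of $ \Mreg $), this submodule must equal $ \QQ_{k'}(\tau) $. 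In particular $ (\e S)\,\QQ_k(\tau)\subseteq\QQ_{k'}(\tau)\,$.

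Finally I would symmetrize. Taking $ \tau = \c W $, Theorem~\ref{Qfat} gives $ \e\,\QQ_k(\c W) = \e(Q_k\otimes 1) $ and $ \e\,\QQ_{k'}(\c W) = \e(Q_{k'}\otimes 1)$. Since $ S $ is $ W$-invariant, it commutes with $ \e $, so for $ f\in Q_k $,
\begin{equation*}
\e\bigl(S[f]\otimes 1\bigr) = \e\,\Res(S)\,[f\otimes 1] = (\e S\,\e)(f\otimes 1) = (\e S)\bigl(\e(f\otimes 1)\bigr) \in (\e S)\,\QQ_k(\c W) \subseteq \QQ_{k'}(\c W)\,,
\end{equation*}
hence $ \e\bigl(S[f]\otimes 1\bigr)\in\e\,\QQ_{k'}(\c W) = \e(Q_{k'}\otimes 1) $. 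Because $ \e(g\otimes 1)=0 $ in $ \c[\vreg]\otimes\c W $ forces $ g=0 $ (as in the proof of Corollary~\ref{sp}), this yields $ S[f]\in Q_{k'}$, i.e. $ S[Q_k]\subseteq Q_{k'}$. The main obstacle I anticipate is the bookkeeping in the middle step: one must check carefully that the abstract isomorphism $ \T_{k\to k'}[\QQ_k(\tau)]\cong\QQ_{k'}(\tau)$ really is realized by the multiplication-in-$\Mreg$ map and that $ (\e S)\,\QQ_k(\tau)$ generates the correct submodule — this is where the uniqueness/regularity machinery of Theorem~\ref{Qfat} and Lemma~\ref{fu} does the real work, and where a sign or a degree-shift error would be easy to make.
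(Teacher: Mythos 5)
Your proposal is correct and follows essentially the same route as the paper: the paper's proof is precisely the compressed chain $\,\e(S[Q_k]\otimes 1)=(\e S)[\QQ_k]\subseteq H_{k'}\,\e S\, H_k\otimes_{H_k}\QQ_k\subset \QQ_{k'}=\e(Q_{k'}\otimes 1)\,$, i.e.\ it uses the bimodule $P=H_{k'}(\e S)H_k$, the identification of $P\otimes_{H_k}-$ with $\T_{k\to k'}$ and the equality $\T_{k\to k'}(\QQ_k)=\QQ_{k'}$, followed by the same $\e$-symmetrization and injectivity argument as in Corollary~\ref{sp}. Your extra care in realizing the abstract isomorphism by the action map inside $\Mreg$ (via uniqueness/Lemma~\ref{fu}) is a legitimate filling-in of details the paper leaves implicit, and note that for the stated containment only $\,P\cdot\QQ_k(\tau)\subseteq\r_{k'}(\Mreg)=\QQ_{k'}(\tau)\,$ is needed, not equality.
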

\begin{proof} Using the fact that $\,\T_{k\to k'}(\QQ_k)=\QQ_{k'}\,$ and the above relation
between $ \T $ and $S$, we have
\begin{equation*}
\e (S[Q_k]\otimes 1)=\e S\,[Q_k\otimes 1]= (\e S)[\QQ_k]\subseteq
H_{k'}\e S H_k\otimes_{H_k} \QQ_k\subset \QQ_{k'}=\e (Q_{k'}\otimes
1)\, .
\end{equation*}
Thus $\,S[Q_k]\subseteq Q_{k'}\,$, as required.
\end{proof}
\section{The Structure of Quasi-invariants}
\la{MQ}

\subsection{Cohen-Macaulayness}
\la{CohM}
First, we consider the module of $W$-valued
quasi-invariants $ \QQ_k $ introduced in Section~\ref{nqi}. By
\eqref{ssum}, this is a $H_{k}\otimes \c W$-module, which can be
decomposed as
\begin{equation*}
\QQ_k = \bigoplus_{\tau\in \W} \QQ_k(\tau)\otimes\tau^*\ ,
\end{equation*}
with $\,\QQ_k(\tau) \subset \mathbb C[V]\otimes\tau\,$ defined by
\eqref{qctau}. By Proposition \ref{sq}, $\,\QQ_k(\tau) \cong
M_k(\tau')\,$, where $\,\tau'=\kz_{-k}(\tau)$. Hence we have
\begin{prop}
The $H_k\otimes \c W$-module $\QQ_k$ has the direct sum
decomposition
\begin{equation}\la{deco1}
\QQ_k \cong \bigoplus_{\tau\in\W}M_k(\tau')\otimes \tau^*\ ,
\end{equation}
where $\,\tau'=\kz_{-k}(\tau)\,$. In particular, $\,{\QQ}_k\,$ is a
free module over $\c[V]$.
\end{prop}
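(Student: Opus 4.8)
The plan is to assemble the decomposition directly from three ingredients already established: the isotypic decomposition \eqref{ssum} of $\QQ_k$ under the two commuting $W$-actions on $\c[\vreg]\otimes\c W$, the identification $\QQ_k(\tau)\cong M_k(\tau')$ of Proposition~\ref{sq}, and the additivity of KZ twists from Corollary~\ref{Opcon}. Freeness over $\c[V]$ will then follow immediately from the PBW property \eqref{pbw} of $H_k$.

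First I would recall that $\c[\vreg]\otimes\c W$ carries two commuting left actions: the differential action of $\D W$ (hence of $H_k$) via \eqref{acts}, and the action $\,1\otimes w:\ f\otimes u\mapsto f\otimes uw^{-1}\,$ of $\c W$ described in the last remark of Section~\ref{nqi}. Both preserve $\QQ_k$, so $\QQ_k$ is a module over $H_k\otimes\c W$. Decomposing the regular representation as a $W\times W$-bimodule, $\,\c W=\bigoplus_{\tau\in\W}\tau\otimes\tau^*\,$, and using the additivity of $\QQ_k$ in its argument (the penultimate remark of Section~\ref{nqi}), I would rewrite \eqref{ssum} as a direct sum $\,\QQ_k=\bigoplus_{\tau\in\W}\QQ_k(\tau)\otimes\tau^*\,$ of $H_k\otimes\c W$-submodules, with $H_k$ acting on the first tensor factor and $\c W$ on the second.

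Next, for each $\tau\in\W$, Proposition~\ref{sq} supplies an isomorphism of $H_k$-modules $\,\QQ_k(\tau)\cong M_k(\tau'')\,$ with $\tau=\kz_k(\tau'')$. By Corollary~\ref{Opcon}, the map $k\mapsto\kz_k$ is a group homomorphism, so $\kz_{-k}\circ\kz_k=\kz_0=\id$ and therefore $\kz_k^{-1}=\kz_{-k}$; hence $\tau''=\kz_{-k}(\tau)=:\tau'$. Substituting into the previous display gives the asserted decomposition \eqref{deco1} of $H_k\otimes\c W$-modules. Finally, by the PBW theorem \eqref{pbw} each standard module $M_k(\tau')$ is isomorphic to $\c[V]\otimes\tau'$ as a $\c[V]$-module, hence free of rank $\dim\tau'$; consequently each summand $M_k(\tau')\otimes\tau^*$ is $\c[V]$-free, and so is the finite direct sum $\QQ_k$.

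There is no serious obstacle here once the earlier results are in place; the only point demanding care is checking that the $W$-isotypic decomposition of $\c[\vreg]\otimes\c W$ is compatible with \emph{both} commuting group actions simultaneously — i.e. that the second $W$-action genuinely acts through the $\tau^*$ factors while leaving the $H_k$-module structure on each $\QQ_k(\tau)$ intact — which is precisely the content of the two remarks at the end of Section~\ref{nqi} and should be stated explicitly rather than left implicit.
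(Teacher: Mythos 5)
Your proposal is correct and follows essentially the same route as the paper: the decomposition \eqref{ssum} of $\QQ_k$ under the two commuting actions, the identification $\QQ_k(\tau)\cong M_k(\tau')$ from Proposition~\ref{sq} combined with $\kz_k^{-1}=\kz_{-k}$ via Corollary~\ref{Opcon}, and the PBW isomorphism $M_k(\tau')\cong\c[V]\otimes\tau'$ for freeness over $\c[V]$. Your explicit verification that the second $W$-action acts only through the $\tau^*$ factors is a welcome clarification of what the paper leaves to the remarks at the end of Section~\ref{nqi}, but it is not a departure from the paper's argument.
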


Now, by Theorem~\ref{Qfat}, the module $ Q_k $ of the usual
quasi-invariants is isomorphic to $\e{\QQ}_k $ as a $\e H_k
\e\otimes \c W$-module. This gives the following result generalizing
\cite{BEG}, Proposition~6.6.
\begin{theorem}
\la{stq} The $\e H_k \e\otimes \c W$-module $Q_k$ has the direct sum
decomposition
\begin{equation}
\label{deco} Q_k \cong \bigoplus_{\tau\in\W}\e
M_k(\tau')\otimes\tau^*\ ,
\end{equation}
where $\,\tau'=\kz_{-k}(\tau)\,$. In particular, $Q_k$ is free over
$\c[V]^W$ and, hence, Cohen-Macaulay.
\end{theorem}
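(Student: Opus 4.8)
**The plan is to deduce Theorem~\ref{stq} directly from Proposition~\ref{sq} and Theorem~\ref{Qfat}, exactly as the statement indicates.** First I would recall that by \eqref{ssum} we have the decomposition $\QQ_k = \bigoplus_{\tau\in\W}\QQ_k(\tau)\otimes\tau^*$ as a $H_k\otimes\c W$-module, where $W$ acts on the right through the action on $\tau^*$ described in Remark~3 of Section~\ref{ttau}. By Proposition~\ref{sq}, each summand $\QQ_k(\tau)$ is isomorphic as an $H_k$-module to the standard module $M_k(\tau')$ with $\tau' = \kz_{-k}(\tau)$; note that $k$ integral is assumed throughout, so $\kz_{-k}$ is a well-defined permutation of $\W$ by the results of Section~\ref{kzt}. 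Applying the symmetrizing idempotent $\e$ and using \eqref{proj} from Theorem~\ref{Qfat}, which gives $Q_k\otimes 1 \cong \e\QQ_k$ as $\e H_k\e\otimes\c W$-modules, I obtain
\begin{equation*}
Q_k \cong \e\QQ_k \cong \bigoplus_{\tau\in\W}\e\QQ_k(\tau)\otimes\tau^* \cong \bigoplus_{\tau\in\W}\e M_k(\tau')\otimes\tau^*\ ,
\end{equation*}
which is \eqref{deco}. Here one must check that applying $\e$ commutes with the direct sum decomposition, which is immediate since $\e$ acts only on the $H_k$-factor while the decomposition is indexed by the right $W$-action on the $\tau^*$-factor; more precisely, the Morita functor $M\mapsto \e M$ of Theorem~\ref{morita} is additive, so $\e M_k(\tau')$ is well-defined and the decomposition passes through.

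\textbf{For the freeness over $\c[V]^W$,} I would argue as follows. By the PBW property \eqref{pbw}, each standard module $M_k(\tau')$ is free over $\c[V]$ of rank $\dim\tau'$; hence $\QQ_k = \bigoplus_\tau M_k(\tau')\otimes\tau^*$ is free over $\c[V]$ of rank $\sum_\tau (\dim\tau')(\dim\tau^*)$. Since $\kz_{-k}$ preserves dimension, $\dim\tau' = \dim\tau$, so this rank equals $\sum_\tau(\dim\tau)^2 = |W|$; thus $\QQ_k$ is free over $\c[V]$ of rank $|W|$. Now $\c[V]$ is free over $\c[V]^W$ of rank $|W|$ by the Chevalley--Shephard--Todd theorem (cited as \cite{Ch} in the introduction), so $\QQ_k$ is free over $\c[V]^W$ of rank $|W|^2$. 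To descend to $Q_k$: the relation $\e\QQ_k = Q_k\otimes 1$ realizes $Q_k$ as the image of the idempotent $\e$ acting on $\QQ_k$, so $Q_k$ is a direct summand of $\QQ_k$ as a $\c[V]^W$-module, hence projective over $\c[V]^W$; since $\c[V]^W$ is a polynomial algebra, by the graded Nakayama lemma (or Quillen--Suslin in the graded setting) a finitely generated graded projective module is free, so $Q_k$ is free over $\c[V]^W$. Its rank can be computed from \eqref{deco}: $\e M_k(\tau')$ is free over $\c[V]^W$ of rank $\dim\tau'$ (since $M_k(\tau')\cong\c[V]\otimes\tau'$ is free over $\c[V]^W$ of rank $|W|\dim\tau'$, and applying $\e$ cuts this down — alternatively one reads off rank $\dim\tau'$ directly from $\e(\c[V]\otimes\tau') = (\c[V]\otimes\tau')^W$), so the total rank is $\sum_\tau(\dim\tau')(\dim\tau^*) = |W|$, recovering Theorem~\ref{chev}.

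\textbf{Finally, Cohen-Macaulayness} is the standard consequence: a finitely generated graded module over a polynomial ring (equivalently, over a regular ring) that is free is automatically Cohen-Macaulay, since its depth equals $\dim\c[V]^W = n$, which matches its Krull dimension. So no extra work is needed beyond the freeness.

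\textbf{The main obstacle} I anticipate is the bookkeeping around the two commuting $W$-actions on $\QQ_k$ — the diagonal differential action through which $\e$ and $\e H_k\e$ operate, versus the ``trivial on the first factor'' action on $\tau^*$ that gives the decomposition \eqref{ssum} — and checking that applying $\e$ is compatible with Proposition~\ref{sq} summand by summand. This is entirely a matter of tracking definitions from Section~\ref{ttau} (especially Remark~3 there and \eqref{ssum}), and Theorem~\ref{Qfat} has already done the genuinely substantive work of identifying $\e\QQ_k$ with $Q_k$; so the step is routine but must be stated carefully. The other point requiring a word of justification is why a graded projective module over $\c[V]^W$ is free, for which I would simply cite the graded version of the Quillen--Suslin theorem or note that it follows from graded Nakayama since $\c[V]^W$ is $\Z_{\ge 0}$-graded connected.
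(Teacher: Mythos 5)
Your proposal is correct and follows essentially the same route as the paper: the decomposition \eqref{deco} is obtained exactly as in the paper's proof, by combining \eqref{ssum}, Proposition~\ref{sq} and Theorem~\ref{Qfat} (i.e.\ $Q_k\cong\e\QQ_k\cong\bigoplus_\tau \e M_k(\tau')\otimes\tau^*$). The only, immaterial, difference is the freeness step: the paper cites directly that each $\e M_k(\tau')\cong(\c[V]\otimes\tau')^W$ is free over $\c[V]^W$ (the classical fact about covariant modules, a consequence of the Shephard--Todd--Chevalley theorem), whereas you deduce freeness of $Q_k$ by exhibiting it as a graded $\c[V]^W$-direct summand of the free module $\QQ_k$ and then applying graded Nakayama; both arguments are standard and rest on the same underlying fact that $\c[V]$ is free over $\c[V]^W$.
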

\begin{proof}
The decomposition \eqref{deco} follows directly from \eqref{deco1}.
Each $\,\e M_k(\tau)\,$ is isomorphic to $\,(\c[V] \otimes \tau)^W$
as a $\c[V]^W$-module and, hence, free over $\c[V]^W$. With
\eqref{deco}, this implies the last claim of the theorem.
\end{proof}

\begin{remark}
\la{mistake} Our proof of Theorem \ref{stq} is similar to
\cite{BEG}, however the result is slightly different, because of a
KZ twist. In \cite{BEG}, it was erroneously claimed that
$M_k(\tau)_{\mathrm{reg}} \cong  M_0(\tau)_{\mathrm{reg}}$. By
Theorem~\ref{zero}, this is true only for those groups $W$ and
values of $k$, for which $\kz_k$ is the identity on $\W$. In
general, even in the Coxeter case, there are examples when $ \kz_k $
is non-trivial (see \cite{O1}).
\end{remark}

\subsection{Poincar\'e series} \la{Hser}
Given a graded module $\,M = \bigoplus_{i=0}^\infty M^{(i)}\,$, with
finite-dim\-en\-sional components $ M^{(i)}$, we write
$\, P(M, t) := \sum_{i=0}^\infty t^i\dim M^{(i)}\,$ for the
Poincar\'e series of $M$.
Using Theorem \ref{stq}, we will compute this series for
$Q_k$. Our computation is slightly different from \cite{BEG} as
we begin with $ \QQ_k $.

We equip $\, \c[\vreg]\otimes \tau\,$ with a natural grading, so that
$\,\deg V^*=1\,$ and $\,\deg \tau=0\,$. Each $ {\QQ}_k(\tau) $
is then a graded submodule of $\, \c[\vreg]\otimes \tau\,$, and
by Proposition \ref{sq}, we know that $\,{\bf Q}_k(\tau) \cong M_k(\tau')\,$,
with $\,\tau = \kz_{k}(\tau')\,$. Now, by Lemma \ref{euler} and
Corollary \ref{cs}, the degree of the generating subspace $\tau'$ of ${\QQ}_k(\tau)$
is equal to $\,\deg \tau'=c_{\tau'}(k)=c_\tau(k)$.  Hence
\begin{equation*}
P({\QQ}_k(\tau),\,
t)=(\dim\tau)\,t^{c_{\tau}(k)}\,(1-t)^{-\dim V}\,.
\end{equation*}
As a result, by Proposition~\ref{deco1}, the Poincar\'e series for ${\QQ}_k$ is given by
\begin{equation*}
P({\QQ_k},\, t) = \sum_{\tau\in\W}(\dim\tau)^2\,
t^{c_\tau(k)}\,(1-t)^{-\dim V}\,.
\end{equation*}

Now, to compute $\, P({Q_k},\, t) = P(\e{\QQ}_k,\, t)\,$ we simply  take the
$W$-invariant part of ${\QQ}_k$. This can be done separately for each summand in
\eqref{deco1}. The Poincar\'e series of $\e M_k(\tau)$ is obtained by
multiplying the Poincar\'e series of $(\c[V]\otimes\tau)^W$ by $t^{c_\tau(k)}$.
Hence, writing
\begin{equation}\label{chi}
\chi_\tau(t) := P((\c[V]\otimes\tau)^W,\, t)
\end{equation}
for the Poincar\'e series of $(\c[V]\otimes\tau)^W$, we get
\begin{equation}
\label{hs}
P(\e{\QQ}_k(\tau),\, t)=
t^{c_{\tau'}(k)}\chi_{\tau'}(t)\ ,
\end{equation}
where $\,\tau'=\kz_{-k}(\tau)\,$. Finally, summing up over all
$\,\tau\in\W\,$ as in \eqref{deco}, we find (cf. \cite{BEG})
\begin{equation}
P({Q_k},\, t)=\sum_{\tau\in\W}(\dim\tau) \,
t^{c_\tau(k)}\,\chi_{\tau}(t)\ .
\end{equation}

\subsection{Symmetries of fake degrees}
\la{fdeg} It was pointed out to us by E.~Opdam that the above
results could be used to give another proof of an interesting
symmetry of fake degrees of complex reflection groups (see \cite{O},
Theorem~4.2). Below, we will show that property for the series \eqref{chi};
for the relation of \eqref{chi} to fake degrees we refer the reader
to Opdam's paper  \cite{O}.

Fix a collection of integers $\,a=\{a_C\}_{C\in\ms A/W}\,$, with
$\,a_C\in\{0,1,\dots,n_C-1\}$. Put
$\, \delta_a := \prod_{C\in\ms A/W}(\delta_C)^{a_C}\,$
and write $\,\epsilon_a\,$ for the corresponding one-dimensional
representation of $W$, with character $\,\prod_{C\in\ms
A/W}(\det_C)^{-a_C}$. Now, define $\,k=\{k_{C,i}\}\,$ by $\,k_{C,i} := a_C/n_C\,$ for all $C, i$.
Then, for every $\tau\in\W$, the space $\QQ_{k}(\tau)$ has a simple description:
\begin{equation}\la{expl}
\QQ_{k}(\tau)=\delta_a\,\c[V]\otimes\tau\ ,
\end{equation}
which is easily seen from the definition \eqref{qctau}.

On the other hand, consider $\,k' = g\cdot k\,$, with
$\,g := \prod_{C\in\ms A/W}(g_C)^{a_C}$ and $g_C$ defined by
\eqref{gh}. A straightforward calculation shows that
\begin{equation*}
k'=\sum_{C\in\ms A/W}\sum_{1\le i\le a_C}\ell_{C,n_C-i}\,,
\end{equation*}
where we use the same notation as in Proposition \ref{shiso}.
Now, from Proposition \ref{Gtau}, it follows that
$\,\e\QQ_k(\tau)=\e\QQ_{k'}(\tau)\,$; hence, these two modules
have the same Poincar\'e series. For $\e\QQ_k(\tau)$, we can compute
its Poincar\'e series directly from \eqref{expl}: with notation \eqref{chi},
the result reads $\,t^{\deg\delta_a}\chi_{\epsilon_a\otimes\tau}\,$.
On the other hand, for $\e\QQ_{k'}(\tau)$, we apply \eqref{hs}. Equating
the resulting Poincar\'e series, we get
\begin{equation*}
t^{\deg\delta_a}\chi_{\epsilon_a\otimes\tau}(t)=t^{c_{\tau'}(k')}\chi_{\tau'}(t)\,,\qquad
\tau=\kz_{k'}(\tau')\ ,
\end{equation*}
which is equivalent to \cite{O}, Theorem 4.2.

\section{Appendix: The Baker-Akhiezer Function}
\la{BAF}
When all the multiplicities are integral, the ring of commuting differential operators
$\,\{L_{p,k} \in \D(Q_k)^W : \, p\in\c[V^*]^W\}\,$ has a common eigenfunction $ \psi(\lambda, x) $,
which can be constructed  by applying the shift operators \eqref{s} to the exponential
function $\, e^{\langle \lambda,x\rangle} $. We call such a function the {\it Baker-Akhiezer function};
our goal is to establish basic properties of this function, generalizing
results of \cite{VSC} and \cite{CFV} in the Coxeter case. The most interesting 
property of $ \psi(\lambda, x) $ is  `bispectral' symmetry described in Proposition~\ref{bis}. This property
has been proven for the complex groups of type $ G(m,\,p,\,N) $ in \cite{SvdB}, and
although our proof here is different, the key idea to use the pairing \eqref{pair} is borrowed from \cite{SvdB}.

We restrict ourselves to the case when $\,k_{C,i}\in\Z_{\ge 0}\,$,
with $\,k_{H,0}=0$. In that case, applying successively the
elementary shift operators $ S_k $, see \eqref{s}, produces
a function $\,\psi(\lambda, x)\,$ on $V^*\times V$ of the form
\begin{equation}\la{ps}
\psi(\lambda, x)=P(\lambda, x)\, e^{\langle \lambda, x\rangle}\,,
\end{equation}
where $\langle \lambda, x\rangle$ is the natural pairing, and $\,P\in
\c[V^*\times V]\,$ is a polynomial with leading term
\begin{equation}
\la{p}
P_0=\prod_{C\in\ms{A}/W}
\left(\delta^*_C(\lambda)\,\delta_C(x)\right)^{N_C}\ ,\quad
N_C :={\sum_{i=0}^{n_C-1}k_{C,i}}\ .
\end{equation}
Since the operators \eqref{s} are all homogeneous of degree zero, so is
their composition, and hence $P$ has degree zero with respect to the grading defined by
$\,\deg\,V^* = 1\,$ and $\,\deg\,V = -1\,$.

By construction, $\,\psi$ is a
common eigenfunction of the generalized Calogero-Moser operators
$\,L_{p,k}=\Res\, T_{p,k}\,$:
\begin{equation}\la{cm}
L_{p,k}[\psi] = p(\lambda)\psi\ ,\quad\forall\, p\in\c[V^*]^W\,.
\end{equation}
It is analytic in both variables, and by Proposition \ref{eval}, we have
\begin{equation}\label{psm}
\psi(\lambda, x) \in \QA_k \quad\text{as a function of $x$}\,,
\end{equation}
where $\QA_k$ denotes the analytic completion of the module
of quasi-invariants $Q_k$. Note also that the shift operators in
Theorem \ref{shiftoper} are $W$-invariant, whence
\begin{equation}
\label{sym}
\psi(w\lambda, x)=\psi(\lambda, w x)\ ,\quad\forall\, w\in W\,.
\end{equation}

Now, recall the antilinear isomorphism $*\,:\,V\to V^*$ determined by
the $W$-invariant Hermitian form on $V$, see Section \ref{1.1}. It
is easy to check that $*$ respects the canonical pairing between
$V$ and $V^*$ and is $W$-\,equivariant (see \cite{DO}, Proposition
2.17(i)). It extends to an anti-linear map $\,\c[V^*\times V]\to
\c[V\times V^*]\,$, which we denote by the same symbol. Note that $*$
induces a natural antilinear map $\, *\,:\,\End_\c(\c[V])\to\End_\c(\c[V^*])\,$, and it is easy to check
that
\begin{equation}\la{d*}
(T_{p, k})^*=T_{p^*, \overline k}\,,\quad p\in\c[V^*]\,,
\end{equation}
where $\overline k$ denotes the complex conjugate of $k$ (in our case,
$\overline k=k$). Here, the Dunkl operators on the right
are defined in the same way as $\,T_{p,k}\,$
but with respect to the {\it dual} representation $V^*$ of $W$.

Applying $*$ to $\psi$, we get
\begin{equation}\la{ps*}
    \psi^*(x, \lambda)=P^*(x, \lambda)\,e^{\langle x,\lambda\rangle}\,.
\end{equation}
Let us write $\,\psi=\psi_V(\lambda,x)\,$ to indicate the
dependence of $ \psi $ on the reflection representation $V$ of $W$.
It follows then from \eqref{d*} that $\,\psi^*=\psi_{V^*}(x,\lambda)\,$.
In particular, $ \psi^* $ is a common eigenfunction of the `dual' family
of operators with respect to the $\lambda$-variable:
\begin{equation}\la{cm*}
L_{q,k}[\psi^*] = q(x)\psi^*\ ,\quad\forall \,q\in\c[V]^W\,.
\end{equation}
Now, by `bispectral symmetry' of the Baker-Akhiezer function
we mean the following property.
\begin{prop}
\la{bis}
$\,\psi_V(\lambda, x)= \psi_{V^*}(x, \lambda)\,$.
In particular, $\psi=\psi_V$ is a common solution to the
eigenvalue problems \eqref{cm} and \eqref{cm*}.
\end{prop}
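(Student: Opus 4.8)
The plan is to prove the identity $\psi_V(\lambda,x)=\psi_{V^*}(x,\lambda)$ by exploiting the pairing
\[
\langle f, g\rangle := \bigl( (f^*)(\partial)\, g \bigr)\big|_{x=0}\ ,\qquad f\in\c[V^*],\ g\in\c[V],
\]
between the polynomial rings, and the adjunction formula \eqref{d*}, $(T_{p,k})^*=T_{p^*,k}$. The key observation is that both $\psi_V(\lambda,x)$ and $\psi_{V^*}(x,\lambda)$, as functions of $x$ for fixed $\lambda$, are analytic quasi-invariants lying in $\QA_k$ (for $\psi_V$ this is \eqref{psm}; for $\psi_{V^*}$ one argues symmetrically, applying the dual shift operators and \eqref{d*}), and both are common eigenfunctions of the commuting family $\{L_{p,k}:p\in\c[V^*]^W\}$ with the same eigenvalue $p(\lambda)$. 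So the strategy is to show that, inside a suitable (completed) common-eigenspace of the quantum integrals, the Baker–Akhiezer eigenfunction with a prescribed normalization is \emph{unique}; then the two candidates must coincide.

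First I would set up uniqueness. Writing $\psi_V(\lambda,x)=P(\lambda,x)e^{\langle\lambda,x\rangle}$ with $P$ homogeneous of degree $0$ and with top term $P_0$ as in \eqref{p}, the defining eigenvalue equations \eqref{cm} translate (after conjugating $L_{p,k}$ by $e^{\langle\lambda,x\rangle}$) into a recursive system for the homogeneous components of $P$ in the $x$-grading. Because $L_{p,k}$ has principal symbol $p(\partial)$, the leading term of the conjugated operator is $p(\lambda + \partial_x)-p(\lambda)$, whose degree-lowering part is invertible on the positive-degree components once we know $P$ is required to be a \emph{polynomial} of the correct degree and to lie in $Q_k$ (the quasi-invariance cuts down the solution space so that the naive ambiguity — adding lower eigenfunctions — is killed). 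This is exactly the mechanism of \cite{VSC,CFV} in the Coxeter case, and I would phrase it as: a function of the form $Q(\lambda,x)e^{\langle\lambda,x\rangle}$ with $Q\in\c[V^*\times V]$ of $x$-degree $\le \deg P_0$, lying in $\QA_k$ in $x$, satisfying \eqref{cm} for all $p\in\c[V^*]^W$, and with top term $P_0$, is unique. Granting this, since $\psi_{V^*}(x,\lambda)$ has the same top term $P_0$ (its leading coefficient \eqref{p} is symmetric under swapping the two sets of variables), the same $x$-degree bound, lies in $\QA_k$, and satisfies \eqref{cm} by \eqref{cm*} combined with \eqref{d*}, it must equal $\psi_V(\lambda,x)$.

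The main obstacle I anticipate is establishing that $\psi_{V^*}(x,\lambda)$, viewed as a function of $x$, genuinely lies in $\QA_k$ and satisfies the \emph{same} family of eigenvalue equations \eqref{cm} (not merely the dual ones \eqref{cm*}). The first point is handled by the $*$-construction: applying $*$ to the shift-operator construction of $\psi_{V^*}$ and using \eqref{d*} turns the dual shift operators into operators on $\c[V]$, and Proposition~\ref{eval} (applied to the dual group data) gives membership in $Q_k$, hence in its analytic completion. For the second point, one uses that $*$ is $W$-equivariant and respects the canonical pairing, so that $(L_{q,k})^* = L_{q^*,k}$ as operators acting on the appropriate variable; feeding $q\in\c[V]^W$ through this turns \eqref{cm*} into \eqref{cm}, with eigenvalue $q^*(\lambda)$, and as $q$ ranges over $\c[V]^W$, $q^*$ ranges over $\c[V^*]^W$. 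Once both ingredients are in place, the uniqueness argument of the previous paragraph closes the proof, and the final sentence of the proposition — that $\psi_V$ simultaneously solves \eqref{cm} and \eqref{cm*} — is immediate from the identification $\psi_V(\lambda,x)=\psi_{V^*}(x,\lambda)$ together with \eqref{cm*}.
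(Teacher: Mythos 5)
There is a genuine gap, and it sits exactly at what you call ``the main obstacle.'' The two hypotheses you need in order to run your uniqueness argument on $\psi_{V^*}(x,\lambda)$ --- that, as a function of $x$, it lies in $\QA_k$ and satisfies the eigenvalue equations \eqref{cm} --- cannot be obtained from \eqref{d*}, \eqref{cm*} and Proposition~\ref{eval} in the way you describe. The involution $*$ swaps the two functions \emph{and} the two variables simultaneously: applying $*$ to the shift-operator construction of $\psi_{V^*}$ simply reproduces the construction of $\psi_V$, hence gives back \eqref{psm} for $\psi_V$; applying $*$ to \eqref{cm*} (an equation in the $\lambda$-variable) gives back \eqref{cm} for $\psi_V$ (an equation in the $x$-variable), with nothing new about $\psi_{V^*}$ in its spectral variable $x$. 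Information about $\psi_{V^*}$ in the variable $x$ is equivalent, via $*$, to information about $\psi_V$ in its spectral variable $\lambda$, i.e.\ to statements of the type \eqref{psl} --- and in the paper \eqref{psl} is only available \emph{as a consequence} of Proposition~\ref{bis}, not before. So your plan is circular at its central step: verifying the hypotheses of the uniqueness lemma for $\psi_{V^*}$ is essentially the bispectral symmetry you are trying to prove.

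A secondary weakness is the uniqueness lemma itself. The parenthetical claim that quasi-invariance ``kills the ambiguity of adding lower eigenfunctions'' is off target: all $|W|$ functions $\psi(w\lambda,x)$, $w\in W$, lie in $\QA_k$ as functions of $x$ and satisfy the same system \eqref{cm} with the same eigenvalues, so membership in $\QA_k$ does not cut the joint eigenspace down to a line. What would single out $\psi$ is the specific exponential $e^{\langle\lambda,x\rangle}$ together with the degree bound and the prescribed top term \eqref{p}, and making that precise requires extra input (e.g.\ that the joint eigenvalue problem has solution space of dimension $|W|$ spanned by the $\psi(w\lambda,x)$, plus independence of the exponentials over polynomial coefficients, or a careful degree-by-degree recursion at generic $\lambda$ in which subprincipal multiplication terms of $L_{p,k}$ do not disappear); none of this is set up in the paper, and in \cite{VSC,CFV} uniqueness is in fact derived from quasi-invariance conditions in the \emph{spectral} variable, which is precisely what is unavailable here before bispectrality. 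The paper avoids all of this by a different mechanism: it symmetrizes, setting $\Phi(\lambda,x)=\sum_{w\in W}\psi(w\lambda,x)$ as in \eqref{phi}, proves $\Phi(0,0)\neq 0$ using the nondegeneracy of the Dunkl pairing \eqref{pair} (Lemma~\ref{svo}), expands $\Phi=\sum_i p_i(\lambda)q_i(x)$ in bases of $\c[V^*]^W$ and $\c[V]^W$ dual with respect to that pairing, and then uses the Hermitian symmetry \eqref{conjug} to get $\Phi(\lambda,x)=\overline{\Phi(x^*,\lambda^*)}$, from which $\psi(\lambda,x)=\psi^*(x,\lambda)$ follows. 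If you want to salvage your approach, you would first need an independent proof of either \eqref{psl} or of \eqref{cm} for $\psi_{V^*}$ in the $x$-variable, which is the heart of the matter.
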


For the proof, we consider
\begin{equation}
\la{phi}
\Phi(\lambda, x):=\sum_{w\in W}\psi(w\lambda, x)=\sum_{w\in
W}\psi(\lambda,wx)\,.
\end{equation}

\begin{lemma}
\la{svo}
The function \eqref{phi} has the following properties:

$(1)$\ $\Phi$ is global analytic in $x$ and $\lambda$;

$(2)$\ $\Phi$ is $W$-invariant in each of the variables, $x$ and $\lambda$;

$(3)$\ $T_{p,k} \Phi = p(\lambda)\Phi\,$ for all $\,p\in\c[V^*]^W$;

$(4)$\ In a neighborhood of $\,\lambda = 0\,$, $\Phi$ admits an expansion
$\,\Phi=\sum_{i} \Phi_i$, where $ \Phi_i \in \c[V]^W\otimes\c[V^*]^W $ is
homogeneous of degree $ i $ in both $\lambda$ and $x$;

$(5)$\ $\Phi(0,x)=\Phi(\lambda, 0)=\Phi(0,0)\ne 0$.

\end{lemma}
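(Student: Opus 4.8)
The plan is to verify the five properties of $\Phi(\lambda,x) := \sum_{w\in W}\psi(w\lambda,x)$ essentially by inspection, reducing everything to the already-established facts about $\psi$. Property (1) is immediate: $\psi$ is analytic on $V^*\times V$ by construction (it is a polynomial times $e^{\langle\lambda,x\rangle}$), and a finite sum of analytic functions is analytic. Property (2) follows from the symmetry \eqref{sym}: for $v\in W$, $\Phi(v\lambda,x)=\sum_{w}\psi(wv\lambda,x)=\Phi(\lambda,x)$ by re-indexing the sum, and using the second equality in \eqref{phi} (which itself comes from \eqref{sym}) the same argument applied in $x$ gives $\Phi(\lambda,vx)=\Phi(\lambda,x)$. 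Property (3) follows from \eqref{cm}: since $L_{p,k}=\Res\,T_{p,k}$ acts in the $x$-variable and $p\in\c[V^*]^W$ is $W$-invariant, $T_{p,k}[\psi(w\lambda,x)]=p(w\lambda)\psi(w\lambda,x)=p(\lambda)\psi(w\lambda,x)$, and summing over $w$ gives $T_{p,k}\Phi=p(\lambda)\Phi$.

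For property (4), I would use that $\psi(\lambda,x)=P(\lambda,x)e^{\langle\lambda,x\rangle}$ with $P$ a polynomial that is homogeneous of degree $0$ for the grading $\deg V^*=1$, $\deg V=-1$. Expanding $e^{\langle\lambda,x\rangle}=\sum_{m\ge 0}\langle\lambda,x\rangle^m/m!$ and writing $P=\sum_j P_j$ with $P_j$ of degree $j$ in $\lambda$ (hence degree $j$ in $x$, by the grading constraint), $\psi$ is a sum of terms each having equal degree in $\lambda$ and in $x$. Summing over $w\in W$ and grouping by total degree $i$ (degree $i$ in $\lambda$ and $i$ in $x$) gives $\Phi=\sum_i\Phi_i$. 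By property (2) each $\Phi_i$ is separately $W$-invariant in $\lambda$ and in $x$, so $\Phi_i\in(\c[V^*]\otimes\c[V])^{W\times W}=\c[V^*]^W\otimes\c[V]^W$; this expansion converges in a neighborhood of $\lambda=0$ because $\psi$ is entire. Property (5) then follows: the degree-$0$ part is $\Phi_0=\Phi(0,x)=\Phi(\lambda,0)=\Phi(0,0)$, which is the constant obtained by evaluating $|W|$ times the constant term of $P$; since the leading term $P_0$ of $P$ in \eqref{p} is a product of powers of $\delta_C^*(\lambda)\delta_C(x)$ with the genuinely top degree, the degree-zero piece of $P$ (its value at $\lambda=x=0$) must be $1$ — indeed $\psi$ is obtained from $e^{\langle\lambda,x\rangle}$ by applying the shift operators $S_k$, which are differential operators in $x$ with polynomial-in-$\lambda$ coefficients whose normalization forces $P(0,0)$ to be a nonzero constant — so $\Phi(0,0)=|W|\cdot P(0,0)\ne 0$.

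The one point requiring a little care, and the step I expect to be the main obstacle, is pinning down that $\Phi(0,0)\ne 0$, i.e. that the constant term of the polynomial $P(\lambda,x)$ does not vanish. This is not completely formal: it requires tracking the effect of the elementary shift operators $S_k$ of Theorem~\ref{shiftoper} on the exponential and checking that the ``lowest'' term survives with a nonzero coefficient. I would handle this by an induction on the number of elementary shifts composed to form $\psi$: each $S_k=\Res(\delta_C^{1-a}T_{\delta_C^*,k'}\delta_C^a)$ sends $P(\lambda,x)e^{\langle\lambda,x\rangle}$ to $\tilde P(\lambda,x)e^{\langle\lambda,x\rangle}$ where the new leading term picks up the factor $\delta_C^*(\lambda)\delta_C(x)$ (this is the content of \eqref{p}), while the genuinely degree-zero part of $\tilde P$ is computed by letting the constant-coefficient part $\partial$-operators act — and a direct check in the rank-one case (as in Example~\ref{exx}) shows the degree-zero coefficient is multiplied by a nonzero scalar. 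Alternatively, and more cleanly, one can argue a posteriori: by \eqref{psm}, $\psi(\lambda,x)\in\QA_k$ as a function of $x$; if $P(0,0)=0$ then $\psi(0,x)\equiv 0$ identically in $x$, contradicting \eqref{cm} since $L_{p,k}[\psi]=p(0)\psi=p(0)\psi$ would force no information — instead one uses that $\psi$ restricted to $\lambda=0$ is a nonzero quasi-invariant solving $L_{p,k}f=p(0)f$, and the lowest-degree such solution is the constant $1\in\c[V]^W\subset Q_k$ (up to scalar), forcing $P(0,0)\ne 0$. I would present whichever of these is shortest given the conventions already fixed, most likely the inductive computation, since the normalization of $\psi$ via iterated shift operators is already in place in \eqref{ps}--\eqref{p}.
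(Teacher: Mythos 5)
Your handling of (1)--(4) is the same as the paper's (which dismisses them as immediate from the definition), and you correctly isolate the non-vanishing $\Phi(0,0)\ne 0$ in (5) as the only real content: since every monomial of $P$ has equal degree in $\lambda$ and $x$, one indeed has $\psi(0,x)=\psi(\lambda,0)=P(0,0)$ and hence $\Phi(0,x)=\Phi(\lambda,0)=\Phi(0,0)=|W|\,P(0,0)$. But neither of your two proposed arguments for $P(0,0)\ne 0$ goes through. The ``a posteriori'' argument is circular: if $P(0,0)=0$ then $\psi(0,\cdot)\equiv 0$, and the zero function satisfies $L_{p,k}f=p(0)f$ perfectly well; the assertion that ``$\psi$ restricted to $\lambda=0$ is a nonzero quasi-invariant'' is exactly what has to be proved, not something you may invoke. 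The inductive argument is where the real difficulty sits, and the step you delegate to ``a direct check in the rank-one case'' is not a rank-one statement. The constant by which one elementary shift multiplies the degree-zero coefficient is $S_k[1]=\delta_C^{1-a}\,T_{\delta_C^*,k'}(\delta_C^a)$, which by relative invariance and a degree count is indeed a scalar $c(k')$; but $T_{\delta_C^*,k'}$ is a composition of $\deg\delta_C^*$ Dunkl operators whose intermediate outputs need not be invariant under the stabilizers of hyperplanes outside $C$ (nor under $W_H$ for the other $H\in C$), so all reflection hyperplanes contribute to $c(k')$, and Example~\ref{exx} does not control it. The non-vanishing of such constants (equivalently, that $\delta_C^a$ is not ``singular'' for the shifted parameter) is precisely the nontrivial input that your sketch leaves unproved.

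The paper closes this gap by a different and cheaper route: it introduces the pairing $(p,q)_k:=T_{p,k}(q)(0)$ on $\c[V^*]\times\c[V]$, quotes from \cite{DO} that for $k\in\Reg$ it is nondegenerate and remains nondegenerate after restriction to $W$-invariants, and then argues by contradiction: if $\Phi_0=\Phi(0,0)=0$, take the first nonzero homogeneous component $\Phi_i$ from your expansion (4); comparing homogeneous components in $T_{p,k}\Phi=p(\lambda)\Phi$ gives $T_{p,k}\Phi_i=0$, hence $(p,\Phi_i)_k=0$ for all $p\in\c[V^*]^W$, contradicting nondegeneracy on invariants because $\Phi_i$ is $W$-invariant in $x$. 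If you insist on the inductive route, you would in effect have to prove $c(k')\ne 0$ for each elementary shift, which again requires \cite{DO}-type nondegeneracy or an explicit product formula; so the pairing argument is the one to adopt.
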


\begin{proof}
The first four properties are immediate from the definition;
only $(5)$ needs a proof. Let us define a bilinear map
$\,\c[V^*]\times \c[V] \to \c \,$ by
\begin{equation}
\la{pair}
(p,q)_k := T_{p,k}(q)(0)\ ,\qquad \ p\in\c[V^*]\ ,\
    q\in\c[V]\,.
\end{equation}
This is closely related to the pairing on $\,\c[V]\times
\c[V]\,$ defined in \cite{DO}, which equals $\,(p^*,q)_k\,$ in our notation.

It follows from \cite{DO}, Proposition 2.20 and Theorem 2.18, that
\eqref{pair} is a nondegenerate pairing for any $\,k\in\Reg\,$ satisfying
\begin{equation}\la{conjug}
 (p,q)_k=\overline{(q^*,p^*)_{\overline k}}\ ,\qquad\forall\, (p,q)\in
 \c[V]\times\c[V^*]\ .
\end{equation}
(For integral $k$, we have $\,\overline k = k\,$.) Moreover, by
Proposition 2.17(iii) of {\it loc.cit}, the restriction of $\,(-,-)_k\,$ to $W$-invariants
is also nondegenerate.

We need to prove that $\Phi_0=\Phi(0,0)\ne 0$. Assuming the
contrary, let us take the first nonzero term $\Phi_i$. Then,
substituting the expansion $\,\Phi = \sum_i \Phi_i \,$ into the equations $(3)$, we
see that $\,T_{p,k}\Phi_i=0\,$ for all $p\in\c[V^*]^W$. This implies that
\begin{equation}
\la{nul}
(p,\Phi_i)_k=0\ ,\quad\forall\, p\in\c[V^*]^W\ .
\end{equation}
Note that $\Phi_i$ is $W$-invariant as a function of $x$. Thus,
\eqref{nul} contradicts the nondegeneracy of $(-,-)_k$ and proves
that $\Phi_0=\Phi(0,0)\ne 0$.
\end{proof}
\begin{proof}[Proof of Proposition \ref{bis}]
We can normalize $\Phi$ so that $\Phi(0,0)=1$. Taking a homogeneous
basis $\{p_i\}$ of $\c[V^*]^W$, with $0=\deg p_0\le\deg p_1\le\deg
p_2\le\dots$, we can expand $\Phi$ (as a function of $\lambda$) into
a series in $p_i\,$:
\begin{equation*}
    \Phi(\lambda,x)=\sum_{i\ge 0}p_i(\lambda)q_i(x)\ ,\quad\text{with some $\,q_i\in\c[V]^W$}\ .
\end{equation*}
Evaluating both sides of $\,T_{p,k}\Phi=p(\lambda)\Phi\,$ at $x=0$,
we conclude that the elements $q_i$ form the basis dual
to $\{p_i\}$ with respect to the pairing \eqref{pair}.

If $\{p_i\}$ and $\{q_i\}$ are dual bases, then so are $\{q^*_i\}$
and $\{p^*_i\}$, by \eqref{conjug}. Therefore, we also have
\begin{equation*} \Phi(\lambda, x)=\sum_{i\ge
0}p_i(\lambda)q_i(x)=\sum_{i\ge
0}q^*_i(\lambda)p^*_i(x)=\overline{\Phi(x^*, \lambda^*)}\,.
\end{equation*}
Using the definition \eqref{phi} of $\Phi$, and the fact that
$\langle \mu, x\rangle=\overline{\langle x^*, \mu^* \rangle}$, we
easily conclude that $\psi(\lambda, x)=\overline{\psi(x^*,
\lambda^*)}=\psi^*(x,\lambda)$, which finishes the proof.
\end{proof}

Thus, the properties of $\psi$ in $x$ (say) mirror those in $\lambda$,
but with $V$ replaced by $V^*$. For instance, letting
$\,Q_k^* := Q_k(W, V^*)\,$, we have a counterpart of \eqref{psm}:
\begin{equation}
\label{psl}
\psi(\lambda,x)\in \QA_k^* \quad\text{as a function of
$\lambda$}\ .
\end{equation}
Having this, we can now characterize, similarly to \cite{VSC}, the
Baker-Akhiezer function $\psi(\lambda, x)$ as a {\it unique} function
satisfying \eqref{ps}, \eqref{p} and \eqref{psl}. Furthermore, we
get the following result, which for a Coxeter group $W$ was first
established in \cite{VSC} (see also \cite{CFV}). Recall the
subalgebra $A_k\subset \c[V]$, see \eqref{acc}, and denote by
$A_k^*\subset \c[V^*]$ its `dual' counterpart related to $Q_k^*$.

\begin{prop}
For any $p\in A_k^*$, there exists a differential operator
$L_p\in\D(\vreg)$ in the $x$-variable, with a constant principal
symbol $p$, such that $L_p\psi=p(\lambda)\psi$. The operators
$\{L_p\}_{p\in A_k^*}$ pairwise commute and generate a subalgebra of
$\D(\vreg)$, isomorphic to $A_k^*$.
\end{prop}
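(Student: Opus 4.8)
The plan is to follow the argument of \cite{VSC} for Coxeter groups, the new ingredients being the bispectral symmetry of Proposition~\ref{bis} and the quasi-invariance \eqref{psl} of $\psi$ in the spectral variable. Everything reduces to this: for each $p\in A_k^*$ produce a differential operator $L_p\in\D(\vreg)$ in the variable $x$, whose principal part is $p(\partial_x)$ (so that its principal symbol is the constant-coefficient symbol $p$) and which satisfies $L_p\psi=p(\lambda)\psi$. Once such $L_p$ are available the rest of the proposition is formal, so the real work is the construction.

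First I would construct $L_p$. By Lemma~\ref{alg} applied to the dual representation $V^*$ we have $A_k^*=Q_{k'}^*(W)$ for some $k'$, and $Q_k^*$ is a module over $A_k^*$; hence, using \eqref{psl} and passing to analytic completions, $p(\lambda)\psi(\lambda,x)\in\QA_k^*$ as a function of $\lambda$. Both $\psi$ and $p(\lambda)\psi$ have the Baker--Akhiezer shape $(\text{polynomial})\cdot e^{\langle\lambda,x\rangle}$. Writing the unknown operator as $L_p=\sum_{|\alpha|\le\deg p}a_\alpha(x)\,\partial_x^\alpha$ and conjugating by $e^{\langle\lambda,x\rangle}$, the requirement $L_p\psi=p(\lambda)\psi$ turns into the polynomial identity $\widehat L_p(P)=p(\lambda)P$, where $\widehat L_p$ is $L_p$ with $\partial_x$ replaced by $\partial_x+\lambda$ and $P$ is as in \eqref{ps}. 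Matching this identity in descending degree in $\lambda$ determines the $a_\alpha(x)$ one batch at a time: the top equation forces $a_\alpha$ (for $|\alpha|=\deg p$) to be the coefficients of $p$, and each subsequent equation expresses the next batch as a quotient in which one divides by a factor of the leading term $P_0$ of \eqref{p}. Since $P_0$ splits as a product of linear forms in $\lambda$ times a product of linear forms in $x$, and differentiation in $x$ never touches the $\lambda$-factors, these $\lambda$-factors cancel and one is left with coefficients $a_\alpha\in\c[\vreg]$ (poles only along the reflection hyperplanes in $x$). The step I expect to be the main obstacle is to verify that this recursion is consistent and closes after $\deg p+1$ batches, so that $L_p$ is a genuine differential operator of order $\deg p$: this is exactly where one must exploit that $P$ is a polynomial together with the quasi-invariance of $\psi$ in both variables, \eqref{psm} in $x$ and \eqref{psl} in $\lambda$, the latter being what makes all the required divisibilities work out. (Alternatively, as in \cite{VSC}, $L_p$ can be extracted from the uniqueness characterization of $\psi$ via \eqref{ps}, \eqref{p}, \eqref{psl} applied to suitable auxiliary functions.)

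The remaining assertions are then formal, the key observation being that a differential operator $D\in\D(\vreg)$ in $x$ with $D\psi\equiv 0$ identically in $(\lambda,x)$ must vanish: expanding $D\psi$ from $\psi=Pe^{\langle\lambda,x\rangle}$, the component of top degree in $\lambda$ equals $\sigma_D(x,\lambda)\,P_0(\lambda,x)$ with $\sigma_D$ the principal symbol of $D$ and $P_0\not\equiv 0$, so $\sigma_D\equiv 0$, and one descends on the order to get $D=0$. This gives uniqueness of $L_p$, linearity of $p\mapsto L_p$, and multiplicativity: $L_pL_{p'}$ has principal symbol $pp'$ and $L_pL_{p'}\psi=L_p\bigl(p'(\lambda)\psi\bigr)=p'(\lambda)p(\lambda)\psi=(pp')(\lambda)\psi$ (because $p'(\lambda)$ is a scalar for the $x$-action), hence $L_pL_{p'}=L_{pp'}$; injectivity is clear since $\psi\not\equiv 0$. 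Therefore $\{L_p\}_{p\in A_k^*}$ is a subalgebra of $\D(\vreg)$ isomorphic to $A_k^*$, and it is commutative because $A_k^*$ is (equivalently, $[L_p,L_{p'}]\psi=0$ forces $[L_p,L_{p'}]=0$). Finally, since $\c[V^*]^W\subseteq A_k^*$ and $L_{p,k}=\Res T_{p,k}$ already satisfies \eqref{cm}, uniqueness identifies $L_p=L_{p,k}$ for $W$-invariant $p$, so this family genuinely extends the commuting operators of \eqref{cm}.
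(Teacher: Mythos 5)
Your proposal is essentially the paper's own (implicit) argument: the paper gives no detailed proof of this proposition, simply invoking — as in \cite{VSC} and \cite{CFV} — the uniqueness characterization of $\psi$ by \eqref{ps}, \eqref{p} and \eqref{psl} that bispectral symmetry (Proposition~\ref{bis}) makes available, which is exactly the backbone of your construction, and your formal part (uniqueness of $L_p$, multiplicativity $L_pL_{p'}=L_{pp'}$, commutativity, identification with $A_k^*$, compatibility with \eqref{cm}) is the standard argument. The recursion step you flag as the main obstacle — divisibility of the top $\lambda$-coefficient of each remainder by the factors $\delta_C^*(\lambda)^{N_C}$ of $P_0$, so that the quotients $a_\alpha$ acquire poles only along the reflection hyperplanes in $x$ — is precisely what the quasi-invariance \eqref{psl} in the spectral variable supplies in the \cite{VSC} induction, and the paper leaves this verification to the cited references just as you do.
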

Note that, by bispectral symmetry, we also have a similar
commutative subalgebra of differential operators in the `spectral'
variable $\lambda$.

\bibliographystyle{amsalpha}

\end{document}